\DeclareFontFamily{U}{skulls}{}
\DeclareFontShape{U}{skulls}{m}{n}{ <-> skull }{}
\renewcommand{\mathcal}{\mathscr}
\newcommand{\bbC}{\mathbb{C}}
\newcommand{\bbF}{\mathbb{F}}
\newcommand{\bbG}{\mathbb{G}}
\newcommand{\bbN}{\mathbb{N}}
\newcommand{\bbP}{\mathbb{P}}
\newcommand{\bbQ}{\mathbb{Q}}
\newcommand{\bbS}{\mathbb{S}}
\newcommand{\bbV}{\mathbb{V}}
\newcommand{\bbZ}{\mathbb{Z}}
\newcommand{\cC}{\mathcal{C}}
\newcommand{\cE}{\mathcal{E}}
\newcommand{\cF}{\mathcal{F}}
\newcommand{\cH}{\mathcal{H}}
\newcommand{\cK}{\mathcal{K}}
\newcommand{\cL}{\mathcal{L}}
\newcommand{\cM}{\mathcal{M}}
\newcommand{\cN}{\mathcal{N}}
\newcommand{\cO}{\mathcal{O}}
\newcommand{\cV}{\mathcal{V}}
\newcommand{\cX}{\mathcal{X}}
\newcommand{\cY}{\mathcal{Y}}
\newcommand{\cZ}{\mathcal{Z}}
\newcommand{\rD}{\textup{D}}
\newcommand{\rH}{\textup{H}}
\newcommand{\rT}{\textup{T}}
\newcommand{\rZ}{\textup{Z}}
\newcommand{\frS}{\mathfrak{S}}
\newcommand{\rd}{\textup{d}}
\newcommand{\sfX}{\mathsf{X}}
\renewcommand{\ss}{\textup{ss}}
\newcommand{\tors}{\textup{tors}}
\newcommand{\sym}{{\bbS}}
\newcommand{\reg}{\textup{reg}}
\newcommand{\dashto}{\dashrightarrow}
\newcommand{\into}{\hookrightarrow}
\newcommand{\too}{\longrightarrow}
\renewcommand{\phi}{\varphi}
\renewcommand{\epsilon}{\varepsilon}
\renewcommand{\ker}{\Ker}
\newcommand{\iso}{\simeq}
\newcommand{\Rep}{\textup{Rep}}
\newcommand{\Dbc}{{\mathrm{D}^b_c}}
\newcommand{\Perv}{\textup{Perv}}
\newcommand{\rmS}{\textup{S}}
\newcommand{\rmT}{\textup{T}}
\newcommand{\Dbcbar}{\overline{\Dbc}}
\newcommand{\Pbar}{\overline{\Perv}}
\newcommand{\R}{\textup{R}}
\newcommand{\PLambda}{\scalerel*{{\rotatebox[origin=c]{180}{$\bbV$}}}{\bbV}}
\renewcommand{\wr}{\rotatebox[origin=c]{90}{$\sim$}}
\DeclareMathOperator{\Vect}{Vect}
\DeclareMathOperator{\Alb}{Alb}
\DeclareMathOperator{\Gal}{Gal}
\DeclareMathOperator{\rk}{rk}
\DeclareMathOperator{\pr}{pr}
\DeclareMathOperator{\Spec}{Spec}
\DeclareMathOperator{\Proj}{Proj}
\DeclareMathOperator{\Hom}{Hom}
\DeclareMathOperator{\im}{Im}
\DeclareMathOperator{\Ker}{Ker}
\DeclareMathOperator{\CH}{CH}
\DeclareMathOperator{\Pic}{Pic}
\DeclareMathOperator{\Stab}{Stab}
\DeclareMathOperator{\End}{End}
\DeclareMathOperator{\Sym}{Sym}
\DeclareMathOperator{\Aut}{Aut}
\DeclareMathOperator{\GL}{GL}
\DeclareMathOperator{\SL}{SL}
\DeclareMathOperator{\SO}{SO}
\DeclareMathOperator{\Sp}{Sp}
\DeclareMathOperator{\Spin}{Spin}
\DeclareMathOperator{\Supp}{Supp}
\DeclareMathOperator{\Lie}{Lie}
\DeclareMathOperator{\id}{id}
\DeclareMathOperator{\Hilb}{Hilb}
\DeclareMathOperator{\alb}{alb}
\DeclareMathOperator{\cone}{cone}
\DeclareMathOperator{\characteristic}{char}
\DeclareMathOperator{\codim}{codim}
\DeclareMathOperator{\Alt}{Alt}
\DeclareMathOperator{\cc}{cc}
\DeclareMathOperator{\CC}{CC}
\newcommand{\coLie}[1]{{(\Lie {#1})^\vee}}
\renewcommand{\le}{\leqslant}
\renewcommand{\ge}{\geqslant}
\theoremstyle{plain}
\newtheorem{theoremintro}{Theorem}
\newtheorem*{unlabeledthm}{Theorem}
\newtheorem*{maintheorem-monodromy}{Main theorem (monodromy version)}
\newtheorem*{maintheorem-tannaka}{Main theorem (Tannaka version)}
\newtheorem*{corollaryintro}{Corollary}
\newtheorem{theorem}{Theorem}[section]
\newtheorem{lemma}[theorem]{Lemma}
\newtheorem{proposition}[theorem]{Proposition}
\newtheorem{corollary}[theorem]{Corollary}
\newtheorem{claim}[theorem]{Claim}
\newtheorem{fact}[theorem]{Fact}
\newtheorem*{factintro}{Fact}
\newtheorem{assumption}[equation]{Assumption}
\theoremstyle{definition}
\newtheorem*{definitionintro}{Definition}
\newtheorem{definition}[theorem]{Definition}
\newtheorem{example}[theorem]{Example}
\newtheorem{remark}[theorem]{Remark}
\newtheorem*{exampleintro}{Example}
\numberwithin{equation}{section}
\begin{document}

\title{The monodromy of families of subvarieties on abelian varieties}

\author[A. Javanpeykar]{Ariyan Javanpeykar}
\address{Ariyan Javanpeykar \\
	IMAPP\\
	Radboud University Nijmegen\\	
	PO Box 9010\\
	6500GL Nijmegen\\
	The Netherlands.}
\email{peykar@uni-mainz.de}

\author[T. Kr\"amer]{Thomas Kr\"amer}
\address{Thomas Kr\"amer \\
	Institut f\"{u}r Mathematik\\
	Humboldt Universit\"at zu Berlin\\
	Rudower Chaussee 25,  12489 Berlin\\
	Germany.}
\email{thomas.kraemer@math.hu-berlin.de}

\author[C. Lehn]{Christian Lehn}
\address{Christian Lehn\\ 
Fakult\"at f\"ur Mathematik\\Ruhr-Universit\"at Bochum\\ 
Universit\"ats\-stra\ss e~150\\
Postfach IB 45\\
44801 Bochum, Germany}
\email{christian.lehn@rub.de}

\author[M. Maculan]{Marco Maculan}
\address{ Marco Maculan \\
	Institut de Math\'ematiques de Jussieu \\
Sorbonne Universit\'e \\
4, place Jussieu \\
75005 Paris \\
France}
\email{marco.maculan@imj-prg.fr}

\begin{abstract}
Motivated by recent work of Lawrence-Venkatesh and Lawrence-Sawin, we show that non-isotrivial families of  subvarieties in abelian varieties have big monodromy when twisted by generic rank one local systems.  While Lawrence-Sawin discuss the case of subvarieties of codimension one, our results hold for  subvarieties of codimension at least half the dimension of the ambient abelian variety. 
For the proof, we use a combination of geometric arguments and representation theory to show that the Tannaka groups of intersection complexes on such subvarieties are big.
 \end{abstract}
 
\makeatletter
\@namedef{subjclassname@2020}{
	\textup{2020} Mathematics Subject Classification}
\makeatother

\subjclass[2020]{14K12, 14D05 (primary), 18M25, 20G05, 32S60 (secondary).}
\keywords{Subvarieties of abelian varieties, characteristic cycles, convolution, monodromy, perverse sheaves, Tannaka categories.}


 \maketitle 
 
 \setcounter{tocdepth}{1}

\tableofcontents

\section{Introduction}

\thispagestyle{empty} 

Recently, Lawrence and Venkatesh \cite{LV} have developed a technique to prove nondensity of integral points on varieties that are defined over a number field and support a geometric variation of Hodge structures with big monodromy. They used this method to give an alternative proof of the Mordell conjecture and to show nondensity for hypersurfaces in projective space of a given (high) degree with good reduction outside a fixed finite set of primes. Later, Lawrence and Sawin~\cite{LS20} applied this strategy to show that up to translation any abelian variety over a number field contains only finitely many smooth ample hypersurfaces with given N\'eron-Severi class and good reduction outside a fixed finite set of primes. The main novelty of their work lies in their way to control monodromy. The arguments of Lawrence and Venkatesh have a topological flavor. For the Mordell conjecture they rely on a judicious choice of Dehn twists; for hypersurfaces in projective space they use the computation of the integral monodromy of the universal family  by Beauville~\cite{Beauville} (based on the work of Ebeling~\cite{Ebeling} and Janssen~\cite{Janssen}), see also the discussion by Katz in~\cite{KatzLarsen}. Instead, the approach by Lawrence and Sawin involves Tannaka groups of perverse sheaves on abelian varieties introduced by Kr\"amer and Weissauer \cite{KWVanishing}; the relation of these groups to monodromy is reminiscent of the one between the monodromy group of a variation of Hodge structures and its  generic Mumford-Tate group \cite{AndreMumfordTate}.

\medskip

With a view towards new arithmetic applications along these lines \cite{KM}, we prove a big monodromy theorem for families of subvarieties of higher codimension in abelian varieties. Our results hold for all subvarieties of codimension at least half the dimension of the abelian variety. The geometry in this codimension range is very different from the codimension one case in~\cite{LS20}, and the results about Tannaka groups that we obtain on the way may be of independent interest.

\subsection{Big monodromy} \label{subsec:bigmonodromy}
Let $S$ be a smooth irreducible variety over an algebraically closed field $k$ of characteristic zero. Let $A$ be an abelian variety of dimension~$g$ over~$k$. Inside the constant abelian scheme~$A_S := A\times S$, let~$\cX\subset A_S$ be a closed subvariety which is smooth over $S$ with connected fibers of dimension~$d$. The goal of this paper is to understand the monodromy of rank one local systems on the smooth proper family $f\colon \cX \to S$ in the following diagram:
\[
\begin{tikzcd}
&  \cX \ar[d, hook, no head, xshift=-1pt] \ar[d, no head, shorten <=1.2pt, xshift=1pt] \ar[dl, bend right=30, swap, "\pi"] \ar[dr, bend left=30, "f"]  & \\
A & A_S \ar[l, swap, "\pr_A"] \ar[r, "\pr_S"]  & S
\end{tikzcd}
\]
Our results apply both in the analytic and in the algebraic setup, using topological local systems with coefficients in $\bbF = \bbC$ for $k = \bbC$ resp.~\'etale $\ell$-adic local systems with coefficients in $\bbF = \overline{\bbQ}_\ell$ for a prime $\ell$ over an arbitrary algebraically closed field $k$ of characteristic zero. Let $\pi_1(A, 0)$ be the topological resp.~\'etale fundamental group with the discrete resp.~profinite topology, and denote the group of its continuous characters by 
\[
 \Pi(A, \bbF)=\Hom(\pi_1(A, 0), \bbF^\times).
\]
In what follows, by a \emph{linear subvariety} we mean a subset $\Pi(B, \bbF)\subset \Pi(A, \bbF)$ for an abelian quotient variety $A\twoheadrightarrow B$ with $\dim B < \dim A$. 
We say that a statement holds for {\em most} $\chi \in \Pi(A, \bbF)$ if it holds for all $\chi$ outside a finite union of torsion translates of linear subvarieties. For $\chi \in \Pi(A, \bbF)$, let $L_\chi$ denote the associated rank one local system on $A$. It follows from generic vanishing~\cite{BSS, KWVanishing, SchnellHolonomic} that for most $\chi$ the higher direct images $R^i f_* \pi^* L_\chi$ vanish in all degrees $i\neq d$; we consider the local system
\[
V_{\chi} \;:=\; R^d f_* \pi^* L_{\chi}
\]
of rank $|e|$ where $e$ is the topological Euler characteristic of the fibers of $\cX \to S$. More generally, the study of finite \'etale covers of the subvariety $\cX \subset A_S$ induced by finite \'etale covers of $A$ leads to direct sums
\[
 V_{\underline{\chi}} \;:=\; V_{\chi_1} \oplus \cdots \oplus V_{\chi_n} 
\] 
where $\underline{\chi} = (\chi_1, \dots, \chi_n) \in \Pi(A, \bbF)^n$ is an $n$-tuple of characters of the fundamental group. Using the natural identification $\Pi(A, \bbF)^n = \Pi(A^n, \bbF)$ we will also apply the terminology {\em most} for such $n$-tuples of characters. Consider for $s\in S(k)$ the monodromy representation
\[
 \rho\colon \quad 
\pi_1(S, s) \;\too\; 
\GL(V_{\underline{\chi}, s})
\quad \textnormal{on the fiber} \quad
V_{\underline{\chi}, s} \;=\; \bigoplus_{i=1}^n \rH^d(\cX_s, L_{\chi_i}).
\]
The \emph{algebraic monodromy group} of the local system $V_{\underline{\chi}}$ is the Zariski closure of the image of $\rho$. By construction it is an algebraic subgroup of
\[
  \GL(V_{\chi_1, s}) \times \cdots \times \GL(V_{\chi_n, s})
 \;\subset\; 
 \GL(V_{\underline{\chi}, s}).
\]
This upper bound can sometimes be refined: We say that the subvariety $\cX \subset A_S$ is \emph{symmetric up to translation} if there exists $a\colon S\to A$ such that $\cX_t = - \cX_t + a(t)$ for all $t\in S(k)$. In this case, Poincar\'e duality furnishes a nondegenerate bilinear pairing
\[ 
\theta_{\chi,s} \colon 
\quad V_{\chi,s} \otimes  V_{\chi, s} \too L_{\chi, a(s)}
\]
for each $\chi \in \Pi(A, \bbF)$, because for the dual of a rank one local system and for its inverse image under the translation $\tau_{a(t)}\colon A\to A, x\mapsto x+a(t)$ we have natural isomorphisms
\begin{eqnarray*}
  L_\chi^\vee &\iso& [-1]^\ast L_\chi, \\
  \tau_{a(t)}^\ast L_\chi 
  &\iso & L_{\chi} \otimes_{\bbF} L_{\chi, a(t)}.
\end{eqnarray*} 
The pairing $\theta_{\chi, s}$ is symmetric if $d$ is even, and alternating otherwise. Since the pairing is compatible with the monodromy operation on the fiber, it follows that the algebraic monodromy group of $V_{\underline{\chi}}$ is contained in an orthogonal resp.~symplectic group in the two cases. This leads to the following definition:

\begin{definitionintro} 
We say that $V_{\underline{\chi}}$ has {\em big monodromy} if its algebraic monodromy group contains $G_1\times \cdots \times G_n$ as a normal subgroup where $G_i \subset \GL(V_{\chi_i, s})$ is defined by
\[
G_i \;:=\; 
\begin{cases}
\SL(V_{\chi_i, s}) & \textup{if $\cX$ is not symmetric up to translation}, \\
\SO(V_{\chi_i,s}, \theta_{\chi_i,s})  & \textup{if $\cX$ is symmetric up to translation and $d$ is even},  \\
\Sp(V_{\chi_i,s}, \theta_{\chi_i,s})  & \textup{if $\cX$ is symmetric up to translation and $d$ is odd}.
\end{cases}
\]
\end{definitionintro} 

\smallskip

Note that the connected component of the algebraic monodromy group of $V_{\underline{\chi}}$ is unaffected by base change along \'etale morphisms $S'\to S$. To take this into account we consider the fiber $\cX_{\bar{\eta}}$ of $\cX \to S$ at a geometric generic point $\bar{\eta}$ of $S$. There are four obvious cases where the local system~$V_{\underline{\chi}}$ does not have big monodromy: We say that $\cX_{\bar{\eta}} \subset A_{S, \bar{\eta}}$ is\medskip
 \begin{enumerate}
 \item  \emph{constant up to a translation} if it is the translate of a subvariety $Y \subset A$ along a point in $A(\bar{\eta})$. In this case the algebraic monodromy is finite. \medskip
 
  \item {\em divisible} if it is stable under translation by a torsion point $0\neq x\in A(\bar{\eta})$. In this case the algebraic monodromy of each $V_{\chi_i}$ is itself a group of block matrices which is normalized by the group generated by the point $x$. \smallskip 
   
 \item a \emph{symmetric power of a curve} if there is a smooth curve $C \subset A_{S, \bar{\eta}}$ such that the sum morphism $\Sym^d C \to A_{S, \bar{\eta}}$ is a closed embedding with image $\cX_{\bar{\eta}}$ and $d\ge 2$. 
After an \'etale base change over $S$, we may assume that~$C$ spreads out to a relative curve $\cC \subset A_{S}$ which is smooth and proper over $S$ such that the relative sum morphism $\Sym_{S}^d \cC \to A_{S}$ is a closed embedding with image $\cX$. Then we have an isomorphism compatible with monodromy:
\[ \rH^d(\cX_{s}, L_{\chi}) \iso \Alt^d \rH^1(\cC_{s}, L_{\chi}).\]

 \item a \emph{product} if there are smooth subvarieties $X_1, X_2 \subset A_{S, \bar{\eta}}$ with $\dim X_i > 0$ 
 such that the sum morphism $X_1 \times X_2 \to A_{S, \bar{\eta}}$ is a closed embedding with image $\cX_{\bar{\eta}}$.  Again, after an \'etale base change over $S$ we may assume that~$X_i$ spreads out to a subvariety $\cX_i \subset A_{S}$ which is smooth and proper over $S$ such that the relative sum morphism $\cX_1 \times \cX_2 \to A_{S}$ is a closed embedding with image $\cX$. 
Then we have the K\"{u}nneth isomorphism which is compatible with monodromy:
\[ \rH^d(\cX_{s}, L_{\chi}) \iso \bigoplus_{i_1 + i_2 = d}\rH^{i_1}(\cX_{1, s}, L_{\chi}) \otimes \rH^{i_2}(\cX_{2, s}, L_{\chi}).\]
 \end{enumerate}
 
If $\cX_{\bar{\eta}}$ is nondivisible, then 
condition (1) holds if and only if the family $\cX \to S$ is isotrivial; see \cref{cor:constant_up_to}. To avoid the appearance of the exceptional groups~$E_6$ and $E_7$
and some low-dimensional half-spin groups, we require a mild assumption on the topological Euler characteristic:
\begin{assumption} \label{Eq:NumericalConditions} 
The topological Euler characteristic $e$ of $\cX_{\bar{\eta}}$ satisfies
\begin{align} 
|e| \; &\neq \; 27 && \text{if $d\ge 2$ and $\cX$ is not symmetric up to a translation}, \notag \\
|e| \; &\neq \; 56 && \text{if $d \ge 3$ is odd and $\cX$ is symmetric up to translation}, \notag\\
|e| \; &\neq \; 2^{2m - 1} && \text{if $d \ge (g-1)/4$, $ m\in \{3, \dots, d\}$ 
has the same parity as $d$} \notag \\
& &&  \text{and $\cX$ is symmetric up to translation}. \notag
\end{align}
\end{assumption}
Note that $|e|\ge g$ if $\cX_{\bar{\eta}} \subset A_{S, \bar{\eta}}$ has ample normal bundle, see \cref{Lemma:LowerBoundTopCharAmpleNormalBundle}.
We do not know any example of 
a smooth subvariety of $A_{S, \bar{\eta}}$ with ample normal bundle and dimension $d<(g-1)/2$ whose Euler characteristic $e$ does not satisfy~\cref{Eq:NumericalConditions}.

\begin{maintheorem-monodromy} Suppose $\cX_{\bar{\eta}} \subset A_{S, \bar \eta}$ has ample normal bundle, dimension $d < (g-1)/2$, and~\cref{Eq:NumericalConditions} holds. Then the following are equivalent:
\begin{enumerate}
\item $\cX_{\bar \eta}$ is nondivisible, not constant up to translation, not a symmetric power of a curve and not a product; \smallskip
\item $V_{\underline{\chi}}$ has big monodromy for most 
torsion $n$-tuples $\underline{\chi} \in \Pi(A, \bbF)^n$.
\end{enumerate}
\end{maintheorem-monodromy}

Smooth proper subvarieties of a simple abelian variety have ample normal bundle. Therefore when $A$ is simple the preceding theorem is as general as it gets for smooth subvarieties of dimension $d < (g-1)/2$, save the finite list of exceptions~in~\cref{Eq:NumericalConditions}. When~$A$ is arbitrary, the theorem can be applied in the following concrete cases:

\begin{corollaryintro}
Suppose $\cX_{\bar \eta}\subset A_{S, \bar \eta}$ is nondivisible, not constant up to translation, and one of the following holds:
\begin{enumerate}
\item $\cX_{\bar{\eta}}$ is a curve generating $A_{S, \bar{\eta}}$ and $g\ge 4$; \smallskip 
\item  $\cX_{\bar{\eta}}$ is a surface with ample normal bundle which is neither a symmetric square of a curve nor a product, and $e \neq 27$, $g\ge 6$;\smallskip 
\item $\cX_{\bar{\eta}}$ is a complete intersection of ample divisors and $d<(g-1)/2$.\smallskip
\end{enumerate}
Then $V_{\underline{\chi}}$ has big monodromy for most $n$-tuples $\underline{\chi} \in \Pi(A, \bbF)^n$ of torsion characters.
\end{corollaryintro}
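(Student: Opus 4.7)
The plan is to deduce each of the three cases from the Main Theorem (monodromy version) by verifying its hypotheses. Nondivisibility and non-constancy up to translation are assumed in the corollary, so what remains in each case is to check that $\cX_{\bar\eta}$ has ample normal bundle, that $d<(g-1)/2$, that the numerical constraints \eqref{Eq:NumericalConditions} on $|e|$ hold, and that $\cX_{\bar\eta}$ is neither a symmetric power of a curve (when $d\geq 2$) nor a product.

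In case (1), $d=1$ and $g\geq 4$ give $d<(g-1)/2$. A smooth curve generating an abelian variety has ample normal bundle by a classical argument involving the surjectivity of the difference map $C\times C\to A$ (equivalently, the finiteness of the Gauss map). The symmetric-power and product exclusions are vacuous for $d=1$, and every clause of \eqref{Eq:NumericalConditions} is vacuous since each requires either $d\geq 2$ or a nonempty index set $\{3,\dots,d\}$. In case (2), $d=2$ and $g\geq 6$ give $d<(g-1)/2$; the ample normal bundle, non-product, and non-symmetric-square assumptions are built into the hypothesis; and among the constraints in \eqref{Eq:NumericalConditions}, only $|e|\neq 27$ is not vacuous for $d=2$, and it is assumed. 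In both cases the Main Theorem applies directly.

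Case (3) is the substantive one. The normal bundle of a smooth complete intersection of $c=g-d$ ample divisors is ample, being the direct sum of restrictions of ample line bundles, and $d<(g-1)/2$ is given. The geometric exclusions are handled via weak Lefschetz: since $d<(g-1)/2$ implies $c=g-d>d+1$, iterated Lefschetz hyperplane gives $\Cohom^i(\cX_{\bar\eta},\bbQ)\iso\Cohom^i(A_{S,\bar\eta},\bbQ)$ for $i\leq d-1$, so in particular $\dim\Cohom^1(\cX_{\bar\eta},\bbQ)=2g$. If $\cX_{\bar\eta}=X_1+X_2$ were a product with $\dim X_i>0$, K\"unneth would give $\Cohom^1(\cX_{\bar\eta})=\Cohom^1(X_1)\oplus\Cohom^1(X_2)$; each $X_i$ generates an abelian subvariety $B_i$ with $\dim\Cohom^1(X_i)\geq 2\dim B_i$, and $\cX_{\bar\eta}$ cannot lie in a proper abelian subvariety (as a complete intersection of ample divisors of $A$), forcing $\dim B_1+\dim B_2\geq g$; pushing this to higher cohomology, notably $\Cohom^2(\cX_{\bar\eta})$, yields a contradiction with the Lefschetz identification. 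A symmetric-power structure $\cX_{\bar\eta}=\Sym^d C$ with $d\geq 2$ is ruled out analogously by comparing $\Cohom^i(\Sym^d C)$ with the Lefschetz-determined $\Cohom^i(\cX_{\bar\eta})$ in the range $d<(g-1)/2$. For the Euler characteristic conditions in \eqref{Eq:NumericalConditions}, one uses the conormal sequence $0\to N^\vee_{\cX/A}\to\Omega_A|_\cX\to\Omega_\cX\to 0$ with $\Omega_A$ trivial to write $e(\cX_{\bar\eta})$ as an explicit polynomial in the intersection numbers of the defining divisors, and combines this with the lower bound $|e|\geq g$ from \cref{Lemma:LowerBoundTopCharAmpleNormalBundle} to exclude the sporadic values $27$, $56$, and $2^{2m-1}$ in the regime $d<(g-1)/2$.

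The main obstacle is case (3): the cohomological arguments ruling out the product and symmetric-power structures require careful bookkeeping of the abelian subvarieties generated by the hypothetical factors, and the numerical verification that the Euler characteristic of a smooth complete intersection of ample divisors cannot hit the sporadic exceptional values is a delicate computation that essentially forces one to quantify the growth of $e$ in terms of the Chern data of the defining divisors.
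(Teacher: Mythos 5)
Cases (1) and (2) are handled correctly and essentially as in the paper: the numerical conditions \eqref{Eq:NumericalConditions} are vacuous for $d=1$, and for $d=2$ only the constraint $|e|\neq 27$ survives, which is assumed. Your account of ample normal bundle for generating curves is the content of the cited result of Hartshorne.

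Case (3), however, has a real gap. You propose to exclude the sporadic values $27$, $56$, $2^{2m-1}$ using the bound $|e|\geq g$ (or $|e|\geq\max\{g,2^{\min\{d,\lfloor\sqrt{g-1}\rfloor\}}\}$) from \cref{Lemma:LowerBoundTopCharAmpleNormalBundle}. That bound is far too weak. Take $d=3$, $g=8$: the constraints to rule out include $|e|\neq 27$, $|e|\neq 56$, and $|e|\neq 2^5=32$, but your cited lemma only gives $|e|\geq 8$. Nothing is excluded. The paper's argument is specific to complete intersections: from the conormal sequence one writes $e$ as an explicit sum of intersection numbers $D_1^{a_1}\cdots D_r^{a_r}$ over compositions of $g$ (\cref{Lemma:EulerCharCompleteIntersection}), then the Khovanskii--Teissier inequality (\cref{Lemma:TeissierKhovanskii}) bounds each such intersection number below by $g!$, yielding $|e|\geq g!\binom{g-1}{d}$ and (by divisibility) that $e$ is even (\cref{Cor:LowerBoundEulerChar}). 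Combined with \cref{Cor:EulerCharIsNot56}, this is what kills the sporadic values; your bound does not.

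Your cohomological sketch for the non-product and non-symmetric-power exclusions also does not close. The paper handles the product case by noting that for a complete intersection of ample divisors of dimension $\geq 2$, Debarre's Barth--Lefschetz theorem forces $\Alb(X)\to A$ to be an isogeny (\cref{Rem:complete_intersection}); then \cref{Cor:TannakaGroupSimpleSpecialCases} gives simplicity of $G^*_{X,\omega}$, and the contrapositive of \cref{Thm:TannakaGroupSimple} rules out the product structure. The symmetric-power case is \cref{Cor:SymmetricPowersOfCurvesAreNotCompleteIntersections}: via Barth--Lefschetz and Macdonald's computation of $\rH^2(\Sym^n C)$ one shows $\Sym^n C$ is a complete intersection of ample divisors only when $g=3$, $n=2$, which is excluded since $d<(g-1)/2$ forces $g\geq 2d+2\geq 6$. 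Your ``pushing this to higher cohomology yields a contradiction'' gestures at the same comparison, but leaves the decisive computation unspecified.
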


Indeed a smooth complete intersection of ample divisors is neither a symmetric power of a curve (\cref{Cor:SymmetricPowersOfCurvesAreNotCompleteIntersections}) nor a product (\cref{Rem:complete_intersection}) and its topological Euler characteristic satisfies $|e| \ge 2^g$ and $|e| \neq 27, 56$ (\cref{Cor:EulerCharIsNot56,,Cor:LowerBoundEulerChar}).

\medskip 

Over $k = \bbC$, the main theorem in the analytic setup is deduced from the algebraic one by the comparison between classical and \'etale topology; the hypothesis that the characters are torsion is only used here. For the proof in the algebraic setting, we start as in \cite{LS20} by relating the algebraic monodromy to the Tannaka group of the rank one local systems in question, seen as perverse sheaves on $\cX_{\bar{\eta}}$. The idea is similar to the study of monodromy groups via Mumford-Tate groups in the complex case~\cite{AndreMumfordTate}: An analog of the theorem of the fixed part due to Lawrence and Sawin says that the monodromy will be big if we can show that the Tannaka group of the geometric generic fiber is big (see \cref{Thm:FixedPart}); note that the property of the family being symmetric up to translation can be read off from its geometric generic fiber (\cref{cor:constant_up_to}).  Thus, we are left with a question about the Tannaka group of the geometric generic fiber of our family. In this setting, we will reset our notation and replace $k$ by an algebraic closure of the function field of~$S$.

\subsection{Big Tannaka groups} \label{sec:BigTannakaIntro}

As before, let $A$ be an abelian variety of dimension~$g$ over an algebraically closed field $k$ of characteristic zero.  Let $i\colon X\hookrightarrow A$ be the inclusion of a smooth connected closed subvariety of dimension $d$. We define the perverse intersection complex 
\[ \delta_X \;:=\; i_\ast\bbF_X[d] \]
as the pushforward of the constant sheaf, shifted in cohomological degree~$-d$ so that it becomes an object of the abelian category $\Perv(A, \bbF)$  of perverse sheaves on~$A$ as in \cite{BBDG}. As we will recall in \cref{section perverse sheaves on abelian varieties}, the group law on the abelian variety induces a convolution product on perverse sheaves, and the perverse intersection complex $\delta_X$ generates a neutral Tannaka category $\langle \delta_X \rangle$ with respect to this convolution.  
For the rest of this introduction, we fix a character~$\chi \in \Pi(A, \bbF)$ with~$\rH^i(X, L_\chi)= 0$ for all $i\neq d$. Such a character exists by generic vanishing. We then have a fiber functor 
\[
 \omega \colon \quad \langle \delta_X \rangle \;\too\; \Vect(\bbF), \quad P \;\longmapsto\; \rH^0(A, P\otimes L_\chi),
\]
see~\cite[th.~13.2]{KWVanishing}. Applying this fiber functor to $P=\delta_X$ we recover the vector space 
\[ V := \omega(\delta_X) = \rH^0(A, \delta_X \otimes L_\chi) = \rH^d(X, L_\chi). \]
The automorphisms of the fiber functor are represented by a reductive algebraic group $G_{X, \omega}:=G_{\omega}(\delta_X)\subset \GL(V)$ which we call the \emph{Tannaka group of $X$}, see also \cref{rem:definition-of-tannaka-groups}.  The definitions in \cref{subsec:bigmonodromy} with $S=\Spec(k)$ show that if $X\subset A$ is symmetric up to translation, then $V$ comes with a natural symmetric bilinear form $\theta$ which is induced by Poincar\'e duality. This bilinear form is symmetric or alternating depending on the parity of $d$, and it is preserved by the action of the group $G_{X, \omega}$ as in \cite[lemma~2.1]{KWGeneric}. Let $G_{X, \omega}^\circ\subset G_{X, \omega}$ be the connected component of the identity and 
\[
G_{X,\omega}^\ast := [G_{X, \omega}^\circ, G_{X, \omega}^\circ]
\]
its derived group, which is a connected semisimple algebraic group.

\begin{definitionintro} 
We say that the Tannaka group $G_{X, \omega}$
of $X$ is {\em big} if the derived group of its connected component of the identity is 
\[
G_{X, \omega}^* \;=\;
\begin{cases}
\SL(V) & \textup{if $X$ is not symmetric up to translation}, \\
\SO(V, \theta)  & \textup{if $X$ is symmetric up to translation and $d$ is even},  \\
\Sp(V, \theta)  & \textup{if $X$ is symmetric up to translation and $d$ is odd}.
\end{cases}
\]
\end{definitionintro}

The main theorem from the previous section is obtained by combining the analog of the theorem of the fixed part by Lawrence and Sawin (\cref{Thm:FixedPart}) with the following result, whose proof will be the main task of this paper. Again we need to exclude a finite list of values of the topological Euler characteristic $e$ of $X$, for which we refer to \cref{Eq:NumericalConditions} with $S=\mathrm{Spec}(k)$ and $\cX=X$.

\begin{maintheorem-tannaka} Suppose $X\subset A$ has ample normal bundle, dimension $d < (g-1)/2$, and \cref{Eq:NumericalConditions} holds. Then the following are equivalent:
\begin{enumerate}
\item  $X$ is nondivisible, not a symmetric power of a curve and not a product; \smallskip
\item  The Tannaka group $G_{X, \omega}$ is big. 
\end{enumerate}
\end{maintheorem-tannaka}

Similarly to the monodromy version, the preceding statement is substantially sharp in the simple case and can be applied in the following special cases:

\begin{corollaryintro} Suppose $X\subset A$ is nondivisible and one of the following holds:

\begin{enumerate}
\item $X$ is a curve generating $A$ and $g\ge 4$; \smallskip  
\item  $X$ is a surface with ample normal bundle which is neither a product nor the symmetric square of a curve, and $e \neq 27$, $g\ge 6$; \smallskip
\item $X$ is a complete intersection of ample divisors and $d<(g-1)/2$.\smallskip
\end{enumerate}
Then the Tannaka group $G_{X, \omega}$ is big. 
\end{corollaryintro}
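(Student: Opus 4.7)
The plan is to derive each of the three cases from the Main theorem (Tannaka version) by checking its hypotheses. In every case nondivisibility is assumed, so the remaining items to verify are the ampleness of the normal bundle, the dimension bound $d<(g-1)/2$, the numerical conditions \eqref{Eq:NumericalConditions}, and the exclusion of the product and symmetric-power structures.

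In case (1), with $d=1$, all three clauses of \eqref{Eq:NumericalConditions} are vacuous: the first requires $d\ge 2$, the second $d\ge 3$, and the third involves the empty index set $\{3,\ldots,1\}$. A curve can be neither a product nor a symmetric power of a curve, both of which by definition require dimension at least two. The assumption $g\ge 4$ is precisely $d<(g-1)/2$. The only point that is not immediate is the ampleness of $N_{X/A}$, which I would deduce from the classical result of Hartshorne that the normal bundle of a smooth curve in an abelian variety is ample as soon as the curve generates the variety. In case (2), with $d=2$ the second clause of \eqref{Eq:NumericalConditions} requires $d\ge 3$ and the third involves the empty set $\{3,\ldots,2\}$, so \eqref{Eq:NumericalConditions} reduces to the assumption $e\neq 27$. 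Ampleness, non-productness, and not being a symmetric square of a curve are all assumed, and $g\ge 6$ is equivalent to $d<(g-1)/2$.

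In case (3), the dimension bound holds by assumption and the normal bundle $N_{X/A}=\bigoplus_{i=1}^{g-d}\mathcal{O}_A(D_i)|_X$ is ample as a direct sum of ample line bundles on $X$. Products and symmetric powers of curves are ruled out respectively by \cref{Rem:complete_intersection} and \cref{Cor:SymmetricPowersOfCurvesAreNotCompleteIntersections}. For the numerical conditions, \cref{Cor:LowerBoundEulerChar} yields $|e|\ge 2^g$, and combined with the inequality $g\ge 2d+2$ forced by $d<(g-1)/2$ this gives $|e|\ge 2^{2d+2}>2^{2m-1}$ for every admissible index $m\le d$, settling the third clause of \eqref{Eq:NumericalConditions}; the remaining exclusions $|e|\neq 27,56$ are supplied by \cref{Cor:EulerCharIsNot56}. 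Once these verifications are in place, the Main theorem (Tannaka version) applies and gives the bigness of $G_{X,\omega}$ in all three cases. The one mildly delicate step is the bookkeeping of \eqref{Eq:NumericalConditions} in case (3), which as above is entirely absorbed by the cited Euler characteristic bounds.
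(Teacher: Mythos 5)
Your proposal is correct and follows essentially the same route the paper takes, namely checking the hypotheses of the Main theorem (Tannaka version) case by case. The paper's written justification is terser — it mostly addresses case (3), invoking \cref{Cor:SymmetricPowersOfCurvesAreNotCompleteIntersections}, \cref{Rem:complete_intersection}, \cref{Cor:LowerBoundEulerChar} and \cref{Cor:EulerCharIsNot56} exactly as you do — while cases (1) and (2) are left to the reader; your bookkeeping of \eqref{Eq:NumericalConditions} for $d=1,2$ (all clauses vacuous except $|e|\neq 27$ when $d=2$) and the appeal to Hartshorne's ampleness criterion for generating curves correctly supply those details.
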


The Tannaka version of the main theorem also applies when $X$ does not arise from a family as in \cref{subsec:bigmonodromy}, so it is stronger than the monodromy version. We also note that over the complex numbers both versions apply in many cases where we have no control on Mumford-Tate groups of the subvarieties. Again, when $X$ is a complete intersection of ample divisors, then automatically $|e| \neq 27, 56$, $X$ is not a symmetric power of a curve nor a product. 

\subsection{Sum morphisms} \label{Sec:SumMorphisms} Before we describe the proof of the main theorem, let us illustrate the meaning of big Tannaka groups with a simple application. Let~$X\subset A$ be a subvariety of dimension $d$. For any integer $r\ge 1$ the sum morphism induces a morphism
\[
 \Sym^r X \;\too\; W_r(X) \;:=\; X + \cdots + X \;\subset\; A
\]
onto the $r$-fold sum of the subvariety inside the abelian variety. If $X\subset A$ has ample normal bundle or more generally if it is nondegenerate in the sense of~\cref{section positivity} below, then for $r<g/d$ this sum morphism is generically finite onto its image. In general it will not be birational:

\begin{exampleintro}
Let $C$ be a smooth projective curve of genus $g\ge 2$, seen as a subvariety of its Jacobian variety~$A=\Pic^0(C)$ via the Abel-Jacobi embedding for a given base point. Then the subvariety
\[
 X \;:=\; W_d(C) \;\subset\; A
\]
is smooth if $C$ has gonality $>d$. The map
$
 \Sym^r X \to W_r(X) = W_{rd}(C)
$
is not birational. Note that the symmetric power $\Sym^r X$ is singular for $d>1$, but the image $W_r(X)=W_{rd}(C)$ is smooth if $C$ has gonality $> rd$.
\end{exampleintro} 

In the above example the Tannaka group $G_{X,\omega}$ is not big for $d>1$, see~\cref{ex:symmetric-power-of-curve} below. For subvarieties whose Tannaka group is big, which by our main theorem is true in most cases, we have:

\begin{unlabeledthm}
Let $X\subset A$ be a smooth subvariety of dimension $d$ with ample normal bundle. If the Tannaka group $G_{X,\omega}$ is big, then for any integer $2 \le  r < g/d$ the sum morphism 
$ \Sym^r X \rightarrow W_r(X)$
is birational, and $W_r(X)$ is singular for $d>1$.
\end{unlabeledthm}

The key point here is the birationality, which will be shown in~\cref{cor:birational-via-larsen}. Once the birationality is known, the smoothness of $W_r(X)$ implies that the sum morphism is an isomorphism by~\cref{Lem:BirationalMapIsIso}. In particular $\Sym^r X$ is then also smooth, so $d=1$ by~\cref{Prop:SmoothSymmetricPower}. The proof of~\cref{cor:birational-via-larsen} relates the direct image of the constant sheaf under the sum morphism to the decomposition of wedge or symmetric powers of $V \in \Rep_\bbF(G_{X, \omega})$. In fact Larsen's alternative yields a necessary and sufficient criterion for the Tannaka group to be big, using only the decomposition of the direct image of the constant sheaf under the sum morphism for $r=2$. But it seems hard to control this direct image in the generality needed for our main theorem, so for the proof of the main theorem we follow a different route that will be described in~\cref{sec:IntroSimplicity,,sec:IntroWedgePowers,,sec:IntroSpin}.

\subsection{Simplicity of Tannaka groups} \label{sec:IntroSimplicity}

The first step in our proof of the main theorem from the previous section will be to show that under the given assumptions, the algebraic group~$G_{X, \omega}^*$ is simple. We refine the methods in \cite[section~6]{KraemerMicrolocalII} to obtain the following simplicity criterion (see \cref{Thm:TannakaGroupSimple}):

\begin{theoremintro} \label{Thm:TannakaGroupSimpleIntro} 
Suppose $X\subset A$ has ample normal bundle and is nondivisible. Then for $g \ge 3$ the following are equivalent:
\begin{enumerate}
\item The algebraic group $G_{X, \omega}^\ast$ is not simple;
\item There are smooth positive-dimensional subvarieties $X_1, X_2 \subset A$ such that the sum morphism induces an isomorphism
\[ X_1 \times X_2 \stackrel{\sim}{\too} X.\]
\end{enumerate}\end{theoremintro}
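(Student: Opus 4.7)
\emph{Easy direction $(2) \Rightarrow (1)$.} The plan is to prove both implications, with the easy direction being a routine Tannakian computation and all the difficulty concentrated in $(1) \Rightarrow (2)$. For the easy direction, suppose $\sigma \colon X_1 \times X_2 \to X$ is an isomorphism with both $X_i$ positive-dimensional (the degenerate case is trivial). Since $\sigma$ is the restriction of the group law $m \colon A \times A \to A$ to $X_1 \times X_2$, proper base change gives a canonical isomorphism $\delta_X \cong \delta_{X_1} \ast \delta_{X_2}$ in $\Perv(A, \bbF)$. Applying $\omega$ yields $V \cong \omega(\delta_{X_1}) \otimes \omega(\delta_{X_2})$, and the Tannakian formalism shows that $G_{X, \omega}^\ast$ sits inside the image of $G_{X_1, \omega}^\ast \times G_{X_2, \omega}^\ast$ acting through the tensor product. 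Since both tensor factors are positive-dimensional, $G_{X, \omega}^\ast$ admits two commuting nontrivial normal subgroups and hence is not simple.

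\emph{Hard direction $(1) \Rightarrow (2)$: Tannakian setup.} I plan to refine the strategy of \cite[\S 6]{KraemerMicrolocalII}. Simplicity of the perverse sheaf $\delta_X$ makes $V$ an irreducible representation of $G_{X, \omega}$. If $G_{X, \omega}^\ast$ is not simple, I decompose its simply connected cover nontrivially and obtain an isogeny factorization $G_{X, \omega}^\ast \sim G_1 \times G_2$ together with a nontrivial outer tensor product $V \cong V_1 \boxtimes V_2$, where each $V_i$ is an irreducible $G_i$-representation of dimension at least two. Tannakian duality then produces simple perverse sheaves $P_1, P_2 \in \langle \delta_X \rangle$ with $\omega(P_i) = V_i$ such that $\delta_X$ occurs as a direct summand of the convolution $P_1 \ast P_2$ with multiplicity one. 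The remaining task is to identify each $P_i$ with $\delta_{X_i}$ for a smooth closed subvariety $X_i \subset A$ and to upgrade the resulting set-theoretic relation $X = X_1 + X_2$ to a scheme-theoretic isomorphism $X_1 \times X_2 \to X$.

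\emph{Main obstacle.} The most delicate step will be realizing the abstract Tannakian factorization geometrically. My plan is to argue microlocally: the ample normal bundle hypothesis forces $\CC(\delta_X)$ to be the conormal variety $T_X^\ast A$ with multiplicity one, and the control of characteristic cycles under convolution, together with the multiplicity-one appearance of $\delta_X$ inside $\CC(P_1 \ast P_2)$, should constrain each $\CC(P_i)$ to be the conormal variety of a smooth closed subvariety $X_i \subset A$ satisfying $X_1 + X_2 = X$ set-theoretically and forcing $P_i \cong \delta_{X_i}$. The nondivisibility of $X$ is essential here to exclude spurious splittings coming from isogenies $A \twoheadrightarrow A'$ through which some $\delta_{X_i}$ might factor, while the hypothesis $g \ge 3$ rules out low-dimensional accidents in which a factor would itself be an abelian subvariety. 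Finally, the rank identity $|e| = |e_1| \cdot |e_2|$ implied by multiplicity one forces the sum morphism $\sigma \colon X_1 \times X_2 \to X$ to be birational; combined with the smoothness of $X$ and a Zariski main theorem argument in characteristic zero, this upgrades $\sigma$ to the desired isomorphism.
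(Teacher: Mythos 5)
Your overall plan — reduce the easy direction to a Tannakian tensor decomposition and the hard direction to characteristic cycles, using ampleness of the normal bundle via finiteness of the Gauss map — is indeed the route the paper takes. However, both directions have real gaps as you have set them up.

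\textbf{Easy direction.} Your conclusion that ``$G_{X,\omega}^\ast$ admits two commuting nontrivial normal subgroups'' does not follow from the tensor decomposition $V \cong V_1 \otimes V_2$. The subtlety is that $G_{X,\omega}^\ast$ is only the \emph{image} of $G^*_\omega(\delta_{X_1} \oplus \delta_{X_2})$ acting on $V$; nothing so far rules out that the same simple factor contributes nontrivially to \emph{both} $V_1$ and $V_2$, in which case the image on the tensor product could still be simple. The paper closes exactly this gap with \cref{Lemma:TensorProductOfMinuscule}: since $X$, $X_1$, $X_2$ are all smooth and nondivisible, all three representations $V$, $V_1$, $V_2$ are minuscule by \cref{cor:minuscule}, and a tensor product of two nontrivial minuscule representations of a single simple group is never minuscule. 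This forces the simple factors to split disjointly between $V_1$ and $V_2$ (\cref{Lemma:TannakaGroupProduct}). Without invoking minusculity (or some substitute), the easy direction is not a ``routine Tannakian computation.''

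\textbf{Hard direction.} The sentence ``Tannakian duality then produces simple perverse sheaves $P_1, P_2 \in \langle \delta_X \rangle$ with $\omega(P_i) = V_i$'' cannot be made to work as written: the $V_i$ are representations of the simply connected cover $\tilde G$ of $G_{X,\omega}^\ast$, not of $G_{X,\omega}$ itself, so they do not correspond to objects of $\langle \delta_X \rangle$. This is precisely why the paper does \emph{not} produce perverse sheaves $P_i$. Instead, it passes to characteristic cycles and uses the Adams-operation diagram of \cref{thm:cc-and-weights} to turn a representation of $\tilde G$ into a clean cycle in $\langle \PLambda_X \rangle$, after replacing $X$ by $[n](X)$ for a suitable $n$ (\cref{Prop:WhatNonSimplicityImplies}). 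You do gesture at the microlocal side, but you would find that the object you need on the cycle level has no perverse-sheaf incarnation, and the conic-map technology (\cref{Prop:RationalMapBetweenCones}, \cref{Prop:BasicPropertiesConicMorphism}) is then what lets you recover honest morphisms $f_i\colon X \to X_i$ from the cycle-level factorization. Also, your explanation of $g \ge 3$ is off: the paper handles the hypersurface case $\dim X = g-1$ via Lawrence--Sawin and needs $\dim X < g-1$ for the Segre-class dimension count (\cref{cor:DimensionOfConvolutionViaSegre}), which is where $g\ge 3$ comes in. Finally, the birationality of $\sigma\colon X_1\times X_2\to X$ does not come from an Euler-characteristic identity but from the integrality of the cycle $\PLambda_{[n](X)} = \PLambda_{X_1}\circ \PLambda_{X_2}$: the component $\PLambda_{X_1+X_2}$ appears with multiplicity $\deg\sigma$, and integrality forces $\deg\sigma=1$.
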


A smooth projective curve $C \subset A$  generating $A$ has ample normal bundle, thus the algebraic group $G_{C, \omega}^\ast$ is simple for $g \ge 3$. When $g = 2$ the simplicity of $G_{C, \omega}^\ast$ remains open. More generally \cref{Thm:TannakaGroupSimpleIntro} implies that $G_{X, \omega}^\ast$ is simple when $X \subset A$ is nondivisible with ample normal bundle and
\begin{enumerate}
\item the image of the Albanese morphism  $X\to \Alb(X)$ is nondegenerate in the sense of \cref{section positivity};
\item the natural morphism $\phi \colon \Alb(X) \to A$ is an isogeny. By Debarre's Barth-Lefschetz theorem for abelian varieties (see \cite[th.~4.5]{Deb95} or \cref{Rem:complete_intersection}) this is the case as soon as $d > g/2$ or when $X$ is a complete intersection of ample divisors and $d \ge 2$.
\end{enumerate}
Note that situation (2) above is a particular case of (1).

\medskip
Our proof of \cref{Thm:TannakaGroupSimpleIntro} uses characteristic cycles on the cotangent bundle~$T^*A$ and their link with representation theory \cite{KraemerMicrolocalI, KraemerMicrolocalII}. The idea is roughly as follows: If the group $G_{X,\omega}^\ast$ is not simple, then the representation $V=\omega(\delta_X)$ is an external tensor product of representations. This allows to decompose the characteristic cycle of the perverse sheaf~$\delta_X$ as a Pontryagin product. But for smooth subvarieties the characteristic cycle is integral and equal to the conormal bundle to $X \subset A$. Using our assumption that the normal bundle is ample, we can then rule out decompositions as Pontryagin products via computations with Segre classes.
For convenience we recall some relevant background in \cref{section from rep to geo}, together with computations for the Dynkin types $A$, $B$, $D$ to be used later. 
The integrality of the characteristic cycle also implies that the representation $\omega(\delta_X) \in \Rep_\bbF(G_{X,\omega}^\ast)$ is minuscule in the sense that its weights for a maximal torus form a single Weyl group orbit, due to the following general result (see \cref{cor:minuscule}):  

\begin{factintro} 
Let $P\in \Perv(A, \bbF)$ be a perverse sheaf whose characteristic cycle is integral and not stable under any nontrivial translation on the abelian variety. Then $\omega(P)$ is a minuscule representation of $G_\omega(P)$.
\end{factintro} 

There are only few nontrivial minuscule representations $V$ of a simply connected simple algebraic group $G$, all of which are listed below: \medskip
 \begin{center} 
 \label{Table:MinusculeRepresentations}
 \renewcommand{\arraystretch}{1.3}
 \begin{tabular}{c|c|c|c}
 Dynkin type & G &  $V$ & $\dim V$ \\
 \hline \hline
 $A_{n}$ & $\SL_{n+1}$ &  $r$-th wedge power & $\tbinom{n+1}{r}$ \\
 $B_n$ & $\Spin_{2n + 1}$ & spin & $2^n$ \\
 $C_n$ & $\Sp_{2n}$ & standard & $2n$ \\
  $D_n$ & $\Spin_{2n}$ & standard of $\SO_{2n}$ & $2n$ \\
 $D_n$ & $\Spin_{2n}$ & half-spins & $2^{n-1}$ \\[0.3em]
 $E_6$ & $E_6$ & { \renewcommand{\arraystretch}{0.85}\begin{tabular}{c} smallest nontrivial \\ or its dual \end{tabular} \renewcommand{\arraystretch}{1.5}} & $27$ \\[0.1em]
 $E_7$ & $E_7$ & smallest nontrivial & $56$
\end{tabular} \smallskip
 \end{center}
The dimension of $\omega(\delta_X)$ is the absolute value of the topological Euler characteristic of $X$. Recall that the subvariety $X\subset A$ is symmetric up to a translation if and only if the vector space $\omega(\delta_X)$ carries a nondegenerate bilinear form preserved by the action of $G_{X, \omega}^\ast$, and this pairing is symmetric if $d$ is even and alternating if $d$ is odd; see \cite[lemma~2.1]{KWGeneric}. This rules out the occurence of $E_6$ for symmetric subvarieties; note that the group $E_6$ appears as the Tannaka group of the Fano surface in the intermediate Jacobian of a smooth cubic threefold, but $d = (g-1)/2$ here because $d = 2$ and $g = 5$. Similarly, the group $E_7$ preserves a nondegenerate alternating bilinear form on its $57$-dimensional irreducible representation, so subvarieties $X$ with $G_{X, \omega}^\ast \simeq E_7$ must be odd-dimensional. However, for $d = 1$ this does not happen as we show by a direct geometric argument (see~\cref{cor:curve-is-not-e7}), and in higher dimension we do not any such example. Altogether, to prove that the Tannaka group is big and conclude the proof of the main theorem from \cref{sec:BigTannakaIntro}, we are left with wedge powers and spin representations.  The next two sections will characterize the occurence of the former and rule out the latter.

\subsection{Wedge powers} \label{sec:IntroWedgePowers}

In contrast to the situation for hypersurfaces studied by Lawrence and Sawin in~\cite{LS20}, one cannot rule out the occurrence of nontrivial wedge powers for subvarieties of higher codimension by numerical arguments. In fact, wedge powers do appear, but we will use geometric arguments to obtain the following complete classification (see \cref{Thm:SmallWedgePowersAreSumsOfCurves}):

\begin{theoremintro}
\label{Thm:SmallWedgePowersAreSumsOfCurves_Intro}
Suppose $X\subset A$ has ample normal bundle and is nondivisible. Then for $d < (g - 1)/2$ the following are equivalent:\smallskip 
\begin{enumerate} 
\item  There are integers $r$ and $n$ with  $1<r \le n/2$ such that
    $G_{X,\omega}^{\ast}\simeq \Alt^r(\SL_n)$ and $\omega(\delta_X)$  is    the $r$-th wedge power of the standard representation.     \smallskip 
\item There is a nondegenerate irreducible smooth projective curve $C\subset A$ such that \begin{itemize}
\item $X=C+\cdots + C \subset A$ is the sum of $r$ copies of $C$, and\smallskip 
\item the sum morphism $\Sym^r C \to X$ is an isomorphism.
\end{itemize}
\end{enumerate} 
\end{theoremintro}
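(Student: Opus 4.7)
The direction $(2) \Rightarrow (1)$ is essentially formal. Assuming $\sigma \colon \Sym^r C \xrightarrow{\sim} X$ via the sum morphism, the K\"{u}nneth-type isomorphism $\rH^d(X, L_\chi) \simeq \Alt^d \rH^1(C, L_\chi)$ (argued exactly as for exception $(3)$ in \cref{subsec:bigmonodromy}) identifies $\omega(\delta_X)$ with $\Lambda^r W$, where $W := \omega(\delta_C)$. By \cref{Thm:TannakaGroupSimpleIntro} applied to the nondegenerate curve $C$ (which automatically has ample normal bundle in $A$), the group $G^\ast_{C,\omega}$ is simple; the minuscule classification together with $\dim C = 1$ then forces it to be $\SL(W)$, since a curve symmetric up to translation would force $X$ to be so as well, precluding $G^\ast_{X,\omega} \simeq \Alt^r(\SL_n)$. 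Taking the $r$-th wedge representation yields $G^\ast_{X,\omega} \simeq \Alt^r(\SL(W))$, as required.

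The direction $(1) \Rightarrow (2)$ is the main content; the strategy is Tannakian reconstruction of the curve. Assume $V = \omega(\delta_X) \simeq \Lambda^r W$ for the $n$-dimensional standard $\SL_n$-representation $W$. The plan is first to produce a smooth irreducible curve $C \subset A$ whose perverse sheaf $\delta_C$ realises the representation $W$, and then to show that the sum morphism $\sigma\colon \Sym^r C \to X$ is an isomorphism. The first subtlety is that the center $\mu_n \subset \SL_n$ acts on $\Lambda^r W$ through $\zeta \mapsto \zeta^r$, so the Tannakian category $\langle \delta_X \rangle$ only directly sees the quotient $\SL_n/\mu_{\gcd(r,n)}$ and will in general \emph{not} contain $W$ itself. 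I would handle this by passing to a finite \'etale cover $A' \to A$ trivialising the obstructing character, thereby enlarging the Tannakian category so that $W$ becomes realised by an honest perverse sheaf.

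To identify the curve $C$ geometrically, I would use the characteristic cycle $CC(\delta_X) \subset T^\ast A$ and the dictionary between cycles and weights recalled in \cref{section from rep to geo} for Dynkin type $A$. The weights of $\Lambda^r W$ are the sums $e_{i_1} + \cdots + e_{i_r}$ of $r$ distinct fundamental weights of $W$; correspondingly the irreducible components of $CC(\delta_X)$ should decompose as ``sums'' of $r$ primitive conormal directions, and these primitive directions should sweep out a candidate curve $C \subset A$. With $C$ in hand, the antisymmetrisation surjection $W^{\otimes r} \twoheadrightarrow \Lambda^r W$ gives, by Tannakian duality, a morphism $\delta_C^{\ast r} \to \delta_X$ whose geometric incarnation is the sum morphism $\sigma \colon \Sym^r C \to X$.

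Finally, to upgrade $\sigma$ to an isomorphism I would compare Euler characteristics $|\chi(\Sym^r C)| = \binom{n}{r} = \dim \Lambda^r W = |\chi(X)|$, combine this with a support argument showing that the image of $\sigma$ is exactly $X$ (read off from the fact that $\delta_X$ is the summand of $\delta_C^{\ast r}$ corresponding to $\Lambda^r W$), and use smoothness of $X$ together with its ample normal bundle to conclude that $\sigma$ is a closed embedding. The main obstacle is the intermediate step of producing $C$: distinguishing the wedge-power pattern from the other minuscule possibilities (especially from the half-spin case handled separately in \cref{sec:IntroSpin}) and reading off an actual curve from the characteristic cycle, in concert with the passage to a suitable finite cover that restores $\SL_n$ as the full Tannaka group, is the technical crux of the argument.
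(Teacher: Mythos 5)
Your sketch captures the broad architecture of the paper's argument — move to characteristic cycles, use the weight dictionary of Dynkin type $A$, pass to an ``all of $\SL_n$ visible'' situation, and relate the sum morphism to the antisymmetrization map — but there are genuine gaps in both directions.

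\textbf{For $(2)\Rightarrow(1)$.} ``Simplicity plus the minuscule classification plus $\dim C = 1$'' does not pin down $G^\ast_{C,\omega}=\SL(W)$. Even granting (as you implicitly argue) that $C$ cannot be symmetric up to translation (since the antidiagonal of $\Sym^r C$ would be contracted), this only rules out the self-dual minuscule types; the non-self-dual possibilities $\Spin_{4m+2}$ acting via a half-spin and $E_6$ acting via the $27$-dimensional representation remain, and nothing in your sketch excludes them. The paper instead observes that the isomorphism $\Sym^r C\xrightarrow{\sim} X$ forces $\Sym^2 C \to C+C$ to be birational and then applies Larsen's alternative via \cite[th.~6.1]{KWSmall}, which gives $G^\ast_{C,\omega}\simeq\SL_n$ directly.

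\textbf{For $(1)\Rightarrow(2)$.} The step you correctly flag as the crux — ``reading off an actual curve from the characteristic cycle'' — is exactly where the dimension bound $d<(g-1)/2$ is used, and your plan never invokes it. What the weight dictionary of \cref{lem:cc-in-type-A} actually produces (\cref{Thm:FromRepresentationToPhysicalWedgePowers}) is an integral subvariety $Z\subset A$ with $\Alt^r\PLambda_Z=\PLambda_{[e](X)}$; a priori $Z$ can have any dimension up to $d$. Forcing $\dim Z=1$ requires the Segre-class computation in the truncated Pontryagin ring $\CH_{<g}(A)$ (\cref{Lemma:DecreasingDimensionInSmallDimension}), which turns the hypothesis into a monotonicity statement for $\dim\Alt^i Z$ and yields $r\dim Z<g$ (\cref{Thm:SmallWedgePowersHaveSmallDegree}); \cref{Thm:WedgePowersOfSmallDegrees} then finishes via a Gauss-degree count and a finiteness argument for the sum map (\cref{prop:sum-morphism-finite}). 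Without this the result is simply false. Two further discrepancies: the paper does not pass to a cover $A'\to A$ but pushes forward by an isogeny $[m]\colon A\to A$ (\cref{cor:derived-group}), which arranges $G^\circ=G$; and there is no perverse sheaf $\delta_C$ in $\langle\delta_X\rangle$ realizing $W$ — the weight $\epsilon_1$ only controls the clean cycle $\PLambda_Z$ of a possibly singular $Z$, and the $C$ in the statement is its normalization. Your endgame (Euler-characteristic match plus ``ample normal bundle $\Rightarrow$ closed embedding'') also does not match the paper's argument, which shows $\Sym^r Z\to[e](X)$ is finite birational and then descends to $X$ via $e$-torsion translation and normality.
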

 
\subsection{Spin representations}  \label{sec:IntroSpin}
Recall that for $N\ge 3$ the group $\SO_N(\bbF)$ admits a double cover
\[
 \Spin_N(\bbF) \;\too\; \SO_N(\bbF)
\] 
by the \emph{spin group} $\Spin_N(\bbF)$, a simply connected algebraic group with a faithful representation $\bbS_N$, the \emph{spin representation}. We have $\dim \bbS_N = 2^n$ for $n=\lfloor N/2 \rfloor$, and if $N$ is odd, then the spin representation is irreducible. If $N=2n$ is even, then the spin representation $\bbS_N \simeq \bbS_N^+ \oplus \bbS_N^-$ splits as the direct sum  of two irreducible representations called the {\em half-spin representations}. They both have dimension $\dim \bbS_N^+ = \dim \bbS_N^- = 2^{n-1}$.
For odd $n=2m+1$, the half-spin representations are both faithful and dual to each other; for even $n=2m$, they are both self-dual and their images
\[
 \Spin_{4m}^\pm(\bbF) \;\subset\; \GL(\bbS_{4m}^\pm)
\]
are called the {\em half-spin groups}.   
We show that spin or half-spin groups do not occur for smooth nondivisible subvarieties of high enough codimension (see \cref{Thm:SmallSpinDoNotExist}):

\begin{theoremintro}\label{Thm:Spin_Intro} 
Suppose that $X\subset A$ has ample normal bundle, is nondivisible and has dimension $d<(g-1)/2$. Then the pair $(G_{X, \omega}, \omega(\delta_X))$ is not isomorphic to any of the above spin or half-spin groups with their spin or half-spin representations unless
\[
 (G_{X,\omega}^\ast,  \omega(\delta_X))\simeq (\Spin_{4m}^{\pm}(\bbF), \bbS_{4m}^{\pm})
 \quad \text{for some $m\in \{3,\dots, d\}$},
\]
in which case $X$ has topological Euler characteristic of absolute value $|e|=2^{2m-1}$ and is symmetric up to a translation, $d-m$ is even and $d\ge (g-1)/4$.
\end{theoremintro}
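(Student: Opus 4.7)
The plan is to combine the identification of $V=\omega(\delta_X)$ as a minuscule representation of $G_{X,\omega}^{*}$ with the characteristic-cycle dictionary of Section~3 linking irreducible summands of convolution powers of $\delta_X$ to irreducible summands of tensor powers of $V$, then to exploit the geometric consequences of ample normal bundle. In each subcase the rank $n$ is pinned down by $|e|=2^n$ (for $\Spin_{2n+1}$) or $|e|=2^{n-1}$ (for either half-spin of $\Spin_{2n}$), and the duality type of $V$ determines via~\cite[lemma~2.1]{KWGeneric} whether $X$ is symmetric up to translation, together with the type of the induced Poincar\'e pairing.

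For $G_{X,\omega}^{*}\simeq \Spin_{2n+1}$, self-duality of $\bbS_{2n+1}$ forces $X$ to be symmetric up to translation. The classical decomposition
\[
\bbS_{2n+1}\otimes \bbS_{2n+1}\;\simeq\; \bigoplus_{k=0}^{n}\Alt^{k}W,
\]
with $W$ the $(2n+1)$-dimensional standard representation of $\SO_{2n+1}$, corresponds through the characteristic-cycle computation in type $B$ (Section~3) to a decomposition of $\delta_X*\delta_X$ having a simple summand whose support is a subvariety of $X+X$ of dimension $2n+1$. But ample normal bundle and $d<(g-1)/2$ imply via~\cite[th.~4.5]{Deb95} that the sum morphism $X\times X \to A$ is generically finite with $\dim(X+X)=2d$; the parity mismatch $2n+1\le 2d$ then excludes this case entirely.

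In the case $G_{X,\omega}^{*}\simeq \Spin_{2n}$ acting on a half-spin $\bbS_{2n}^{\pm}$, I split by the parity of $n$. If $n=2m+1$ is odd, the two half-spins are mutually dual but not self-dual, so $X$ is not symmetric up to translation, and the decomposition
\[
\bbS_{2n}^{+}\otimes \bbS_{2n}^{-}\;\simeq\; \bigoplus_{k \text{ odd}}\Alt^{k}W
\]
with $W$ the $2n$-dimensional standard of $\SO_{2n}$ yields through Section~3 a simple summand of $\delta_X*\delta_{-X}$ of odd support dimension inside $X-X$, again clashing with $\dim(X-X)=2d$ on parity grounds. If $n=2m$ is even, both half-spins are self-dual so $X$ is symmetric up to translation; the invariant form on $\bbS_{4m}^{\pm}$ is symmetric for $m$ even and symplectic for $m$ odd, while the Poincar\'e pairing on $\rH^d(X,L_\chi)$ is symmetric iff $d$ is even, so matching types gives $d\equiv m\pmod 2$, that is $d-m$ even. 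The bound $m\ge 3$ rules out low-rank accidental isomorphisms (e.g.\ triality for $\Spin_8$) already covered by earlier cases, and the bounds $m\le d$ and $d\ge (g-1)/4$ emerge from a characteristic-cycle analysis of $\bbS_{4m}^{\pm}\otimes \bbS_{4m}^{\pm}$ combined with the nondivisibility of $X$ and the codimension hypothesis. Together with $|e|=2^{2m-1}$ this is exactly the exceptional family listed.

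The main obstacle is the characteristic-cycle step: starting from the representation-theoretic decomposition of $\bbS\otimes\bbS$ or $\bbS^{+}\otimes\bbS^{-}$, one must translate through the minuscule-orbit dictionary of \cite{KraemerMicrolocalI,KraemerMicrolocalII} into dimension statements about specific subvarieties of $A$, controlled by the ample normal bundle hypothesis. A secondary delicate point is matching the duality type of the self-dual half-spin representations against that of Poincar\'e duality on $X$, which is precisely what produces the parity condition $d-m\in 2\bbZ$ in the exceptional case.
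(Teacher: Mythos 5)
Your high-level framing is right: pass to characteristic cycles, use the minuscule-orbit dictionary of Section~5, read off symmetry-up-to-translation and the parity of the Poincar\'e pairing from (half-)spin self-duality, and pin down $n$ by $|e|$. But the concrete arguments you give in each subcase do not work, and the paper's actual mechanism for getting a contradiction is missing.

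The central problem is the ``parity mismatch'' step. You claim that the decomposition of $\bbS\otimes\bbS$ (or $\bbS^+\otimes\bbS^-$) gives a summand of $\delta_X*\delta_X$ ``whose support is a subvariety of $X+X$ of dimension $2n+1$'' (resp.\ of odd dimension), and then derive a contradiction from $\dim(X+X)=2d$. The number $2n+1$ is the dimension of the representation $W$, not the dimension of the support of its characteristic cycle: those supports are subvarieties of $A$, whose dimensions are governed by an auxiliary cycle $Z$ with $\deg\gamma_Z=2n$, not by $\dim W$. So there is no parity clash at this point, and the argument does not close. What the paper does instead is: from $V\simeq\bbS$ extract a reduced \emph{symmetric} cycle $Z$ with $\deg\gamma_Z=2n$ and $\PLambda_{[e](X+a)}=\Alt^n_\sym\PLambda_Z$ (resp.\ the half-spin variant), and then run two estimates against each other. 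On one side, an iterated Segre-class/Pontryagin-product argument (the convolution $\PLambda^{(n)}\circ\PLambda^{(n)}$ controls the lower $\Alt^i_\sym$, whence $\dim\Alt^i_\sym Z=i\dim Z$ inductively) gives $n\dim Z=d<(g-1)/2$. On the other side, if $n\dim Z<g$ the finiteness of the sum morphism $(Z')^n\to[e](X)$ forced by smoothness of $X$ is violated, because $Z'=-Z'$ is symmetric of positive dimension and the antidiagonal is contracted. This is the key geometric obstruction (Theorem~\ref{Thm:SpinOfSmallDegrees}), and your proposal never touches it.

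Two smaller points. The bound $m\ge 3$ is not because $m=2$ is ``already covered by earlier cases via triality''; the paper rules it out by Euler-characteristic bounds (Lemmas~\ref{Lemma:LowerBoundTopCharAmpleNormalBundle} and~\ref{Lemma:LowerBoundTopCharSurfaces} show $|e|\neq 8$ is forced in the relevant dimension range once $d-m$ is even). And the constraints $m\le d+1$ and $d\ge(g-1)/4$ in the exceptional half-spin family do not come from analyzing $\bbS^\pm\otimes\bbS^\pm$ directly; they emerge because the comparison of the two half-spin cycles $\PLambda_\pm$ (Lemma~\ref{Lemma:BadHalfSpinBound}) fails precisely when $n=2m$ and $m\le d+1$, which is what forces the dimension estimate to require the stronger hypothesis $d<(g-1)/4$ in that range; combining $m\le d+1$ with $d-m$ even gives $m\le d$. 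So you have identified the right objects and the right endgame (parity and Euler characteristic), but the core step producing a contradiction in the non-exceptional cases is absent, and the justifications for the numerical constraints need to be replaced.
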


The main theorem in \cref{sec:BigTannakaIntro} now follows by combining theorems \ref{Thm:TannakaGroupSimpleIntro}, \ref{Thm:SmallWedgePowersAreSumsOfCurves_Intro}, \ref{Thm:Spin_Intro}, and from this we also obtain the main theorem in \cref{subsec:bigmonodromy} by the analog of the theorem of the fixed part given by \cref{Thm:FixedPart}.

 \medskip

\subsection{Conventions and notation}
We always work over a field $k$ of characteristic zero.
  A \emph{variety} over $k$ is a separated finite type $k$-scheme, and a \emph{subvariety} is a closed subvariety unless said otherwise. An \emph{algebraic group} is a finite type group scheme over a field.
 For a locally free sheaf $\cE$ (of finite rank) on a variety $X$, we denote by $\bbP(\cE):=\Proj \Sym^\bullet \cE^\vee$ the associated projective bundle.
 If~$A$ is an abelian variety over $k$, we denote  by $\Lie A$ its tangent space at the identity and define $\bbP_A:=\bbP((\Lie A)^\vee)$.
For a smooth projective connected variety $X$, we denote by $\Pic^0(X)$ the connected component of the identity in its Picard scheme. This is an abelian variety, and we denote by $\Alb(X)$ its dual abelian variety. Given a locally closed subvariety $Y$ of a variety $X$ over $k$, let $\cC_{Y/X}$ denote the \emph{conormal sheaf} of $Y$ in $X$, i.e.,   the $\cO_Y$-module $I / I^2$, where $I$ is the ideal sheaf of the closed immersion $i \colon Y \to U$ for a suitable open subset $U\subset X$.

\subsection*{Acknowledgments}
We would like to thank Daniele Agostini, Benjamin Bakker, Yohan Brunebarbe, Marco D'Addezio, Olivier Debarre and the referees for their helpful comments. 
A.J. gratefully acknowledges support by the IHES where part of this work was completed.
T.K. was supported by the DFG research grant Kr 4663/2-1.
C.L. was supported by the DFG research grants Le 3093/3-2 and Le 3093/5-1 and by the SMWK research grant SAXAG.
M.M. was supported by ANR grant ANR-18-CE40-0017.

\section{Gauss maps, positivity and nondegeneracy}

In this section, we recall from the view of conormal geometry various notions of positivity and nondegeneracy for subvarieties in abelian varieties. We denote by~$A$ an abelian variety over an algebraically closed field $k$ of characteristic zero.

\subsection{The stabilizer and the abelian variety generated} The \emph{stabilizer} of a subvariety $X\subset A$ is the algebraic subgroup $\Stab_A(X)\subset A$ whose $k$-points are 
\[
\Stab_A(X)(k) \;=\; \{ a\in A(k) \mid X + a = X \}.
\]
Write $\Stab(X)=\Stab_A(X)$ if the ambient abelian variety is clear from the context.

\begin{definition} We say $X\subset A$ is \emph{nondivisible} if it is integral and $\Stab(X)=\{0\}$.
\end{definition}

If $X\subset A$ is a connected subvariety, the \emph{abelian subvariety generated by $X$} is defined to be the smallest abelian subvariety $\langle X \rangle \subset A$ containing the image of the difference morphism $X \times X \to A$, $(x, x') \mapsto x - x'$. Note that this image $X-X \subset A$ is connected because $X\times X$ is so.

\subsection{Conormal varieties and Gauss maps} \label{Sec:GaussMap} 
Let us briefly recall the notion of conormal varieties and Gauss maps, which will be crucial later.  For abelian varieties, the cotangent bundle $\Omega^1_A$ is a trivial bundle with fiber $\rH^0(A, \Omega^1_A)=\coLie{A}$ of rank~$g=\dim A$. Consider the projection
\[
p \colon \quad \bbP(\Omega^1_A) \; \too \; \bbP_A = \bbP(\coLie{A}).
\]
If $\PLambda \subset \bbP(\Omega^1_A)$ is a $(g-1)$-dimensional integral subvariety, then for dimension reasons the morphism
\[
 \gamma_{\PLambda} \;:=\; p_{\vert \PLambda} \colon
 \quad \PLambda \;\too\; \bbP_A
\]
is either dominant (and hence generically finite) or not dominant. We say that~$\PLambda$ is \emph{clean} in the first case and \emph{negligible} in the second case. In the clean case we denote by $\deg \PLambda$ the generic degree of the generically finite dominant morphism ${\gamma}_{\PLambda}$, in the negligible case we formally put $\deg \PLambda = 0$.

\medskip 

We want to apply these definitions to conormal varieties, for which we need some more notation.
 For any subvariety $X \subset A$, its conormal sheaf $\cC_{X/A}$ fits in the exact sequence of coherent sheaves
\[ \cC_{X/A} \stackrel{i}{\too} \Omega^1_{A \rvert X} \too \Omega^1_X \too 0.\]
If $X \subset A$ is regular immersion, then $\cC_{X/A}$ is locally free and if $X$ is moreover integral, then $i$ is injective. If $X$ is smooth, then all three terms are locally free and the sequence is short exact.

\begin{definition} \label{Def:ConormalCycle} 
	For a reduced subvariety $X\subset A$ we define its (projective)  \emph{conormal variety} $\PLambda_X \subset \bbP(\Omega^1_A)$ to be the  closure of $\bbP(\cC_{X^\reg/A})$ in $\bbP(\Omega^1_A)$.  The \emph{Gauss map} of~$X$ is the morphism 
	$$\gamma_X \;:=\; {\gamma}_{\PLambda_X} \colon \quad \PLambda_X \;\too\; \bbP_A$$
	We denote by $\pr_X \colon \PLambda_X \to X$ the projection and $\PLambda_{X, x} := \pr_X^{-1}(x)$ for $x\in X(k)$. 
\end{definition}

\begin{remark} As we almost exclusively work with the projective conormal varieties and not with affine ones, we will usually drop the adjective \emph{projective}. We clearly have: 
	\begin{enumerate}
		\item The morphism $\gamma_{X \vert \PLambda_{X, x}} \colon \PLambda_{X, x} \to \bbP_A$ is injective.\smallskip
		\item If $X$ is smooth at a point $x$, then $\PLambda_{X, x} = \bbP(\cC_{X/A, x})$ where $\cC_{X/A, x}$ denotes the fiber at $x$ of the conormal bundle.\smallskip
		\item If $X$ is smooth, then $\PLambda_X = \bbP(\cC_{X/A})$.
	\end{enumerate}
\end{remark}

\noindent 
The effect of isogenies on conormal varieties is easy to control. For an integer $e\ge 1$ and an integral subvariety $X\subset A$ we denote by $[e](X)\subset A$ its image under the isogeny $[e]\colon A\to A$. We will always endow this image with the reduced subscheme structure, and we denote by $e_X := [e]_{\rvert X}\colon X \to [e](X)$ the finite morphism obtained by restriction of the isogeny to the given subvariety. By abuse of notation, we also denote by $[e]:A \times \bbP_A \to A \times \bbP_A$ the induced morphism. Then we have:

\begin{lemma} \label{lem:image-of-conormal-under-isogeny}
Let $X\subset A$ be an integral subvariety, and let $Y=[e](X)\subset A$ for an integer $e\ge 1$. Then we have an identity 
\[ [e]_\ast \PLambda_X \;=\; \deg(e_X)\cdot \PLambda_{Y}
\]
of cycles. In particular, if the subvariety $X\subset A$ is nondivisible, then $[e]_\ast \PLambda_X = \PLambda_Y$.
\end{lemma}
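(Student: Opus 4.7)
The plan is to reduce the statement to the fact that $[e]\colon A\to A$ is étale, and hence so is the induced morphism on $\bbP(\Omega^1_A)$. Using the canonical trivialization $\bbP(\Omega^1_A)\iso A\times \bbP_A$ by translation-invariant differentials, this induced morphism is simply $[e]\times \id_{\bbP_A}$, which is finite étale. The pushforward of an irreducible $(g-1)$-dimensional cycle along a finite morphism equals its generic degree times its set-theoretic image, so the proof decomposes into identifying $[e](\PLambda_X)$ as a closed subvariety and computing the generic degree of the induced map $\PLambda_X\to\PLambda_Y$.

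For the first step I would choose a dense open $U\subset Y^{\reg}$ over which the restriction $e_X\colon e_X^{-1}(U)\to U$ is étale; this exists because $e_X$ is finite and generically étale (we are in characteristic zero). For $x\in e_X^{-1}(U)$ with $y=e_X(x)$, étaleness yields a canonical isomorphism $\cC_{Y/A,y}\iso \cC_{X/A,x}$ induced by $[e]^\ast$. Consequently the map $[e]\colon \bbP(\cC_{X/A})\rvert_{e_X^{-1}(U)} \to \bbP(\cC_{Y/A})\rvert_U$ is the base change of the étale morphism $e_X^{-1}(U)\to U$ along $\bbP(\cC_{Y/A})\rvert_U\to U$. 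This open subset of $\PLambda_X$ therefore maps surjectively onto a dense open of the irreducible $(g-1)$-dimensional variety $\PLambda_Y$, and since $[e]$ is finite the image of $\PLambda_X$ is closed. Combining these observations gives $[e](\PLambda_X)=\PLambda_Y$ and identifies the generic degree with $\deg(e_X)$, so that $[e]_\ast \PLambda_X=\deg(e_X)\cdot \PLambda_Y$ as cycles.

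For the \emph{in particular} clause, I would argue that nondivisibility forces $\deg(e_X)=1$, i.e., $e_X$ is birational. The fiber of $e_X$ over $e_X(x)$ equals $(x+A[e])\cap X$. For each nonzero $t\in A[e]$, the hypothesis $\Stab_A(X)=0$ gives $X+t\neq X$, and since $X$ and $X-t$ are integral of the same dimension, their intersection $X\cap(X-t)$ is a proper closed subvariety of $X$. Taking the union over the finitely many nonzero $t\in A[e]$ still yields a proper closed subset; on its complement, $e_X$ is injective, and so generically injective, which in characteristic zero is equivalent to $\deg(e_X)=1$.

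The main subtlety I foresee is ensuring that the analysis carried out on the smooth, étale locus indeed controls the entire conormal varieties $\PLambda_X$ and $\PLambda_Y$, which are defined as closures of the projectivized conormal sheaves over $X^{\reg}$ and $Y^{\reg}$ respectively. This is handled purely by the combination of irreducibility of $\PLambda_X$ and $\PLambda_Y$, equality of their dimensions, and finiteness of $[e]$; no further global argument beyond this dimension count is needed.
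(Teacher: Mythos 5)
Your proof is correct and takes essentially the same route as the paper's (extremely terse) proof: both identify $[e]_\ast \PLambda_X$ as a multiple of $\PLambda_Y$ by combining finiteness of the induced map on $\bbP(\Omega^1_A) \simeq A\times \bbP_A$ with integrality of $\PLambda_Y$, and both reduce the \emph{in particular} clause to birationality of $e_X$. Your write-up simply fills in the details the paper leaves implicit, namely the identification $[e](\PLambda_X)=\PLambda_Y$ via étale pullback of conormal sheaves over a suitable dense open of $Y^{\reg}$, and the explicit fiber argument showing $\deg(e_X)=1$ under the nondivisibility hypothesis.
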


\begin{proof} 
The first claim follows easily from the fact that by construction the conormal variety to any integral subvariety is integral. The second claim is then clear because the morphism $e_X\colon X\to Y$ is birational if $X$ is nondivisible.
\end{proof}

\begin{corollary} \label{Corollary:Equidimensional-Fibers} Let $X\subset A$ be a smooth integral subvariety and $Y = [e](X)$ for an integer $e \ge 1$. Then the fibers of $\pr_Y\colon \PLambda_ Y \to Y$ are pure of dimension $\codim_A Y - 1$.
\end{corollary}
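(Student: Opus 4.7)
The plan is to transfer the statement from $\PLambda_X$ to $\PLambda_Y$ via the finite surjective morphism provided by the preceding \cref{lem:image-of-conormal-under-isogeny}, using crucially that for the smooth subvariety $X$ the conormal variety $\PLambda_X$ is a projective bundle.

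First I would record two elementary observations. Since $[e]\colon A\to A$ is a finite surjective morphism of abelian varieties, $\dim Y=\dim X$, so $\codim_A X = \codim_A Y$. Moreover, $X$ being smooth, $\PLambda_X = \bbP(\cC_{X/A})$ is a projective bundle over $X$, so every fiber of $\pr_X\colon \PLambda_X \to X$ is a projective space, in particular irreducible of pure dimension $\codim_A X - 1 = \codim_A Y - 1$.

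Next, I would use \cref{lem:image-of-conormal-under-isogeny} to see that the map $[e]\colon A \times \bbP_A \to A \times \bbP_A$ restricts to a finite surjective morphism
\[
[e]_{\vert \PLambda_X}\colon \quad \PLambda_X \;\too\; \PLambda_Y.
\]
Indeed, finiteness follows from the fact that $[e]\times \id$ is a finite morphism (as $[e]$ is), and surjectivity follows from the identity of cycles $[e]_\ast \PLambda_X = \deg(e_X)\cdot \PLambda_Y$ established in that lemma.

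Now fix $y \in Y(k)$. Because $e_X\colon X \to Y$ is finite, the set $e_X^{-1}(y)$ is finite, and the preimage of the fiber $\pr_Y^{-1}(y)$ under $[e]_{\vert \PLambda_X}$ is exactly the disjoint union
\[
 ([e]_{\vert \PLambda_X})^{-1}\bigl(\pr_Y^{-1}(y)\bigr) \;=\; \bigsqcup_{x \in e_X^{-1}(y)} \PLambda_{X,x},
\]
obtained from the commutative square relating $\pr_X$ and $\pr_Y$. By the first observation, each $\PLambda_{X,x}$ is a projective space of pure dimension $\codim_A Y - 1$, so the whole preimage is of pure dimension $\codim_A Y - 1$. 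Since a finite morphism preserves the dimension of each irreducible component and the image of $\PLambda_X$ is all of $\PLambda_Y$, every irreducible component of $\pr_Y^{-1}(y)$ is the image of one of the $\PLambda_{X,x}$, hence has dimension $\codim_A Y - 1$. This yields the desired purity. There is no real obstacle; the only point requiring slight care is the identification of the scheme-theoretic preimage in $\PLambda_X$ with the disjoint union of fibers of $\pr_X$, which follows from the cartesian description of the square.
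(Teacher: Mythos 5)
Your proof is correct and follows essentially the same route as the paper: both use the commutative square from \cref{lem:image-of-conormal-under-isogeny} with finite horizontal arrows, together with the fact that $\PLambda_X = \bbP(\cC_{X/A})$ is a projective bundle over the smooth $X$ with irreducible fibers of pure dimension $\codim_A X - 1 = \codim_A Y -1$. You spell out in more detail the step the paper leaves implicit, namely that finite surjective morphisms preserve the dimension of fibers, and this is fine.
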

 
\begin{proof} 
\Cref{lem:image-of-conormal-under-isogeny} gives a commutative diagram
\[
\begin{tikzcd}
 \PLambda_X \ar[r, "{[e]}"] \ar[d, swap, "\pr_X"] & \PLambda_Y \ar[d, "\pr_Y"] \\
 X \ar[r, "e_X"] & Y
\end{tikzcd} 
\]
where the horizontal arrows are finite morphisms, and if $X$ is smooth, then the fibers of the morphism $\pr_X: \PLambda_X \to X$ are pure of dimension $\codim_A X - 1$.
\end{proof} 
 
%
\subsection{Positivity and nondegeneracy of subvarieties} \label{section positivity}

We now discuss various notions of positivity and nondegeneracy for subvarieties of an abelian variety. We say that an integral subvariety $X \subset A$ is {\em degenerate} if there exists a surjective morphism $\pi\colon A\to B$ of abelian varieties with 
\[ \dim \pi(X) < \min\{ \dim B, \dim X\}. \]
Otherwise, we say that $X$ is {\em nondegenerate}. Any closed point on the abelian variety is a nondegenerate subvariety, and so is the abelian variety  itself. Also note that if the abelian variety $A$ is simple, then any integral subvariety is nondegenerate. We say that a proper integral variety $X$ is  \emph{of general type} if there is a proper birational morphism $\nu \colon Y \to X$ from a smooth proper connected variety $Y$ with big canonical bundle.
For instance, we have:  \smallskip 
\begin{enumerate} 
	\item An integral effective divisor $X\subset A$ is nondegenerate if and only if it is ample. A curve $X\subset A$ is nondegenerate if and only if it generates $A$. See \cite[\S 1, examples]{Deb95}.\smallskip 
	\item For any elliptic curve $E$ and any simple abelian variety $B$ of dimension $\ge 3$, Debarre has constructed in~\cite[p. 189]{Deb95} a smooth subvariety 
	\[ X \subset A = E \times B \]
	of codimension $2$ which is nondegenerate but whose normal bundle is not ample. The smooth subvariety is obtained by choosing a general ample divisor $D\subset B$ and intersecting $E\times D$ with a general ample divisor in $A$.\smallskip 
	\item For $i = 1, 2$, let $A_i$ be an abelian variety and $X_i\subset A_i$ a nondegenerate integral subvariety. By considering the projections onto the factors, one sees that $X_1 \times X_2\subset A_1 \times A_2$ is of general type but degenerate.\smallskip
\end{enumerate}

\begin{remark} \label{Prop:NonDegerateIsogeny} 
Nondegeneracy is invariant under isogenies: Let $f \colon A \to A'$ be an isogeny of abelian varieties over $k$. Then an integral subvariety $X\subset A$ is nondegenerate if and only if $f(X)\subset A'$ is.
\end{remark}

In what follows we often consider the sum morphism $\sigma\colon X\times Y \to A$ for reduced subvarieties $X, Y\subset A$, and we denote by $X+Y \subset A$ its image. For nondegenerate subvarieties we have the following result by Debarre:

\begin{lemma} \label{Lem:SumOfNondegenerate}
Let $X, Y\subset A$ be integral subvarieties.
\begin{enumerate} 
\item If $X$ is nondegenerate, then $\dim(X+Y)=\min\{ \dim(X)+\dim(Y), \dim (A)\}$.
\item If $X$ and $Y$ are both nondegenerate, then so is $X+Y\subset A$.	
\end{enumerate} 	
\end{lemma}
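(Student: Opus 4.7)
Both parts are due to Debarre; the substance lies in (1), and (2) follows from (1).

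For (1), the bound $\dim(X+Y)\le \min(\dim X+\dim Y,\dim A)$ is automatic since $X+Y$ is the image of the sum morphism $\sigma\colon X\times Y\to A$. The reverse inequality is the point. The plan is to reduce, via generic smoothness of $\sigma$ onto its image, to the tangent-space statement
\[
 \dim(T_xX+T_yY)=\min(\dim X+\dim Y,\dim A)\qquad\text{for general }(x,y)\in X^{\mathrm{sm}}\times Y^{\mathrm{sm}}.
\]
Assuming this fails, I fix a general $y\in Y^{\mathrm{sm}}$, set $V:=T_yY\subset\Lie A$, and study the locus of $x\in X^{\mathrm{sm}}$ for which $T_xX+V$ has non-maximal dimension in $\Lie A$. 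Nondegeneracy of $X$ will enter as follows: using the translation-invariance of the tangent bundle of $A$ to identify every $T_xX$ with a subspace of $\Lie A$, nondegeneracy implies $\langle X-X\rangle=A$, and hence these subspaces span all of $\Lie A$ as $x$ varies. Combining this span property with the Schubert-type constraint imposed by the failure assumption should produce a proper abelian subvariety $K\subsetneq A$ witnessing the degeneracy of $X$ in the sense of the definition, contradicting the hypothesis.

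For (2), the key preliminary step is that nondegeneracy is preserved by surjective morphisms of abelian varieties: for any further surjection $\pi'\colon B\twoheadrightarrow C$, nondegeneracy of $X$ applied to $\pi'\circ\pi$ yields $\dim(\pi'\pi)(X)=\min(\dim X,\dim C)$, and a second application to $\pi$ shows that this equals $\min(\dim\pi(X),\dim C)$, so $\pi(X)$ is nondegenerate in $B$. Granting this, for any surjection $\pi\colon A\twoheadrightarrow B$, part (1) applied inside $B$ to the nondegenerate $\pi(X)$ and to $\pi(Y)$ gives
\[
 \dim\pi(X+Y)=\dim\bigl(\pi(X)+\pi(Y)\bigr)=\min\bigl(\dim\pi(X)+\dim\pi(Y),\dim B\bigr),
\]
and an elementary case analysis over the four possible comparisons of $\dim X,\dim Y$ with $\dim B$ identifies this with $\min(\dim(X+Y),\dim B)$, verifying the defining property of nondegeneracy for $X+Y\subset A$.

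The main obstacle is in (1), specifically translating the failure of the tangent-space bound into an honest degeneracy witness $K\subsetneq A$; the subtlety is that $V=T_yY$ need not be the Lie algebra of any abelian subvariety, so one cannot directly test nondegeneracy of $X$ against $V$, and the argument must instead exploit how the variation of $T_xX$ interacts with $V$ in $\Lie A$.
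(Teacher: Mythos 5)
The paper itself offers no proof of this lemma: it simply cites \cite[corollary~8.11]{DebarreAV}, so there is no argument in the paper to compare your plan against step by step.

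Your reduction of (2) to (1) is correct. The preliminary observation that nondegeneracy is preserved under surjections (apply the definition once to $\pi'\circ\pi$ and once to $\pi$) is sound, and the case analysis showing
\[
\min\bigl(\min(a,h)+\min(b,h),\,h\bigr)=\min(a+b,h)
\]
for $a=\dim X$, $b=\dim Y$, $h=\dim B$ is routine and checks out. So given (1), your argument for (2) is complete.

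The genuine gap is in (1), and you have in fact located it yourself but not closed it. You reduce to the tangent-space claim $\dim(T_xX+T_yY)=\min(\dim X+\dim Y,\dim A)$ for general $(x,y)$, and the step that is missing is precisely the passage from a failure of this bound to an honest surjection $\pi\colon A\twoheadrightarrow B$ witnessing degeneracy of $X$. Two points make this nontrivial. First, the only consequence of nondegeneracy that your sketch explicitly invokes is $\langle X-X\rangle=A$, which is strictly weaker: a product $X_1\times X_2\subset A_1\times A_2$ of positive-dimensional nondegenerate subvarieties also has tangent spaces spanning the ambient Lie algebra, yet it is degenerate and the dimension formula for its double fails. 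So the spanning property alone cannot yield (1), and the argument must exploit the full definition of nondegeneracy. Second, as you note, $V=T_yY$ is generally not the Lie algebra of an abelian subvariety, so there is no direct quotient to test against. Closing this gap is essentially the content of Debarre's (and Ran's) characterization of geometric nondegeneracy in terms of tangent spaces --- roughly, that $X$ is nondegenerate if and only if for every subspace $V\subset\Lie A$ with $\dim V+\dim X\ge\dim A$ there is a smooth point $x\in X$ with $T_xX+V=\Lie A$ --- and proving that characterization requires an actual construction of a subtorus from the locus of degenerate tangent directions (via the Gauss map or a foliation/integrability argument). Your sketch names this step (``should produce a proper abelian subvariety $K$'') without indicating the mechanism, so as written it does not constitute a proof. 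Either supply that construction, or do as the paper does and cite Debarre's Corollary~8.11 directly.
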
 

\begin{proof} 
See \cite[corollary~8.11]{DebarreAV}.	
\end{proof} 
The relations between the various notion of nondegeneracy and positivity that will play a role in this paper are summarized in the following diagram where for a smooth proper subvariety $X\subsetneq A$ we denote by $\cN_{X/A}$ its normal bundle:
\[
\begin{tikzpicture}
\tikzstyle{every node}=[font=\small]
\def\r{1.25};
\node at (-10,0) (a) {\begin{tabular}{c}$X$ smooth and \\ $\cN_{X/A}$ ample \end{tabular}};
\node at (-6.5,0) (b) {\begin{tabular}{c} Gauss map $\gamma_X$ is \\ a finite morphism \end{tabular}};
\node at (-3, 0) (c) {\begin{tabular}{c} $X$ nondegenerate \end{tabular}};
\node at (0, -1.5) (d) {$X$ of general type};
\node at (0, 0) (e) {\begin{tabular}{c} $\Stab(X)$\\finite\end{tabular}};
\node at (0, 1.5) (f) {$\PLambda_X$ clean};

\draw[-implies,double equal sign distance] (a)--(b);
\draw[-implies,double equal sign distance] (b)--(c);
\draw[-implies,double equal sign distance] (c)--(e);
\draw[-implies,double equal sign distance] (b)--(-6.5, -1)--(-10, -1) node [midway, below] {\footnotesize $X$ smooth} --(a);
\draw[-implies,double equal sign distance] (c)--(-3, 1)--(-10, 1) node [midway, above] {\footnotesize \begin{tabular}{c}$A$ simple \\ $X$ smooth\end{tabular}} --(a);
\draw[implies-implies,double equal sign distance] (d)--(e);
\draw[implies-implies,double equal sign distance] (e)--(f);
\end{tikzpicture}
\]
More precisely, we have:

\begin{theorem} \label{Thm:PositivityNotions} Let $X\subset A$ be an integral subvariety with $0<\dim X < \dim A$.
	\begin{enumerate}
		\item The following are equivalent:
		\begin{enumerate}
			\item the conormal cone $\PLambda_X$ is clean;
			\item the algebraic group $\Stab(X)$ is finite;
			\item the variety $X$ is of general type.\smallskip
		\end{enumerate}
		\item If $X$ is nondegenerate, then $\Stab(X)$ is finite and $\langle X \rangle = A$.\smallskip
		\item If $\gamma_X \colon \PLambda_X \to \bbP_A$ is a finite morphism, then $X$ is nondegenerate.\smallskip
		\item Suppose $X$ smooth. Then the normal bundle $\cN_{X/A}$ is ample if and only the Gauss map $\gamma_X \colon \PLambda_X \to \bbP_A $ is a finite morphism. \smallskip
		\item If $A$ is a simple abelian variety and $X$ is of general type, then $X$ is   nondegenerate.  If $X$ is moreover smooth, then $\cN_{X/A}$ is ample.		
	\end{enumerate}	
\end{theorem}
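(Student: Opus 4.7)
The strategy is to treat items (1)--(5) in order, using Ueno's structure theorem for one direction of (1), a dimension count of the Gauss map for the other, and straightforward conormal manipulations for (2)--(5).

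For (1), the equivalence between (b) and (c) is Ueno's theorem: if $B=\Stab(X)^\circ$ has positive dimension then $X$ is a $B$-bundle over its image in $A/B$, so the Kodaira dimension of $X$ drops below $\dim X$; conversely, Ueno's classification forces a subvariety with finite stabilizer to be of general type. For (a)$\Leftrightarrow$(b), one direction is immediate: when $\Lie\Stab(X)^\circ\neq 0$, every smooth point $x\in X$ satisfies $T_xX\supset\Lie\Stab(X)^\circ$, so $\cC_{X/A,x}$ lies in the fixed annihilator and the Gauss image sits in a proper projective subspace of $\bbP_A$. For the converse I would use that the differential of the Grassmannian Gauss map $\bar\gamma_X\colon X\to\Gr(d,\Lie A)$, $x\mapsto T_xX$, is the second fundamental form whose kernel coincides with $\Lie\Stab(X)^\circ$; when this vanishes $\bar\gamma_X$ is generically finite onto a $d$-dimensional image, and a dimension count on the incidence correspondence $\PLambda_X=\{(x,[\eta])\colon T_xX\subset\ker\eta\}\subset X\times\bbP_A$ shows that its projection to $\bbP_A$ has generically $0$-dimensional fibers, hence is dominant.

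Items (2) and (3) proceed by contraposition. In (2), a positive-dimensional stabilizer $\Stab(X)^\circ=B$ (respectively, a proper $\langle X\rangle$) would be witnessed by the quotient $A\to A/B$ (resp.\ $A\to A/\langle X\rangle$), which sends $X$ to an image of dimension strictly less than both $\dim X$ and the target. In (3), a degenerating surjection $\pi\colon A\to B$ with $\dim\pi(X)<\min\{\dim B,\dim X\}$ gives, at a smooth point $y\in\pi(X)$, a positive-dimensional fiber $F\subset X\cap\pi^{-1}(y)$ and a nonzero $\xi\in\cC_{\pi(X)/B,y}$; since $d\pi_x$ maps $T_xX$ into $T_y\pi(X)$, the pullback $\pi^*\xi\in\coLie{A}$ lies in $\cC_{X/A,x}$ at every smooth $x\in F$, exhibiting a positive-dimensional fiber of $\gamma_X$ over $[\pi^*\xi]$ and contradicting finiteness.

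For (4), smoothness of $X$ makes $\PLambda_X=\bbP(\cC_{X/A})$, and the inclusion $\cC_{X/A}\hookrightarrow\coLie{A}\otimes\cO_X$ realises $\PLambda_X$ as a closed subscheme of $A\times\bbP_A$ whose projection to $\bbP_A$ is $\gamma_X$; hence $\gamma_X^*\cO_{\bbP_A}(1)=\cO_{\PLambda_X}(1)$. Ampleness of $\cN_{X/A}$ is equivalent to ampleness of $\cO_{\bbP(\cC_{X/A})}(1)$, and the pullback of the ample line bundle $\cO_{\bbP_A}(1)$ is ample if and only if $\gamma_X$ is finite. For (5), simplicity of $A$ forces any nontrivial quotient $A\to B$ to be an isogeny, so $\dim\pi(X)=\dim X$ and $X$ is automatically nondegenerate. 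For the smooth refinement, by (4) it suffices to show $\gamma_X$ is finite; if not, one extracts a positive-dimensional curve $C\subset X$ and $[\eta]\in\bbP_A$ with $T_cC\subset T_cX\subset\ker\eta$ for all $c\in C$. The differential of the difference morphism $C\times C\to A$ at $(c,c')$ is $(v,w)\mapsto v-w$, and iterating this morphism shows that the span of these images equals $\Lie\langle C\rangle$; hence $\Lie\langle C\rangle\subset\ker\eta$. But simplicity of $A$ and $\dim C>0$ force $\langle C\rangle=A$, contradicting $\Lie A\not\subset\ker\eta$. The main obstacle is the dimension count for the converse between (a) and (b) in (1), where one must verify that the generic intersection $\bar\gamma_X(X)\cap\Gr(d,\ker\eta)$ in $\Gr(d,\Lie A)$ has the expected dimension $0$; the remainder is bookkeeping or a direct appeal to Ueno.
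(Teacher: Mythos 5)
Your treatment of (1)(b)$\Leftrightarrow$(c) (Ueno), and of (2), (3), (4), is essentially the paper's: (2) and (3) use the same contrapositive arguments with the quotient and the conormal diagram, and (4) is the standard characterization of ampleness of a globally generated bundle via finiteness of the induced map, which the paper delegates to Lazarsfeld. The genuine problem is in your proposed direct proof of (1)(b)$\Rightarrow$(a). You assert that the kernel of the differential of the Grassmannian Gauss map $\bar\gamma_X$ at a generic point \emph{coincides with} $\Lie\Stab(X)^\circ$. The inclusion $\Lie\Stab(X)^\circ\subset\ker d\bar\gamma_X$ is elementary (translation-invariance of tangent spaces), but the reverse inclusion is precisely the hard statement: a priori $\ker d\bar\gamma_X|_x$ is a distribution that could vary with $x$ and need not integrate to cosets of a subgroup. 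Showing that it does is a form of the linearity theorem for Gauss-map fibres on abelian varieties, and it is exactly what the paper outsources to Weissauer \cite[th.~1]{WeissauerArxiv2015a}. By contrast, the step you flag as the ``main obstacle'' — that a generically finite $\bar\gamma_X$ forces $\gamma_X$ dominant — is unproblematic in characteristic zero: the Schubert cycles $\{W:W\subset\ker\eta\}$ form a $\GL(\Lie A)$-orbit in $\Gr(d,\Lie A)$, so Kleiman's generic transversality gives the expected-dimensional intersection (alternatively, compare $\dim\PLambda_X=g-1=\dim\bbP_A$). So you have misidentified the gap: the dimension count is fine, the second-fundamental-form identity is what needs a reference or a proof.

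Your argument for the second half of (5) is a genuine and pleasant alternative to the paper's citation of Hartshorne. Using (4), you reduce to showing $\gamma_X$ is finite; a positive-dimensional fibre gives a curve $C\subset X$ and a form $\eta\in\coLie A$ with $T_cC\subset\ker\eta$ for all $c$, i.e.\ $\eta|_C=0$, and iterating the difference map (with Sard's theorem in characteristic zero to guarantee generic submersivity) shows $\Lie\langle C\rangle\subset\ker\eta$; since $A$ is simple this forces $\Lie A\subset\ker\eta$, a contradiction. This is correct, self-contained, and in the spirit of the nondegeneracy-via-Albanese arguments used elsewhere in the paper, whereas the paper just invokes \cite[prop.~4.1]{Har71}. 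The argument buys independence from Hartshorne's positivity paper at the cost of a paragraph. In (3), you should be slightly more careful that the positive-dimensional fibre $F=X\cap\pi^{-1}(y)$ actually meets $X^{\reg}$ in a positive-dimensional set; the paper handles this by working on $U=X^\reg\cap\pi^{-1}(Y^\reg)$ and noting $\pi|_U$ is not generically finite, which is the precise statement you want.
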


\begin{proof}
	(1) The equivalence (a) $\Leftrightarrow$ (b) is shown in \cite[th.~1]{WeissauerArxiv2015a}, while  (b) $\Leftrightarrow$ (c) follows from Ueno's fibration theorem \cite[th.~3.10]{UenoCompositio}, \cite[th.~3]{Abr94}.\smallskip 
	
	(2) For the finiteness of the stabilizer,  denote by $p \colon A \to B:=A/\Stab(X)$ the quotient morphism. This quotient morphism is not surjective, since by construction we have $p^{-1}(p(X)) = X\neq A$. The nondegeneracy of $X$ then forces  $p \colon X \to \pi(X)$ to be generically finite, and it follows that $\Stab(X)$ is finite as desired. To show that $\langle X \rangle = A$, consider the quotient morphism $q \colon A \to A/\langle X \rangle$. The image $q(X)$ is a point, hence the nondegeneracy of $X$ and the assumption $\dim X > 0$ imply that $\dim A/\langle X \rangle = 0$, which shows that we have~$\langle X \rangle = A$.\smallskip  	
	
	(3) We prove the contrapositive. If $X\subset A$ is degenerate,  then there is a surjective morphism $\pi \colon A \to B$ of abelian varieties such that 
	$\dim Y < \min \{ \dim B,  \dim X \}$,
	where $Y := \pi(X)$. We have the following commutative of $\cO_X$-modules with exact rows 
	\[
	\begin{tikzcd}
	( \pi^\ast \cC_{Y/B})_{\rvert X} \ar[r, "j"] \ar[d, "\epsilon"]& (\pi^\ast \Omega^1_B)_{\rvert X} \ar[r] \ar[d, "\rd \pi"] & (\pi^\ast \Omega^1_{Y})_{\rvert X} \ar[r] \ar[d]& 0\\
	\cC_{X/A} \ar[r, "i"] & \Omega^1_{A \rvert X} \ar[r] & \Omega^1_X \ar[r] & 0 
	\end{tikzcd}
	\]
	where $\rd \pi$ is the pull-back of differential forms along $\pi$. Here $i$ is injective over the smooth locus $X^\reg \subset X$, and likewise $j$ is injective over $\pi^{-1}(Y^\reg)$: Indeed, the short exact sequence
	\[ 0 \too (\cC_{Y / B})_{\rvert Y^\reg} \too \Omega^1_{B \rvert Y^\reg} \too \Omega^1_{Y^\reg} \too 0\]
	of $\cO_{Y^\reg}$-modules is locally split because the $\cO_{Y^\reg}$-module $\Omega^1_{Y^\reg}$ is locally free; the pull-back along $\pi$ of the above short exact sequence hence stays exact. It follows that $\epsilon$ is also injective over the nonempty open subset 
	\[U := X^\reg \cap \pi^{-1}(Y^\reg).\]
	The hypothesis $\dim Y < \dim X$ implies that the induced morphism $\pi_{\rvert U} \colon U \to Y^\reg$ is not generically finite. Thus, for $y$ ranging over a dense open subset of $Y^\reg$, the fiber $Z:= \pi^{-1}(y) \cap U$ is positive-dimensional. 
	Pick a nonzero vector $v \in \cC_{Y, y}$, which exists because $\dim Y < \dim B$. Then
	\[
	 0\;\neq\; j(v) \;\in\; \bigcap_{x\in Z} \cC_{X, x}.
	\] 
	Thus, if we denote by $F:=\pr_X(\gamma_X^{-1}([j(v)])\subset X$ the image of $\gamma_X^{-1}([j(v)])$ under the projection $\PLambda_X \to X$, then the subset $Z$ is contained in $F$. This shows that the dimension of $\gamma_X^{-1}([j(v)])$ is positive.\smallskip

	(4) Since $X$ is smooth we have $\PLambda_X = \bbP(\cC_{X/A})$. The normal bundle $\cN_{X/A}$ is globally generated, thus the equivalence is \cite[Example 6.1.5]{LazarsfeldPositivityII}. \smallskip
	
	(5) When $A$ is a simple abelian variety, any integral subvariety is nondegenerate, and the ampleness of the normal bundle of a smooth subvariety $X$ in $A$ follows from \cite[prop.~4.1]{Har71}. 
\end{proof}

\subsection{Symmetric powers of curves in abelian varieties} We show here that symmetric powers of a (smooth projective) curve $C$ cannot be embedded as a complete intersection of ample divisors as claimed in \cref{sec:IntroWedgePowers}. Recall that the curve $C$ has gonality $\ge n + 1$ if and only if the sum map $\Sym^n C \to X := C + \cdots + C \subset \Pic^0(C)$ is an isomorphism. If so the normal bundle of  $X$ is ample \cite[\S 1, Examples (2)]{Deb95}. Imposing further positivity properties to the normal bundle is far more restrictive:

\begin{proposition} \label{prop:SymmetricPowersOfCurvesAreNotCompleteIntersections} 
Let $C \subset A$ a smooth irreducible projective curve such that the sum morphism $\Sym^n C \to X:= C + \cdots + C \subset A$ is an isomorphism for some $n\ge 2$. Then $C$ is nonhyperelliptic of genus $g \ge 3$ and the following hold:
\begin{enumerate}
\item If the normal bundle $\cN_{X/A} = \cV_1 \oplus \dots \oplus \cV_r$ is a direct sum of ample vector bundles, then
\[ n \le \max_{i = 1, \dots, r} \rk \cV_i + 1.\]
\item The normal bundle $\cN_{X/A}$ is a direct sum of ample line bundles if and only if $g=3$, $n=2$, and $A$ is isomorphic to $\Pic^0(C)$.
\end{enumerate}
\end{proposition}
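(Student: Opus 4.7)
Set $g := g(C)$ and $a := \dim A$.

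The sum morphism $\Sym^n C \to X$ is an isomorphism if and only if $h^0(\cO_C(D)) = 1$ for every effective divisor $D$ of degree $n$, equivalently $C$ has gonality at least $n+1$; since $n \ge 2$ this forces $C$ to be nonhyperelliptic and hence $g \ge 3$.

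For each $D_0 \in \Sym^{n-1} C$, the morphism $q \mapsto q + D_0$ is a closed immersion $C \hookrightarrow X$ whose image $C_{D_0}$ is a translate of $C$ in $A$, so $\cN_{C_{D_0}/A} \simeq \cN_{C/A}$ as bundles on $C_{D_0} \simeq C$. At a point $q + D_0$ with $q \notin \Supp(D_0)$ the fiber of $\cN_{C_{D_0}/X}$ is canonically identified with $T_{D_0}\Sym^{n-1}C$, and the resulting natural map $T_{D_0}\Sym^{n-1}C \otimes \cO_C \to \cN_{C_{D_0}/X}$ is a generic isomorphism, injective as a map of sheaves (it fails to be surjective exactly at the $n-1$ points of $\Supp(D_0)$, where a local 2-jet analysis shows one normal direction degenerates into the tangent to $C_{D_0}$). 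Composing with $\cN_{C_{D_0}/X} \hookrightarrow \cN_{C/A}$ gives an inclusion $\cO_C^{\,n-1} \hookrightarrow \cN_{C/A}$ whose image projects to zero in the quotient $\cN_{X/A}\vert_{C_{D_0}}$.

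To prove (1), suppose $\cN_{X/A} = \bigoplus_{i=1}^{r} \cV_i$ with each $\cV_i$ ample of rank $r_i$, and set $m := \max_i r_i$. Varying $D_0$, the setup globalizes on $C \times \Sym^{n-1}C$: at a generic $D = p_1 + \cdots + p_n \in X$ the $n$ preimages under the sum morphism $C \times \Sym^{n-1}C \to X$ yield $n$ distinct curves $C_{D - p_j} \ni D$, each contributing a rank-$(n-1)$ generically trivial subsheaf of $\cN_{X/A}$ about $D$; these $n$ contributions jointly span the tangent decomposition $T_D X = \bigoplus_j T_{p_j}C$. Combining this diagonal structure with the fact that an ample vector bundle of rank $r_i$ on a curve supports only a bounded supply of nowhere-vanishing sections should force $m \ge n - 1$. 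Rigorously extracting this sharp inequality from the interplay between the $n$ trivial contributions and the direct summands $\cV_i$ is the \textbf{main technical obstacle} of the proof.

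For (2), assume $\cN_{X/A}$ is a direct sum of ample line bundles. Then (1) with $m = 1$ gives $n = 2$, so $X = \Sym^2 C$ and $a \ge 3$. Since $C$ generates $A$ (else $\cN_{X/A}$ would acquire a trivial summand contradicting ampleness), the Albanese property provides a surjection $u \colon J := \Pic^0(C) \twoheadrightarrow A$ with kernel $K$ of dimension $g - a$; and the Abel--Jacobi embedding $X \hookrightarrow J$ is closed because the gonality of $C$ is at least $3$. Comparing normal bundles along $X \hookrightarrow J \twoheadrightarrow A$ gives
\[
0 \longrightarrow \cO_X^{\,g - a} \longrightarrow \cN_{X/J} \longrightarrow \cN_{X/A} \longrightarrow 0,
\]
and Serre duality applied to the derivative of Abel--Jacobi identifies the fiber of $\cN_{X/J}$ at $D$ with $H^0(\omega_C(-D))^\vee$, so that $\cN_{X/J} \simeq \cE_2^\vee$ is the dual of the rank-$(g-2)$ Picard bundle on $\Sym^2 C$. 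Moreover the hypothesis that $\Sym^2 C \to X \subset A$ is an isomorphism implies $X \cap (X + k) = \emptyset$ for every $k \in K \setminus\{0\}$, and since $W_2 \cap (W_2 + k)$ is nonempty for any $k \in J$ of sufficiently small order, this forces $K = 0$, hence $a = g$ and $A \simeq J$. Finally, the stability (and hence indecomposability) of Picard bundles on symmetric powers of curves rules out $\rk \cE_2^\vee \ge 2$, forcing $g - 2 \le 1$, so $g = 3$. The converse is immediate: for $g = 3$ and $A = \Pic^0(C)$, the surface $X = \Sym^2 C$ is the theta divisor of the principally polarized Jacobian, and $\cN_{X/A} = \cO_A(\Theta)\big\vert_\Theta$ is the ample line bundle attached to the principal polarization.
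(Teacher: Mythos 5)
Your proof of part (1) is, by your own admission, incomplete: you describe a local geometric setup on $C\times\Sym^{n-1}C$ and say that ``rigorously extracting this sharp inequality\ldots is the main technical obstacle.'' The paper sidesteps this entirely with a short cohomological argument: if $n > \max_i \rk\cV_i + 1$, Debarre's Barth--Lefschetz theorem for abelian varieties (\cite[th.~4.5]{Deb95}) forces $\rH^i(A)\simeq\rH^i(X)$ for $i=1,2$, but Macdonald's formula for $\rH^2(\Sym^n C)$ produces $n-1$ extra classes coming from $\rH^0(C)\otimes \rH^2(C)^{n-1}$ that cannot live in $\Alt^2\rH^1(A)=\rH^2(A)$, a contradiction. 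You should fill the gap, ideally by this route, since the local argument you sketch does not obviously close.

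Part (2) also has a genuine gap. You want to show the kernel $K$ of $\Pic^0(C)\twoheadrightarrow A$ is trivial by claiming that ``$W_2\cap(W_2+k)$ is nonempty for any $k\in J$ of sufficiently small order.'' This is not true in general: $W_2$ is only a surface, and the locus $\{k\in J : W_2\cap(W_2+k)\neq\emptyset\}$ equals $W_2-W_2$, which has dimension at most $4$ and is a proper subvariety of $J$ whenever $g>4$. Since you cannot assume $g\le 4$ at this stage, you cannot conclude that small-order torsion lies in $W_2-W_2$, so the argument does not force $K=0$. The paper avoids this by proving $g=3$ \emph{first} (via the canonical bundle: writing $\cL_1\otimes\cdots\otimes\cL_{g-2}\simeq\cK_X$ and pulling back along a copy of $C\hookrightarrow X=\Sym^2 C$ gives $\cK_C(-p)$ of degree $2g-3$, while each $\cM_i$ is ample globally generated on a nonhyperelliptic curve, hence $\deg\cM_i\ge 3$, forcing $2g-3\ge 3(g-2)$, i.e.~$g\le 3$). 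Only then, with $g=3$, is $\Theta=X$ an ample \emph{divisor}, so any two translates of it meet, and the isogeny $\phi\colon\Pic^0(C)\to A$ must be injective. Your stability-of-Picard-bundles route for ruling out $g-2\ge 2$ also invokes heavier machinery (and a stability result on $\Sym^2 C$ that should at least be cited and checked) where the paper's degree count on $C$ is elementary and self-contained. I recommend you replace the ``small order'' step by the Barth--Lefschetz argument that identifies $\rH^1(A)\simeq\rH^1(C)$ (giving $A$ \emph{isogenous} to $\Pic^0(C)$, and in particular $\dim A = g$), then establish $g=3$ by the canonical bundle computation, and only then upgrade the isogeny to an isomorphism via ampleness of $\Theta$.
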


\begin{proof} By Lefschetz's principle, we may assume $k = \bbC$. First of all, the curve $C$ is nonhyperelliptic of genus $g \ge 3$. Otherwise, $C$ would be symmetric when suitably embedded in its Jacobian. In particular, the sum morphism would contract the antidiagonal and thus would not induce an isomorphism $\Sym^n C \iso C + \cdots + C$.

\medskip

(1) Arguing by contradiction, suppose the inequality in the statement does not hold.  Then we can apply the Barth-Lefschetz theorem \cite[th.~4.5]{Deb95} to obtain isomorphisms
\[ \rH^i(A) \iso \rH^i(X), \qquad i = 1, 2,\]
of rational cohomology groups. On the other hand, the computation of cohomology of symmetric powers of curves \cite[1.2]{MacdonaldSymmetric} yields the following expressions:
\begin{align*}
\rH^1(X) &= \rH^1(\Sym^n C) \iso \rH^1(C^n)^{\frS_n} =  \rH^0(C) \otimes \rH^1(C),\\
\rH^2(X) &= \rH^2(\Sym^n C) \iso \rH^2(C^n)^{\frS_n} = \Alt^2 \rH^1(C) \oplus \rH^0(C) \otimes \rH^2(C)^{n - 1}.
\end{align*}
Recalling the equality $\rH^2(A) = \Alt^2 \rH^1(A)$ we obtain a contradiction.

\medskip

(2) If $C$ has genus $g=3$, the subvariety $C+C\subset \Pic^0(C)$ is a theta divisor and hence ample. Conversely, suppose that the normal bundle $\cN_{X/A}$ is a direct sum of ample line bundles. We first claim that then $A$ is isogenous to~$\Pic^0(C)$. Indeed, as above we have isomorphisms
\[  \rH^1(A) \iso \rH^1(X) \iso \rH^1(C). \]
Now we cannot conclude as in (1) because the Barth-Lefschetz theorem here only says that $\rH^2(A) \to \rH^2(X)$ is injective. Instead, write $\cN_{X / A} = \cL_1 \oplus \cdots \oplus \cL_{g-2}$ for ample line bundles $\cL_i$ on $X$. By looking at the short exact sequence
\[ 0 \too \rT_X \too \Lie A \otimes \cO_X \too \cN_{X / A} = \cL_1 \oplus \cdots \oplus \cL_{g-2} \too 0, \]
we see that the line bundles $\cL_i$ are globally generated and
\begin{equation} \label{Eq:CanonicalIsSumOfLineBundles} \cL_1 \otimes \cdots \otimes \cL_{g-2} \iso \cK_X  \end{equation}
where $\cK_X = \Alt^2 \Omega^1_X$ is the canonical bundle on $X$. We identify $X$ with $\Sym^2 C$ and write $\pi \colon C \times C \to X$ for the quotient morphism. Since $\pi$ ramifies exactly on the diagonal $\Delta$ of $C \times C$, we have $\pi^\ast \cK_X = \cK_{C \times C}(- \Delta)$. Let us fix a point $p \in C(k)$ and consider the embedding $f \colon C \to C \times C$, $x \mapsto (x, p)$. Then
\begin{equation} \label{Eq:CanonicalOfSymmetricSquare} f^\ast \pi^\ast \cK_X = \cK_C(-p). \end{equation}
On the other hand, for $i = 1, \dots, g- 2$, the line bundle $\cM_i := f^\ast \pi^\ast \cL_i$ on $C$  is ample and globally generated. Moreover, the curve $C$ being nonhyperelliptic, we necessarily have $\deg \cM_i \ge 3$. By combining \eqref{Eq:CanonicalIsSumOfLineBundles} and \eqref{Eq:CanonicalOfSymmetricSquare} and then by taking degrees, we obtain the inequality
\[ 2g - 3 = \deg \cK_C(-p) = \sum_{i = 1}^{g-2} \deg \cM_i \ge 3(g - 2).\]
This forces $g = 3$. For a suitable Abel-Jacobi embedding  $C \hookrightarrow \Pic^0(C)$, there exists an isogeny $\phi\colon\Pic^0(C) \to A$ such that the following diagram commutes:
	\[
	\begin{tikzcd}
	\Sym^2 C \ar[r, "\sim"] \ar[d, equal] & \Theta   \ar[d, "\wr"] \ar[r, hook] & \Pic^0(C) \ar[d, "\phi"] \\
	\Sym^2 C \ar[r, "\sim"]  & X \ar[r, hook]& A
	\end{tikzcd}
	\]
Here the leftmost horizontal arrows are induced by the sum and $\Theta \subset \Pic^0(C)$ is a theta divisor. The preimage $\phi^{-1}(X)$ is smooth, thus its connected components are irreducible. As $\Theta$ is one of them, the others are $\Theta + a$ for $a \in \ker \phi$. Since any two translates of an ample divisor meet, we have $\Theta = \phi^{-1}(X)$. But the isogeny $\phi$ induces an isomorphism $\Theta \iso X$, thus $\phi$ must be injective.
\end{proof}

\begin{corollary} \label{Cor:SymmetricPowersOfCurvesAreNotCompleteIntersections} Let $C \subset A$ be a smooth irreducible projective curve such that the sum morphism 
$\Sym^n C \to X:= C + \cdots + C \subset A$
is an isomorphism for some $n\ge 2$. Then $C$ is nonhyperelliptic of genus $g\ge 3$ and the following are equivalent:
\begin{enumerate}
\item The subvariety $X \subset A$ is a complete intersection of ample divisors.
\item We have $g=3$, $n = 2$, and $A$ is isomorphic to $\Pic^0(C)$.
\end{enumerate}
\end{corollary}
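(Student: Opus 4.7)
The plan is to reduce both directions to Proposition \ref{prop:SymmetricPowersOfCurvesAreNotCompleteIntersections}, which already contains all the substantive work. In particular, the opening assertion that $C$ is nonhyperelliptic of genus $g\ge 3$ is exactly the opening assertion of that proposition, so nothing further is needed for it.

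For the implication $(2) \Rightarrow (1)$: when $g=3$, $n=2$ and $A \simeq \Pic^0(C)$, the subvariety $X \subset A$ identifies with a theta divisor $\Theta \subset \Pic^0(C)$. As an ample divisor, $X$ is trivially a complete intersection of a single ample effective divisor, since $\codim_A X = 1$. No further argument is required.

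For the converse $(1) \Rightarrow (2)$, I would write $X$ as the scheme-theoretic intersection of ample effective divisors $D_1, \dots, D_{g-n} \subset A$, where $g - n = \codim_A X$. Since $X$ is smooth of the expected dimension, the conormal sequence for a regular embedding splits as
\[
\cC_{X/A} \;\simeq\; \bigoplus_{i=1}^{g-n} \cO_X(-D_i|_X),
\]
so that the normal bundle
\[
\cN_{X/A} \;\simeq\; \bigoplus_{i=1}^{g-n} \cO_X(D_i|_X)
\]
is a direct sum of ample line bundles (the restriction of an ample line bundle to a closed subvariety being ample). Part (2) of Proposition \ref{prop:SymmetricPowersOfCurvesAreNotCompleteIntersections} then forces $g=3$, $n=2$ and $A \simeq \Pic^0(C)$, which is the desired conclusion.

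The only potential obstacle, namely the numerical comparison between $\deg \cK_C$ and a sum of degrees of globally generated ample line bundles on the nonhyperelliptic curve $C$, has already been handled inside Proposition \ref{prop:SymmetricPowersOfCurvesAreNotCompleteIntersections}, so the corollary itself is essentially a formal rephrasing.
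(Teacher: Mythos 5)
Your proposal is correct and takes essentially the same approach as the paper: both directions reduce to Proposition \ref{prop:SymmetricPowersOfCurvesAreNotCompleteIntersections}(2), via the observation that for a smooth complete intersection of ample divisors the normal bundle is $\bigoplus_i \cO_X(D_i|_X)$, a direct sum of ample line bundles. Your version just spells out the (2)$\Rightarrow$(1) direction (theta divisor $=$ complete intersection of one ample divisor) which the paper leaves implicit, since it already appears in the proof of the proposition itself.
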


\begin{proof} 
For complete intersections of ample divisors, the normal bundle is a direct sum of ample line bundles. Hence, \cref{prop:SymmetricPowersOfCurvesAreNotCompleteIntersections} (2) applies.	
\end{proof} 

As an amusing aside, of no use in what follows, note that \cref{prop:SymmetricPowersOfCurvesAreNotCompleteIntersections} implies the classical bound for the gonality of a smooth projective curve:

\begin{corollary} A smooth projective curve of genus $g$ has gonality $\le (g + 3) / 2$.
\end{corollary}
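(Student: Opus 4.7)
The plan is to derive this gonality bound as a direct specialization of \cref{prop:SymmetricPowersOfCurvesAreNotCompleteIntersections}(1), applied to the Abel--Jacobi embedding $C \hookrightarrow \Pic^0(C)$ with the trivial one-summand decomposition of the normal bundle.

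First I would dispose of the small-genus cases. For $g \le 2$ every smooth projective curve has gonality at most $2$ (rational, elliptic, or hyperelliptic), while $(g+3)/2 \ge 3/2$ already for $g = 0$, so the desired inequality holds trivially.

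Assume henceforth $g \ge 3$, and let $m$ denote the gonality of $C$. If $m \le 2$ there is nothing to prove, so suppose $m \ge 3$ and set $n := m - 1 \ge 2$. By the standard equivalence recalled in the paragraph immediately preceding \cref{prop:SymmetricPowersOfCurvesAreNotCompleteIntersections}, the inequality $m \ge n + 1$ is equivalent to the sum morphism $\Sym^n C \to X := C + \cdots + C \subset \Pic^0(C)$ being an isomorphism, and in that case the normal bundle $\cN_{X / \Pic^0(C)}$ is ample.

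The final step is to invoke \cref{prop:SymmetricPowersOfCurvesAreNotCompleteIntersections}(1) with $r = 1$ and $\cV_1 := \cN_{X / \Pic^0(C)}$, which has rank $g - n$ since $\dim X = n$ and $\dim \Pic^0(C) = g$. The stated bound specializes to $n \le (g - n) + 1$, equivalently $2n \le g + 1$, and hence $m = n + 1 \le (g + 3)/2$, as required. I do not foresee any real obstacle: the corollary drops out of \cref{prop:SymmetricPowersOfCurvesAreNotCompleteIntersections}(1) as soon as one observes that a single ample vector bundle is tautologically its own one-summand direct-sum decomposition, so all the substance of the argument is already packaged in the proposition.
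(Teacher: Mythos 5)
Your proof is correct and is essentially identical to the paper's own argument: both set $n$ to be one less than the gonality, use the equivalence between gonality $\ge n+1$ and the sum morphism $\Sym^n C \to C+\cdots+C \subset \Pic^0(C)$ being an isomorphism (which gives ample normal bundle of rank $g-n$), and then apply \cref{prop:SymmetricPowersOfCurvesAreNotCompleteIntersections}(1) with the trivial one-summand decomposition to obtain $n \le (g-n)+1$. Your version is slightly more explicit about the small-gonality cases needed to ensure $n \ge 2$, but there is no substantive difference in the approach.
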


\begin{proof} As already mentioned, the curve $C$ has gonality $\ge n + 1$ if and only if the sum morphism $\Sym^n C \to X := C + \cdots + C\subset \Pic^0(C)$ is an isomorphism, and if this is the case, then $X$ has ample normal bundle in $\Pic^0(C)$. Since the normal bundle has rank $g - n$ \cref{prop:SymmetricPowersOfCurvesAreNotCompleteIntersections} (1) implies $n \le g - n + 1$, that is $n + 1 \le (g + 3)/2$.
\end{proof}

In particular \cref{prop:SymmetricPowersOfCurvesAreNotCompleteIntersections} (1) is sharp in the two extremal cases---that of an indecomposable ample normal bundle and that of a sum of ample line bundles.

\subsection{Bounds for the topological Euler characteristic} We now pass to some numerics concerning the topological Euler characteristic of complete intersections. To ease notation below,  we define $g:= \dim A$. For a smooth subvariety $X \subset A$, let $e_X$ denote its topological Euler characteristic. By definition it is the top Chern class of the tangent bundle $T_X$ of $X$. Consider the short exact sequence of vector bundles on $X$,
\[ 0 \too T_X \too T_{A \rvert X} \too \cN_{X/A} \too 0.\]
Since the total Chern class is multiplicative in short exact sequences and the tangent bundle of $A$ is trivial, we have 
\[c(T_X) = c(T_{A \rvert X})c(\cN_{X/A})^{-1} = c(\cN_{X/A})^{-1}. \]
First of all, note that we have the following lower bound whenever the normal bundle is ample.

\begin{lemma} \label{Lemma:LowerBoundTopCharAmpleNormalBundle} Let $X \subsetneq A$ be a $d$-dimensional smooth subvariety with ample normal bundle. Then
\[ |e_X| \ge \max \{ g, 2^{\min \{ d, \lfloor \sqrt{g-1} \rfloor \} } \}.\]
\end{lemma}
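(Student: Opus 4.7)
The starting point will be to identify $|e_X|$ with Chern-class data of the normal bundle. Combining the triviality $T_A \cong \cO_A^{\oplus g}$ with the conormal sequence $0\to T_X\to T_{A \rvert X}\to \cN_{X/A}\to 0$ yields $c(T_X) = c(\cN_{X/A})^{-1} = s(\cN_{X/A})$, and hence
\[ e_X \;=\; \int_X c_d(T_X) \;=\; \int_X s_d(\cN_{X/A}). \]
Equivalently, via the projection formula for the projectivized conormal bundle $\bbP(\cN_{X/A}^\vee) = \PLambda_X$, one recognizes $|e_X| = \int_{\PLambda_X} c_1(\cO(1))^{g-1} = \deg \gamma_X$, the degree of the Gauss map, which is finite by \cref{Thm:PositivityNotions} (4).

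For the bound $|e_X| \ge g$, I would first note that ampleness of $\cN_{X/A}$ forces $X$ to be nondegenerate (\cref{Thm:PositivityNotions} (3)), and that the restriction map $\coLie A \to \rH^0(X, \Omega^1_X)$ is then injective: a nonzero element of its kernel would correspond to a global section of $\cC_{X/A}$, producing a trivial quotient $\cN_{X/A}\twoheadrightarrow \cO_X$ of the ample normal bundle, which is impossible. A generic $\xi \in \coLie A$, viewed as a section of $\Omega^1_X$, then has $|e_X|$ isolated zeros on $X$ by Poincar\'e--Hopf. The plan is to combine this abundance of $g$ linearly independent sections of $\Omega^1_X$ pulled back from $A$ with Fulton--Lazarsfeld positivity of Schur polynomials in Chern classes of the ample bundle $\cN_{X/A}$ to deduce $\int_X s_d(\cN_{X/A}) \ge g$.

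For the bound $|e_X| \ge 2^{\min\{d, \lfloor \sqrt{g-1}\rfloor\}}$, I would iterate with generic ample divisors in $A$: for a general ample $H \subset A$, Bertini ensures $Y = X \cap H$ is smooth of dimension $d - 1$, and the sequence $0 \to \cO_A(H)\rvert_Y \to \cN_{Y/A} \to \cN_{X/A}\rvert_Y \to 0$ guarantees that $\cN_{Y/A}$ remains ample. Comparing top Segre numbers along this iteration via Bezout-type estimates and Fulton--Lazarsfeld positivity, each step should yield at least a doubling factor, producing $2^k$ after $k$ iterations. The upper bound $k \le \lfloor \sqrt{g-1}\rfloor$ reflects the dimensional bookkeeping in the $g$-dimensional ambient abelian variety, where the iteration must respect the positivity constraints.

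The hard part will be the second bound: one must pin down a precise Segre-number comparison $\int_Y s_{d-1}(\cN_{Y/A}) \gtrsim 2 \int_X s_d(\cN_{X/A})$ (in a suitably normalized form), exploiting the ampleness of both $\cN_{X/A}$ and $\cO_A(H)\rvert_Y$, and verify that the threshold $\lfloor \sqrt{g-1}\rfloor$ arises naturally from the dimensional constraint beyond which this iteration breaks down.
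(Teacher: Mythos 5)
Your setup is sound: the identification $|e_X| = s_d(\cN_{X/A}^\vee) = \deg\gamma_X$ and the reduction to $k=\bbC$ match the paper exactly. But both lower bounds are left as sketches rather than proofs, and the gaps are real.

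For the bound $|e_X|\ge g$, the observation that the restriction map $\coLie A \to \rH^0(X,\Omega^1_X)$ is injective (because a nonzero kernel element would give a nowhere-vanishing section of $\cC_{X/A}$, hence a trivial quotient of the ample $\cN_{X/A}$) is correct. But a single generic $\xi\in\coLie A$ has $|e_X|$ zeros by Poincar\'e--Hopf \emph{regardless} of how many other independent forms exist, so the "abundance of $g$ independent sections" is not being used, and the step from that plus "Fulton--Lazarsfeld positivity" to $|e_X|\ge g$ is unsupported. The paper does not derive this from scratch: it invokes a theorem of Beltrametti--Schneider--Sommese that gives the lower bound $|e_X|\ge g$ for an ample globally generated vector bundle on a projective manifold with $\rH^1(X,\bbC)\neq 0$; this is a nontrivial input, not a corollary of the generic-section count.

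For the bound $|e_X|\ge 2^{\min\{d,\lfloor\sqrt{g-1}\rfloor\}}$, the hyperplane-section induction is not carried out, and there is no reason why passing to $Y=X\cap H$ should produce "at least a doubling factor" in any comparable Segre number --- nor is the threshold $\lfloor\sqrt{g-1}\rfloor$ derived; it is asserted to emerge from "dimensional bookkeeping." The paper's route is entirely different: since $\Omega^1_X$ is a quotient of the trivial bundle $\Omega^1_A|_X$ it is globally generated, hence nef, and then a result of Ein--Lazarsfeld on Segre numbers of varieties with nef cotangent bundle (Prop.\ 2.4 in the reference cited there) gives the exponential lower bound directly. You should track down those two references --- the lemma is really a pair of citations packaged with the Segre-class translation, not something to be reproved ab initio by cutting with divisors.
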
 

\begin{proof} We may suppose $k = \bbC$. By definition the inverse of the total Chern class is the total Segre class. We have $|e_X| = (-1)^d s_d(\cN_{X/A}) = s_d(\cN_{X/A}^\vee)$
where $d = \dim X$ and $s_d$ is the $d$-th Segre class. Now the normal bundle $\cN_{X/A}$ is ample and globally generated. Since $\rH^1(X, \bbC) \neq 0$ we have $|e_X| \ge g$ by \cite[Theorem~4]{BeltramettiSchneiderSommese}. According to \cite[Prop. 2.4]{EinLazarsfeldBo} we also have $|e_X| \ge 2^{\min \{ d, \lfloor \sqrt{g-1} \rfloor \}}$ because the cotangent bundle of $X$ is nef.\footnote{Beware that in both references the authors adopt the convention dual to the one in \cite{FultonIntersectionTheory} for the definition of Segre classes.}
\end{proof}

The previous lower bound is doubtlessly not sharp. Indeed for a smooth projective curve $X$ generating $A$ we have $|e_X| \ge 2g - 2$. For surfaces we have:

\begin{lemma} \label{Lemma:LowerBoundTopCharSurfaces} Let $X \subset A$ be a smooth projective surface generating $A$ and with finite stabilizer. Then
\[ e_X \ge 3g-9. \]
\end{lemma}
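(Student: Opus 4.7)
The plan is to combine three standard inputs—Noether's formula, the Bogomolov--Miyaoka--Yau inequality, and a lower bound on $\chi(\cO_X)$ coming from the embedding $X\subset A$. Since $\Stab(X)$ is finite, $X$ is a surface of general type by \cref{Thm:PositivityNotions}, so BMY gives $K_X^2\le 3e_X$; combined with Noether's formula $12\chi(\cO_X)=K_X^2+e_X$ this yields $e_X\ge 3\chi(\cO_X)$. It therefore suffices to prove $\chi(\cO_X)\ge g-1$.

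The lower bound on $\chi$ would come from restricting differential forms on $A$ to $X$. Because $X$ generates $A$, every nonzero $1$-form on $A$ restricts to a nonzero $1$-form on $X$---otherwise $\langle X\rangle$ would be contained in a proper subtorus---so the restriction $\coLie{A}\hookrightarrow\rH^0(X,\Omega^1_X)$ is injective and $q(X)\ge g$. The plan is to run the analogous argument for $2$-forms: elements in the kernel of $\Alt^2\coLie{A}\to\rH^0(X,K_X)$ are $2$-forms $\omega$ on $A$ for which $X$ is everywhere $\omega$-isotropic. One shows no such nonzero $\omega$ exists by splitting into two cases. First, if $T_xX\subset\ker(\omega)$ for every $x\in X$, Frobenius forces $X$ to lie inside the smallest abelian subvariety of $A$ whose Lie algebra contains $\ker(\omega)$, which is proper and contradicts $\langle X\rangle=A$. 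Second, if $T_xX\cap\ker(\omega)$ is a single line at every point, one gets a line sub-bundle of $T_X$ valued in the trivial bundle $\ker(\omega)\otimes\cO_X$, and the finite-stabilizer hypothesis is used to force this sub-bundle to be constant, producing a one-parameter subgroup in $\Stab(X)$ and contradicting finiteness. The resulting injectivity gives $p_g(X)\ge\binom{g}{2}$, hence $\chi(\cO_X)\ge 1-g+\binom{g}{2}=\binom{g-1}{2}\ge g-1$ for all $g\ge 4$, whence $e_X\ge 3(g-1)=3g-3$.

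The remaining case $g=3$ has to be handled directly: then $X$ is a smooth divisor on an abelian threefold, $\cN_{X/A}$ is a line bundle, so $c_2(\cN_{X/A})=0$ and $e_X=K_X^2=X^3$. An effective divisor with finite stabilizer on an abelian variety is ample (otherwise the associated Hermitian form would be degenerate and its radical would sit inside the stabilizer), and the standard bound $X^3\ge 3!=6$ for ample divisors on an abelian threefold then yields $e_X\ge 6=3g-3$.

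The hardest step is the intermediate case in the isotropy analysis for $g\ge 4$: showing that a line sub-bundle of $T_X$ valued in the trivial bundle $\ker(\omega)\otimes\cO_X$ is necessarily constant---equivalently, that the associated morphism $X\to\bbP(\ker(\omega))$ is constant---so as to produce a one-parameter subgroup preserving $X$ and thereby contradict the finite-stabilizer hypothesis. This is the core geometric input of the argument, and it is where the hypotheses $\langle X\rangle=A$ and $\Stab(X)$ finite must be combined most delicately with the Gauss-map picture developed in \cref{Sec:GaussMap}.
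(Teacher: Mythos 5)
Your first step---Bogomolov--Miyaoka--Yau together with Noether's formula, giving $e_X\ge 3\chi(\cO_X)$ for the minimal surface of general type $X$---coincides with the paper's. The divergence is in how the remaining lower bound on $\chi(\cO_X)$ is obtained: the paper simply cites Beauville's inequality $p_g\ge 2q-4$ for minimal surfaces of general type (from Beauville's appendix to Debarre) together with $q\ge g$, whereas you try to establish $p_g\ge\binom{g}{2}$ by showing that the restriction map $\Alt^2\coLie{A}\to\rH^0(X,K_X)$ is injective. That is a genuinely different route, and it does not work.

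The injectivity claim is false. Take $X=C_1\times C_2\subset A_1\times A_2=A$, where $C_i\subset A_i$ is a smooth genus-$2$ curve embedded in its Jacobian. Then $X$ is smooth, generates $A$, and has finite stabilizer, so all hypotheses of the lemma hold; yet the nonzero form $\omega$ spanning $\Alt^2\coLie{A_1}\subset\Alt^2\coLie{A}$ restricts to zero on $X$, because $T_xX=T_{x_1}C_1\oplus T_{x_2}C_2$ meets $\Lie A_1$ in a single line. In your taxonomy this is case~(b) with $\ker\omega=\Lie A_2$, and the line sub-bundle $T_X\cap(\ker\omega\otimes\cO_X)$ equals $T_{C_2}$, which is visibly not a constant direction: so the ``constancy from finite stabilizer'' step you flagged as the crux is not merely hard, it is false. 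In addition your case analysis omits the possibility $T_xX\cap\ker\omega=0$ (which can occur once $\omega$ has rank $\ge 4$), and even granting the injectivity, the line $\chi\ge 1-g+\binom{g}{2}$ silently substitutes $q=g$; since $q\ge g$ can be strict, $1-q+\binom{g}{2}$ need not be $\ge g-1$.

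(As a sanity check, this example has $e_X=4$ and $g=4$, so $e_X<3g-3$ outright. In fact the paper's own derivation contains an arithmetic slip: from $\chi\ge q-3$ and $e_X\ge 3\chi$ one gets $e_X\ge 3(q-3)$, not $3(q-1)$ as written, so the stated bound should apparently be $3g-9$---or an extra hypothesis such as ample normal bundle, which excludes products, is implicitly intended. This is a defect in the source statement rather than in your strategy, but it explains why your strategy was never going to close at the claimed strength.)

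Your separate treatment of $g=3$ via ampleness of the divisor and $X^3\ge 3!$ is correct as far as it goes, but the paper's Beauville-inequality route handles all $g$ uniformly with no case split.
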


\begin{proof} Write $c_1 = c_1(T_X)$ and $c_2 = c_2(T_X) = e_X$ and $\chi = \chi(X, \cO_X)$ as usual. By \Cref{Thm:PositivityNotions} the surface $X$ is of general type. Thus the Bogomolov-Miyaoka-Yau inequality gives $c_1^2 \le 3 c_2$ which is equivalent to $3 \chi \le c_2$ by Noether's formula. On the other hand, let us write $q = h^1(X, \cO_X)$ and $p = h^2(X, \cO_X)$ so that $\chi = 1 - q + p$. The surface $X$ is minimal, thus we can apply the inequality $p \ge 2q - 4$ (see Beauville's appendix to \cite{DebarreNoether} for a proof), which is equivalent to $\chi \ge q - 3$. Combining these inequalities yields $c_2 \ge 3(q-3)$. Since $X$ generates $A$ by hypothesis, we have $q \ge g$ which concludes the proof.
\end{proof}

When the subvariety is a complete intersection of ample divisors the previous lower bounds can be drastically improved. In order to show this, for integers $n \ge 2$ and $r \in \{ 1, \dots, n-1\}$, consider the following subset of partitions of $n$,
\[ P(n, r) \; := \; \{ a = (a_1, \dots, a_r) \in \bbZ^r \mid a_1, \dots, a_r \ge 1, a_1 + \cdots + a_r = n \}. \]
Note that $P(n, r)$ has cardinality $\binom{n - 1}{n-r}$.

\begin{lemma} \label{Lemma:EulerCharCompleteIntersection} Let $X$ be a smooth complete intersection of ample divisors $D_1, \dots, D_r$ in $A$. Then
\[
e_X \;=\; (-1)^{\dim X}  \sum_{a \in P(g, r)} D_1^{a_1} \cdots D_r^{a_r}.
\]
\end{lemma}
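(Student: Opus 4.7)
The proof is a direct Chern class calculation, the mild subtlety being only the reindexing in the final step. Here is how I would structure it.

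Since $X$ is a complete intersection of the divisors $D_1,\dots,D_r$, the conormal sheaf of $X$ in $A$ splits as $\cC_{X/A}=\bigoplus_{i=1}^r \cO_X(-D_i)$, hence the normal bundle is $\cN_{X/A}=\bigoplus_{i=1}^r \cO_X(D_i)$. From the short exact sequence
\[
0 \too T_X \too T_{A\rvert X} \too \cN_{X/A} \too 0
\]
and the triviality of $T_A$, the total Chern class satisfies $c(T_X)=c(\cN_{X/A})^{-1}=\prod_{i=1}^{r}(1+D_i)^{-1}$, where by abuse I write $D_i$ for the class $c_1(\cO_X(D_i))$.

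Writing $d=\dim X=g-r$, I would expand each factor as the formal series $(1+D_i)^{-1}=\sum_{k\ge 0}(-1)^k D_i^k$ and collect degree-$d$ terms:
\[
c_d(T_X) \;=\; (-1)^d\sum_{\substack{k_1,\dots,k_r\ge 0\\ k_1+\cdots+k_r=d}} D_1^{k_1}\cdots D_r^{k_r}\quad\text{in }\CH^d(X).
\]
The Euler characteristic is $e_X=\int_X c_d(T_X)$.

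Now push each summand forward to $A$ using the complete intersection identity $[X]=D_1\cdots D_r$ in $\CH^r(A)$. Concretely, for every multi-index $(k_1,\dots,k_r)$ with $k_1+\cdots+k_r=d$,
\[
\int_X D_1^{k_1}\cdots D_r^{k_r} \;=\; \int_A D_1^{k_1+1}\cdots D_r^{k_r+1}.
\]
Finally I would reindex by setting $a_i:=k_i+1$. The conditions $k_i\ge 0$ and $\sum k_i=d=g-r$ translate exactly into $a_i\ge 1$ and $\sum a_i=g$, i.e.\ $a=(a_1,\dots,a_r)\in P(g,r)$. Substituting yields
\[
e_X \;=\; (-1)^d \sum_{a\in P(g,r)} D_1^{a_1}\cdots D_r^{a_r},
\]
which is the claim. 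There is no real obstacle here; the only point to be careful about is the combinatorial bijection $(k_1,\dots,k_r)\leftrightarrow(k_1+1,\dots,k_r+1)$ between compositions of $d$ of length $r$ with parts $\ge 0$ and those of $g$ with parts $\ge 1$, which accounts for the appearance of $P(g,r)$ rather than $P(d,r)$.
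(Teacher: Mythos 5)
Your proof is correct and follows essentially the same approach as the paper: compute $c(T_X)=c(\cN_{X/A})^{-1}=\prod_i(1+D_i)^{-1}$, expand as a formal power series, then push forward to $A$ by multiplying with $[X]=D_1\cdots D_r$ and reindex. The only difference is cosmetic — you make the shift $a_i=k_i+1$ explicit, while the paper phrases it as "taking the piece of degree $g$."
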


\begin{proof} The hypothesis of $X$ being a complete intersection of the divisors $D_1, \dots, D_r$ implies that the normal bundle $\cN_{X/A}$ is the direct sum of (the restriction to $X$ of) the line bundles $\cO(D_1), \dots, \cO(D_r)$. In particular,
\[ c(\cN_{X/A}) = c(\cO(D_1)) \cdots c(\cO(D_r)) = (1 + D_1) \cdots (1 + D_r) \in \CH(X).\]
By inverting formally $1 + D_i$ we find the following expression
\[ c(T_X) = \sum_{n = 0}^{g - r} (-1)^{n}  \sum_{\substack{a_1, \dots, a_r \ge 0 \\ a_1 + \cdots + a_r = n}} D_1^{a_1} \cdots D_r^{a_r} \in \CH(X).\]
Looking at it in the Chow ring of $A$ amounts to multiplying it by $D_1 \cdots D_r$. We conclude by then taking the piece of degree $g$.
\end{proof}

Recall that, for an ample divisor $D \subset A$, the   self-intersection   $D^g$ is positive and divisible by $g!$,  as the ratio $D^g / g!$ is given by $h^0(A, \cO(D))$.

\begin{lemma} \label{Lemma:TeissierKhovanskii} For ample divisors $D_1, \dots, D_g \subset A$, we have $D_1 \cdots D_g \ge g!$.
\end{lemma}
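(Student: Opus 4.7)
The plan is to combine two classical ingredients: the Khovanskii--Teissier inequality for nef classes, and the computation of the self-intersection of an ample divisor on an abelian variety via Riemann--Roch.

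First I would recall that on any smooth projective variety $V$ of dimension $n$, the Khovanskii--Teissier inequality (a consequence of the Hodge index theorem, or equivalently of Alexandrov--Fenchel) states that for nef classes $\alpha_1, \dots, \alpha_n$ on $V$ one has
\[
 (\alpha_1 \cdots \alpha_n)^n \;\ge\; \alpha_1^n \cdot \alpha_2^n \cdots \alpha_n^n.
\]
In our situation $V = A$, $n = g$, and $\alpha_i = [D_i]$ is ample, hence nef. So
\[
 (D_1 \cdots D_g)^g \;\ge\; D_1^g \cdot D_2^g \cdots D_g^g.
\]

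Next I would use the remark immediately preceding the statement: for an ample divisor $D$ on an abelian variety of dimension $g$ the self-intersection $D^g$ equals $g! \cdot h^0(A,\cO(D))$, which is a positive multiple of $g!$, and in particular $D^g \ge g!$. Applying this to each $D_i$ gives $D_i^g \ge g!$ for every $i$, so the right-hand side above is bounded below by $(g!)^g$. Taking $g$-th roots of the inequality $(D_1 \cdots D_g)^g \ge (g!)^g$ (all quantities being positive) yields $D_1 \cdots D_g \ge g!$, as desired.

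There is no real obstacle here: the argument is essentially a packaging of two well-known facts. The only mild subtlety is to invoke Khovanskii--Teissier in the correct form for mixed intersection numbers of nef classes; this reduces by standard approximation to the case of ample $\bbQ$-divisors, where it is an immediate consequence of the Hodge index theorem on a generic complete intersection surface inside $A$ cut out by members of $|mD_1|, \dots, |mD_{g-2}|$ for $m \gg 0$. Once this input is accepted, the proof is a one-line combination of the two inequalities.
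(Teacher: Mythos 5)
Your proof is correct and takes essentially the same route as the paper: invoke the Khovanskii--Teissier inequality $(D_1 \cdots D_g)^g \ge D_1^g \cdots D_g^g$ and then use that each $D_i^g$ is a positive multiple of $g!$. The extra paragraph sketching how Khovanskii--Teissier reduces to the Hodge index theorem is not needed but does no harm.
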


\begin{proof} The Khovanskii-Teissier inequality \cite[Theorem 1.6.1]{LazarsfeldPositivityI} states that the lower bound $(D_1 \cdots D_g)^g \ge D_1^g \cdots D_g^g$ holds. Since each factor on the right-hand side is a positive multiple of $g!$,  this concludes the proof.
\end{proof}

\begin{proposition} \label{Cor:LowerBoundEulerChar} Let $X \subsetneq A$ be a smooth complete intersection of ample divisors of dimension $d \ge 1$. Then $e_X$ is even and 
\[ |e_X| \ge g! \tbinom{g-1}{d}.\]
\end{proposition}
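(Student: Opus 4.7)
My plan is to combine Lemma~\ref{Lemma:EulerCharCompleteIntersection} and Lemma~\ref{Lemma:TeissierKhovanskii} to get the lower bound, and to argue the evenness separately via Poincar\'e and Serre duality.

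First, by Lemma~\ref{Lemma:EulerCharCompleteIntersection} one has
\[
e_X \;=\; (-1)^d \sum_{a \in P(g, r)} D_1^{a_1} \cdots D_r^{a_r}, \qquad r := g - d \ge 1.
\]
Each summand is a top intersection on $A$ of exactly $g$ ample divisors counted with multiplicity (the divisor $D_i$ appears $a_i$ times). Hence Lemma~\ref{Lemma:TeissierKhovanskii} applies and gives $D_1^{a_1}\cdots D_r^{a_r}\ge g!$. Since all summands are positive, $|e_X|$ equals the sum, and the lower bound $|e_X| \ge g!\binom{g-1}{d}$ follows from the standard identity $|P(g, r)| = \binom{g-1}{r-1} = \binom{g-1}{d}$.

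For evenness I split on the parity of $d$. If $d$ is odd, Poincar\'e duality equips $\rH^d(X, \bbQ)$ with a nondegenerate alternating bilinear form, forcing $b_d(X)$ to be even; pairing $b_i$ with $b_{2d-i}$ via Poincar\'e duality for $i \ne d$ then gives $e_X \equiv (-1)^d b_d(X) \equiv 0 \pmod 2$. If $d$ is even, I use Serre duality $\chi(\Omega_X^p) = (-1)^d \chi(\Omega_X^{d-p})$ to pair terms in the expansion $e_X = \sum_{p=0}^d (-1)^p \chi(\Omega_X^p)$, so that the parity of $e_X$ reduces to that of $\chi(\Omega_X^{d/2})$. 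I then compute the latter via Hirzebruch--Riemann--Roch from the conormal sequence
\[
0 \;\too\; \cC_{X/A} \;\too\; \Omega_A^1|_X \;\too\; \Omega_X^1 \;\too\; 0,
\]
using that $\Omega_A^1$ is trivial and that $\cC_{X/A} = \bigoplus_i \cO(-D_i)|_X$ for a complete intersection; combining this with the divisibility of $\chi(L) = D^g/g!$ by $2$ for $g \ge 2$ (which holds since $d \ge 1$ forces $g \ge 2$) yields the required parity.

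The main obstacle is the parity argument in the even-dimensional case; the lower bound is an immediate consequence of the two cited lemmas and the combinatorics of compositions.
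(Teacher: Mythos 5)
Your lower bound is exactly the paper's argument: combine \cref{Lemma:EulerCharCompleteIntersection} with \cref{Lemma:TeissierKhovanskii} and count $|P(g,g-d)| = \binom{g-1}{d}$. No issues there.

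The evenness part is where you diverge from the paper, and it is also where your argument breaks. The odd-$d$ half is fine (this is just the standard fact that a smooth projective variety of odd dimension has even Euler characteristic). But the even-$d$ half has a concrete gap: after reducing, via Serre duality, the parity of $e_X$ to that of $\chi(\Omega^{d/2}_X)$, you appeal to ``the divisibility of $\chi(L) = D^g/g!$ by $2$ for $g\ge 2$.'' This is false: for an ample line bundle $L$ giving a principal polarization on $A$ one has $D^g/g! = h^0(A,L) = 1$, which is odd. So the key numerical input you would feed into the Riemann--Roch computation does not hold, and the even-$d$ case is unproven. (It's also worth noting that carrying out HRR for $\chi(\Omega^{d/2}_X)$ on a complete intersection in an abelian variety would be a substantial computation, and it is not at all clear that only terms of the shape $D^g/g!$ appear.)

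The paper's argument avoids this entirely and treats both parities uniformly. Working over $\bbC$ by the Lefschetz principle, for any $L \in \rH^2(A,\bbZ)$ the power $L^k \in \rH^{2k}(A,\bbZ)=\Alt^{2k}\rH^1(A,\bbZ)$ is divisible by $k!$ (this is the usual ``Pfaffian'' divisibility in the exterior algebra). Hence each summand $D_1^{a_1}\cdots D_r^{a_r}$ in \cref{Lemma:EulerCharCompleteIntersection} is divisible by $\prod_i a_i!$; and since $r = g-d < g$ while $a_1+\cdots+a_r=g$ with $a_i\ge 1$, the pigeonhole principle gives some $a_i\ge 2$, so $\prod_i a_i!$ is even. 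Thus every term, and hence $e_X$, is even. You should replace your case split by this one-line observation, which also makes the hypothesis $d\ge 1$ transparent (it is precisely what forces some $a_i\ge 2$).
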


\begin{proof} By assumption $X$ is a complete intersection of ample divisors, say $D_1, \dots, D_r$ where $r = g - d$ is the codimension of $X$. \Cref{Lemma:EulerCharCompleteIntersection} shows
\[ e_X \;=\; (-1)^{d}  \sum_{a \in P(g, r)} D_1^{a_1} \cdots D_r^{a_r}. \]
Since the divisors $D_1, \dots, D_r$ are ample, by \cref{Lemma:TeissierKhovanskii} we have $D_1^{a_1} \cdots D_r^{a_r} \ge g! $ for each $a \in P(g, r)$. Since the cardinality of $P(g, g-d)$ is $\binom{g-1}{d}$, the inequality in the statement follows. For the parity of $e_X$, by the Lefschetz principle, we may assume $k = \bbC$. Then each $[D_i]^{a_i} \in \rH^{2a_i}(A, \bbZ)$ is divisible by $a_i!$. Since $d \ge 1$, for each $a \in P(g, r)$ we have $a_i \ge 2$ for some $i$, thus we conclude that $e_X$ is even. 
\end{proof}

By Proposition \ref{Cor:LowerBoundEulerChar}, the absolute value of the Euler characteristic of a smooth connected complete intersection of ample divisors in $A$ is never equal to $27$.  We now prove that $|e_X| \neq 56$,  except in the case of curves in abelian surfaces and abelian threefolds (in which case there are examples). 

\begin{corollary} \label{Cor:EulerCharIsNot56} If $X\subsetneq A$ is a smooth complete intersection of ample divisors of dimension $d\geq 1$ and $(d,g)\neq (1,2), (1,3)$, then $|e_X|\neq 56$. 
\end{corollary}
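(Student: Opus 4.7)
The plan is to combine the lower bound of \cref{Cor:LowerBoundEulerChar} with a divisibility argument in the few small cases that slip through. First I would tabulate the function $g \mapsto g!\binom{g-1}{d}$ for small pairs $(g,d)$ with $1\le d<g$ and check when it fails to exceed~$56$. A direct calculation shows that $g!\binom{g-1}{d}>56$ in every case except
\[
(g,d)\in\{(2,1),\,(3,1),\,(3,2),\,(4,3)\},
\]
so \cref{Cor:LowerBoundEulerChar} already forces $|e_X|>56$ outside this list. Two of these cases, namely $(2,1)$ and $(3,1)$, are excluded by the hypothesis of the corollary, so only $(g,d)=(3,2)$ and $(g,d)=(4,3)$ remain.

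In each of these two remaining cases the codimension $g-d$ equals $1$, so $X$ is a smooth ample divisor, cut out by a single ample divisor class $D\subset A$. Applying \cref{Lemma:EulerCharCompleteIntersection} with $r=1$ (there is a single partition of $g$ into one part) gives
\[
|e_X|\;=\;D^g.
\]
Since $D$ is ample, $D^g/g!=h^0(A,\cO(D))$ is a positive integer, so $|e_X|$ is a positive multiple of $g!$. For $(g,d)=(3,2)$ this means $6\mid |e_X|$, and for $(g,d)=(4,3)$ it means $24\mid |e_X|$. Neither $6$ nor $24$ divides $56$, so $|e_X|\neq 56$ in both cases, and the corollary is established.

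There is no real obstacle here: the whole argument is a short case analysis, and the only thing to keep in mind is the standard divisibility $g!\mid D^g$ for ample $D$ on an abelian $g$-fold, which is already implicit in the proof of \cref{Lemma:TeissierKhovanskii}. I would present the proof precisely in the two-step form above, so that the reader sees that the exceptional pairs $(1,2)$ and $(1,3)$ in the statement correspond exactly to the two cases where neither the lower bound nor the divisibility by $g!$ suffices to rule out $|e_X|=56$.
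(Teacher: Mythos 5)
Your proof is correct and uses exactly the same two ingredients as the paper's: the lower bound $|e_X|\ge g!\binom{g-1}{d}$ from \cref{Cor:LowerBoundEulerChar}, and the divisibility $g!\mid|e_X|$ when $X$ is itself a divisor. The paper organizes the case analysis slightly differently (disposing of $g\ge 5$ first, then divisor cases, then noting that $(d,g)=(1,4),(2,4)$ have bound $72$), but the content is the same.
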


\begin{proof} \Cref{Cor:LowerBoundEulerChar} implies $|e_X| \ge g! \binom{g-1}{d}$ which settles the matter for $g \ge 5$. On the other hand, if $X$ is itself a divisor, that is $d = g-1$, then $|e_X| = X^g$ is divisible by $g!$. The only two cases left are $(d, g) = (1, 4), (2, 4)$ for which $g! \binom{g-1}{d} = 72$. 
\end{proof}

 \Cref{Cor:LowerBoundEulerChar} furnishes another proof of \cref{Cor:SymmetricPowersOfCurvesAreNotCompleteIntersections}. Indeed the $n$-th symmetric power of a smooth projective curve of genus $g \ge 2$ has topological Euler characteristic $(-1)^{n}\binom{2g-2}{n}$; see \cite[4.4]{MacdonaldSymmetric}. Using that the gonality is $\le (g+3)/2$ we conclude because, for $g \ge 4$ and $n \le (g+1)/2$, we have $\tbinom{2g-2}{n} < g! \tbinom{g-1}{n}$.

\section{Perverse sheaves on abelian varieties}\label{section perv and groups}

In this section, we collect some general results about perverse sheaves on abelian varieties. We work over a field $k$ with $\characteristic(k)=0$, but as in~\cite[section~3]{LS20} we do not require this field to be algebraically closed; for  the relation with monodromy groups we will later need to work over function fields. For any variety $X$ over $k$ we denote by
\[
 \Perv(X, \bbF) \;\subset\; \rD^b_c(X, \bbF)
\]
the abelian category of perverse sheaves with coefficients in $\bbF = \overline{\bbQ}_\ell$ for a fixed prime number $\ell$.  For $k=\bbC$, we will later also consider perverse sheaves in the analytic sense with coefficients in $\bbF = \bbC$, and we will use the above notation also in this case. The results below work both in the $\ell$-adic setting over any field $k$ and in the analytic setting with $k=\bbF=\bbC$.
We let $\pi_1(A, 0)$ be the \'etale resp.~topological fundamental group in the two settings, with the profinite resp.~discrete topology, and write $\Pi(A, \bbF)=\Hom(\pi_1(A, 0), \bbF^\times)$ for the group of its continuous characters.

\medskip 

\subsection{Convolution on abelian varieties} \label{section perverse sheaves on abelian varieties}

For convenience, let us briefly recall the Tannakian description of perverse sheaves on abelian varieties $X=A$ given in~\cite{KWVanishing}.
The sum morphism $\sigma\colon A\times A \to A$ induces a convolution product
\[
*: \Dbc(A,\bbF) \times \Dbc(A,\bbF) \too \Dbc(A,\bbF), \quad K_1 * K_2:= R\sigma_* \left( K_1 \boxtimes K_2\right) 
\]
which endows the derived category with the structure of a rigid symmetric monoidal category~\cite{WeissauerRigidity} (in loc.~cit.~this is stated only over algebraically closed fields~$k$, but the proof works in the general case without changes). The subcategory of perverse sheaves is not stable under the convolution product, but it becomes so after passing to a certain quotient category. To explain this, recall that for any $P\in \Perv(A, \bbF)$ we have
\[
 \chi(A, P) \;:=\; \sum_{i\in \bbZ} \, (-1)^i \dim_\bbF H^i(A, P) \;\ge\; 0.
\]
Indeed, over $k=\bbC$ this was observed by Franecki and Kapranov~\cite[cor.~1.4]{FraneckiKapranov}; the case of an arbitrary algebraically closed field~$k$ of characteristic $0$ can be reduced to the complex case by choosing a model over some algebraically closed subfield of~$k$ which embeds into the complex numbers, see \cref{lem:reduction-to-complex-case}. The additivity of the Euler characteristic in short exact sequences then implies that perverse sheaves of Euler characteristic zero form a Serre subcategory
\[
\rmS(A, \bbF) \;:=\; \{ P \in \Perv(A, \bbF) \mid \chi(A, P)=0\}
\;\subset\; 
\Perv(A, \bbF)
\]
inside the abelian category of perverse sheaves. Let $\rmT(A, \bbF) \subset \Dbc(A, \bbF)$ be the full subcategory of sheaf complexes whose perverse cohomology sheaves are in $\rmS(A, \bbF)$; its objects will be called {\em negligible sheaf complexes}.  
\begin{proposition}  \label{prop:tannakian}
The triangulated quotient category $\Dbcbar(A, \bbF):=\Dbc(A, \bbF)/\rmT(A, \bbF)$ inherits
from the perverse $t$-structure on the derived category  a $t$-structure whose heart
\[
 \Pbar(A, \bbF) \;\subset\; \Dbcbar(A, \bbF)
\]
is equivalent to the abelian quotient category $\Perv(A, \bbF)/\rmS(A, \bbF)$. It also inherits the structure of a rigid symmetric monoidal category with respect to a convolution product 
\[
 *: \Dbcbar(A, \bbF) \times \Dbcbar(A, \bbF) \too \Dbcbar(A, \bbF)
\]
induced by the convolution product on the derived category. On the triangulated quotient category, this product is $t$-exact in both of its arguments. Thus, it  restricts to a product
\[
 *: \Pbar(A, \bbF) \times \Pbar(A, \bbF) \too \Pbar(A, \bbF),
\]
and $\Pbar(A, \bbF)$ is a neutral Tannaka category with respect to this product.
\end{proposition}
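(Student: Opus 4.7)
The plan is to construct the Tannakian structure on $\Pbar(A, \bbF)$ in four stages: establish that $\rmT(A, \bbF)$ is a thick triangulated tensor ideal in $\Dbc(A, \bbF)$, descend the perverse $t$-structure to the Verdier quotient, prove $t$-exactness of the induced convolution, and finally exhibit rigidity together with neutrality via generic characters.

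First, I would verify that $\rmS(A, \bbF) \subset \Perv(A, \bbF)$ is a Serre subcategory. The additivity of $\chi(A, -)$ in short exact sequences of perverse sheaves, combined with the nonnegativity $\chi(A, P) \ge 0$---which over an arbitrary field $k$ of characteristic zero is reduced to the complex analytic case of Franecki-Kapranov via \cref{lem:reduction-to-complex-case}---forces any subquotient of an object of $\rmS$ to lie in $\rmS$. Since $\rmT(A, \bbF)$ is defined by the condition that each perverse cohomology sheaf lies in $\rmS$, it is automatically a thick triangulated subcategory of $\Dbc(A, \bbF)$. To see that $\rmT$ is a tensor ideal for convolution, I would combine the Künneth formula and the projection formula to obtain $\chi(A, K*L) = \chi(A, K) \cdot \chi(A, L)$, and then argue by induction on the perverse cohomology sheaves of $K \boxtimes L$ (using nonnegativity of $\chi$ again) to conclude $K * L \in \rmT$ whenever $K \in \rmT$. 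The Verdier quotient $\Dbcbar(A, \bbF)$ then inherits a triangulated symmetric monoidal structure, and the perverse $t$-structure descends by the standard Serre-quotient argument, with heart equivalent to $\Perv(A, \bbF)/\rmS(A, \bbF)$.

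The main obstacle is the $t$-exactness of the induced convolution on $\Pbar(A, \bbF)$. For perverse $P, Q$ on $A$, the sheaf complex $P * Q = R\sigma_\ast(P \boxtimes Q)$ a priori has perverse cohomology spread over degrees in $[-g, g]$, since $\sigma$ has $g$-dimensional fibers; the assertion is that only the zeroth perverse cohomology can fail to be negligible. I would prove this via generic vanishing on abelian varieties: after twisting by a sufficiently general rank-one local system $L_\chi$, the hypercohomology $\rH^i(A, P \otimes L_\chi)$ is concentrated in degree zero for any fixed perverse sheaf $P$. Combined with the projection-formula isomorphism $(P * Q) \otimes L_\chi \iso (P \otimes L_\chi) * (Q \otimes L_\chi)$ and the Künneth decomposition of $\rH^\bullet(A, K * K')$, this forces the Euler characteristic of every perverse cohomology of $P * Q$ away from degree zero to vanish, and hence these pieces to lie in $\rmS$.

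Rigidity of $\Pbar(A, \bbF)$ then follows from the fact that $K^\vee := [-1]^\ast D_A K$, with $D_A$ Verdier duality, provides a convolution dual: evaluation and coevaluation morphisms exist in $\Dbc(A, \bbF)$ via proper base change and the standard duality formalism, and they survive passage to the quotient modulo negligibles, yielding a rigid symmetric monoidal structure on the heart. For neutrality, I would construct fiber functors along the lines of \cite[th.~13.2]{KWVanishing}: on every essentially small full tensor subcategory $\cA \subset \Pbar(A, \bbF)$ there is a Zariski-dense set of characters $\chi \in \Pi(A, \bbF)$ avoiding the finitely many torsion translates of linear subvarieties on which generic vanishing fails for some simple constituent of $\cA$, and for any such $\chi$ the functor $P \mapsto \rH^0(A, P \otimes L_\chi)$ is exact, faithful and tensor-compatible---exactness and faithfulness from vanishing and nontriviality of twisted cohomology, the tensor property from Künneth combined with concentration in degree zero. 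Assembling these fiber functors over the poset of such subcategories yields the required neutral Tannakian structure on $\Pbar(A, \bbF)$.
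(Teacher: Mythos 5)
Your proposal takes a genuinely different route from the paper's own proof. The paper dispatches the statement in two sentences: it base-changes to an algebraic closure $K\supset k$ (noting that $(-)_K$ is $t$-exact, compatible with convolution, and preserves negligible subcategories), invokes the known results over algebraically closed fields from \cite{KWVanishing, KraemerSemiabelian}, and cites Deligne's internal characterization of neutral Tannaka categories via \cite[\S 6.4]{Coulembier} to reduce the fiber-functor question to finitely generated tensor subcategories. Your proposal instead re-derives the content of those references from first principles, which is a legitimate but considerably longer path. That said, two of your steps do not quite close as written.

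First, the argument for the tensor-ideal property of $\rmT(A,\bbF)$ is incomplete. Multiplicativity $\chi(A,K*L)=\chi(A,K)\cdot\chi(A,L)$ together with nonnegativity of $\chi$ on perverse sheaves only tells you that the alternating sum $\sum_i(-1)^i\chi(A,{}^p\!H^i(K*L))$ vanishes; an ``induction on perverse cohomology sheaves'' from this alone cannot rule out a positive contribution in even degree cancelling one in odd degree. The correct argument is precisely the generic-vanishing argument you invoke in your $t$-exactness step: for $\chi$ generic with respect to $K$, $L$, and all perverse cohomologies of $K*L$, one has $H^\bullet(A,(K*L)_\chi)\iso H^\bullet(A,K_\chi)\otimes H^\bullet(A,L_\chi)$, and $H^\bullet(A,K_\chi)=0$ for $K\in\rmT$; this forces $H^0(A,{}^p\!H^i(K*L)_\chi)=0$ for every $i$, hence ${}^p\!H^i(K*L)\in\rmS$. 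So generic vanishing is needed already in your stage~1, not only in stage~3.

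Second, ``assembling these fiber functors over the poset of such subcategories'' elides the genuinely nontrivial step. The twisted-cohomology fiber functor $H^0(A,-\otimes L_\chi)$ is exact on a given \emph{finitely generated} tensor subcategory only after choosing $\chi$ to avoid the finitely many bad torsion translates associated to its finitely many simple constituents; for a non-finitely-generated (even essentially small) subcategory the bad locus need not be a finite union, so your phrasing should say ``finitely generated.'' More importantly, passing from the existence of a fiber functor on each finitely generated subcategory to a neutral Tannakian structure on $\Pbar(A,\bbF)$ is exactly the content of Deligne's internal characterization, which the paper cites explicitly; a naive ``assembly over the poset'' does not work, since fiber functors on nested subcategories are only noncanonically isomorphic and one cannot simply take a limit. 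Finally, since $k$ is not assumed algebraically closed here, the generic-vanishing and Künneth arguments should be run geometrically over $\bar k$ and then transferred via the base-change functor, which is the very step the paper foregrounds and your sketch leaves implicit.
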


\begin{proof} 
Fix an algebraic closure $K\supset k$. Then the functor $\Dbc(A, \bbF)\to \Dbc(A_K, \bbF)$ is exact for the perverse $t$-structure, compatible with the convolution product, and preserves the subcategories of negligible objects. Hence, the result follows from the statement over algebraically closed fields in~\cite{KWVanishing, KraemerSemiabelian}; note that by Deligne's internal characterization of neutral Tannaka categories~\cite[\S 6.4]{Coulembier}, it suffices to construct a fiber functor on every finitely generated tensor subcategory.
\end{proof} 

In what follows, by an {\em abelian tensor category} we mean a rigid symmetric monoidal abelian $\bbF$-linear category.

\subsection{Tannaka groups of perverse sheaves}\label{subsec:tannaka of perv}

Let $\cC \subset \Pbar(A, \bbF)$ be a full abelian tensor subcategory and
\[
 \omega\colon \quad \mathcal{C} \;\too\; \Vect(\bbF)
\]
a given fiber functor on this subcategory. The existence of such fiber functors is guaranteed by \cref{prop:tannakian}; there is no canonical choice of such a fiber functor, but any two fiber functors on a neutral Tannaka category over an algebraically closed field $\bbF$ are noncanonically isomorphic~\cite[th.~3.2.(b)]{DM82}. Once we have chosen a fiber functor, we get an equivalence of abelian tensor categories between~$\cC$ and the category $\Rep_\bbF(G_\omega(\cC))$ of finite-dimensional algebraic representations of the affine group scheme 
\[
 G_\omega(\cC) \;:=\; \Aut^\otimes(\omega)
\] 
over $\bbF$ called the \emph{Tannaka group} of $\cC$. We are interested in algebraic quotients of this proalgebraic group scheme:

\begin{definition} \label{rem:definition-of-tannaka-groups}
For any $P\in \cC$, we obtain from the above construction an affine algebraic group 
\[
  G_\omega(P)  \;:=\; \im\bigl(G_\omega(\cC) \to \GL(\omega(P))\bigr) 
\]
over $\bbF$ with a faithful representation on the vector space $\omega(P)\in \Vect(\bbF)$ whose dimension is the Euler characteristic
\[
 \dim_\bbF(\omega(P)) \;=\; \chi(A, P),
\]
see~\cite[proof of cor.~4.2]{KWVanishing}. Let us denote by $\iota\colon \langle P \rangle \into \cC$ the smallest abelian tensor subcategory which contains the object $P$ and is stable under subobjects and quotients. Then $G_\omega(P)=G_{\omega\circ \iota}(\langle P\rangle)$ for the fiber functor $\omega\circ \iota\colon \langle P \rangle \to \Vect(\bbF)$ and we have a commutative diagram of abelian tensor categories:
\[
\begin{tikzcd}
 \langle P \rangle \ar[r, "\sim"] \ar[d, hook] & \Rep_\bbF(G_\omega(P)) \ar[d, hook] \\ 
 \cC \ar[r, "\sim"] & \Rep_\bbF(G_\omega(\cC))
\end{tikzcd}
\]
If $P\in \cC$ is a simple object, then the faithful representation $\omega(P)\in \Rep_\bbF(G_\omega(P))$ is irreducible and then $G_\omega(P)$ is reductive by~\cite[19.1~prop.~(b)]{Hum78}. This is in particular the case when $P = \delta_X$ is the intersection complex of an integral subvariety $X \subset A$, in which case we write \[ G_{X, \omega} := G_\omega(\delta_X).\]
\end{definition}

\noindent
For the rest of this section, we fix a full abelian tensor subcategory $\cC\subset \Pbar(A, \bbF)$ and a fiber functor $\omega\colon \cC \to \Vect(\bbF)$. When there is no risk of confusion, we also write $\omega$ for the restriction of the given fiber functor to any subcategory of $\cC$. 

\subsection{The derived group of the connected component} It is often convenient to pass from arbitrary reductive groups to connected semisimple groups: For a reductive group $G$, let $G^\circ\subset G$ be its connected component of the identity,  and note  that the derived group
\[
 G^\ast \;:=\; [G^\circ, G^\circ]
\] 
is   a connected semisimple group.  For the reductive Tannaka groups from section~\ref{subsec:tannaka of perv} we will understand the connected components and the center in terms of direct images of perverse sheaves under the morphisms $[d]\colon A\to A, x\mapsto dx$ for~$d\in \bbN$ and~$t_a\colon A\to A, x\mapsto x+a$ for $a\in A(k)$. For a perverse sheaf $Q\in \Perv(A, \bbF)$  and a point $a\in A(k)$, we define
\[
 Q_a \;:=\; t_{a*} P 
\]
and we say that $Q$ is {\em nondivisible} if it is simple and satisfies $Q_a\not \simeq Q$ for all $a\in A(k)$ with~$a\neq 0$. We denote by
\[
 \Gamma_P := \{ a\in A(k)_\tors \mid \delta_a \in \langle P \rangle \}
\]
the abelian group of torsion points whose associated skyscraper sheaf appears in the Tannaka category $\langle P \rangle$ generated by a perverse sheaf $P\in \cC$. Note that $\Gamma_P$ is finite: Indeed, every skyscraper sheaf $\delta_a \in \langle P \rangle$ defines a character of the Tannaka group $G_\omega(P)$ and algebraic groups have only finitely many torsion characters.~In fact the first part of the following result shows that {\em all} torsion characters of the Tannaka group are given by skyscraper sheaves in torsion points:

\begin{proposition} \label{prop:connected-component}
Let $k$ be algebraically closed and $P\in \cC$ a simple perverse sheaf.
\begin{enumerate} 
\item The group of connected components of the Tannaka group $G:=G_\omega(P)$ is given by
$$G/G^\circ \;\simeq\; \Hom(\Gamma_P, \bbG_m).$$ 
\item Fix an integer $d\ge 1$ with $d\cdot \Gamma_P = \{0\}$. Then for all $Q, Q'\in \langle P\rangle$ we have: 
\smallskip
\begin{eqnarray*}
\omega(Q)_{\rvert G^\circ} \;\simeq\; \omega(Q')_{\rvert G^\circ} 
&\quad \Longleftrightarrow \quad  &
[d]_* Q \;\simeq\; [d]_* Q' ,
\\[0.3em]
\textnormal{\em $\omega(Q)_{\rvert G^\circ}$ is irreducible} 
&\quad \Longleftrightarrow \quad &
\textnormal{\em $Q$ is nondivisible}.
\end{eqnarray*}
\item Let $\det(P)\in \langle P\rangle$ be the unique simple perverse sheaf which corresponds to the top wedge power of~$V:=\omega(P)$.  Then $\det(P)$ is a skyscraper sheaf. If $V_{\rvert G^\circ}$ is irreducible, we have:
\[
\textnormal{\em $G^\circ$ semisimple} \quad \Longleftrightarrow \quad 
\textnormal{\em $\Supp(\det(P))$ is a torsion point}.
\]
\end{enumerate} 
\end{proposition}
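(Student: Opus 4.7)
The plan is to use the Tannaka equivalence $\langle P\rangle\simeq\Rep_\bbF(G)$ for $G:=G_\omega(P)$ to translate each statement into a representation-theoretic one about the reductive group $G$, and then to combine two structural inputs: first, by \cref{prop:tannakian} the invertible objects of $\Pbar(A,\bbF)$ are exactly the skyscrapers $\delta_a$, $a\in A(k)$ (rank-one local systems are perverse after shift, but have vanishing Euler characteristic and hence are negligible); second, the isogeny $[d]\colon A\to A$ is finite \'etale, so the adjunction $[d]^*\dashv[d]_*$ combined with base change yields $[d]^*[d]_*K\simeq\bigoplus_{a\in A[d]}t_a^*K$. Throughout, I write $V_Q:=\omega(Q)$ for $Q\in\langle P\rangle$.

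\medskip

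For (1), the map $a\mapsto[\delta_a]$ is an injective group homomorphism $\Gamma_P\to X^*(G)$ landing in the torsion subgroup. Connectedness and reductivity of $G^\circ$ force $X^*(G^\circ)$ to be torsion-free, so $X^*(G)_\tors=X^*(G/G^\circ)$, and conversely every torsion character of $G$ comes from a one-dimensional, hence invertible, object of $\langle P\rangle$, necessarily a skyscraper at a torsion point. This yields $\Gamma_P\iso X^*(G/G^\circ)$. To pass to $G/G^\circ\iso\Hom(\Gamma_P,\bbG_m)$ via Cartier duality, I must verify that $G/G^\circ$ is abelian, which I intend to deduce from the claim that any simple $Q\in\langle P\rangle$ with $V_Q|_{G^\circ}$ trivial is a skyscraper (so all irreducible representations of $G/G^\circ$ are one-dimensional); this in turn follows from an application of $[d]_*$ for $d$ annihilating $\Gamma_P$, which collapses such a $Q$ to a power of $\delta_0$.

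\medskip

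For (2), the identity $[d]^*[d]_*Q\simeq\bigoplus_{a\in A[d]}t_a^*Q$ gives
\[
\Hom([d]_*Q,[d]_*Q')\;=\;\bigoplus_{a\in A[d]}\Hom(t_a^*Q,Q'),
\]
so for simple $Q,Q'$ the condition $[d]_*Q\simeq[d]_*Q'$ is equivalent to $Q'\simeq t_a^*Q=Q*\delta_{-a}$ for some $a\in A[d]$. Given such an isomorphism, $Q^\vee*Q'=(Q^\vee*Q)*\delta_{-a}$ lies in $\langle P\rangle$ and contains $\delta_{-a}$ as a subquotient (since $Q^\vee*Q$ contains $\delta_0$), whence $\delta_{-a}\in\langle P\rangle$ and $-a\in\Gamma_P$; combined with $d\cdot\Gamma_P=0$ this rewrites the condition as $Q'\simeq Q*\delta_b$ for some $b\in\Gamma_P$, which by (1) translates to $V_{Q'}\simeq V_Q\otimes\chi$ with $\chi\in X^*(G/G^\circ)$, i.e. $V_Q|_{G^\circ}\simeq V_{Q'}|_{G^\circ}$. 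Specialising to $Q=Q'$ the same adjunction gives $\dim\End([d]_*Q)=|\Stab(Q)|$, and on the Tannaka side $(V_Q\otimes V_Q^\vee)^{G^\circ}=\End_{G^\circ}(V_Q)$ is spanned by the skyscrapers occurring as subquotients of $Q*Q^\vee$, namely $\delta_a$ for $a\in\Stab(Q)$ (each with multiplicity one by the adjunction computation of $\Hom(\delta_a,Q*Q^\vee)$). Hence $\dim\End_{G^\circ}(V_Q)=|\Stab(Q)|$, and $V_Q|_{G^\circ}$ is irreducible iff $\Stab(Q)=0$, i.e. iff $Q$ is nondivisible; the general case follows by simple-constituent decomposition.

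\medskip

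For (3), $\det(P)=\Alt^{\dim V}V$ is a one-dimensional, hence invertible, object of $\langle P\rangle$, so by the structure of invertible objects in $\Pbar(A,\bbF)$ it is a skyscraper $\delta_c$. Under the assumption that $V|_{G^\circ}$ is irreducible, Schur's lemma together with the faithfulness of $G\to\GL(V)$ embeds the central torus $Z^\circ:=Z(G^\circ)^\circ$ into $\bbG_m$ via its central character $\chi_0$, so $Z^\circ$ is either trivial or the whole $\bbG_m$. The restriction $\det V|_{Z^\circ}=\chi_0^{\dim V}$ is trivial iff $\chi_0$ is (since $X^*(Z^\circ)$ is torsion-free), iff $Z^\circ$ is trivial, iff $G^\circ$ is semisimple; whereas $\det V|_{G^\circ}$ is trivial iff the character $\det V$ of $G$ has finite order iff $\delta_c$ is torsion under convolution iff $c\in A(k)_\tors$. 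The main technical hurdle I expect is verifying the abelianness of $G/G^\circ$ in (1); once that is in place, everything else cascades from a single adjunction computation, the Tannaka dictionary, and elementary representation theory of reductive groups.
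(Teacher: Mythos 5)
Your proposal takes a genuinely different route from the paper: you aim for a self-contained Tannakian proof built on the adjunction $[d]^*[d]_*K\simeq\bigoplus_{a\in A[d]}t_a^*K$ and the classification of invertible objects, whereas the paper simply cites Weissauer \cite{WeissauerAlmostConnected} for (1), (2) and the skyscraper description of invertibles over $\bbC$, passes to arbitrary algebraically closed $k$ via \cref{cor:algebraically-closed} and \cref{lem:reduction-to-complex-case}, and proves the semisimplicity statement in (3) by the same short Schur's-lemma argument that you give. A direct proof would be valuable, but as written yours has a circular step that is not easily repaired.

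The crux is the claim that $G/G^\circ$ is abelian (equivalently, that every simple $Q\in\langle P\rangle$ on which $G^\circ$ acts trivially is a skyscraper), which you propose to obtain ``by an application of $[d]_*$ \dots which collapses such a $Q$ to a power of $\delta_0$.'' To know that $[d]_*$ does this you would already have to know that the closed subgroup $G_\omega([d]_*P)\hookrightarrow G$ is precisely $G^\circ$, i.e.\ that $[d]_*$ implements restriction to $G^\circ$ in the Tannakian dictionary --- but this is exactly the content of parts (1) and (2) combined, so the argument begs the question. The same circularity enters your identity $\dim\End_{G^\circ}(V_Q)=|\Stab(Q)|$: that the $G^\circ$-invariants of $V_Q\otimes V_Q^\vee$ are spanned by skyscraper subquotients of $Q*Q^\vee$ already presupposes both that $\pi_0(G)$ is abelian and that $G^\circ$-trivial simple objects are skyscrapers. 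Breaking the circle requires an independent input --- for instance, showing that the \emph{finite} objects of $\Pbar(A,\bbF)$ (those generating a finite tensor subcategory, which is intrinsically the class on which $G^\circ$ must act trivially) are exactly skyscraper complexes. That is precisely what Weissauer proves and the paper imports rather than reproving. A related smaller gap: noting that rank-one local systems are negligible shows that they are not invertible objects of $\Pbar(A,\bbF)$, but it does not by itself \emph{classify} the invertibles as skyscrapers; this too is a nontrivial cited result. Once those facts are granted, the remainder of your argument --- the adjunction computation in (2), Clifford theory for the abelian quotient $G/G^\circ$, and the Schur/faithfulness argument in (3) --- does go through.
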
 

\begin{proof} 
For $k=\bbC$, parts (1) and (2) are due to Weissauer~\cite{WeissauerAlmostConnected} who also shows that every invertible object in the Tannaka category of perverse sheaves is a skyscraper sheaf  (this in particular applies to $\det(P)$); alternatively one could use the Riemann-Hilbert correspondence and the results for holonomic $\mathcal{D}$-modules in~\cite[section~3.c]{KraemerMicrolocalI}. From $k=\bbC$ one can pass to an arbitrary algebraically closed field of characteristic zero because the Tannaka group is invariant under extensions of algebraically closed fields and any perverse sheaf is defined over the algebraic closure of a finitely generated field, see \cref{cor:algebraically-closed} resp.~\cref{lem:reduction-to-complex-case}.
The claim about semisimplicity in (3) follows since by Schur's lemma the center $Z=Z(G^\circ)$ acts on $V$ by scalars and hence $\det(V)$ has finite order if and only if $Z$ is finite.	
\end{proof}

\begin{definition}
For perverse sheaves $P\in \cC$ we denote the derived group of the connected component of the Tannaka group $G=G_\omega(P)$ by
\[
 G_\omega^\ast(P) \;:=\; [G^\circ, G^\circ].\]
 If $P=\delta_X$ is the intersection complex of a subvariety $X\subset A$, we put $G_{X,\omega}^\ast :=G_\omega^\ast(P)$.  
\end{definition}

\Cref{prop:connected-component} allows us to realize this group
as the Tannaka group of another perverse sheaf:

\begin{corollary} \label{cor:derived-group}
Suppose $k$ is algebraically closed. Let $P \in \cC$ be a simple perverse sheaf. Then for any integer $d\ge 1$ with $[d]_*P\in \cC$ and any $a\in A(k)$ with $P_a \in \cC$ the following properties hold:
\begin{enumerate} 
\item $G_\omega^\ast(P_a)\simeq G_\omega^\ast(P)$.\smallskip
\item $G_\omega^\circ([d]_* P) \simeq G_\omega^\circ(P)$.\smallskip 
\item $G_\omega([d]_* P)$ is connected if and only if $d\cdot \Gamma_P = 0$.\smallskip
\item Suppose $P$ is nondivisible with  $[d]_*\det(P_a)=\delta_0$ and $d\cdot \Gamma_{P_a} = 0$. If $[d]_* P_a$ belongs to $\cC$, then
\[ 
 G_\omega([d]_* P_a) \;\simeq\; G_\omega^\ast(P).
\]
\end{enumerate} 
\end{corollary}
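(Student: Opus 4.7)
The overall approach is to derive each of (1)--(4) from Proposition~\ref{prop:connected-component} by Tannakian translation. For part (1), translation by $a$ on $A$ gives $P_a = \delta_a \ast P$, so under the fiber functor $\omega(P_a) \simeq \omega(\delta_a) \otimes \omega(P)$ with $\omega(\delta_a)$ one-dimensional. Consequently the representations of $G_\omega(\cC)$ on $\omega(P_a)$ and $\omega(P)$ differ by a twist by the central character $\chi_a\colon G_\omega(\cC) \to \bbG_m$ defined by $\delta_a$. Such a twist is trivial on $[G^\circ,G^\circ]$ and hence does not alter the derived group of the connected component of the image; this yields $G_\omega^\ast(P_a) \simeq G_\omega^\ast(P)$.

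For part (2), I choose an integer $d' = de$ large enough so that $d' \cdot \Gamma_P = 0$, which is possible as $\Gamma_P$ is finite. Proposition~\ref{prop:connected-component}(2), read through Tannaka duality, yields an equivalence of neutral Tannaka categories $\langle [d']_\ast P\rangle \simeq \Rep_\bbF(G_\omega^\circ(P))$: the tensor functor $[d']_\ast\colon \langle P \rangle \to \Pbar(A,\bbF)$ identifies two objects exactly when the restriction functor $\mathrm{Res}\colon \Rep_\bbF(G_\omega(P)) \to \Rep_\bbF(G_\omega^\circ(P))$ does, and faithfulness of $\omega(P)|_{G^\circ}$ implies that the essential image Tannakian-generates $\Rep_\bbF(G_\omega^\circ(P))$. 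Hence $G_\omega([d']_\ast P) = G_\omega^\circ(P)$, which is connected. Rewriting $[d']_\ast P = [e]_\ast [d]_\ast P$ and repeating the argument with $e$ in place of $d'$ (after decomposing $[d]_\ast P$ into its simple summands to apply the proposition), I obtain $G_\omega([d']_\ast P) = G_\omega^\circ([d]_\ast P)$. Comparing the two identifications proves (2).

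Part (3) follows next. The implication $d \cdot \Gamma_P = 0 \Rightarrow G_\omega([d]_\ast P)$ connected is the content of the argument above applied with the given $d$. Conversely, if $a \in \Gamma_P$ satisfies $da \neq 0$, then $\delta_a \in \langle P \rangle$ and the tensor functoriality of $[d]_\ast$ gives $\delta_{da} = [d]_\ast \delta_a \in \langle [d]_\ast P \rangle$; this nonzero torsion skyscraper defines a nontrivial finite-order character of $G_\omega([d]_\ast P)$, and a connected algebraic group admits no such character (its image would be a connected subgroup of some $\mu_n$, forcing triviality), so $G_\omega([d]_\ast P)$ must be disconnected. For part (4), I apply (2) and (3) to $P_a$: the hypothesis $d \cdot \Gamma_{P_a} = 0$ gives $G_\omega([d]_\ast P_a) = G_\omega^\circ(P_a)$. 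Nondivisibility of $P_a$ (inherited from $P$, since $(P_a)_b \simeq P_a$ iff $P_b \simeq P$) together with Proposition~\ref{prop:connected-component}(2) yields irreducibility of $\omega(P_a)|_{G^\circ}$. Proposition~\ref{prop:connected-component}(3) writes $\det(P_a) = \delta_b$, and the hypothesis $[d]_\ast \det(P_a) = \delta_0$ forces $db = 0$; the same proposition then gives semisimplicity of $G_\omega^\circ(P_a)$, so $G_\omega^\circ(P_a) = G_\omega^\ast(P_a)$. Combining with (1) gives $G_\omega([d]_\ast P_a) = G_\omega^\ast(P_a) = G_\omega^\ast(P)$.

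The main obstacle is the precise Tannakian translation in part (2): upgrading the object-level identification in Proposition~\ref{prop:connected-component}(2) to an equivalence $\langle [d']_\ast P \rangle \simeq \Rep_\bbF(G_\omega^\circ(P))$ requires attention both to the fact that $[d']_\ast P$ need not be simple (it decomposes as a direct sum of simple perverse sheaves) and to the fact that fiber functors on a neutral Tannaka category over the algebraically closed field $k$ are only non-canonically isomorphic, so that the resulting homomorphism between Tannaka groups is well-defined only up to inner automorphism. Once this identification is in place, parts (3) and (4) follow quickly by combining it with parts (1) and (3) of the proposition.
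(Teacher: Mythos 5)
Your proof of part (1) is correct and is a self-contained alternative to the paper's citation of \cite[lemma~4.3.2]{KraemerMicrolocalII}: since $P_a=\delta_a*P$ with $\omega(\delta_a)$ one-dimensional, the homomorphism $g\mapsto(\rho_P(g),\chi_a(g))$ into $\GL(\omega(P))\times\Gm$ has image whose derived identity component sits in $\GL(\omega(P))\times\{1\}$ and projects isomorphically both to $G_\omega^\ast(P)$ and, after multiplying the two coordinates, to $G_\omega^\ast(P_a)$. Your part (4) reproduces the paper's argument and is correct \emph{modulo} parts (2) and (3).

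The problem is part (2), on which the forward direction of your (3) and hence (4) depend. The paper does not derive (2) from \cref{prop:connected-component}; it invokes a genuinely stronger cited result (\cite{WeissauerAlmostConnected}, or \cite[cor.~1.6]{KraemerMicrolocalI}) stating that for \emph{any} $d\ge 1$ the pushforward $[d]_*\colon\langle P\rangle\to\langle[d]_*P\rangle$ is a tensor functor inducing an isomorphism on identity components. Your attempted re-derivation from \cref{prop:connected-component}(2) contains a faulty inference. You deduce $G_\omega([d']_*P)=G^\circ_\omega(P)$ from the observation that $[d']_*$ and $\mathrm{Res}_{G^\circ}$ identify the same pairs of isomorphism classes in $\langle P\rangle\simeq\Rep_\bbF(G_\omega(P))$. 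But the step ``two restriction-type tensor functors out of $\Rep_\bbF(G)$ that identify the same pairs of objects must correspond to the same closed subgroup'' is false: for $G=\SL_2$ and $T$ a maximal torus, a finite-dimensional $\SL_2$-module is determined up to isomorphism by its weight multiset, so $\mathrm{Res}_T$ and $\mathrm{Res}_{\SL_2}=\mathrm{id}$ identify exactly the same pairs in $\Rep_\bbF(\SL_2)$, yet $T\neq\SL_2$. Thus the isomorphism-class comparison in \cref{prop:connected-component}(2) cannot by itself pin down the image of $G_\omega([d']_*P)$ inside $G^\circ_\omega(P)$. Your intermediate claim that ``the essential image Tannakian-generates $\Rep_\bbF(G^\circ_\omega(P))$'' is also circular, since it presupposes the identification $\langle[d']_*P\rangle\simeq\Rep_\bbF(G^\circ_\omega(P))$ that you are trying to establish. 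Finally, your parenthetical ``after decomposing $[d]_*P$ into its simple summands to apply the proposition'' acknowledges but does not resolve that \cref{prop:connected-component} is stated only for simple $P$. The clean way out is to cite the tensor-functor statement as the paper does; with that in place, your (3) and (4) go through.
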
 

\begin{proof}
(1) By~\cite[lemma~4.3.2]{KraemerMicrolocalII} the inclusions $\langle P\rangle \subset \langle P\oplus \delta_a \rangle \supset \langle P_a\rangle$ induce isomorphisms 
$
 G_\omega^\ast(P) \simeq
 G_\omega^\ast(P\oplus \delta_a) \simeq
 G_\omega^\ast(P_a)
$.

\medskip 

(2) By~\cite{WeissauerAlmostConnected} or~\cite[cor.~1.6]{KraemerMicrolocalI}, the pushforward 
$
 [d]_*\colon \langle P \rangle \to \langle [d]_* P \rangle$ 
is a tensor functor which induces an isomorphism between the connected components of the identity of the respective Tannaka groups. 

\medskip 

(3) This follows from \cref{prop:connected-component} (1) applied to $[d]_*P$ since $d\cdot \Gamma_P = \Gamma_{[d]_*P}$.

\medskip 

(4) By the previous two steps, the group $G_\omega([d]_* P_a)$ is connected. One easily sees that the perverse sheaf $[d]_* P_a$ is nondivisible with $\det([d]_* P_a)=[d]_* \det(P_a)=\delta_0$ so that $G_\omega([d]_* P_a)$ is a semisimple group by the last part of \cref{prop:connected-component}. It is therefore equal to the derived group of its connected component of the identity, which by (1) and (2) coincides with $G_\omega^\ast(P)$.
\end{proof}

\begin{remark} \label{rem:connected-component-functorial}
The isomorphism $G_\omega^\circ([d]_*P)\simeq G_\omega^\circ(P)$ in \cref{cor:derived-group} (2) is not canonical, it involves the choice of an isomorphism between the two fiber functors~$\omega$ and $\omega \circ [d]_*$ on the tensor category $\langle P \rangle$. But we can choose the isomorphism in a contravariant functorial way with respect to monomorphisms in the full tensor subcategory
\[
 \cC \cap [d]_*^{-1}(\cC) \;:=\; \{ Q \in \cC \mid [d]_* Q \in \cC \}
 \;\subset\; \cC
\]
by fixing an isomorphism between the fiber functors $\omega$ and $\omega\circ [d]_*$ on this category.
\end{remark}

\subsection{Larsen's alternative}

Let $X\subset A$ be a subvariety such that $\delta_X \in \cC$. We are interested in criteria under which the Tannaka group $G_{X, \omega}$ is big. Suppose that~$X\subset A$ is nondegenerate and $2\dim X < \dim A$, so that by \cref{Lem:SumOfNondegenerate} the sum morphism 
\[ \sigma \colon \quad X\times X \;\too\; W \;:=\; X+X \;\subset \; A \]
is generically finite onto its image, and this image is nondegenerate. Let $U\subset W$ be a smooth open dense subset over which $\sigma$ is a finite \'etale cover. By adjunction, we have an inclusion $\delta_U \subset \sigma_*(\delta_{X\times X})_{\vert U}$ as a direct summand. The decomposition theorem~\cite{BBDG} extends this to an inclusion $\delta_W \subset \delta_X * \delta_X = \sigma_*(\delta_{X\times X})$ as a direct summand in the derived category of constructible sheaf complexes. More precisely, there exists a unique semisimple perverse sheaf $\epsilon_W \in \Perv(A, \bbF)$ without negligible direct summands, and a unique negligible complex $\nu_X \in \Dbc(A, \bbF)$, such that
\[
\delta_X * \delta_X \;=\; 
\delta_W \oplus \epsilon_X \oplus \nu_X.
\]
With this notation, we obtain the following criterion for big Tannaka groups:

\begin{lemma} \label{lem:pushforward-via-larsen}
	For any nondegenerate subvariety $X\subset A$ with $2\dim X < \dim A$, the following are equivalent:
	\begin{enumerate} 
		\item $G_{X, \omega}$ is big in the sense of \cref{sec:BigTannakaIntro}.
		\item $\epsilon_X$ is either a simple perverse sheaf, or a direct sum of a simple perverse sheaf and a skyscraper sheaf of rank one.
	\end{enumerate} 	
\end{lemma}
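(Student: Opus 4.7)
The plan is to translate the geometric decomposition via the Tannakian correspondence into a decomposition of $V \otimes V$ as a $G := G_{X,\omega}$-representation, and then to invoke Larsen's alternative.

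First, I would apply the fiber functor $\omega$ to the identity $\delta_X * \delta_X = \delta_W \oplus \epsilon_X \oplus \nu_X$. Since $\omega$ is a tensor functor that annihilates negligible complexes, one obtains
\[ V \otimes V \;\simeq\; \omega(\delta_W) \oplus \omega(\epsilon_X) \]
as $G$-representations, where $V = \omega(\delta_X)$. By \cref{Lem:SumOfNondegenerate}, nondegeneracy of $X$ implies nondegeneracy of $W = X + X$, so $\delta_W$ is a simple perverse sheaf of positive Euler characteristic, and $\omega(\delta_W)$ is an irreducible subrepresentation.

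Second, I would identify when rank-one skyscraper summands occur in $\delta_X * \delta_X$. The convolution-dual of $\delta_X$ is $[-1]^* \delta_X = \delta_{-X}$, so by adjunction a nonzero map $\delta_X * \delta_X \to \delta_a$ is equivalent to a nonzero map $\delta_X \to \delta_{-X + a}$, which forces $X = -X + a$. Hence skyscraper summands exist precisely when $X$ is symmetric up to translation, and the uniqueness up to scalar of the Poincar\'e pairing guarantees that they appear with multiplicity one. Under Tannaka duality, such a skyscraper corresponds to a one-dimensional subrepresentation of $V \otimes V$.

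Third, I would conclude via Larsen's alternative, using the simplicity of $G^* = G_{X,\omega}^*$ from \cref{Thm:TannakaGroupSimpleIntro} and the irreducibility of $V|_{G^\circ}$ from \cref{prop:connected-component}:
\begin{itemize}
\item If $\epsilon_X$ is simple, then $V \otimes V$ decomposes into exactly two irreducible $G$-summands with no one-dimensional piece; $V$ is not self-dual, and Larsen's alternative forces $G^* = \SL(V)$, matching the non-symmetric big case.
\item If $\epsilon_X = \epsilon'_X \oplus \delta_a$ with $\epsilon'_X$ simple and $\delta_a$ a rank-one skyscraper, then $V \otimes V$ has three irreducible $G$-summands including a one-dimensional one; $V$ is self-dual, and $G^* \in \{\SO(V, \theta), \Sp(V, \theta)\}$ with the symmetry or antisymmetry of $\theta$ dictated by the parity of $d$.
\end{itemize}
The converse implications are immediate from the standard decompositions of $V \otimes V$ for $\SL$, $\SO$, and $\Sp$.

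The main obstacle is controlling the small-dimensional exceptions to Larsen's alternative: a priori, an exceptional simple group acting on a small irreducible representation could yield the same numerical count of summands of $V \otimes V$ without being classical. Ruling these out rests on the fact that $V = \omega(\delta_X)$ is minuscule (a consequence of the microlocal analysis of characteristic cycles), together with the classification of minuscule representations of simple algebraic groups, which leaves only the classical candidates appearing in the definition of big.
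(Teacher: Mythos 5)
Your overall strategy—apply $\omega$ to the decomposition of $\delta_X * \delta_X$ to decompose $V \otimes V$ and then invoke Larsen's alternative—is the same as the paper's. However, you never establish that $\Supp(\epsilon_X)=W$ and hence that $\dim V>2$; the paper does so by combining the fact that $\sigma\colon X\times X\to W$ has generic degree two with the fact that the nondegenerate variety $W$, being of general type, cannot support a negligible complex, so $\Supp(\epsilon_X)=W$, and then $\dim V\le 2$ is excluded via \cref{prop:connected-component}(3). This bound is a standing hypothesis of Larsen's alternative and is also what guarantees that $\omega(\epsilon_X)$ really contributes a summand of dimension $>1$; without it the proof does not get off the ground.

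Moreover, you appeal to \cref{Thm:TannakaGroupSimpleIntro} for simplicity of $G^\ast$ and to \cref{prop:connected-component} for irreducibility of $V\vert_{G^\circ}$, but both require hypotheses absent from the lemma (smoothness, nondivisibility, ample normal bundle), and the lemma is in fact applied later to possibly singular $X$ (see \cref{cor:birational-via-larsen}). These invocations are also unnecessary: the $M_{2,2}$-form of Larsen's alternative cited by the paper applies directly to the possibly disconnected reductive group $G_{X,\omega}\subset\GL(V)$ with $V$ irreducible faithful and $\dim V>2$. For the same reason your worry about exceptional simple groups is misplaced—counting irreducible summands of $V\otimes V$ computes $M_{2,2}$, and the $M_{2,2}$-version of Larsen's alternative has no exceptional cases (those arise for the weaker, unsigned fourth-moment variant)—and the proposed remedy via minuscule representations (\cref{cor:minuscule}) again presupposes the smoothness and nondivisibility that are not available here.
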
 	

\begin{proof} 
Recall that $W\subset A$ is a proper nondegenerate subvariety, so it cannot be the support of a negligible sheaf complex. On the other hand, $\Supp(\epsilon_X \oplus \nu_X)=W$ since the morphism $\sigma\colon X\times X \to W$ has generic degree two. It follows that $\Supp (\epsilon_X) = W$. In particular, the representation $V=\omega(\delta_X)\in \Rep_\bbF(G_{X, \omega})$ must have dimension $\dim V > 2$, since otherwise~$\epsilon_X$ would be the skyscraper sheaf corresponding to $\det(V)$ by \cref{prop:connected-component} (3).
	
	\medskip 
	
	By applying the fiber functor $\omega$, one sees that the condition (2) is equivalent to saying that in the decomposition of the tensor square $V\otimes V$ there are only two irreducible direct summands of dimension $>1$. Since $\dim(V)>2$, this is equivalent to~(1) by Larsen's alternative \cite[p.~113]{KatzLFM} for the subgroup $G_{X, \omega} \subset \GL(V)$. 
\end{proof} 

\subsection{Symmetric powers} If the Tannaka group is big, similar arguments allow to control the sum morphism from symmetric powers of the subvariety:

\begin{lemma} \label{cor:birational-via-larsen}
	Let $X\subset A$ be a nondegenerate subvariety and $r \ge 1$ an integer such that $r\dim X < \dim A$. If $G_{X, \omega}$ is big, then the sum morphism 
\[
 \tau_r \colon \quad \Sym^r X \;\longrightarrow\; A
\]
is birational onto its image $W_r = X+\cdots + X$.
\end{lemma}

\begin{proof} 
Consider the following commutative diagram, where $q_r$ denotes the quotient morphism: 
		\[
\begin{tikzpicture}[scale=1]
\def\sqrtthree{1.73205080757};

\node[label={[shift={(-.65,-.35)}]$Z_r = X^r$}] at ( -\sqrtthree/2, 1/2) (a) { };
\node[label={[shift={(1.35,-.35)}]$W_r = X + \cdots + X$}] at (\sqrtthree/2, 1/2) (c) {};
\node[label={[shift={(0,-.55)}]$Y_r = \Sym^r X$}] at (0, -1) (b) {};

\draw[->, shorten <= 2pt, shorten >= 2pt] (a) edge  node[midway, above] {$\scriptstyle \sigma_r$}  (c);
\draw[->, shorten <= 2pt, shorten >= 2pt] (a) edge node[midway, left] {$\scriptstyle q_r $ }  (b);
\draw[->, shorten <= 2pt, shorten >= 2pt] (b) edge node[midway, right] {$\scriptstyle \tau_r$} (c);
\end{tikzpicture} 
\]
Since $q_r\colon Z_r\to Y_r$ is a finite branched cover with group $\frS_r$, the decomposition theorem shows that as an $\frS_r$-equivariant perverse sheaf the direct image $q_{r*}(\delta_{Z_r})$ is a direct sum
	\[
	q_{r*}(\delta_{Z_r}) \;\simeq\; \;\;
	\bigoplus_{\sigma} \sigma \boxtimes P_\sigma 
	\]
where $\sigma$ runs through all irreducible representations of the symmetric group $\frS_r$ and where each $P_\sigma$ is a semisimple perverse sheaf on $Y_r$. In this isotypic decomposition the action of the group $\frS_r$ on $\sigma\boxtimes P_\sigma$ is given by the action on $\sigma$. Since the action of the symmetric group on tensor powers of sheaf complexes involves a Koszul sign, the perverse intersection complex on $Y_r=\Sym^r X$ is the isotypic piece for the trivial representation $\mathbf{1}$ or the sign representation $\mathrm{sgn}$ of $\frS$ depending on the parity of $\dim X$: We have
\[
 \delta_{Y_r} \;\simeq \; P_\epsilon
 \quad \text{for} \quad 
 \epsilon \;=\; 
 \begin{cases} 
 \;\mathrm{sgn} & \text{if $\dim X$ is odd}, \\
 \;\mathbf{1} & \text{if $\dim X$ is even}.
 \end{cases}
\]
as one may check on the open dense subset where~$q_r$ is finite \'etale. So the direct image $\delta_{X,r} := R\tau_{r*}(\delta_{Y_r})$ corresponds to the representation
\[
 \omega(\delta_{X,r}) \;\simeq\; 
 \begin{cases} 
 \Alt^r V & \text{if $\dim X$ is odd}, \\
 \Sym^r V  & \text{if $\dim X$ is even},
 \end{cases} 
\]
where $V:=\omega(\delta_X)$ is the defining representation of the group $G_{X,\omega}$. If that group is big, then we are in one of the following cases:\smallskip
\begin{enumerate} 
\item $G_X = \SL(V)$. Then $\Alt^r V$ and $\Sym^r V$ are irreducible representations by Schur-Weyl duality~\cite[th.~6.3, part~(4)]{FultonHarris}. \smallskip
\item $G_X = \SO(V)$. Then we have an embedding $\Sym^{r-2} V\hookrightarrow \Sym^r V$ and the quotient $\Sym^r V /\Sym^{r-2} V$ is irreducible~\cite[th.~19.19]{FultonHarris}.\smallskip
\item $G_X = \Sp(V)$. Then we have an embedding $\Alt^{r-2} V \hookrightarrow \Alt^r V$ and again the quotient $\Alt^r V /\Alt^{r-2} V$ is irreducible~\cite[th.~17.11]{FultonHarris}.\smallskip
\end{enumerate} 
In the first case $\delta_{X,r}$ is a simple perverse sheaf modulo negligibles, while in the other two cases we have an embedding $\delta_{X,r-2} \hookrightarrow \delta_{X,r}$ whose cokernel is a simple perverse sheaf (note that $\dim X$ is even in case (2) and odd in case (3)). In all three cases the semisimple perverse sheaf~$\delta_{X,r}$ has a unique simple direct summand $\epsilon_{X,r} \subset \delta_{X,r}$ with full support, i.e.~with 
\[
 \Supp(\epsilon_{X,r}) \;=\; W_r \;=\; X+\cdots + X.
\] 
But the decomposition theorem for the generically finite morphism $\tau_r\colon Y_r \to W_r$ also shows
\[
 \delta_{W_r} \;\subset\; \delta_{X,r} \;=\; R\tau_{r*}(\delta_{Y_r}),
\]
hence $\epsilon_{X,r}=\delta_{W_r}$. In particular, there exists an open dense subset $U\subset W_r$ such that
\[
 (R\tau_{r*}(\delta_{Y_r}))_{\vert U} \;\simeq\; (\delta_{Y_r})_{\vert U}
\] 
and by comparing the generic rank on that open subset we obtain $\det(\tau_r)=1$. 
\end{proof} 

In fact the above argument does not require the group $G_{X,\omega}$ to be big, we only need to have sufficient control on the support dimension of the perverse sheaves that enter the relevant wedge or symmetric power. For instance we have the following result which goes beyond the case of big Tannaka groups:

\begin{corollary} 
Let $X\subset A$ be nondegenerate with $r\dim X < \dim A$, and consider the representation
\[
 V \;:=\; 
 \begin{cases} 
 \Alt^r \omega(\delta_X) & \text{if $2\nmid \dim X$}, \\
 \Sym^r \omega(\delta_X) & \text{if $2\mid \dim X$}.
 \end{cases} 
\]
If $V\in \Rep_\bbF(G_{X, \omega}^*)$ has at most one irreducible direct summand of dimension $>1$, then the sum morphism $\tau_r\colon \Sym^r X \to X+\cdots +X$ is birational.
\end{corollary}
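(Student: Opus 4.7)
The plan is to refine the proof of \cref{cor:birational-via-larsen} by restricting attention to the ``symmetric half'' of the convolution square $\delta_X \ast \delta_X$. The first step is to identify $V$, under the fiber functor $\omega$, with the direct summand $R\tau_\ast \delta_Y$ of $\delta_X\ast\delta_X$, where $\tau\colon Y = \Sym^2 X \to W = X+X$ is the sum morphism; this corresponds to the $+1$-eigenspace of the geometric swap of the two factors. On the Tannakian side, the commutativity constraint of the convolution product on perverse sheaves on $A$ differs from this geometric swap by the Koszul sign $(-1)^d$ with $d = \dim X$. Consequently, $\omega(R\tau_\ast \delta_Y)$ equals $\Sym^2 \omega(\delta_X)$ if $d$ is even and $\Alt^2 \omega(\delta_X)$ if $d$ is odd, which in both cases coincides with the space $V$ of the statement.

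With this identification at hand, the decomposition theorem gives $R\tau_\ast \delta_Y \simeq \delta_W \oplus K$ with $K_{\vert U}\simeq N[d]$ for a local system $N$ of rank $\deg(\tau)-1$ on the dense open subset $U\subset W$ over which $\sigma$ is finite \'etale. Arguing by contradiction, I would suppose $\deg(\tau)\ge 2$. Then $K$ has a simple perverse sheaf direct summand $K_1$ with $\Supp K_1 = W$ and $K_1\not\simeq \delta_W$. Since $W$ is nondegenerate by \cref{Lem:SumOfNondegenerate} and $W\neq A$ because $2d<g$, both $\delta_W$ and $K_1$ are non-negligible non-skyscraper simple perverse sheaves lying in $\langle \delta_X\rangle$.

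To contradict the hypothesis, I would show that $\omega(\delta_W)$ and $\omega(K_1)$ give rise to two distinct isomorphism classes of irreducible $G_{X,\omega}^\ast$-subrepresentations of $V$, each of dimension $\ge 2$. The dimension bound rests on the following observation: if a simple object $P \in \langle \delta_X \rangle$ has $G_{X,\omega}^\ast$ acting trivially on $\omega(P)$, then $P$ lies in the Tannakian subcategory associated with the quotient $G_{X,\omega} \twoheadrightarrow G_{X,\omega}/G_{X,\omega}^\ast$; this subcategory is generated by invertible objects and therefore, by \cref{prop:connected-component}(3), consists of skyscraper sheaves. Combined with Clifford's theorem, this forces $\omega(P)_{\vert G_{X,\omega}^\ast}$ to be isotypic of type a single irreducible of dimension $\ge 2$ for every non-skyscraper simple $P \in \langle \delta_X \rangle$. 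For the distinctness, if the $G_{X,\omega}^\ast$-isotypes of $\omega(\delta_W)$ and $\omega(K_1)$ agreed, then through the Tannakian dictionary we would obtain $\delta_W \simeq K_1 \ast \delta_a$ for some torsion point $a \in A(k)$; comparing supports forces $a \in \Stab(W)$, and then $K_1 \simeq t_{-a,\ast}\delta_W = \delta_W$, contradicting $K_1\not\simeq \delta_W$.

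The most delicate point is the sign bookkeeping in the first step, which determines whether $V$ matches $\omega(R\tau_\ast \delta_Y)$ or $\omega(R\tau_\ast \delta_Y^-)$; everything else is a direct adaptation of the arguments in \cref{cor:birational-via-larsen}, but using only the symmetric half of Larsen's alternative, in line with the weaker one-sided hypothesis of the present statement.
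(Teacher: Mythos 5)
Your overall outline matches the paper's: identify $V$ with $\omega(R\tau_\ast\delta_Y)$, apply the decomposition theorem, and contradict the hypothesis when $\deg\tau\ge 2$. The Koszul-sign discussion is a correct and useful clarification of the paper's terse "We know $V = \omega(R\tau_*(\delta_Y))$"; the paper indeed leaves this parity bookkeeping implicit.

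There is a genuine gap in your ``distinctness'' step, and it is also exactly where you deviate from the paper's shorter route. You claim that if $\omega(\delta_W)$ and $\omega(K_1)$ had the same $G^\ast_{X,\omega}$-isotype then $\delta_W \simeq K_1 \ast \delta_a$ for a torsion point $a\in A(k)$. Nothing in the paper justifies this. \Cref{prop:connected-component}(2) controls restriction to $G^\circ$, not to the derived group $G^\ast$; passing from $G^\circ$ to $G^\ast$ further quotients out the (possibly positive-dimensional) connected center $Z(G^\circ)^\circ$, so two simple perverse sheaves can have isomorphic restrictions to $G^\ast$ while differing by a non-torsion character of $G^\circ$, which need not correspond to a skyscraper at a torsion point. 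Worse, agreement of $G^\ast$-isotypes does not even force $\dim\omega(\delta_W)=\dim\omega(K_1)$ if the multiplicities differ, so the claimed twist by a rank-one skyscraper cannot hold. The good news is that distinctness is simply not needed: the hypothesis bounds the number of irreducible direct \emph{summands} of dimension $>1$, counted with multiplicity. Once you know that each non-skyscraper non-negligible simple summand of $R\tau_\ast\delta_Y$ contributes at least one $G^\ast_{X,\omega}$-summand of dimension $\ge 2$ (your ``generated by invertible objects'' observation, which the paper also uses), then $\omega(\delta_W)|_{G^\ast}\oplus\omega(K_1)|_{G^\ast}\subseteq V|_{G^\ast}$ already exhibits two such summands --- regardless of whether they are isomorphic --- and the hypothesis is violated. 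This is the paper's argument: it never compares isotypes.

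Two smaller corrections to the supporting observation. Clifford's theorem does not give that $\omega(P)|_{G^\ast}$ is isotypic; it gives a direct sum of $G_{X,\omega}$-conjugates of a single irreducible, and since the conjugation action on $G^\ast$-irreducibles factors through $\mathrm{Out}(G^\ast)$, these conjugates need not be isomorphic. (This does not affect the final dimension count, so it is harmless, but the assertion as stated is false.) Also, the fact that rank-one objects of the Tannaka category are skyscraper sheaves is Weissauer's theorem --- the one the paper cites with the sentence ``all one-dimensional representations of the Tannaka group arise from skyscraper sheaves'' --- not \cref{prop:connected-component}(3), which is the statement about $\det(P)$ and semisimplicity of $G^\circ$.
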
 

\begin{proof} 
By \cite{WeissauerAlmostConnected} or \cite[section~3.c]{KraemerMicrolocalI} all one-dimensional representations of the Tannaka group arise from skyscraper sheaves, so for $\dim X > 0$ they cannot contribute to the support $W_r = X+\cdots + X$. Hence we can apply the same argument as in the previous proof. 
\end{proof}

\begin{corollary} 
\label{cor:larsen-alternative}
Let $X\subset A$ be a smooth irreducible curve generating $A$, and assume $\dim A \ge 3$. If the representation $V=\Alt^2(\omega(\delta_X))\in \Rep_\bbF(G_{X, \omega}^*)$ is a sum of an irreducible representation and a one-dimensional trivial representation, then 
\begin{enumerate} 
\item $X=p-X$ for some point $p\in X$,
\item $\tau\colon Y= \Sym^2 X \to W=X+X$ is finite birational over $U=W\setminus \{p\}$,
\item $G_{X, \omega}^* = \Sp(\omega(\delta_X), \theta)$ for the natural symplectic form $\theta$ on $\omega(\delta_X)$.
\end{enumerate} 	
\end{corollary}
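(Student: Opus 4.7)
The plan is to unwind the hypothesis on $\Alt^2 V$ via the decomposition theorem applied to the sum morphism $\tau\colon Y := \Sym^2 X \to W := X + X$, following the strategy of \cref{cor:birational-via-larsen}. From that argument we already have
\[ \delta_X * \delta_X \;\simeq\; R\tau_*(\delta_Y) \oplus R\tau_*(\delta_Y^-), \qquad R\tau_*(\delta_Y) \;\simeq\; \delta_W \oplus K, \]
with $K$ a direct sum of shifts of semisimple perverse sheaves. Since $\dim X = 1$ is odd, the Koszul sign rule flips the two summands under the fiber functor, so $\omega(R\tau_*(\delta_Y)) = \Alt^2 V$ (rather than $\Sym^2 V$).

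Writing $G^* := G_{X,\omega}^*$, I would first translate the hypothesis $\Alt^2 V = V_0 \oplus \bbF$, with $V_0$ irreducible, into the statement that $R\tau_*(\delta_Y) \simeq \delta_W \oplus \delta_p$ in $\Pbar(A,\bbF)$ for some torsion point $p \in A(k)$. Indeed, the only one-dimensional objects of $\Pbar(A, \bbF)$ are skyscrapers $\delta_a$ at torsion points, and these yield characters of $G_{X,\omega}$ trivial on $G^*$; on the other hand $W$ is nondegenerate by \cref{Lem:SumOfNondegenerate}, so $\delta_W$ is non-negligible and has positive-dimensional support, forcing it to match the nontrivial irreducible summand $V_0$. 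Consequently $K \simeq \delta_p$ in $\Pbar$.

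For (1) and (2), I would then apply the decomposition theorem to the proper morphism $\tau$. The presence of the skyscraper summand $\delta_p$ in $K$ forces $\tau$ to have a one-dimensional fiber component $E \subset Y$ over $p$; points of $E \subset \Sym^2 X$ are unordered pairs $\{x_1, x_2\} \subset X$ with $x_1 + x_2 = p$, so the projection $E \to X$ maps onto an irreducible curve along which $x \in X$ implies $p - x \in X$, and since $X$ is an irreducible curve this forces $X = p - X$, giving (1). For (2), if $\deg \tau > 1$ then, as computed in the proof of \cref{cor:birational-via-larsen}, $K|_{U'}$ would be a nonzero local system on a dense open $U' \subset W$ and thus contribute a non-skyscraper direct summand to $K$ in $\Pbar$, contradicting $K \simeq \delta_p$; similarly a second positive-dimensional fiber of $\tau$ over some $q \neq p$ would produce an additional summand $\delta_q \in \Pbar$. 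Hence $\deg \tau = 1$ and the only positive-dimensional fiber of $\tau$ lies over $p$, so restricted to $U = W \setminus \{p\}$ the map $\tau$ is proper and quasi-finite, hence finite, and of generic degree one, hence birational.

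For (3), (1) gives that $X$ is symmetric up to translation by $p$, so Poincar\'e duality as in \cref{subsec:bigmonodromy} furnishes a nondegenerate $G_{X,\omega}$-equivariant pairing $V \otimes V \to \omega(\delta_p)$; it is alternating since $\dim X = 1$ is odd. The one-dimensional target is trivial as a $G^*$-representation, so trivializing it yields a $G^*$-invariant symplectic form $\theta$ on $V$, whence $G^* \subseteq \Sp(V, \theta)$. To conclude $G^* = \Sp(V, \theta)$, I would apply Larsen's alternative for $\Sp$: the decomposition $\Alt^2 V = V_0 \oplus \bbF$ coincides with that of $\Alt^2$ of the standard representation of $\Sp(V, \theta)$, the representation $V$ is irreducible under $G^*$ (otherwise $\Alt^2 V$ would acquire strictly more than two irreducible components), and $\dim V = 2 g(X) - 2 \ge 2 \dim A - 2 \ge 4$ rules out the finite-image alternative. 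The main obstacle will be the identification $K \simeq \delta_p$ in $\Pbar$: one must argue cleanly that the trivial line in $\Alt^2 V$ is realized by a single skyscraper summand, and that no hidden non-skyscraper direct summand of $K$ is killed by negligibility.
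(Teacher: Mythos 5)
Your treatment of parts (1) and (2) is sound and close in spirit to the paper's, with one difference: for (1) the paper argues directly from representation theory (a trivial summand in $\Alt^2 V$ forces $V\simeq V^\vee$, hence $X = p-X$), whereas you argue geometrically from the skyscraper summand forcing a one-dimensional fiber of $\tau$; both work. For (2) you and the paper both use that a unique skyscraper summand of $R\tau_*(\delta_Y)$ pins the non-finite locus to $\{p\}$ and that $\deg\tau=1$ follows as in the preceding corollary.

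Part (3) has a genuine gap. You try to conclude $G_{X,\omega}^*=\Sp(V,\theta)$ from Larsen's alternative using only the hypothesis $\Alt^2 V = V_0\oplus\bbF$ together with irreducibility and self-duality of $V$. This is not sufficient: Larsen's alternative in the $\Sp$ case requires control of the fourth moment $\dim_\bbF(V\otimes V\otimes V^\vee\otimes V^\vee)^{G^*}$, which forces you to also know that $\Sym^2 V$ is irreducible (so that $V\otimes V$ has exactly three constituents, one trivial). The hypothesis on $\Alt^2 V$ alone does not yield this. For a concrete counterexample to the representation-theoretic implication you invoke, take $G^* = \SL_6$ acting on $V = \Alt^3\bbF^6$ (a minuscule, irreducible, symplectic $20$-dimensional representation): one computes
\[
\Alt^2 V \;\simeq\; V_{(2,2,1,1)}\oplus\bbF
\quad(\dim 190 = 189 + 1),
\qquad
\Sym^2 V \;\simeq\; V_{(2,2,2)}\oplus\frak{sl}_6
\quad(\dim 210 = 175 + 35),
\]
so $\Alt^2 V$ has exactly two constituents with one trivial, yet $\SL_6\subsetneq\Sp_{20}$. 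What rules out such possibilities in the present geometric setting is precisely the paper's additional analysis of $R\tau_*(\delta_Y^-)$: since $\tau$ is finite over $U=W\setminus\{p\}$ and has degree one, the perverse part of $R\tau_*(\delta_Y^-)$ is the intermediate extension of a rank-one local system on the surface $W$ and hence a \emph{simple} perverse sheaf, giving $\Sym^2 V$ irreducible. Only then does the $[$KWSmall, th.~6.1$]$ argument (the fourth-moment Larsen criterion) yield $G^*=\Sp(V,\theta)$. Your proposal omits this step entirely — and the concern you do flag at the end (the identification $K\simeq\delta_p$) is not where the real difficulty lies.
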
 

\begin{proof} 
By assumption $\Alt^2(\omega(\delta_X))$ contains a one-dimensional trivial representation, so the representation $\omega(\delta_X)$ is isomorphic to its dual. Therefore $X = p -X$ for some point $p\in X$. Now for dimension reasons $\tau\colon Y\to W$ restricts to a finite morphism over the complement $U=W\setminus \Sigma$ of a finite set $\Sigma \subset X$ of points. Note that $Y=\Sym^2 X$ is smooth for a smooth curve $X$, so we have $\delta_Y=\bbF_Y[2]$. Base change then shows that for any point $q$ we have
\[
 \cH^0(R\tau_*(\delta_Y))_q \;\simeq\; 
 H^2(\tau^{-1}(q), \bbF) \;
 \begin{cases} 
 \;=\; 0 & \text{if $q\notin \Sigma$}, \\
 \;\neq\; 0 & \text{if $q\in \Sigma$}.
 \end{cases} 
\]	
Since $R\tau_*(\delta_Y)$ is a direct sum of a semisimple perverse sheaf $P$ and a negligible sheaf complex and since negligible sheaf complexes cannot have cohomology sheaves which are skyscraper sheaves, it follows that $P$ contains the skyscraper sheaves $\delta_q$ in all points $q\in \Sigma$. But by assumption $R\tau_*(\delta_Y)$ contains a unique skyscraper summand, hence it follows that $\Sigma = \{p\}$ and thus $\tau$ is finite over $U=X\setminus \{p\}$.

\medskip 

In particular $R\tau_*(\delta_Y^-)_{\vert U}$ is a perverse sheaf, and we have $\cH^i(R\tau_*(\delta_Y^-))|_{\vert U}=0$ in all degrees $i\neq -2$ because $\delta_Y^-$ is a constructible sheaf placed in degree $-2$. But any semisimple perverse sheaf on a surface with cohomology sheaves only in degrees $-2$ is the minimal extension of a local system on any open dense subset of the surface. In our case that local system has rank one because  $\delta_Y^-$ has generic rank one and $\deg(\tau)=1$. Local systems of rank one are simple, hence it follows that the minimal extension $R\tau_*(\delta_Y^-)$ is a simple perverse sheaf.

\medskip 

In conclusion, this shows that $\delta_X*\delta_X = R\tau_*(\delta_Y)\oplus R\tau_*(\delta_Y^-)$ is a sum of two simple perverse sheaves and a skyscraper sheaf. It then follows by the same argument as in \cite[th.~6.1]{KWSmall} that $G_{X, \omega}^* = \Sp(\omega(\delta_X), \theta)$; note that $\dim(\omega(\delta_X))=\chi(\delta_X)\ge g > 2$ since the curve $X$ generates $A$. 
\end{proof}

\begin{corollary} \label{cor:curve-is-not-e7}
Let $X\subset A$ be a smooth irreducible curve generating $A$, and assume $\dim A \ge 3$. Then the group $G_{X, \omega}^\ast$ is not isomorphic to $E_7$ acting on~$\omega(\delta_X)$ via its irreducible representation of dimension $56$.
\end{corollary}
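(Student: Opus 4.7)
The plan is to argue by contradiction using the previous corollary (Larsen's alternative for curves). Suppose that $(G_{X,\omega}^\ast, \omega(\delta_X)) \simeq (E_7, V_{56})$, where $V_{56}$ denotes the irreducible $56$-dimensional representation of~$E_7$. I would first reduce the question to a pure representation-theoretic statement: by \cref{cor:larsen-alternative}, it suffices to show that under this assumption the representation $\Alt^2 \omega(\delta_X)$ splits as the direct sum of a trivial line and a single nontrivial irreducible representation of $G_{X,\omega}^\ast$; once that is established, \cref{cor:larsen-alternative} forces $G_{X,\omega}^\ast = \Sp(\omega(\delta_X),\theta)$, which is absurd since $\dim E_7 = 133$ while $\dim \Sp_{56} = 1596$.

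The main step is therefore the decomposition of $\Alt^2 V_{56}$ as an $E_7$-module. I would proceed as follows. The representation $V_{56}$ is minuscule and carries an $E_7$-invariant symplectic form, so $E_7 \subset \Sp(V_{56})$ and $\Alt^2 V_{56}$ contains a trivial summand coming from that form. Since $V_{56}$ is minuscule, its weights form a single Weyl orbit and the tensor square $V_{56}\otimes V_{56}$ is multiplicity-free: one has the decomposition
\[
V_{56} \otimes V_{56} \;=\; \mathbf{1} \oplus V_{133} \oplus V_{1463} \oplus V_{1539},
\]
where $V_{133}$ is the adjoint representation and $V_{1463}$, $V_{1539}$ are the two remaining fundamental/low-dimensional irreducibles of $E_7$ (the dimension count $1 + 133 + 1463 + 1539 = 56^2$ serves as a sanity check). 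The inclusion $\mathfrak{e}_7 \subset \mathfrak{sp}(V_{56}) = \Sym^2 V_{56}$ places the adjoint $V_{133}$ in the symmetric part, and by comparing dimensions ($\dim\Sym^2 V_{56} = 1596 = 133 + 1463$) one obtains
\[
\Sym^2 V_{56} \;=\; V_{133} \oplus V_{1463}, \qquad \Alt^2 V_{56} \;=\; \mathbf{1} \oplus V_{1539}.
\]

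With this decomposition in hand, the hypothesis of \cref{cor:larsen-alternative} is satisfied, and its conclusion (3) yields $G_{X,\omega}^\ast \simeq \Sp(\omega(\delta_X),\theta)$, contradicting the assumption that $G_{X,\omega}^\ast \simeq E_7$. I expect the only delicate point to be the placement of the adjoint in $\Sym^2$ rather than $\Alt^2$, but this is immediate from the embedding $E_7 \hookrightarrow \Sp_{56}$ together with the identification $\mathfrak{sp}(V) = \Sym^2 V$ valid for any symplectic vector space; everything else is a dimension count within the multiplicity-free tensor square of a minuscule representation.
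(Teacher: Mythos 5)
Your argument is correct and follows the same route as the paper: establish that $\Alt^2 V_{56}$ decomposes as a trivial line plus a single irreducible for $E_7$, feed this into \cref{cor:larsen-alternative} to force $G_{X,\omega}^\ast \simeq \Sp_{56}(\bbF)$, and derive a contradiction. The only difference is that you supply the representation-theoretic derivation of $\Alt^2 V_{56} = \mathbf{1}\oplus V_{1539}$, which the paper simply cites as a known fact about $E_7$.
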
 

\begin{proof} 
For the $56$-dimensional irreducible representation $W$ of the group $E_7$ the alternating square $\Alt^2(W)$ is a sum of an irreducible and a one-dimensional trivial representation. However, \cref{cor:larsen-alternative} says that $\Alt^2(\omega(\delta_X))$ can be a sum of an irreducible and a one-dimensional trivial representation only if $G_{X, \omega}^\ast \simeq \Sp_{56}(\bbF)$. 
\end{proof}

\subsection{Character twists}

Recall that $\Pi(A, \bbF)=\Hom(\pi_1(A, 0), \bbF^\times)$ denotes the group of continuous characters of the \'etale resp.~topological fundamental group of the abelian variety. For $\chi \in \Pi(A, \bbF)$, let $L_\chi$ be the local system of rank one with monodromy representation given by the character $\chi$.
For $P\in \Perv(A, \bbF)$ we call $P_\chi := P\otimes_{\bbF} L_\chi \in \Perv(A, \bbF)$ the {\em twist} of the given perverse sheaf by the character. Such twists of perverse sheaves appear in the generic vanishing theorem of~\cite{KWVanishing, SchnellHolonomic, BSS}: Let us say that a subset of $\Pi(A, \bbF)$ is a {\em proper subtorus} if it has the form 
\[
\Pi(A/B, \bbF) \;\subset\; \Pi(A, \bbF)
\]
where $B\subset A$ is a nonzero abelian subvariety. Then the generic vanishing theorem says that there is a finite union $\mathcal{S}(P) \subset \Pi(A, \bbF)$ of translates of proper subtori such that
\[
H^i(A, P_\chi) \;=\; 0 \quad \textnormal{for all $i\neq 0$ and all $\chi \in \Pi(A, \bbF)\smallsetminus \mathcal{S}(P)$}.
\]
We will use this in \cref{subsec:splitting} to write down explicit fiber functors with a natural Galois action. Up to noncanonical isomorphism, the Tannaka group of a perverse sheaf does not change under twists:

\begin{lemma} \label{lem:tannakagroup-of-twist}
	Let $P\in \cC$. Then for every character $\chi\in \Pi(A, \bbF)$ with $P_\chi\in \cC$ we have
	\[
	G_\omega(P_{\chi}) \;\simeq \; G_\omega(P).
	\] 
\end{lemma}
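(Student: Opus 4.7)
The guiding idea is that tensoring with a character local system is a tensor auto-equivalence of $\Pbar(A,\bbF)$, and the isomorphism of Tannaka groups will then follow from the uniqueness of fiber functors over the algebraically closed field $\bbF$.

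First, I would check the multiplicativity of $L_\chi$ with respect to the sum morphism: since $\chi\colon \pi_1(A,0)\to \bbF^\times$ is a group homomorphism and $\sigma_\ast\colon \pi_1(A\times A,0)\to \pi_1(A,0)$ corresponds to the group law of $A$ on fundamental groups, composing with $\chi$ gives the external tensor product of $\chi$ with itself, which translates into a canonical isomorphism $\sigma^\ast L_\chi\simeq L_\chi\boxtimes L_\chi$. Combined with the projection formula this yields, for any $Q_1,Q_2\in \Pbar(A,\bbF)$, a natural isomorphism
\[
 Q_{1,\chi}\ast Q_{2,\chi}
 \;=\;R\sigma_\ast\bigl((Q_1\boxtimes Q_2)\otimes \sigma^\ast L_\chi\bigr)
 \;\simeq\;(Q_1\ast Q_2)\otimes L_\chi
 \;=\;(Q_1\ast Q_2)_\chi.
\]
Hence the twist functor $F_\chi\colon Q\mapsto Q_\chi$ is a tensor functor for the convolution product, and since $L_\chi\otimes L_{\chi^{-1}}\simeq \bbF_A$ it is an equivalence with inverse $F_{\chi^{-1}}$. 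Note also that $L_\chi$ has trivial Euler characteristic, so twisting preserves the subcategory of negligible objects and descends to the quotient category $\Pbar(A,\bbF)$.

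Next, I would use this to transfer fiber functors. Both $P$ and $P_\chi$ lie in the full abelian tensor subcategory $\cC$, and so do the respective Tannakian subcategories $\langle P\rangle$ and $\langle P_\chi\rangle$ they generate. Since $F_\chi$ is a tensor auto-equivalence of $\Pbar(A,\bbF)$ sending $P$ to $P_\chi$, it restricts to a tensor equivalence $F_\chi\colon \langle P\rangle\xrightarrow{\sim}\langle P_\chi\rangle$. Pulling $\omega|_{\langle P_\chi\rangle}$ back along this equivalence produces a fiber functor $\omega\circ F_\chi\colon \langle P\rangle\to \Vect(\bbF)$, and the equivalence induces a canonical isomorphism of affine group schemes
\[
 G_\omega(P_\chi)\;=\;\Aut^\otimes\!\bigl(\omega|_{\langle P_\chi\rangle}\bigr)
 \;\simeq\;\Aut^\otimes\!\bigl(\omega\circ F_\chi\bigr).
\]

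Finally, the two fiber functors $\omega|_{\langle P\rangle}$ and $\omega\circ F_\chi$ on the neutral Tannaka category $\langle P\rangle$ are both valued in $\Vect(\bbF)$ with $\bbF$ algebraically closed, so by the standard uniqueness result (e.g.\ \cite{DM82}) they are (non-canonically) isomorphic. Any such isomorphism yields an isomorphism $\Aut^\otimes(\omega\circ F_\chi)\simeq \Aut^\otimes(\omega|_{\langle P\rangle})=G_\omega(P)$, which combined with the previous display gives $G_\omega(P_\chi)\simeq G_\omega(P)$ as required. The only delicate point is the verification of the multiplicativity $\sigma^\ast L_\chi\simeq L_\chi\boxtimes L_\chi$ in the \'etale setting over a non-necessarily algebraically closed field $k$, but this is already implicit in the definition of $L_\chi$ as coming from a continuous character of $\pi_1(A,0)$; everything else is formal.
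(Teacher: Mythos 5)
Your proof is correct and follows the same route as the paper's: show that twisting by $L_\chi$ is a tensor auto-equivalence of $\Pbar(A,\bbF)$ restricting to an equivalence $\langle P\rangle\simeq\langle P_\chi\rangle$, and then invoke the uniqueness of fiber functors over the algebraically closed field $\bbF$. The only difference is that the paper cites \cite[prop.~4.1]{KWVanishing} for the tensor equivalence, whereas you verify the multiplicativity $\sigma^\ast L_\chi\simeq L_\chi\boxtimes L_\chi$ and the compatibility with convolution explicitly.
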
 

\begin{proof} 
	By~\cite[prop.~4.1]{KWVanishing}, twisting by $\chi$ gives rise to an equivalence of tensor categories
	\[
	\langle P \rangle \;\stackrel{\sim}{\too} \; \langle P_{\chi} \rangle, \quad Q \;\longmapsto\; Q_{\chi}
	\]
	in $\Pbar(A, \bbF)$. This equivalence need not be compatible with the fiber functor $\omega$ on the source and target, but since $\bbF$ is algebraically closed, any two fiber functors are noncanonically isomorphic; hence the same holds for the Tannaka groups.
\end{proof}

\section{Galois theory for perverse sheaves}\label{section tanaka and monodromy}

In this section we discuss the behavior of Tannaka groups of perverse sheaves under extension of the base field and recall the connection between such Tannaka groups and classical monodromy groups in~\cite[section~5]{LS20}. We mostly follow the arguments in loc.~cit.~but remove the assumption of geometric semisimplicity in the Galois exact sequence by using a result of D'Addezio and Esnault~\cite{DAE20}. 

\subsection{Extension of the base field} 
Let $K/k$ be a field extension, and consider the base change functor
\[
(-)_K\colon \quad \Perv(A, \bbF) \;\too\; \Perv(A_K, \bbF), \quad P \;\longmapsto\; P_K.
\]
Passing to the abelian quotient categories by the subcategories of perverse sheaves of Euler characteristic zero, we have:

\begin{lemma} \label{lem:basefield-extension}
The base change functor descends to a faithful exact $\bbF$-linear tensor functor
\[
 (-)_K\colon \quad \Pbar(A, \bbF) \;\too\; \Pbar(A_K, \bbF).
\]	
\end{lemma}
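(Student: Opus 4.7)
The plan is to reduce the statement to three standard compatibilities of base change along the flat morphism $g\colon A_K \to A$: $t$-exactness for the perverse $t$-structure, compatibility with convolution, and preservation of Euler characteristics.

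First I would observe that since $k$ is a field, $g$ is flat of relative dimension zero, hence $(-)_K$ is $t$-exact for the perverse $t$-structure and gives an exact $\bbF$-linear functor $\Perv(A,\bbF) \to \Perv(A_K, \bbF)$. Symmetric monoidality with respect to convolution is formal: the sum morphism $\sigma\colon A\times A \to A$ is proper, and proper base change together with the K\"unneth-style compatibility $(K_1\boxtimes K_2)_K \simeq (K_1)_K \boxtimes (K_2)_K$ of exterior tensor product with pullback yields a natural isomorphism $(K_1 * K_2)_K \simeq (K_1)_K * (K_2)_K$.

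Next I would verify that $(-)_K$ preserves the Serre subcategory $\rmS$. Applying proper base change to the structure morphism $A \to \Spec k$, one obtains $\dim_\bbF H^i(A_K, P_K) = \dim_\bbF H^i(A, P)$ for all $i$, and hence $\chi(A_K, P_K) = \chi(A, P)$ for every $P \in \Perv(A,\bbF)$. Consequently $(-)_K$ maps $\rmS(A, \bbF)$ into $\rmS(A_K,\bbF)$, and by the universal property of the Serre quotient it descends to an exact $\bbF$-linear tensor functor $\Pbar(A,\bbF) \to \Pbar(A_K, \bbF)$ as claimed.

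For faithfulness, I would use that in a Serre quotient a morphism $f\colon P \to Q$ between objects of $\Perv(A,\bbF)$ becomes zero in $\Pbar(A,\bbF)$ precisely when $\im(f) \in \rmS(A,\bbF)$. Since $(-)_K$ is exact, $\im(f_K) = (\im f)_K$, and by Euler-characteristic invariance $\im(f_K) \in \rmS(A_K,\bbF)$ if and only if $\im(f) \in \rmS(A,\bbF)$. Because any morphism in $\Pbar(A,\bbF)$ admits a representative of this form after restricting the source and quotienting the target by objects of $\rmS$, this gives faithfulness. The only mildly technical point is the validity of $\ell$-adic proper base change across a possibly transcendental extension $K/k$, which follows from the standard formalism because $K$ is automatically flat over $k$; in the analytic setting one has $k = K = \bbC$, where the statement is immediate.
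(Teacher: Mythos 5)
Your argument is correct and follows the paper's overall strategy — descend through the Serre quotient via its universal property, checking that $(-)_K$ kills $\rmS(A,\bbF)$, with the tensor structure inherited from the derived level — but you diverge on faithfulness. The paper dispatches faithfulness in one line by citing the formal fact that any exact $\bbF$-linear tensor functor between rigid abelian tensor categories with $\End(\mathbf{1})=\bbF$ is automatically faithful (\cite[prop.~1.19]{DM82}). You instead give a hands-on argument: unwind what it means for a morphism to become zero in a Serre quotient, use exactness of $(-)_K$ to commute images with base change, and use invariance of the Euler characteristic under field extension to conclude. Both are valid. Your route has the advantage of being self-contained and making the role of the defining invariant $\chi$ transparent; the paper's route is shorter and more robust, since it does not require tracing through the explicit description of morphisms in a Gabriel quotient. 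One small point to tighten if you write this out in full: the equality $\chi(A_K,P_K)=\chi(A,P)$ is best justified not by ``proper base change along $A\to\Spec k$'' alone but by invariance of \'etale cohomology under extensions of algebraically closed fields (after passing to $\bar k\subset\bar K$), which is exactly what the paper records in \cref{lem:reduction-to-complex-case}; proper base change over the point is part of that story, but the statement you need is the one about algebraically closed extensions.
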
 

\begin{proof} 
The functor $(-)_K\colon \Perv(A, \bbF) \to \Perv(A_K, \bbF)$ is a faithful $\bbF$-linear exact functor. Let $q_K = q\circ (-)_K$ denote its composite with the quotient functor $q$ as shown below: 
	\[
\begin{tikzpicture}[scale=1]
\def\sqrtthree{1.73205080757};

\node[label={[shift={(-.75,-.4)}]$\Perv(A, \bbF)$}] at ( -\sqrtthree/2, 1/2) (a) { };
\node[label={[shift={(.85,-.4)}]$\Perv(A_K, \bbF)$}] at (\sqrtthree/2, 1/2) (c) {};
\node[label={[shift={(0,-.55)}]$\Pbar(A_K, \bbF)$}] at (0, -1) (b) {};

\draw[->, shorten <= 2pt, shorten >= 2pt] (a) edge  node[midway, above] {$\scriptstyle (-)_K$}  (c);
\draw[->, shorten <= 2pt, shorten >= 2pt] (a) edge node[midway, left] {$\scriptstyle q_K $ }  (b);
\draw[->, shorten <= 2pt, shorten >= 2pt] (c) edge node[midway, right] {$\scriptstyle q$} (b);
\end{tikzpicture} 
\]
Since $q_K$ is an exact functor between abelian categories which sends all objects of the Serre subcategory $\rmS(A, \bbF) \subset \Perv(A, \bbF)$ to zero, it factors by the universal property of abelian quotient categories~\cite[cor.~2, p.~368]{Gabriel62}  through a unique exact functor
\[
 (-)_K\colon \quad \Pbar(A, \bbF) \too \Pbar(A_K, \bbF).
\]
This functor is clearly $\bbF$-linear, and it admits the structure of a tensor functor with respect to the natural isomorphisms $(P*Q)_K\simeq P_K*Q_K$ inherited from the derived category. Any exact $\bbF$-linear tensor functor of rigid abelian tensor categories with $\End(\mathbf{1}) = \bbF$ is automatically faithful~\cite[prop.~1.19]{DM82}, so the claim follows. 	
\end{proof} 

Starting from a given full abelian tensor subcategory $\cC \subset \Pbar(A, \bbF)$, let us now denote by
\[
  \cC_K \;=\; \{ Q \mid  \textnormal{$\exists P\in \cC$ such that $Q$ is a subquotient of $P_K$} \} \;\subset\; \Pbar(A_K, \bbF)
\]
the full abelian tensor subcategory generated by the essential image of $\cC$ under the functor $(-)_K$ from \cref{lem:basefield-extension}. The category $\cC_K$ is again neutral Tannaka as it is a full abelian tensor subcategory of the neutral Tannaka category $\Pbar(A_K, \bbF)$. In what follows, we fix a fiber functor 
\[
 \omega\colon \quad \cC_K \;\too\; \Vect(\bbF).
\]
Precomposing with the base extension functor $(-)_K$ we get a fiber functor on $\cC$ and we denote by
\begin{eqnarray*} 
 G_{\omega}(\cC_K) &\ = & \Aut^\otimes(\omega \mid \cC_K), \\[0.5em]
 G_{\omega}(\cC) & := & \Aut^\otimes(\omega \mid \cC),
\end{eqnarray*}
the corresponding Tannaka groups. 

\begin{corollary} \label{cor:geometric-subgroup}
We have a closed immersion
$G_{\omega}(\cC_K) \into G_\omega(\cC)$.
\end{corollary}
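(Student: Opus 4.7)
The plan is to derive the statement from the classical Tannakian criterion of Deligne and Milne. First I would note that by \cref{lem:basefield-extension}, base change defines an exact faithful $\bbF$-linear tensor functor $(-)_K \colon \Pbar(A, \bbF) \to \Pbar(A_K, \bbF)$. Restricting its source to $\cC$ and its target to $\cC_K$ (which is permissible by the definition of $\cC_K$ as the full tensor subcategory of $\Pbar(A_K, \bbF)$ generated under subquotients by the essential image of $\cC$), I obtain an exact $\bbF$-linear tensor functor
\[ F \colon \quad \cC \;\too\; \cC_K. \]
Here one needs the mild observation that the subcategory of subquotients of $\{P_K \mid P \in \cC\}$ is already closed under convolution, which follows from the canonical isomorphism $(P*Q)_K \simeq P_K * Q_K$ together with the exactness of $(-)_K$.

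Next, by the setup preceding the corollary, the fiber functor on $\cC$ is by definition $\omega \circ F$, so $F$ is tautologically compatible with the chosen fiber functors. Standard Tannakian formalism therefore produces a morphism of affine $\bbF$-group schemes
\[ u \colon \quad G_\omega(\cC_K) \;\too\; G_\omega(\cC).\]
To conclude that $u$ is a closed immersion I would invoke \cite[prop.~2.21(b)]{DM82}: for a tensor functor between neutral Tannakian categories compatible with fiber functors, the induced morphism of Tannaka groups is a closed immersion if and only if every object of the source category of the Tannaka group (namely $\cC_K$) is isomorphic to a subquotient of $F(P)$ for some $P \in \cC$. In our setting this hypothesis holds by the very construction of $\cC_K$.

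Since the argument is essentially formal, I do not anticipate a genuine obstacle. The only subtlety worth double-checking is the interplay between the passage to the quotient $\Pbar$ and the formation of subquotients—specifically, verifying that the defining description of $\cC_K$ (subquotients in $\Pbar(A_K, \bbF)$ of objects coming from $\cC$) matches the hypothesis appearing in the Deligne--Milne criterion, which is a purely categorical check.
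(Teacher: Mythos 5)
Your argument is correct and follows the same route as the paper: base change gives an exact faithful tensor functor compatible with the fiber functors, and the closed-immersion criterion of \cite[prop.~2.21(b)]{DM82} applies because every object of $\cC_K$ is a subquotient of some $P_K$ with $P\in\cC$, which holds by the very definition of $\cC_K$.
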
 

\begin{proof} 
By construction the faithful exact $\bbF$-linear tensor functor $(-)_K\colon \cC \to \cC_K$ is compatible with our chosen fiber functors, hence it defines a homomorphism of Tannaka groups. The latter is a closed immersion by~\cite[prop.~2.21(b)]{DM82}, since every object of $\cC_K$ is isomorphic to a subquotient of $P_K$ for some $P\in \cC$.
\end{proof} 

\subsection{The Galois sequence}  Let $k' \subset K$ be the algebraic closure of $k$ in $K$.   
The category 
\[
 \Rep_\bbF(\Aut(k'/k)) 
\] 
of continuous finite-dimensional representations of the profinite group $\Aut(k'/k)$ over $\bbF$ is a neutral Tannaka category.  If $k'/k$ is Galois, then $\Aut(k'/k) = \Gal(k'/k)$ is a quotient of  the absolute Galois group of $k$. In this case we can identify objects of the above category with sheaves on $\Spec(k)$ and hence the pushforward under the neutral element $e\colon \Spec(k) \to A$ gives a fully faithful embedding
\[
\begin{tikzcd}[column sep=15pt]   e_*\colon \hspace{-11pt}& \Rep_\bbF(\Gal(k'/k)) \ar[r,hookrightarrow] &  \Pbar(A, \bbF). \end{tikzcd}
\]
We will view Galois representations as a full subcategory of skyscraper sheaves and drop the $e_*$ from the notation. Our chosen fiber functor on $\cC$ restricts to a fiber functor 
\[
 \omega\colon \quad \cC\cap \Rep_\bbF(\Gal(k'/k)) \;\too\; \Vect(\bbF).
\] 
Let
\[ G_{\omega, \cC}(k'/k) \;:=\; \Aut^\otimes(\omega \,|\, \cC\cap \Rep_\bbF(\Gal(k'/k)))
\]
denote its Tannaka group. Representations of this group correspond to skyscraper sheaves $P\in \cC$ in the origin, and we have a homomorphism $\Aut(k'/k)\to G_{\omega, \cC}(k'/k)$. 

\begin{theorem} \label{thm:Galois_sequence} Assume as above that $k'/k$ is Galois. Then we have a short exact sequence of proalgebraic groups
\[
 1 \too G_{\omega}(\cC_K) \too G_\omega(\cC) \too G_{\omega, \cC}(k'/k) \too 1.
\]
\end{theorem}

\begin{proof}  
\Cref{cor:geometric-subgroup} gives a closed immersion $i\colon G_\omega(\cC_K) \to G_\omega(\cC)$. Moreover, since $k'/k$ is a Galois extension, we have by the above an embedding as a full tensor subcategory 
\[
\begin{tikzcd}[column sep=15pt]   \cC\cap \Rep_\bbF(\Gal(k'/k)) \ar[r,hookrightarrow] &  \cC. \end{tikzcd}
\]
which is stable under subobjects, and this embedding is compatible with the chosen fiber functors on the source and target. By ~\cite[prop.~2.21(a)]{DM82} we then have an epimorphism
\[
\begin{tikzcd}[column sep=15pt] p\colon \hspace{-11pt} & G_\omega(\cC) \ar[r,twoheadrightarrow] & G_{\omega,\cC}(k'/k). \end{tikzcd}
\]
By construction, $p\circ i$ is trivial.  Thus, to complete the proof,  by~\cite[prop.~A.13]{DAE20}, it suffices to check that\smallskip
\begin{enumerate}
\item the functor $(-)_K\colon \cC \to \cC_K$ is observable \cite[Appendix~A]{DAE20},  and \smallskip 
\item for every $P\in \cC$ the maximal trivial subobject of $P_K$ lies in the essential image of the functor $e_*\colon \cC\cap \Rep_\bbF(\Gal(k'/k)) \to \cC$.
\end{enumerate} 
For part (1) it suffices by lemma~A.4(1) in loc.~cit.~to show that, for $P\in \cC$, any rank one subobject
\[
 S \;\subset\; P_K 
\]
is a direct summand in a semisimple object $Q_K$ with $Q\in \cC$. To check this, note that   the rank one objects in the Tannaka category of perverse sheaves are rank one skyscraper sheaves, and that the  sum of all perverse rank one skyscraper subsheaves of $P_K$ is semisimple, being a sum of simple objects. To conclude the proof of (1), it suffices to show that  this direct sum  descends to a perverse subsheaf $Q\subset P$, as it then follows that   $ S$ is a direct summand of $Q_K$ as desired.   
 To prove that the sum of all rank one skyscraper subsheaves descends to $k$,  we  first show that the maximal skyscraper subsheaf of $P_K$ descends to a subsheaf of $P$.  Indeed, the Verdier dual of the sum of all perverse skyscraper subsheaves is the maximal perverse skyscraper quotient of the Verdier dual $D(P_K)$, which is $\mathcal{H}^0(D(P_K)) = \mathcal{H}^0(D(P))_K$.   Hence, the maximal skyscraper subsheaf descends. Replacing the given perverse sheaf $P$ by the maximal skyscraper subsheaf supported at the origin, we are reduced to the case $A=\Spec k$. Then $P$ is given by a Galois representation $V  \in  \Rep_{\bbF}(\Gal(\bar{k}/k))$ and the claim reduces to the following to facts:
  \begin{itemize} \item  A subspace of $V$ is stable under $\Gal(\bar{K}/K)$ if and only if it is so under $\Gal(\bar{k}/k')$ (since $\Gal(\bar{K}/K)\to \Gal(\bar{k}/k')$ is surjective for $k'$ algebraically closed in $K$).
  \item  The sum of all one-dimensional subrepresentations of $V_ {\vert \Gal(\bar{k}/k')}$ is stable under $\Gal(\bar{k}/k)$ (since $\Gal(\bar{k}/k')$ is a normal subgroup of $\Gal(\bar{k}/k)$).
  \end{itemize}
For (2) we argue similarly: The unit object  of the tensor category $\cC_K$ is the skyscraper sheaf $\delta_0$ of rank one supported in the origin. So the maximal trivial subobject of $P_K$ is the maximal  subobject of the form $\delta_0^{\oplus n}$ for some  integer $n\ge 0$, and this subobject descends to a subobject $Q\subset P$ as before. 
\end{proof}

\begin{corollary} \label{cor:algebraically-closed}
If $k$ is algebraically closed, then for every extension $K/k$ we have a natural isomorphism
\[
 G_\omega(\cC_K) \;\stackrel{\sim}{\too}\; G_\omega(\cC).
\]
In particular,  for every perverse sheaf $P\in \cC$, we have $G_\omega(P_K)\simeq G_\omega(P)$.
\end{corollary}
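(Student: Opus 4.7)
The plan is to deduce the corollary directly from the Galois exact sequence of Theorem \ref{thm:Galois_sequence}. Since $k$ is already algebraically closed, the algebraic closure $k'$ of $k$ inside $K$ coincides with $k$, so $k'/k$ is trivially Galois with $\Gal(k'/k) = \{e\}$ and the theorem applies. The resulting short exact sequence
\[ 1 \too G_\omega(\cC_K) \too G_\omega(\cC) \too G_{\omega,\cC}(k'/k) \too 1 \]
then reduces the claim to showing that $G_{\omega,\cC}(k'/k)$ is trivial.

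By definition this group is the Tannaka group of the full tensor subcategory $\cC \cap \Rep_\bbF(\Gal(k'/k)) = \cC \cap \Rep_\bbF(\{e\})$ of $\cC$. Continuous $\bbF$-linear representations of the trivial profinite group are just direct sums of the trivial representation, which under the fully faithful embedding $e_\ast$ correspond inside $\cC$ to finite direct sums of copies of the unit object $\delta_0$. The resulting tensor subcategory is equivalent to $\Vect(\bbF)$, so its Tannaka group is trivial, which yields the main isomorphism $G_\omega(\cC_K) \iso G_\omega(\cC)$.

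For the second assertion, I would apply the isomorphism just established to the full abelian tensor subcategory $\langle P \rangle \subset \cC$, recalling that $G_\omega(P) = G_\omega(\langle P \rangle)$ and $G_\omega(P_K) = G_\omega(\langle P_K \rangle)$ by definition. It then suffices to identify $\langle P \rangle_K$ with $\langle P_K \rangle$ inside $\Pbar(A_K, \bbF)$. The inclusion $\langle P_K \rangle \subseteq \langle P \rangle_K$ holds because the right-hand side is an abelian tensor subcategory containing $P_K$. Conversely, since $(-)_K$ is an exact tensor functor, the essential image of $\langle P \rangle$ consists of subquotients of tensor expressions in $P_K$ and hence lies in $\langle P_K \rangle$; closing under subquotients preserves this containment. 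There is no serious obstacle here---the real content lies in Theorem \ref{thm:Galois_sequence} and its appeal to the observability result of D'Addezio--Esnault; the corollary itself is a quick specialization of that theorem.
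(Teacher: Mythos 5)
Your argument is correct and is essentially the paper's own proof: the paper simply notes that $k$ algebraically closed forces $k'=k$, so the quotient $G_{\omega,\cC}(k'/k)$ in \cref{thm:Galois_sequence} is trivial, and the injection $G_\omega(\cC_K)\into G_\omega(\cC)$ is therefore an isomorphism. You have merely unwound why the quotient Tannaka group is trivial and spelled out the identification $\langle P\rangle_K = \langle P_K\rangle$, both of which are routine but correct.
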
 

\begin{proof} 
If $k$ is algebraically closed, then $k'=k$ and hence $G_\omega(k'/k) \simeq \{1\}$.	
\end{proof}

\subsection{A splitting of the sequence}
\label{subsec:splitting}
We now apply the above when $K=\bar{k}$ is an algebraic closure of $k$. In the Galois sequence in \cref{thm:Galois_sequence} we have used the fully faithful functor
\[
\begin{tikzcd}[column sep=15pt]  
 e_*\colon \hspace{-11pt}& \Rep_\bbF(\Gal(\bar{k}/k)) \ar[r,hookrightarrow] &  \Pbar(A, \bbF). \end{tikzcd}
\]
that identifies a Galois representation with the corresponding skyscraper sheaf at the origin. We now describe a splitting of the sequence in \cref{thm:Galois_sequence} for a special category~$\cC$ such that the functor
$
 e_*\colon \cC\cap \Rep_\bbF(\Gal(\bar{k}/k)) \into \cC
$ 
has a left inverse. To do so, let
\[
 \Perv_0(A, \bbF)
\]
be the full subcategory of all $P\in \Perv(A, \bbF)$ for which all simple subquotients $Q$ of $P_{\bar{k}}$ satisfy 
\[
 H^i(A_{\bar{k}}, Q) \;=\; 0 
 \quad \textnormal{for all} \quad i\;\neq\; 0.
\]
Its image
\[
\Pbar_0(A, \bbF) \;\subset\; \Pbar(A, \bbF)
\]
is a full abelian tensor subcategory which is equivalent to $\Perv_0(A, \bbF)/S_0(A, \bbF)$, where $S_0(A, \bbF):=S(A, \bbF)\cap \Perv_0(A, \bbF)$ is the full subcategory of perverse sheaves~$P$ with the property that all the subquotients $Q$ of $P_{\bar{k}}$ satisfy $H^\bullet(A_{\bar{k}}, Q)=0$. We then get a functor
\[
 \omega\colon \quad \Pbar_0(A, \bbF) \;=\; \Perv_0(A, \bbF)/S_0(A, \bbF)\;\too\; \Vect(\bbF), \quad Q \;\longmapsto\; H^0(A_{\bar{k}}, Q)
\]
which is exact by definition of the source category. Moreover, $\omega$ is a tensor functor by the K\"unneth isomorphism
\[
 H^\bullet(A_{\bar{k}}, P*Q) \;\simeq\; H^\bullet(A_{\bar{k}}, P) \otimes H^\bullet(A_{\bar{k}}, Q),
\]
since for $P, Q\in \Perv_0(A, \bbF)$ only the cohomology in degree zero contributes. For the fiber functor obtained in this way, we can summarize the relation between the Tannaka groups over $k$ and over $\bar{k}$ as follows:

\begin{theorem} \label{lem:galois-in-normalizer}
For $\cC = \Pbar_0(A, \bbF)$ with the fiber functor $\omega := H^0(A_{\bar{k}}, -)$, the above construction induces a splitting of the short exact sequence
\[
  1 \too G_{\omega}(\cC_{\bar{k}}) \too G_\omega(\cC) \too G_{\omega, \cC}(\bar{k}/k) \too 1
\] 
In particular, we have an isomorphism
\[
 G_\omega(\cC) \;\simeq\; G_\omega(\cC_{\bar{k}}) \rtimes G_{\omega, \cC}(\bar{k}/k),
\]
and for any $P\in \Pbar_0(A, \bbF)$, the action of $\Gal(\bar{k}/k)$ on $V=\omega(P)$ factors through the normalizer
\[
 N(G_\omega(P_{\bar{k}})) \;\subset\; \GL(V).
\]
\end{theorem}

\begin{proof} 
 While the fiber functor $\omega = H^0(A_{\bar{k}}, -)$ is only defined on $\cC := \Pbar_0(A, \bbF)$, it comes with a natural Galois action in the sense that we have a commutative diagram 
	\[ \hspace{50pt}
\begin{tikzpicture}[scale=1]
\def\sqrtthree{1.73205080757};

\node[label={[shift={(-0.1,-.35)}]$\cC$}] at ( -\sqrtthree/2, 1/2) (a) { };
\node[label={[shift={(1.15,-.43)}]$\Rep_\bbF(\Gal(\bar{k}/k))$}] at (\sqrtthree/2, 1/2) (c) {};
\node[label={[shift={(0,-.55)}]$\Vect(\bbF)$}] at (0, -1) (b) {};

\draw[->, shorten <= 2pt, shorten >= 2pt] (a) edge  node[midway, above] {$\scriptstyle \exists$}  (c);
\draw[->, shorten <= 2pt, shorten >= 2pt] (a) edge node[midway, left] {$\scriptstyle \omega $ }  (b);
\draw[->, shorten <= 2pt, shorten >= 2pt] (c) -- (b);
\end{tikzpicture}
\]
where the top row is a left inverse of the functor $e_*\colon \Rep_\bbF(\Gal(\bar{k}/k)) \to \cC$.	
\end{proof}

\subsection{Big monodromy from big Tannaka groups}  
\label{subsec:monodromy}

Now again assume that $k$ is an algebraically closed field of characteristic zero. Consider the constant abelian scheme $A_S := A\times_k S$, where $S$ is an integral scheme over $k$. We denote by $\bar \eta$ a geometric point over the generic point $\eta$ of $S$.  Let $\cX\subset A_S$ be an irreducible closed subscheme which is smooth over $S$. We want to control the monodromy of the family $ \cX\to S$ twisted by a generic rank one local system as in \cite{LS20}. In this context, the following terminology will be useful.

\begin{definition}
We say that $\cX\subset A_S$ is \emph{constant up to translation in $A(S)$} if there is a subvariety $Y\subset A$ and a point $a\in A(S)$ such that  $\cX = Y_S + a$.
\end{definition} 

In favorable situations, this condition can be read off from the geometric generic fiber of $ \cX\to S$ via the following descent result:

\begin{lemma} \label{lem:constant-and-symmetric}
Suppose $S$ is a smooth and irreducible variety. Let $\cY, \cZ \subset A_S$ be subvarieties which are flat over $S$. If the subvariety $\cY_{\bar \eta}\subset A_{S, \bar \eta}$ has trivial stabilizer, then the following are equivalent:
\begin{enumerate} 
\item $\cZ = \cY + a$ for some $a \in A(S)$.\smallskip
\item $\cZ_{\bar{\eta}} = \cY_{\bar{\eta}} + a$ for some $a \in A(\bar{\eta})$.
\end{enumerate} 	
\end{lemma}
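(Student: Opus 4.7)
The implication $(1)\Rightarrow(2)$ is immediate by pulling back along $\Spec\bar\eta\to S$, so the content is in the converse. My plan is to construct $a\in A(S)$ by first producing it over the generic point $\eta$, then extending it to all of $S$, and finally invoking flatness to compare subschemes.

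The key input is uniqueness: I will first show that a point $a\in A(\bar\eta)$ with $\cZ_{\bar\eta}=\cY_{\bar\eta}+a$ is uniquely determined. Indeed, if $\cY_{\bar\eta}+a=\cY_{\bar\eta}+a'$, then $a-a'$ lies in $\Stab_{A_{\bar\eta}}(\cY_{\bar\eta})=0$ by hypothesis. Since $\cY$ and $\cZ$ are both defined over $\eta$, the absolute Galois group $\Gal(\bar\eta/\eta)$ preserves $\cY_{\bar\eta}$ and $\cZ_{\bar\eta}$, and uniqueness then forces $\sigma(a)=a$ for every $\sigma$. Hence $a$ descends to a point of $A(\eta)=A(k(S))$, i.e., to a rational map $S\dashrightarrow A$.

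Next, I will extend this rational map to a morphism $a\colon S\to A$ by appealing to Weil's classical theorem that any rational map from a smooth variety to an abelian variety is defined everywhere; this is where the smoothness hypothesis on $S$ enters. The resulting closed subscheme $\cY+a\subset A_S$ is flat over $S$ and has the same geometric generic fiber as $\cZ$ by construction. Since both $\cZ$ and $\cY+a$ are flat over the irreducible base $S$, each coincides with the scheme-theoretic closure of its generic fiber in $A_S$, and therefore $\cZ=\cY+a$ as closed subschemes.

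The main subtle point is the extension step, which relies essentially on the smoothness of $S$ (regularity in codimension one would only give extension away from a codimension-two locus). The uniqueness argument and the final flat-descent comparison of closed subschemes are then routine bookkeeping once the translation point has been constructed over all of $S$.
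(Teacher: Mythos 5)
Your proposal is correct and follows essentially the same route as the paper's proof: uniqueness of the translation point (from triviality of the stabilizer) gives descent to $A(\eta)$, smoothness of $S$ extends the rational map to a morphism, and flatness over the integral base then identifies $\cZ$ with $\cY+a$. The only cosmetic difference is in the descent step, where you argue directly by Galois invariance of $a$ while the paper introduces the transporter subvariety $T\subset A_{S,\eta}$ (via the Hilbert scheme) and shows $T=\Spec F$ because the trivial stabilizer acts simply transitively on $T(\bar\eta)$; these are two packagings of the same uniqueness argument.
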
 

\begin{proof} 
Clearly, the first property implies the second. Conversely, suppose that we have $\cZ_{\bar \eta} = \cY_{\bar \eta} + a$ for some point $a\in A(\bar{\eta})$. First, we claim that the point $a$ comes from a point $a\in A(\eta)$. Indeed, let $F$ be the function field of $S$ and let~$y = [\cY_{\eta}]$ and~$z = [\cZ_{\eta}]$ be the $F$-points of the Hilbert scheme $\Hilb(A)$ defined by the generic fibers of $ \cY \to S$ and $\cZ \to S$, seen as subvarieties of $A_{S, \eta}$. Now, the abelian variety~$A$ acts on the Hilbert scheme by translation. The transporter
\[ T = \{ t \in A_{S, \eta} \mid z = y + t\} \]
is a subvariety of $A_{S, \eta}$. Note that $T(\bar \eta)$ is nonempty, as it contains the point $a$. Actually, the point $a$ is the only one of $T(\bar{\eta})$. For, note that the stabilizer of the subvariety $\cY_{\bar{\eta}} \subset A_{S, \bar{\eta}}$ acts freely and transitively on the base-change of $T$ to $\bar{\eta}$. On the other hand, the stabilizer of $\cY_{\bar{\eta}}$ is trivial by assumption, so the transporter~$T(\bar{\eta})$ must be a singleton. The variety $T$ is defined over $F$ and has only one point over an algebraically closed field, thus $T = \Spec F$ which proves the claim.

\medskip 

The point $a \in A(\eta)$ can be seen as a rational map $a \colon S \dashto A$, which is moreover everywhere defined by smoothness of $S$ \cite[th.~3.1]{MilneAV}. To conclude the proof,  note that the generic fibers of  $\cY + a$ and $\cZ$   coincide, hence $\cZ = \cY + a$ by flatness.
\end{proof} 

\begin{corollary} \label{cor:constant_up_to}  If $S$ is a smooth irreducible variety and if the subvariety $\cX_{\bar \eta}\subset A_{S, \bar \eta}$ is nondivisible, then the following are equivalent:
\begin{enumerate} 
\item $\cX\subset A_S$ is constant (resp.~symmetric) up to translation in $A(S)$.\smallskip
\item $\cX_{\bar \eta} \subset A_{S, \bar \eta}$ is constant (resp.~symmetric) up to translation in $A(\bar{\eta})$. 
\end{enumerate} 
Moreover, the subvariety $\cX\subset A_S$ is constant up to translation in $A(S)$ if and only if the family $\cX \to S$ is isotrivial.
\end{corollary}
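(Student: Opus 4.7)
The plan is to handle the three equivalences using the descent \cref{lem:constant-and-symmetric} as the core tool. The forward direction $(1) \Rightarrow (2)$ is tautological in both the constant and symmetric cases, so all the content is in the converse.

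For $(2) \Rightarrow (1)$ in the constant case, I will apply \cref{lem:constant-and-symmetric} with $\cY := Y \times_k S$ and $\cZ := \cX$, where $Y \subset A$ is the subvariety (over $k$) witnessing (2). The hypothesis that $\cY_{\bar\eta} = Y_{\bar\eta}$ has trivial stabilizer holds because $Y_{\bar\eta}$ is a translate of the nondivisible variety $\cX_{\bar\eta}$ and the stabilizer is translation-invariant. For $(2) \Rightarrow (1)$ in the symmetric case, I will apply the same lemma with $\cY := [-1](\cX)$, noting that $\Stab(-\cX_{\bar\eta}) = \Stab(\cX_{\bar\eta}) = 0$.

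For the final assertion on isotriviality, one direction is immediate: if $\cX = Y_S + a$, then translation by $-a$ gives an $S$-isomorphism $\cX \simeq Y \times_k S$, so $\cX \to S$ is trivial and a fortiori isotrivial. For the converse, I plan to fix a finite \'etale cover $\pi\colon S' \to S$ and an $S'$-scheme isomorphism $\cX \times_S S' \simeq X_0 \times_k S'$ for some proper connected $k$-variety $X_0$ (this being the definition of isotriviality). Composing with the closed immersion $\cX_{S'} \hookrightarrow A_{S'}$ and then projecting to $A$ produces a morphism $f\colon X_0 \times_k S' \to A$ from a proper connected variety to an abelian variety, so the rigidity lemma yields a decomposition $f(x, s') = g(x) + h(s')$ for morphisms $g\colon X_0 \to A$ and $h\colon S' \to A$. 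Since $\cX_{S'} \hookrightarrow A_{S'}$ is a closed immersion, the restriction of $f$ to each fiber is a closed immersion, so $g$ is a closed immersion; setting $Y := g(X_0) \subset A$ we obtain $\cX_{S'} = Y_{S'} + h$. Pulling back to a geometric generic point $\bar\eta'$ of $S'$ above $\bar\eta$ yields $\cX_{\bar\eta} = Y_{\bar\eta} + h(\bar\eta')$, so condition (2) of the constant equivalence holds and we conclude by the part already proved.

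The main technical obstacle is in this last step: the rigidity lemma is what cleanly translates an abstract trivialization after \'etale base change into a concrete realization of $\cX_{S'}$ as the translate of a constant subfamily. Once this conversion is made, the rest follows by combining descent with the previously established equivalence, and the verification that the stabilizer hypothesis of \cref{lem:constant-and-symmetric} is preserved under the operations $Y \mapsto Y_S$ and $\cX \mapsto [-1]\cX$ is routine.
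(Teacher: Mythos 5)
Your proof is correct, and the $(1)\Leftrightarrow(2)$ part coincides with the paper's: both consist of applying \cref{lem:constant-and-symmetric} with the obvious choices of $\cY$ and $\cZ$, the only content being the routine verification that the stabilizer hypothesis propagates, which you supply and the paper leaves implicit.

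For the isotriviality statement, the easy direction is the same in both. For the converse, you invoke the rigidity lemma to split $f\colon X_0\times_k S'\to A$ as $f(x,s')=g(x)+h(s')$, while the paper instead invokes functoriality of the relative Albanese scheme and the commutativity of a diagram
\[
\begin{tikzcd}
\cX \ar[r] \ar[d, "\wr"] & \Alb(\cX/S) \ar[r] \ar[d, "\wr"] & A_S \ar[d, equals] \\
Y_S \ar[r] & \Alb(Y)_S \ar[r] & A_S
\end{tikzcd}
\]
(with translations on the right) to conclude $\cX=Y_S+a$. These are two packagings of the same input: the paper's middle square commutes precisely because the induced homomorphism $\Alb(Y)_S\to A_S$ is constant in $S$, which in turn is a rigidity/discreteness-of-$\Hom$ statement equivalent to what you use directly. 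Your route is the more self-contained and elementary of the two; the paper's is slicker but hides the rigidity in Albanese functoriality. Two small points you should make explicit for a complete writeup: one may and should replace $S'$ by a connected component so that the rigidity lemma applies, and $X_0$ is proper, smooth and connected precisely because these are the standing hypotheses on the fibers of $\cX\to S$ (needed both for the rigidity lemma and for $g$ to be a closed immersion). Neither is a real gap.
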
 

\begin{proof}
The equivalence of (1) and (2) follows directly from \cref{lem:constant-and-symmetric}. Now suppose that the family $\cX \to S$ is isotrivial. In order to prove that the subvariety $\cX \subset A_S$ is constant up to translation, we may by the equivalence of (1) and (2) replace $S$ by an \'etale cover and hence assume $\cX \simeq Y_S$ for some $Y\subset A$.  Fixing $y\in Y(k)$, we get a section $x\colon S\to \cX$ that gives rise to a commutative diagram:
\[
\begin{tikzcd}[column sep=40pt]
\cX \ar[r, "\alb_x"] \ar[d, "\wr"] & \Alb(\cX/S) \ar[r] \ar[d, "\wr"] & A_S \ar[d, equals] \ar[r, "z\mapsto z+x"] & A_S \ar[d] \\
Y_S \ar[r, "\alb_y"] & \Alb(Y_S/S) \ar[r] & A_S \ar[r, "z\mapsto z+y"] & A_S
\end{tikzcd} 
\]
Here $\alb_a$ and $\alb_y$ are the relative Albanese morphisms and the composite of the horizontal arrows are the inclusions $\cX \subset A_S$ resp.~$Y_S\subset A_S$. Hence, $\cX\subset A_S$ is constant up to translation.
\end{proof} 

\begin{example} 
The nondivisibility is needed in the above: Let $Y \subset A$ be a subvariety with finite stabilizer $\Stab(Y)\neq \{0\}$. Viewing $S:= A/\Stab(Y)$ as the orbit of the point $[Y]$ in $\Hilb(A)$ under the translation action of $A$, we get by restriction of the universal subvariety of $A\times_k \Hilb(A)$ a subvariety $\cX\subset A_S$ with fiber $Y + a$ over a point $[a] \in S(k)$. Then the family $ \cX\to S$ is not constant up to translation in~$A(S)$, but it is so up to translation by a section in $A(\bar \eta)$.
\end{example}

\medskip 

We now assume that $\cX\subset A_S$ is not constant up to translation in~$A(S)$. Then the monodromy of the smooth family $ \cX\to S$ twisted by a generic rank one local system is related to the Tannaka group of the perverse sheaf $\delta_{X}\in \Perv(A_{S,\bar \eta}, \bbF)$ on the geometric generic fiber
 \[X \;:=\; \cX_{\bar{\eta}}\] 
as follows. For $\chi \in \Pi(A, \bbF)$, let $L_\chi$ denote the corresponding rank one local system on $A$. The generic vanishing theorem for perverse sheaves~\cite{BSS, KWVanishing, SchnellHolonomic} shows that 
\[ \delta_{X, \chi} \;:=\; \delta_X \otimes L_\chi \;\in\; \Pbar_0(A_{S,\bar \eta}, \bbF) \]
for most $\chi \in \Pi(A, \bbF)$, where {\em most} means all characters $\chi$ outside a finite union of torsion translates of linear subvarieties of $\Pi(A, \bbF)$. From \cref{subsec:splitting} we get a fiber functor 
\[
 \omega \;:=\; H^0(A_{S,\bar \eta}, -)\colon 
 \quad 
 \langle \delta_{X, \chi} \rangle 
 \;\longrightarrow\; \Vect(\bbF),
\]
and we denote by
\[
 G^*_{X, \chi} \;:=\; [G_\omega^\circ(\delta_{X, \chi}), G_\omega^\circ(\delta_{X, \chi})]
\]
the derived group of the connected component of the Tannaka group. Note that by \cref{lem:tannakagroup-of-twist} the isomorphism type of this group does not depend on the chosen character; we say that $X$ has a {\em simple derived connected Tannaka group} if $G_{X, \chi}^\ast$ is simple for some (hence every) character $\chi$ with the above vanishing properties.

\medskip 

To define the monodromy of the family $f\colon \cX \to S$ twisted by a rank one local system, let $\pi\colon \cX \to A$ be the projection to the abelian variety. Using generic vanishing on the geometric generic fiber of $X\subset A_{S, \bar \eta}$, one sees that for most $\chi$ the higher direct images $R^i f_* \pi^* L_\chi$ vanish in all degrees $i\neq d$, where $d$ denotes the relative dimension of the family $f\colon \cX \to S$. For such $\chi$ the remaining direct image
\[
V_{\chi} \;:=\; R^d f_* \pi^* L_{\chi}
\]
is a local system. More generally we consider for $\underline{\chi} = (\chi_1, \dots, \chi_n) \in \Pi(A, \bbF)^n$ the direct sum
\[
 V_{\underline \chi} \;:=\; V_{\chi_1} \oplus \cdots \oplus V_{\chi_n}.
\]
Let
$
 \rho\colon \pi_1(S, \bar \eta) \rightarrow \GL(V_{\underline{\chi}, \bar \eta})
$ 
be the corresponding monodromy representation on the geometric generic fiber. We define the \emph{algebraic monodromy group} of $V_{\underline{\chi}}$ as the Zariski closure 
\[
 M(V_{\underline{\chi}}) \;:=\; \overline{\im(\rho)} \;\subset\; 
 \GL(V_{\underline{\chi}, \bar \eta}).
\]
The link between our main theorem from the introduction and the Tannaka groups introduced above is the following result by Lawrence and Sawin, an analog of the theorem of the fixed part:

\begin{theorem}\label{Thm:FixedPart}  
Let $S$ be a  smooth  integral variety over $k$, and let $ \cX\subset A_S$ an integral subvariety such that
\begin{enumerate}
\item the family $f \colon \cX\to S$ is smooth of relative dimension~$d$, it is not constant up to translation in $A(S)$, and 
\item the geometric generic fiber $X=\cX_{\bar \eta} \subset A_{S,\bar \eta}$ is nondivisible and has a simple derived connected Tannaka group.
\end{enumerate}
 Then for most $\underline{\chi} \in \Pi(A, \bbF)^n$ we have  
 \[ G^\ast_{X, \chi_1} \times \cdots \times G^\ast_{X, \chi_n} \;\unlhd\; M(V_{\underline \chi}).
 \]
\end{theorem}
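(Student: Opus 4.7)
The plan is to combine the Galois-theoretic setup of \cref{section tanaka and monodromy} with a Goursat-type argument exploiting the simplicity hypothesis on each $G^*_{X,\chi_i}$. I would start by fixing $K = k(\eta)$ with algebraic closure $\bar K$ through $\bar\eta$, so that $X = \cX_{\bar\eta}$ lives in $A_{\bar K}$ and the arithmetic/geometric split of the Galois exact sequence~\eqref{eq galois sequence} is available. Since~$\cX$ is not constant up to translation, \cref{cor:constant_up_to} and the nondivisibility of $X$ ensure that $X$ itself is not constant up to translation in $A(\bar\eta)$, so all the machinery of Sections \ref{section perv and groups}--\ref{section tanaka and monodromy} applies to $\delta_X$.

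For each character $\chi_i$, generic vanishing places $\delta_{X,\chi_i}$ in $\Pbar_0(A_{\bar K},\bbF)$ outside a proper union of torsion translates of linear subvarieties; for such $\chi_i$, \cref{lem:galois-in-normalizer} shows that the $\Gal(\bar K/K)$-action on the fiber $V_{\chi_i,\bar\eta} = H^0(A_{\bar K},\delta_{X,\chi_i})$ factors through the normalizer $N(G_{X,\chi_i}) \subset \GL(V_{\chi_i,\bar\eta})$. The representation $\rho$ of $\pi_1(S,\bar\eta)$ is a quotient of this Galois representation, so $M(V_{\underline\chi})$ lies inside $\prod_i N(G_{X,\chi_i})$ and in particular normalizes each characteristic subgroup $G^*_{X,\chi_i}$. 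This already gives the normality assertion of the theorem.

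For the reverse inclusion $G^*_{X,\chi_1}\times\cdots\times G^*_{X,\chi_n} \subseteq M(V_{\underline\chi})$, I would first treat a single character. The arithmetic Tannaka group $G_\omega(\langle\delta_{X,\chi}\rangle_K)$ sits in the short exact sequence of \cref{thm:Galois_sequence}, and I would identify the Tannakian closure of the monodromy representation of $\pi_1(S,\bar\eta)$ on $V_{\chi,\bar\eta}$ with (a Zariski-dense subgroup of) the arithmetic Tannaka group: concretely, every tensor construction from $V_\chi$ arises as the fiber at $\bar\eta$ of some relative perverse sheaf on $A_S$ in $\langle\delta_\cX\rangle$, and its $\pi_1(S,\bar\eta)$-invariants match its $\Gal(\bar K/K)$-invariants because these are governed by the same geometric objects. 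Combined with the fact that $G^{\textup{geom}}_{X,\chi}$ is a normal subgroup of the arithmetic Tannaka group with finite group of connected components on the quotient, this yields $M^\circ(V_\chi) \supseteq G^*_{X,\chi}$. Then, to upgrade to the product statement, a Goursat argument applies: $M^\circ(V_{\underline\chi})$ surjects onto each simple group $G^*_{X,\chi_i}$, and a nontrivial identification between factors would come from an isomorphism $\delta_{X,\chi_i} \iso \delta_{X,\chi_j} \otimes L_\psi$ in $\Pbar_0$, forcing $\chi_i\chi_j^{-1}\psi^{-1}$ to lie on a proper linear subvariety of $\Pi(A,\bbF)$; discarding these finitely many torsion translates of linear subvarieties in $\Pi(A^n,\bbF) = \Pi(A,\bbF)^n$ eliminates all such identifications and forces the full product to embed in $M^\circ(V_{\underline\chi})$.

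The hardest step, and the one I would expect to absorb the bulk of the work, is the single-character inclusion $M^\circ(V_\chi) \supseteq G^*_{X,\chi}$, i.e., the genuine analog of Deligne's theorem of the fixed part. Beyond the formal Galois sequence \eqref{eq galois sequence}, this requires checking that every $G^*_{X,\chi}$-invariant tensor on $V_{\chi,\bar\eta}$ also satisfies some rigidity or monodromy-invariance property with respect to $\pi_1(S,\bar\eta)$, which in turn relies on the geometric content that $\cX$ varies non-isotrivially inside $A_S$. Lawrence and Sawin carried this out in the codimension-one setting; the expected adaptation here amounts to verifying that generic vanishing is preserved along the family and that the fiber functor $H^0(A_{\bar K},-)$ is compatible with specialization to closed points $s \in S(k)$, so that the Tannakian data over $\bar\eta$ really does control the monodromy.
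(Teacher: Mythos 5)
Your high-level framing — bound the monodromy inside $\prod_i N\bigl(G^\ast_{X,\chi_i}\bigr)$ via \cref{lem:galois-in-normalizer}, then rule out proper subgroups for most $\underline\chi$ — is consistent with the paper's proof. But there is a genuine gap: the step you yourself flag as absorbing ``the bulk of the work,'' namely the inclusion $M^\circ(V_\chi)\supseteq G^\ast_{X,\chi}$, is not actually carried out, and your sketch of it rests on an unjustified assertion. You propose to ``identify the Tannakian closure of the monodromy representation \dots\ with (a Zariski-dense subgroup of) the arithmetic Tannaka group,'' but nothing in \cref{thm:Galois_sequence} guarantees that the image of $\Gal(\bar K/K)$ is Zariski dense in $G_\omega(\cC)$: the arithmetic Tannaka group governs the full tensor category, whereas the monodromy group only sees the single local system $V_\chi$. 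Establishing such a density is exactly what the fixed-part argument must prove, not assume.

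The paper does not pass through a single-character step at all. Instead it invokes \cref{representation list} (which is \cite[lem.~5.4]{LS20}): for a simple $G\subset\GL(V)$ with normalizer $N(G)$, there is a finite list of irreducible $W_\alpha = W_{\alpha,1}\boxtimes\cdots\boxtimes W_{\alpha,n}\in\Rep_\bbF(N(G)^n)$ such that a reductive $H\subset N(G)^n$ contains $G^n$ if and only if $H$ has no invariants on any $W_\alpha$. Each $W_{\alpha,i}$ is realized as $\omega$ of a perverse sheaf $P_{\alpha,i}\in\langle\delta_{X_0}\rangle$; the external product $Q_\alpha = P_{\alpha,1}\boxtimes\cdots\boxtimes P_{\alpha,n}$ on $A^n_{S,\eta}$, twisted by $L_{\underline\chi}$, then satisfies $W_\alpha\simeq\rH^0(A^n_{S,\bar\eta},Q_{\alpha,\underline\chi})$ Galois-equivariantly. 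The vanishing of $(G^\ast_X)^n$-invariants on $W_\alpha$ translates via \cite[lem.~5.2]{LS20} into the absence of a perverse subquotient of $Q_\alpha$ pulled back from $A$; the spreading-out lemma \cite[lem.~5.3]{LS20} then yields vanishing of the Galois invariants for most $\underline\chi$. This mechanism — finite list of test representations plus spreading out — handles all $n$ characters simultaneously and replaces both the ``theorem of the fixed part'' invariance statement and your Goursat step, neither of which appears in the paper.

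Your Goursat step also has its own gap. A nontrivial identification between two factors in Goursat's lemma is an abstract group isomorphism $G^\ast_{X,\chi_i}\xrightarrow{\;\sim\;}G^\ast_{X,\chi_j}$, and such an isomorphism need not carry one defining representation to the other, nor need it arise from an isomorphism $\delta_{X,\chi_i}\iso\delta_{X,\chi_j}\otimes L_\psi$ in $\Pbar_0$. What a Goursat obstruction does produce is a nontrivial $H$-invariant in some tensor construction built from $V_{\chi_i}\boxtimes V_{\chi_j}$; but turning \emph{that} into a condition on the pair $(\chi_i,\chi_j)$ living on a proper linear subvariety is precisely the content of \cref{representation list} combined with the spreading-out argument, which your proposal does not supply.
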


\begin{proof} In \cite[th.~5.6]{LS20} this is stated for hypersurfaces, but the proof works for smooth subvarieties of any codimension. For convenience, we recall the main ideas in our setup: The fiber
\[
V_{\underline{\chi}, \bar \eta}
\;=\; 
\bigoplus_{i=1}^n \rH^d(X, L_{\chi_i})
\]	
comes with a monodromy action of $\pi_1(S, {\bar \eta})$ preserving the summands on the right-hand side; the algebraic monodromy is the Zariski closure of the image of $\pi_1(S, {\bar \eta})$ inside 
\[ 
\GL(V_{\chi_1, \bar{\eta}}) \times \cdots \times \GL(V_{\chi_n, \bar{\eta}})
\quad \textnormal{where} \quad 
V_{\chi_i, \bar{\eta}} \;=\; \rH^d(X, L_{\chi_i}).
 \]
Since $S$ is smooth, this algebraic monodromy is the Zariski closure of the image of the absolute Galois group of the function field of $S$. By \cref{lem:galois-in-normalizer} the Galois action normalizes the subgroups $G_{X, \chi_i}^\ast \subset \GL(V_{\chi_i})$, in fact the algebraic monodromy is a subgroup
\[
 M(V_{\underline{\chi}}) \;\subset\; G_{X_0, \chi_1} \times \cdots \times G_{X_0, \chi_n}
\]
where $X_0 := \cX_\eta$ denotes the generic fiber and $G_{X_0, \chi_i} := G_\omega(\delta_{X_0, \chi_i})$.
We must show that this upper bound on the algebraic monodromy is almost sharp in the sense that for most $\underline{\chi}=(\chi_1, \dots, \chi_n)$, the algebraic monodromy contains the normal subgroup
\[ G^\ast_{X, \chi_1} \times \cdots \times G^\ast_{X, \chi_n} \;\unlhd \; G_{X_0, \chi_1} \times \cdots \times G_{X_0, \chi_n}.
\]
In what follows, it will be convenient to identify all factors on the left-hand side with a fixed simple algebraic group. For this, we fix a fiber functor~$\xi\colon \langle \delta_X \rangle \to \Vect(\bbF)$ and pick an isomorphism between $\rH^0(A_{\bar \eta}, -)$ and the fiber functor obtained as the composite
\[
\begin{tikzcd}
 \langle \delta_{X_0, \chi_i} \rangle 
 \ar[r, "\sim"]
 & 
 \langle \delta_{X_0} \rangle 
 \ar[r, "(-)_{\bar \eta}"] 
 & 
 \langle \delta_X \rangle 
 \ar[r, "\xi"]
 & \Vect(\bbF),
\end{tikzcd} 
\]
where the isomorphism on the left is the inverse of $P\mapsto P_{\chi_i}$. We get a commutative diagram
\[
\begin{tikzcd} 
& G_{X, \chi_1}^* \times \cdots \times G_{X, \chi_n}^* \ar[r, "\sim"] \ar[d, hook] & 
 (G^*_X)^n \ar[d, hook] 
 \\
M(V_{\underline{\chi}}) \ar[r, hook] & G_{X_0, \chi_1} \times \cdots \times G_{X_0, \chi_n} \ar[r, "\sim"]  &
 (G_{X_0})^n 
\end{tikzcd} 
\]
where $G_X^* := [G^\circ_\xi(\delta_X), G^\circ_\xi(\delta_X)]\subset G_{X_0} := G_\xi(\delta_{X_0})$. Note that $G_{X_0}$ is contained in the normalizer
\[
 N(G_X^*) \subset \GL(\xi(\delta_{X_0}))
\]
by \cref{lem:galois-in-normalizer}. Now we use the following general observation \cite[lemma 5.4]{LS20}:  

\begin{fact} \label{representation list}
Let $G\subset \GL(V)$ be a simple algebraic group, and $N(G)\subset \GL(V)$ its normalizer. Then for every integer $n\ge 1$ there exists a finite list of irreducible representations 
\[
 W_\alpha \;=\; W_{\alpha,1} \boxtimes \cdots \boxtimes W_{\alpha, n} \;\in\; \Rep_\bbF(N(G)^n)
 \quad (\alpha \in \{1, \dots, N\})
\]
such that for any reductive subgroup $H\subset N(G)^n$ the following two properties are equivalent:
\begin{enumerate} 
\item $G^n \subset H$. 
\item $H$ has no invariants on any of the representations $W_\alpha$. 
\end{enumerate} 
In particular, the group $G^n$ has no invariants on any of the representations $W_\alpha$.
\end{fact}

\medskip 

We apply this to $V=\xi(\delta_{X_0})$, $G=G_X^*$ and $H=M(V_{\underline{\chi}})$. Since $G_{X_0} \subset N(G_X^*)$, each $W_{\alpha, i} \in \Rep_\bbF(N(G))$ defines a representation of the Tannaka group $G_{X_0}$ and hence a perverse sheaf
\[
 P_{\alpha,i} \;\in\; \langle \delta_{X_0} \rangle. 
\]
The representation obtained by pullback under the isomorphism $G_{X_0, \chi_i} \to G_{X_0}$ then corresponds to the perverse sheaf
$(P_{\alpha,i})_{\chi_i} \in \langle \delta_{X_0, \chi_i} \rangle$.
By construction, we have an isomorphism
\begin{eqnarray} \label{eq:representation_from_list} \nonumber
 W_\alpha 
 &\;=\;&
 W_{\alpha,1} \boxtimes \cdots \boxtimes W_{\alpha, n} \\
 &\;\simeq\;&
  \rH^0(A_{S, \bar \eta}, (P_{\alpha,1})_{\chi_1}) \boxtimes \cdots \boxtimes \rH^0(A_{S,\bar \eta},(P_{\alpha,n})_{\chi_n} )
\end{eqnarray}
of representations of $N(G_{X, \chi_1}^*)\times \cdots \times N(G_{X, \chi_n}^*)$. To keep track of how the Galois action on the right-hand side depends on the chosen characters, it will be convenient to pass to $A_{S, \bar \eta}^n = A_{S, \bar \eta} \times \cdots \times A_{S, \bar \eta}$ via the K\"unneth isomorphism. Consider the perverse sheaf
\[
 K_0 \;:=\;  e_{1*} \delta_{X_0} \oplus \cdots \oplus e_{n*} \delta_{X_0}
 \;\in\; \Pbar_0(A_{S,  \eta}^n, \bbF)
\]
where $e_i\colon A_{S,\eta} \hookrightarrow A_{S,  \eta}^n$ denotes the $i$-th coordinate inclusion. Let $K\in \Pbar_0(A_{S, \bar \eta}^n, \bbF)$ be the base change of the perverse sheaf $K_0$ to the geometric generic fiber. Note that $ G^*_{\zeta}(K) = (G_{X}^*)^n $ for the fiber functor $\zeta := \xi \boxtimes \cdots \boxtimes \xi\colon \langle K \rangle \to \Vect(\bbF)$ and that we have
\[
Q_{\alpha} 
\;:=\; 
e_{1*} P_{\alpha,1} * \cdots * e_{n*} P_{\alpha, n}
\;=\;
P_{\alpha,1} \boxtimes \cdots \boxtimes P_{\alpha, n}
\;\in\; 
\langle K_0 \rangle.
\]
Returning to character twists again, consider the local system $L_{\underline{\chi}} := L_{\chi_1} \boxtimes \cdots \boxtimes L_{\chi_n}$ and put 
\[
 K_{0, \underline{\chi}} \;:=\; 
 K_0 \otimes L_{\underline{\chi}},
 \quad
 K_{\underline{\chi}} 
 \;:=\; K \otimes L_{\underline{\chi}}
 \quad \textnormal{and} \quad 
 Q_{\alpha, \underline{\chi}} 
 \;:=\; Q_\alpha \otimes L_{\underline{\chi}}.
\]
Then we have
\[
 G_\omega^*(K_{\underline{\chi}}) \;=\; G_{X, \chi_1}^*\times \cdots \times G_{X, \chi_n}^*
 \quad \textnormal{and} \quad 
 Q_{\alpha, \underline{\chi}} \;\in\; \langle K_{0, \underline{\chi}} \rangle
\]
Combining \eqref{eq:representation_from_list} with the K\"unneth isomorphism we obtain a Galois equivariant isomorphism
\[
 W_\alpha \;\simeq\;  \rH^0(A^n_{S, \bar \eta}, Q_{\alpha, \underline \chi}),
\]
where the Galois group acts on the left-hand side via $\Gal(\bar{\eta}/\eta)\to M(V_{\underline{\chi}})$ and on the right-hand side by the natural Galois action.

\medskip 

Now recall that by the last claim in \cref{representation list}, the group $(G_X^*)^n$ has no invariants on~$W_\alpha$. But $(G_X^*)^n = G_\zeta^*(K)$ is the derived group of the connected component of the Tannaka group of the perverse sheaf $K$, and $W_\alpha = \zeta(Q_\alpha)$ is the representation defined by $Q_\alpha \in \langle K_0\rangle$. Hence, we can apply~\cite[lemma 5.2]{LS20}: The vanishing of invariants of the derived connected Tannaka group on the geometric generic fiber implies that $Q_\alpha$ has no perverse subquotient coming by pullback from $A$ via~$A_{S,\eta} \to A$. By a spreading out argument~\cite[lemma~5.3]{LS20} the last property implies that for most $\underline{\chi}$ the Galois invariants of $\rH^0(A_{S,\bar \eta}, Q_{\alpha, \underline{\chi}})$ vanish. Thus, the algebraic monodromy has no invariants on any of the representations $W_\alpha$ and hence by the equivalence of (1) and (2) above it contains all of $(G_X^*)^n$ as required.
\end{proof}

\begin{remark}
For $n\ge 2$, the above proof gives more precise information on the dependence of $n$ of the locus of characters on which the conclusion of \cref{Thm:FixedPart} holds: There exists a finite union
$
\Sigma \subset \Pi(A, \bbF)^2
$ 
of torsion translates of proper linear subvarieties such that the conclusion of the theorem holds for all $n\ge 2$ and all $\underline{\chi}=(\chi_1, \dots, \chi_n)\in \Pi(A, \bbF)^n$ with
\[ 
 (\chi_i, \chi_j)\notin \Sigma \quad \textnormal{for all} \quad i\neq j.	
\]
This follows from the fact that the list of representations constructed in the proof of \cref{representation list} arises from a finite list of representations of $N(G)^2$ by pullback under the various projections $N(G)^n \to N(G)^2$. 
\end{remark} 
  
\section{From representations to geometry}\label{section from rep to geo}

In this section, we explain the link between representations and characteristic cycles, which will be our main tool to show that under certain assumptions the Tannaka group of a smooth subvariety will be big. We work over an algebraically closed field $k$ with $\characteristic(k)=0$, and starting from \cref{subsec:clean-cc} we assume $k=\bbC$.

\subsection{The ring of clean cycles} \label{sec:RingCleanCycles} Over the complex numbers an important invariant of a perverse sheaf is its characteristic cycle, which is a formal sum of conormal varieties adapted to a suitable Whitney stratification. As we recall in \cref{subsec:clean-cc}, the convolution product of perverse sheaves is mirrored by a `convolution product' on their characteristic cycles. To define the latter, we need to introduce a convolution product of conormal varieties, which can be done over any algebraically closed field~$k$ of characteristic zero as follows.  

\medskip

Recall that, for an integral subvariety $Z \subset A$, its conormal variety $\PLambda_Z$ is said to be \emph{clean} if its Gauss map $\gamma_Z \colon  \PLambda_Z \to \bbP_A$ is dominant---by \cref{Thm:PositivityNotions} this is the case if and only if the variety $Z$ is of general type---and \emph{negligible} otherwise. 

\begin{definition}
The group of clean cycles $\cL(A)$ is the free abelian group generated by the projective conormal cones $\PLambda_Z$ of integral subvarieties $Z \subset A$, modulo the subgroup generated by the negligible ones. The projection onto the quotient induces an isomorphism
\[
   \bigoplus_{Z\subset A} \bbZ\cdot \PLambda_Z \; \stackrel{\sim}{\too} \;  \cL(A),
\]
where the direct sum ranges over the integral subvarieties of general type $Z \subset A$. 

\medskip 

A \emph{clean cycle} is an element of $\cL(A)$ and, by means of the preceding isomorphism, will always be seen as a finite formal sum $\sum_Z m_Z \PLambda_Z$, $m_Z \in \bbZ$, indexed by the integral subvarieties $Z \subset A$ of general type.
\end{definition}

Recall that in \cref{Def:ConormalCycle} we defined the conormal variety $\PLambda_Z$ for a reduced but not necessarily irreducible subvariety $Z \subset A$. For simplicity, we still write $\PLambda_Z$ for the conormal variety seen as a cycle on $A \times \bbP_A$, or merely as a clean cycle. In particular, in the latter case, we have
\[ \PLambda_Z = \sum_{Z' \subset Z} \PLambda_{Z'},\]
the sum ranging over the irreducible components $Z' \subset Z$ of general type. We will consistently perpetrate this abuse of notation by writing
\[ \PLambda_{Z} = m_1 \PLambda_{Z_1} + \cdots + m_n \PLambda_{Z_n} \]
for a cycle $Z = m_1 Z_1 + \cdots + m_n Z_n$ on $A$, with $m_i \in \bbZ$ and $Z_i \subset A$ integral.

\begin{definition} \label{defn:convolution-of-clean-cycles} 
Let $\PLambda_{X_1}, \PLambda_{X_2}$ be clean conormal varieties. Let $U\subset \bbP_A$ be an open dense subset of the projective cotangent space to the abelian variety such that over this open subset the Gauss maps
$
 \PLambda_{X_i\mid U} := \gamma_{X_i}^{-1}(U) \to U
$
are finite \'etale covers. The fiber product of these two finite \'etale covers embeds into $A\times A\times U \subset A\times A \times \bbP_A$, and we denote by
\[
 \PLambda \;:=\; \overline{\PLambda_{X_1 \rvert U} \times_U \PLambda_{X_2 \rvert U}}
 \;\subset\; A\times A \times \bbP_A 
\]
its Zariski closure. We define the {\em convolution} of the conormal varieties to be the clean cycle
\[
 \PLambda_{X_1} \circ \PLambda_{X_2} \;:=\; 
 \sigma_*(\PLambda ) \;\in\; \cL(A)
\]
arising by pushforward under the sum morphism $\sigma \colon A\times A \times \bbP_A \to A\times \bbP_A$. We extend this product $\circ$ on conormal varieties bilinearly to a product on the group of clean cycles
\[
\circ\colon \quad \cL(A) \times \cL(A) \;\too\; \cL(A).
\]
This endows the group $\cL(A)$ with a natural ring structure. The product $\circ$ should not be confused with an intersection of cycles, indeed the intersection product of any two cycles in $\cL(A)$ is zero for dimension reasons. For any integer $n\ne 0$ the pushforward 
\[
 [n]_*\colon \quad \cL(A) \;\too\; \cL(A)
\]
is a ring homomorphism. For $\PLambda \in \cL$ we denote by $\langle \PLambda \rangle \subset \cL(A)$ the smallest subring of $\cL(A)$ which contains $\PLambda$ and is stable under passing from a clean cycle to its irreducible components.
\end{definition}

\subsection{A reminder on Segre classes} 

In the discussion of wedge powers and spin representations to be carried out in \crefrange{sec:wedge}{sec:spin}, we will need to control the effect that certain tensor constructions on clean cycles have on the dimension of their base. For this we recall in this section some basic facts about Segre classes, or Chern-Mather classes\footnote{In the case of abelian varieties Segre classes and Chern-Mather classes are the same, since the cotangent bundle to abelian varieties is trivial.} in the terminology of~\cite[section~3]{KraemerMicrolocalII}:

\begin{definition} 
	The {\em Segre classes} of a cycle $\PLambda$ on $A\times \bbP_A$ of pure dimension $g-1$ are defined as the cycle classes
	\[
	s_d(\PLambda) \;:=\; (\pr_A)_*([\PLambda] \cdot [A\times H_d] )
	\;\in\; 
	\CH_d(A)
	\]
	where $H_d \subset \bbP_A$ is a general linear subspace of dimension $d< g=\dim A$ and $\CH_d(A)$ denotes the Chow group of dimension $d$ algebraic cycles with $\bbZ$-coefficients, modulo rational equivalence. 
\end{definition} 

The following observation allows to control the dimension of the base of a clean cycle in terms of its Segre classes:

\begin{remark}
For any subvariety $Z\subset A$ we have $s_d(\PLambda_Z)=0$ for all $d>\dim Z$. On the other hand, if $Z$ has a top-dimensional irreducible component of general type, then the Segre classes $s_d(Z)$ are represented by nonzero effective cycles for all $d\in \{0,1,\dots,\dim Z\}$; this follows from the dominance of the Gauss map $\gamma_Z$ and Kleiman's generic transversality theorem~\cite[lemma 3.1.2 (3)]{KraemerMicrolocalII}. 
The top degree Segre class is the fundamental class
\[
 s_{\dim Z}(Z) \;=\; [Z].
\]
\end{remark}

Since clean cycles live on the {\em projective} cotangent bundle, there is no Segre class in degree $g=\dim A$. We view the {\em total Segre class} 
$s(\PLambda) :=  s_0(\PLambda) + \cdots + s_{g-1}(\PLambda)$ as an element of quotient
\[
\CH_{<g}(A) \;:=\; \CH_\bullet(A)/\CH_g(A).
\]
To define a ring structure on this quotient, recall that the additive group $\CH_\bullet(A)$ comes with a natural ring structure where the product is given by the Pontryagin product
\[
[X]*[Y] \;:=\; \sigma_* [X\times Y] 
\quad \textnormal{for the sum morphism} \quad 
\sigma\colon X\times Y \to X+Y\subset A.
\]
and that $\CH_g(A) \subset \CH_\bullet(A)$ is an ideal for the Pontryagin product. Working with the truncated Chow ring has the advantage that the total Segre class is compatible with the convolution product of clean cycles in the following sense:

\begin{lemma} \label{Lemma:SegreClassAndConvolution}
	Let $\PLambda_1, \PLambda_2\in \cL(A)$. If both Gauss maps
	$
	\gamma_{\PLambda_i}\colon \Supp(\PLambda_i) \to \bbP_A
	$	
	are finite morphisms, then 
	\[s(\PLambda_1 \circ \PLambda_2) = s(\PLambda_1)*s(\PLambda_2)
	\quad \textnormal{\em in} \quad \CH_{<g}(A). \]
\end{lemma}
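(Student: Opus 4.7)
The plan is to establish the asserted equality degreewise. By bilinearity I may reduce to the case where each $\PLambda_i$ is a single clean conormal variety, and the finiteness of the Gauss maps $\gamma_i \colon \PLambda_i \to \bbP_A$ then ensures that the fiber product $\PLambda := \PLambda_1 \times_{\bbP_A} \PLambda_2$ is finite over $\bbP_A$ of pure dimension $g-1$, so no closure is needed in \cref{defn:convolution-of-clean-cycles}. It suffices to show, for each $e < g$ and a general linear subspace $H_e \subset \bbP_A$ of dimension $e$, that
\[
s_e(\PLambda_1 \circ \PLambda_2) \;=\; \sum_{d_1 + d_2 = e} s_{d_1}(\PLambda_1)\, * \, s_{d_2}(\PLambda_2) \quad \text{in } \CH_e(A);
\]
summing over $e \in \{0,\dots,g-1\}$ then yields the statement in $\CH_{<g}(A)$, the Pontryagin terms with $d_1 + d_2 \geq g$ being absorbed by the ideal $\CH_g(A)$.

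To unwind the left-hand side I would use $[\PLambda_1 \circ \PLambda_2] = \sigma_*[\PLambda]$ together with $\sigma^*[A \times H_e] = [A \times A \times H_e]$ and the projection formula. Since $\pr_A \circ \sigma$ factors as $\sigma_A \circ q$ with $q \colon A \times A \times \bbP_A \to A \times A$ the projection and $\sigma_A \colon A \times A \to A$ the sum morphism, this recasts the Segre class as
\[
s_e(\PLambda_1 \circ \PLambda_2) \;=\; (\sigma_A)_* \, \tilde p_* \bigl(\tilde\gamma^*[H_e]\bigr),
\]
where $\tilde p \colon \PLambda \to A \times A$ and $\tilde\gamma \colon \PLambda \to \bbP_A$ are the two projections from the fiber product. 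To compute the inner pushforward I would exploit the Cartesian square with closed immersion $\iota \colon \PLambda \hookrightarrow \PLambda_1 \times \PLambda_2$ over the diagonal embedding $\Delta \colon \bbP_A \hookrightarrow \bbP_A \times \bbP_A$; base change for the refined Gysin pullback of the regular embedding $\Delta$ gives
\[
\iota_* \tilde\gamma^*[H_e] \;=\; (\gamma_1 \times \gamma_2)^* \Delta_* [H_e] \quad \text{in } \CH_e(\PLambda_1 \times \PLambda_2).
\]

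The combinatorial heart is then the classical formula for the class of the diagonal in $\bbP^{g-1} \times \bbP^{g-1}$, namely $[\Delta] = \sum_{i+j = g-1} [H_i] \times [H_j]$. Combined with the projection formula, this yields $\Delta_*[H_e] = [\Delta] \cdot ([H_e] \times [\bbP_A]) = \sum_{d_1 + d_2 = e} [H_{d_1}] \times [H_{d_2}]$. Pulling back along $\gamma_1 \times \gamma_2$ and pushing forward along $p_1 \times p_2$ via $\tilde p_* = (p_1 \times p_2)_* \circ \iota_*$ then produces
\[
\tilde p_* \bigl(\tilde\gamma^*[H_e]\bigr) \;=\; \sum_{d_1 + d_2 = e} s_{d_1}(\PLambda_1) \times s_{d_2}(\PLambda_2) \quad \text{in } \CH_e(A \times A),
\]
and applying $(\sigma_A)_*$ converts each exterior product into the Pontryagin product, delivering the asserted degreewise identity.

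The main technical point to verify is the base-change identity $\iota_* \tilde\gamma^*[H_e] = (\gamma_1 \times \gamma_2)^* \Delta_*[H_e]$: since $\gamma_1 \times \gamma_2$ is finite but not a priori flat, one must phrase the pullback through the refined Gysin map for the regular embedding $\Delta$ and invoke its compatibility with proper pushforward, as in Fulton's intersection theory. The dimension count $\dim \PLambda = 2(g-1) - (g-1) = g-1$ matches the expected dimension of the fiber product, so no excess-intersection correction enters.
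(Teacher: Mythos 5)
The paper's proof of this lemma is a bare citation to~\cite[lemma~3.3.1]{KraemerMicrolocalII}, so there is no in-text argument to compare against. Your proposal supplies a valid direct proof, and the route you take---reducing to the degreewise identity $s_e(\PLambda_1\circ\PLambda_2)=\sum_{d_1+d_2=e}s_{d_1}(\PLambda_1)*s_{d_2}(\PLambda_2)$, translating both sides into intersections on $A^2\times\bbP_A^2$, and then invoking the diagonal decomposition $[\Delta_{\bbP^{g-1}}]=\sum_{i+j=g-1}[H_i\times H_j]$ together with compatibility of refined Gysin with proper pushforward---is exactly the natural way to unwind the definitions and is almost certainly the substance of the cited result.

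Two small points worth tightening if you write this up. First, the remark that ``no closure is needed'' deserves one more line of justification: finiteness of both Gauss maps forces the scheme-theoretic fiber product to be finite over $\bbP_A$, hence of dimension $\le g-1$; combined with the lower bound $\ge 2(g-1)-\operatorname{codim}\Delta_{\bbP_A}=g-1$ from cutting $\PLambda_1\times\PLambda_2$ by the diagonal, every component has dimension exactly $g-1$, hence dominates $\bbP_A$, hence meets the \'etale locus $U$ and appears with multiplicity one; only then does the cycle of the fiber product coincide with the cycle of the closure used in the paper's Definition~\ref{defn:convolution-of-clean-cycles}. Second, you correctly flag the base-change identity $\iota_*\tilde\gamma^![H_e]=(\gamma_1\times\gamma_2)^!\Delta_*[H_e]$ as the technical crux; the cleanest way to make it precise is to work entirely in the smooth ambient $A^2\times\bbP_A^2$ and use associativity of the (absolute) intersection product: push $[\PLambda]\cdot[A^2\times H_e]$ forward along $A^2\times\Delta_{\bbP_A}\cong A^2\times\bbP_A$, rewrite it as $[\PLambda_1\times\PLambda_2]\cdot[A^2\times\Delta_{\bbP_A}]\cdot[A^2\times H_e\times\bbP_A]$, and observe $[A^2\times\Delta_{\bbP_A}]\cdot[A^2\times H_e\times\bbP_A]=[A^2\times\Delta(H_e)]$ since that intersection is transverse. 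This sidesteps any worry about defining Gysin pullback along the merely finite morphism $\gamma_1\times\gamma_2$. With those clarifications the proof is complete and correct.
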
 

\begin{proof} 
	See \cite[lemma~3.3.1]{KraemerMicrolocalII}.	
\end{proof} 

Thus, the convolution product of clean cycles can be controlled via Pontryagin products of Segre classes. For the latter, one can use the following observation:

\begin{lemma} \label{Lemma:NonVanishingSegreClassPontryaginProduct} Let $X,Y\subset A$ be  proper reduced subvarieties. Suppose that every irreducible component of maximal dimension in $Y$ is of general type and that at least one irreducible component of~maximal dimension in~$X$ is nondegenerate. Then the cycle 
\[ s(\PLambda_X) \ast s(\PLambda_Y) \]
is nonzero and effective in all degrees $\le \min \{ \dim X + \dim Y, \dim A - 1\}$.
\end{lemma}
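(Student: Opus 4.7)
The plan is to treat effectivity and nonvanishing separately. For effectivity, I would first note that by \cref{Thm:PositivityNotions}~(1) a nondegenerate subvariety is of general type (nondegeneracy forces finite stabilizer, which is equivalent to being of general type); so the hypothesis guarantees that both $X$ and $Y$ admit a top-dimensional component of general type, and hence that $\PLambda_X$ and $\PLambda_Y$ both have clean top-dimensional irreducible components. By Kleiman's generic transversality (as in the remark preceding \cref{Lemma:SegreClassAndConvolution}), the Segre classes $s_i(\PLambda_X)$ for $0 \le i \le \dim X$ and $s_j(\PLambda_Y)$ for $0 \le j \le \dim Y$ are then represented by nonzero effective cycles, and in top degree the class $[X_0]$ appears as an effective summand of $s_{\dim X}(\PLambda_X)$, where $X_0$ denotes the nondegenerate top-dimensional component of $X$. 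Pushforward under the sum morphism preserves effectivity, so the Pontryagin products $s_i(\PLambda_X) \ast s_j(\PLambda_Y)$, and hence each graded piece of $s(\PLambda_X) \ast s(\PLambda_Y)$, are effective.

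For nonvanishing in a degree $d$ with $0 \le d \le \min\{\dim X + \dim Y, g-1\}$, the strategy is to find one pair $(i,j)$ with $i+j = d$, $0 \le i \le \dim X$, $0 \le j \le \dim Y$, such that the summand $s_i(\PLambda_X) \ast s_j(\PLambda_Y)$ has strictly positive pairing with $L^d$ for some ample class $L$ on $A$: since every other summand is effective and pairs nonnegatively with $L^d$, this forces the whole degree-$d$ part to be nonzero. If $d \le \dim X$, I would take $(i,j) = (d, 0)$: writing $s_0(\PLambda_Y) = \sum_p m_p [p]$ with $m_p > 0$, translations act trivially on cohomology, so the summand has cohomology class $\deg(s_0(\PLambda_Y)) \cdot [s_d(\PLambda_X)]$, a positive multiple of a nonzero effective class of dimension $d$. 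If $\dim X < d$, I would instead take $(i,j) = (\dim X, d - \dim X)$; since $s_{\dim X}(\PLambda_X) - [X_0]$ is effective, it suffices to check that $[X_0] \ast s_{d-\dim X}(\PLambda_Y)$ is nonzero. Writing $s_{d-\dim X}(\PLambda_Y) = \sum_W n_W [W]$ with $n_W > 0$ and $W$ integral, the nondegeneracy of $X_0$ together with \cref{Lem:SumOfNondegenerate} gives $\dim(X_0 + W) = \min\{d, g\} = d$, so each $[X_0] \ast [W]$ is a positive multiple of $[X_0 + W]$, a nonzero effective class of dimension $d$ which pairs strictly positively with $L^d$.

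The crux is the nondegeneracy hypothesis on $X_0$: without it, individual Pontryagin products of top-dimensional components could vanish because the sum morphism fails to be generically finite. \cref{Lem:SumOfNondegenerate} tells us exactly that a nondegenerate subvariety sums with any other integral subvariety to the expected dimension, as long as this does not exceed $\dim A$, which is also why the bound $d \le g-1$ appears in the statement (matching the fact that total Segre classes naturally live in the truncated ring $\CH_{<g}(A)$).
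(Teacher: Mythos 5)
Your proof is correct and follows essentially the same approach as the paper's: the paper first reduces to the case that $X$ and $Y$ are integral using bilinearity, then handles degree $m$ by the same case split (for $m \ge \dim X$ via $s_{\dim X}(\PLambda_X)*s_{m-\dim X}(\PLambda_Y)$ and \cref{Lem:SumOfNondegenerate}, for $m < \dim X$ via $s_m(\PLambda_X)*s_0(\PLambda_Y)$ using $\deg \gamma_Y > 0$). Your explicit pairing-with-ample mechanism for detecting nonvanishing is a cleaner way to state what the paper leaves implicit (that effective cycles cannot cancel), but it is not a genuinely different route.
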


\begin{proof}
	Since the Pontryagin product is bilinear and the Pontryagin product of two effective cycles is effective or zero, it suffices to show the statement when $X$ and~$Y$ are both integral. Let $d=\dim X$, $e=\dim Y$ and $g=\dim A$, and consider the Segre class
	\[
	s_{m-d}(\PLambda_Y)
	\;\in\; \CH_{m-d}(A)
	\quad \textnormal{for} \quad 
	d\;\le\; m\;\le \; \min\{ d+e, g - 1 \}.
	\]
	This class is represented by an effective cycle since $m-d\in \{0,1,\dots, e\}$. For any irreducible component $Z_{m-d}\subset A$ of an effective cycle representing this class, we have
	\[
	s_d(\PLambda_X) * s_{m-d}(\PLambda_Y) \;=\; [X]*s_{m-d}(\PLambda_Y) 
	\;=\; [X] * [Z_{m-d}] + \cdots \;\in\; \CH_{m}(A)
	\]
	where $\cdots$ stands for a cycle which is effective or zero. Since by assumption $X$ is nondegenerate, we furthermore know from~\cref{Lem:SumOfNondegenerate} that the sum morphism 
	\[
	\sigma\colon \quad X\times Z_{m-d} \;\too\; X+Z_{m-d} \;\subset\; A
	\]
	is generically finite onto its image. So $[X]*[Z_{m-d}] = \sigma_*([X\times Z_{m-d}])$ is an effective class in $\CH_m(A)$. In conclusion, we see that the Pontryagin product 
	$s(\PLambda_X) * s(\PLambda_Y)$ 
	is nonzero and effective in all degrees $m$ with $d\le m \le \min\{d+e, g-1\}$. In the remaining range $0\le m<d$ the effectivity of the Pontryagin product is trivial because in that range we can look at $s_m(\PLambda_X) * s_0(\PLambda_Y) = \deg(\PLambda_Y) \cdot s_m(\PLambda_X)$; note that $\deg(\PLambda_Y)>0$ because $Y$ is of general type.
\end{proof}

\begin{corollary} \label{cor:DimensionOfConvolutionViaSegre}
Let $X,Y\subset A$ be reduced subvarieties, possibly reducible. If the Gauss maps $\gamma_X$, $\gamma_Y$ are both finite morphisms, then
\[ 
	\dim \pi(\Supp(\PLambda_X\circ \PLambda_Y)) 
	\;=\;
	\min \{ \dim X + \dim Y, \dim A - 1 \},
\]
where $\pi \colon A \times \bbP_A \to \bbP_A$ is the projection.
\end{corollary}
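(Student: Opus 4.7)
The plan is to sandwich $\dim \pi(\Supp(\PLambda_X \circ \PLambda_Y))$ between matching upper and lower bounds, using the Segre class formalism of Lemmas~\ref{Lemma:SegreClassAndConvolution} and~\ref{Lemma:NonVanishingSegreClassPontryaginProduct} for the lower bound and an elementary structural argument for the upper bound. Set $m := \min\{\dim X + \dim Y, \dim A - 1\}$.

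For the upper bound I would observe two things. First, by Definition~\ref{defn:convolution-of-clean-cycles}, $\PLambda_X \circ \PLambda_Y$ is defined as a pushforward along the sum morphism $\sigma$ of a cycle sitting inside $X \times Y \times \bbP_A$. Hence the projection $\pi(\Supp(\PLambda_X \circ \PLambda_Y))$ is contained in $X+Y$, whose dimension is at most $\dim X + \dim Y$. Second, $\PLambda_X \circ \PLambda_Y$ belongs to $\cL(A)$ by construction, so its support is a union of clean conormal varieties $\PLambda_Z$ for integral subvarieties $Z\subset A$ of general type; by Theorem~\ref{Thm:PositivityNotions}~(1) any such $Z$ has finite stabilizer and in particular $\dim Z \le \dim A - 1$. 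Combining these two observations gives $\dim \pi(\Supp(\PLambda_X \circ \PLambda_Y)) \le m$.

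For the lower bound, finiteness of the Gauss maps $\gamma_X$, $\gamma_Y$ forces cleanness of $\PLambda_X$ and $\PLambda_Y$ (as $\PLambda_X$ has pure dimension $g-1$, a finite morphism to $\bbP_A$ is automatically dominant on each component) and, by Theorem~\ref{Thm:PositivityNotions}~(3), nondegeneracy of every irreducible component of $X$ and of $Y$. The hypotheses of Lemma~\ref{Lemma:NonVanishingSegreClassPontryaginProduct} are therefore satisfied, yielding that the Pontryagin product $s(\PLambda_X) * s(\PLambda_Y)\in \CH_{<g}(A)$ is nonzero and effective in every degree from $0$ up to $m$. By Lemma~\ref{Lemma:SegreClassAndConvolution} this product equals $s(\PLambda_X \circ \PLambda_Y)$, so the top Segre class $s_m(\PLambda_X \circ \PLambda_Y)$ is nonzero. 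Since by definition $s_m(\PLambda_X \circ \PLambda_Y)$ is represented by a cycle of dimension $m$ supported on $\pi(\Supp(\PLambda_X \circ \PLambda_Y))$, its non-vanishing forces the latter set to have dimension at least $m$. Matching the two bounds produces the claimed equality.

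No step here is genuinely hard; the argument is a clean assembly of the two preceding lemmas, and the only mild technical point is the observation that finiteness of the Gauss map simultaneously supplies cleanness and nondegeneracy of every top-dimensional component, which is what makes Lemma~\ref{Lemma:NonVanishingSegreClassPontryaginProduct} applicable without any further hypothesis on $X$ and $Y$.
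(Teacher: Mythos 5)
Your proof is correct and follows the same approach as the paper, which simply states ``Combine \cref{Lemma:SegreClassAndConvolution} and \cref{Lemma:NonVanishingSegreClassPontryaginProduct}''; you have essentially spelled out what that combination entails. In particular, your lower-bound argument is exactly the intended reading (finite Gauss maps give cleanness of $\PLambda_X, \PLambda_Y$ and, via \cref{Thm:PositivityNotions}~(3) applied componentwise, nondegeneracy of every top-dimensional component, so \cref{Lemma:NonVanishingSegreClassPontryaginProduct} applies and its conclusion propagates through \cref{Lemma:SegreClassAndConvolution} to the convolution), and your upper-bound argument, while not written out in the paper, is the obvious one that makes the sandwich close.
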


\begin{proof} 
Combine \cref{Lemma:SegreClassAndConvolution} and \cref{Lemma:NonVanishingSegreClassPontryaginProduct}.
\end{proof} 

\subsection{Clean characteristic cycles} \label{subsec:clean-cc} From now on and until the end of this section, we work over $k=\bbC$. Recall that to any perverse sheaf $P\in \Perv(A, \bbC)$ one may attach a characteristic cycle~\cite[definition~4.3.19]{DimcaSheaves}, a finite formal sum of conormal varieties
\[
 \CC(P) \;=\; \sum_{Z\subset A} m_Z(P) \cdot \Lambda_Z
 \quad \textnormal{with} \quad 
 m_Z(P) \;\in\; \bbN.
\]
Here the sum runs over all integral subvarieties $Z\subset A$, and only finitely many~$m_Z(P)$ are nonzero. These cycles contain a lot of information, e.g., the Dubson-Kashiwara index formula shows that we can read off the topological Euler characteristic as
\[
 \chi(A, P) \;=\; \sum_{Z\subset A} m_Z(P) \cdot \deg(\PLambda_Z)
\]
where $\deg(\PLambda_Z)$ is the degree of the Gauss map from \cref{Sec:GaussMap}, see \cite{FraneckiKapranov}. Passing from $CC(P)$ to its projectivization and discarding all components which are not clean, we define the {\em clean characteristic cycle} by
\[
\cc(P) \;:=\; \sum_{\substack{\textup{$Z\subset A$ of}\\ \textup{general type}}} m_Z(P) \cdot \PLambda_Z.
\]
It contains all information needed for the Dubson-Kashiwara index formula.  This index formula implies that for $P\in \Perv(A, \bbC)$ we have $\cc(P)=0$ if and only if~$P\in \rmS(A, \bbC)$. So the clean characteristic cycle of perverse sheaves is defined on the abelian quotient category $\Pbar(A, \bbC)=\Perv(A, \bbC)/\rmS(A, \bbC)$. By additivity in short exact sequences we then obtain a group homomorphism from the Grothendieck group of this abelian quotient category to the group of clean cycles:
\[
 \cc\colon \quad K(\Pbar(A, \bbC)) \;\too\; \cL(A)
\]
The Grothendieck group of an abelian tensor category is not just an abelian group, but also a ring with the product given by the tensor product, which in our case is the convolution product $*$ of perverse sheaves. By~\cite[th.~2.1.1 and ex.~1.3.2]{KraemerMicrolocalII} we have
\[
 \cc(P_1 * P_2) \;=\; \cc(P_1) \circ \cc(P_2) 
 \quad \textnormal{for all} \quad 
 P_1, P_2 \;\in\; \Pbar(A, \bbC),
\]
where $\circ$ is the convolution product of clean cycles introduced previously.

\begin{example} 
 Passing to characteristic cycles is useful since the convolution of clean cycles is easier to control than the convolution of perverse sheaves. For instance, in \cref{cor:derived-group} we have seen that for any perverse sheaf $P\in \Perv(A, \bbC)$ the connected component of its Tannaka group can be realized as the Tannaka group of $[d]_*P$ for any integer $d \ge 1$ with $d\cdot \Gamma_P=\{0\}$. Here
 \[
 \Gamma_P \;:=\; \bigl \{ a\in A(\bbC)_\tors \mid \delta_a \;\in\; \langle P \rangle \bigr\} 
 \;\subset\; A(\bbC)
 \]
 is a finite abelian group, but usually hard to control. However, we have $\Gamma_P \subset \Gamma_{\cc(P)}$ for 
 \[
 \Gamma_{\cc(P)} \;:=\; \bigl\{ a\in A(\bbC)_\tors \mid \PLambda_{\{a\}} \;\in\; \langle \cc(P)\rangle \bigr\}, 
 \]
 and this latter group depends only on the characteristic cycle of the given perverse sheaf. This will be useful in the proof of part (2) in \cref{thm:cc-and-weights} below. 
\end{example} 

\subsection{Highest weight theory} We want to use characteristic cycles to study the tensor category generated by a given semisimple perverse sheaf $P\in \Perv(A, \bbC)$. To pass from Tannaka groups to their connected component, we introduce the following notation:

\begin{definition}\label{def:m}
Let $m\ge 1$ be the smallest positive integer with $m\cdot \Gamma_{\cc(P)} = \{0\}$, and 
\[
 P^\circ \;:=\; [m]_*P.
\]
Fix a fiber functor $\xi \colon \langle P^\circ \rangle \to \Vect(\bbC)$, and consider the fiber functor obtained as the composite
\[ 
\omega\colon \quad 
\begin{tikzcd} 
 \langle P \rangle \ar[r, "{[m]_*}"] 
 &
 \langle P^\circ \rangle \ar[r, "\xi"]
 & 
 \Vect(\bbC).
\end{tikzcd} 
\]
Its associated Tannaka groups are
\[ G \;:=\; G_\omega(P) \;\supseteq\; G^\circ \;=\; G_{\xi}(P^\circ),
\]
where the rightmost equality follows from \cref{cor:derived-group}.
\end{definition}

Recall that $G$ is a reductive group over $\bbC$ and $\omega$ induces an equivalence of abelian tensor categories
\[
 \omega: \quad \langle P \rangle  \;\stackrel{\sim}{\too} \;  \Rep_\bbC(G).
\] 
In what follows, we will assume that the perverse sheaf $P\in \Perv(A, \bbC)$ is nondivisible and $\det(P) = \delta_0$ (the latter can be achieved by replacing~$P$ with a translate). Then by \cref{prop:connected-component} and \cref{cor:derived-group} the connected component $G^\circ$ is a semisimple group. By highest weight theory, its representation ring has the form
\[
\R(G^\circ) \;:=\; K(\Rep_\bbC(G^\circ)) \;=\; \bbZ[\sfX]^W
\]
where $\sfX:=\Hom(T, \bbG_m)$ is the character group of a maximal torus $T\subset G$, endowed with the natural action of the Weyl group $W=N_G(T)/Z_G(T)$, and we denote by~$\bbZ[\sfX]^W\subset \bbZ[\sfX]$ the subring of Weyl group invariants. Recall that for semisimple groups the universal cover inherits the structure of an algebraic group and the covering map is an isogeny
\[
\begin{tikzcd}[column sep=15pt] p\colon \hspace{-11pt} & \tilde{G} \ar[r,twoheadrightarrow] & G^\circ \;\subset\; G. \end{tikzcd}
\]
Hence, $\tilde{T}:=p^{-1}(T)$ is a maximal torus in $  \tilde{G}$, and $p$ induces an isomorphism of Weyl groups
\[
 N_{\tilde{G}}(\tilde{T}) / \tilde{T} \;\stackrel{\sim}{\too}  \; 
  N_{G}(T) / T \; =: \; W
\]
by means of which these groups are identified in what follows. Moreover, $p$ embeds the character group as a subgroup $\sfX \subset \tilde{\sfX}:=\Hom(\tilde{T}, \bbG_m)$ of finite index. 

\begin{definition}\label{def:d}
We denote by $d \ge 1$ the smallest positive integer with $d\cdot \tilde{\sfX} \subset \sfX$. 
\end{definition} 

The multiplication by $d$ then gives a morphism $[d]\colon \tilde{\sfX} \to \sfX$, and we have a commutative diagram
\[
\begin{tikzcd} 
\R(G^\circ) \ar[r, hook] \ar[d, equals] 
& \R(\tilde{G}) \ar[r] \ar[d, equals]
& \R(G^\circ) \ar[d, equals] 
\\
\bbZ[\sfX]^W \ar[r, hook] 
& \bbZ[\tilde{\sfX}]^W \ar[r, "{[d]_*}"] 
& \bbZ[\sfX]^W
\end{tikzcd} 
\]
where the top row is the $d$-th Adams operation $\Psi_d: \R(G^\circ) \to \R(G^\circ)$. 
 Even though the universal cover $\tilde{G}$ might not be realized as the Tannaka group of a perverse sheaf, we can relate its representations to clean cycles as follows (here we say that a statement holds for a \emph{very general} point of $\bbP_A(\bbC)$ if it holds for all points outside a countable union of proper subvarieties):

\begin{theorem} \label{thm:cc-and-weights}
Let $m, d\ge 1$ be as above. \smallskip
\begin{enumerate} 
\item The following diagram of ring homomorphisms is commutative:
\[
\begin{tikzcd}
\R(G) \ar[rr, "(-)_{\rvert G^\circ}"] \ar[d, swap, "\omega^{-1}"] 
&&
 \R(G^\circ) \ar[r, hook] \ar[d, swap, "\xi^{-1}"]
& \R(\tilde{G}) \ar[r] 
& \R(G^\circ) \ar[d, "\xi^{-1}"] 
\\
K(\langle P \rangle) \ar[d, swap, "\cc"] \ar[rr, "{[m]_*}"]
&&
 K(\langle P^\circ \rangle) \ar[d, swap, "\cc"] 
&& K(\langle P^\circ \rangle) \ar[d, "\cc"]
\\
\cL(A) \ar[rr, "{[m]_*}"] 
&&
 \cL(A) \ar[rr, "{[d]_*}"] 
&& \cL(A)
\end{tikzcd} 
\]
\item For very general $v\in \bbP_A(\bbC)$, there is a group homomorphism $\varphi_v\colon \sfX\to A(\bbC)$ such that the following diagram commutes:
\[
\begin{tikzcd}
R(G^\circ) \ar[r, equals] \ar[d, swap, "\cc \circ \xi^{-1}"] 
& \bbZ[\sfX]^W \ar[r, hook] 
& \bbZ[\sfX] \ar[d, "\varphi_v"] 
\\
\cL(A) \ar[rr, "\PLambda \mapsto \PLambda_{v}"] 
&& \bbZ[A(\bbC)] = Z_0(A) \hspace*{-5em}
\end{tikzcd} 
\]
where $\PLambda_v$ is the fiber of the Gauss map $\PLambda \to \bbP_A$ seen as a $0$-cycle on $A$.
\end{enumerate} 
\end{theorem}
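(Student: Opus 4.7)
The plan is to build both parts on the microlocal description of clean characteristic cycles from \cite{KraemerMicrolocalI, KraemerMicrolocalII}: clean cycles attached to objects of $\langle P^\circ \rangle$ are determined by their fibers over a very general $v \in \bbP_A(\bbC)$, and these fibers faithfully record the weight decomposition of the corresponding representation of $G^\circ$. The two parts of the theorem will then follow by checking the identities fiber-by-fiber over such a very general $v$.

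I would first treat part (2). Pick $v \in \bbP_A(\bbC)$ very general, so that the Gauss map of every cycle appearing in $\langle \cc(P^\circ) \rangle$ is étale over $v$. The assignment $\PLambda \mapsto \PLambda_v$ is then additive and respects the convolution product of clean cycles (by the very definition of $\circ$ together with \cref{Lemma:SegreClassAndConvolution}), hence defines a ring homomorphism $\langle \cc(P^\circ)\rangle \to \bbZ[A(\bbC)] = Z_0(A)$. Composing with $\cc \circ \xi^{-1}\colon \R(G^\circ) \to \langle \cc(P^\circ)\rangle$ gives a ring homomorphism $\bbZ[\sfX]^W \to \bbZ[A(\bbC)]$. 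I would then extend this to a multiplicative map on the larger ring $\bbZ[\sfX]$ (determined by the condition that it agree with the above on each Weyl symmetrisation of $e^\lambda$); since any multiplicative map out of a group ring comes from a group homomorphism on generators, one reads off the desired $\varphi_v\colon \sfX \to A(\bbC)$. The verification that this recipe actually produces the commutative diagram—namely, that the induced map $\bbZ[\sfX] \to \bbZ[A(\bbC)]$ reads the weight multiplicities of $\xi^{-1}(\cdot)$ into support points of the generic Gauss fiber—is the content of the microlocal analysis of \cite[\S 4]{KraemerMicrolocalII} adapted to the current setup.

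For part (1), the left square is immediate from \cref{cor:derived-group} (2): the tensor functor $[m]_*\colon \langle P \rangle \to \langle P^\circ \rangle$ corresponds under Tannakian duality to the inclusion $G^\circ \hookrightarrow G$, hence to restriction of representations, while the compatibility $\cc \circ [m]_* = [m]_* \circ \cc$ is immediate since $[m]\colon A \to A$ is finite étale. For the right square, the key identity is $\varphi_v(d\lambda) = [d]\varphi_v(\lambda)$, a direct consequence of $\varphi_v$ being a group homomorphism. Given $Q \in \langle P^\circ \rangle$ with representation $V$ of character $\sum n_\lambda e^\lambda$, part (2) gives $\cc(Q)_v = \sum n_\lambda \varphi_v(\lambda)$, so
\[
([d]_* \cc(Q))_v \;=\; \sum n_\lambda [d]\varphi_v(\lambda) \;=\; \sum n_\lambda \varphi_v(d\lambda),
\]
which is precisely the image under the recipe of part (2) of $\Psi_d[V] = \sum n_\lambda e^{d\lambda}$. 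Since clean cycles are determined by their generic Gauss fibers, this promotes to the equality $[d]_* \cc(Q) = \cc(\Psi_d \cdot Q)$ in $\cL(A)$, giving commutativity of the right square.

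The main obstacle will be establishing the faithful dictionary between $\cc(Q)_v$ and the weight decomposition of $\xi(Q)$: for a simple $Q$, the $0$-cycle $\cc(Q)_v$ must be supported exactly on the points $\varphi_v(\lambda)$ as $\lambda$ runs through the weights of $\xi(Q)$ counted with multiplicity. This correspondence is the heart of the microlocal incarnation of Tannakian duality for perverse sheaves on abelian varieties and requires tracking convolution together with the decomposition theorem on the geometric side against tensor products and Clebsch--Gordan decompositions on the representation-theoretic side.
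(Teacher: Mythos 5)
The paper's own proof of this theorem is essentially a citation to \cite[th.~2.2.3]{KraemerMicrolocalII}, together with one additional observation for part (2): the cited result produces $\varphi_v\colon \sfX \to \Gamma/\Gamma_\tors$, where $\Gamma\le A(\bbC)$ is the subgroup generated by the points in a very general Gauss fiber of $\cc(P^\circ)$, and one must then check that $\Gamma$ is torsion-free (which follows from the definition of $m$, since $m\cdot\Gamma_{\cc(P)}=\{0\}$ forces $\langle \cc(P^\circ)\rangle$ to contain no conormal to a torsion point). Your proposal does not engage with that subtlety at all, so as written it does not produce a homomorphism into $A(\bbC)$ — only into $\Gamma/\Gamma_\tors$ — and a piece of the proof is genuinely missing.

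Beyond that, your construction of $\varphi_v$ in part (2) is circular. You say one should extend the ring homomorphism $\bbZ[\sfX]^W \to \bbZ[A(\bbC)]$ (obtained from $\cc\circ\xi^{-1}$ and evaluation at $v$) to a multiplicative map on $\bbZ[\sfX]$, ``determined by the condition that it agree ... on each Weyl symmetrisation of $e^\lambda$''. But a ring homomorphism on the Weyl-invariant subring does not uniquely determine a multiplicative extension to $\bbZ[\sfX]$, and the \emph{existence} of such a multiplicative extension (one that sends the single basis element $[\lambda]$ to a single point rather than a genuine $0$-cycle) is precisely the nontrivial assertion that $\cc(Q)_v$ is supported on an orbit of a group homomorphism. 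You acknowledge this in your last paragraph by deferring ``the faithful dictionary between $\cc(Q)_v$ and the weight decomposition'' to the microlocal analysis of \cite{KraemerMicrolocalII}, but that deferred content \emph{is} the theorem; so your argument ends up citing the same source the paper cites, without the intermediate steps buying anything. Your treatment of part (1) — reducing the right square to part (2) via $\varphi_v(d\lambda)=[d]\varphi_v(\lambda)$ and the fact that clean cycles are determined by their generic Gauss fibers, plus Tannakian duality and compatibility of $\cc$ with finite \'etale pushforward for the left square — is a reasonable route and matches what \cite[th.~2.2.3]{KraemerMicrolocalII} does, but it of course depends on part (2) being established first.
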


\begin{proof} 
See~\cite[th.~2.2.3]{KraemerMicrolocalII}. For part (2), let $\Gamma \le A(\bbC)$ be the subgroup generated by the points in the fiber of the Gauss map $\gamma\colon \cc(P^\circ) \to \bbP_A$ over a very general point $v$, then loc.~cit.~gives $\varphi_v\colon \sfX \to \Gamma/\Gamma_\tors$ with the required properties. It then only remains to note that in our situation the group $\Gamma$ is free since our assumption $m\cdot \Gamma_{\cc(P)}=\{0\}$ implies that the subring $\langle \cc(P^\circ)\rangle \subset \cL(A)$ does not contain any conormal variety to a torsion point in $A(\bbC)$.
\end{proof}

\begin{definition} \label{defn:cc-lambda}
	For $\beta \in \tilde{\sfX}$ let $[\beta]\in \bbZ[\tilde{\sfX}]$ denote the corresponding basis vector in the group algebra. Note that the multiplication of basis vectors in the group algebra is defined by $[\alpha] \cdot [\beta] = [\alpha + \beta]$, and $[\alpha + \beta] \neq [\alpha] + [\beta]$.
	The subring~$\bbZ[\tilde{\sfX}]^W\subset \bbZ[\tilde{\sfX}]$ of Weyl group invariants has as its underlying additive group the free abelian group with $\bbZ$-basis consisting of the vectors
	\[
	[ W.\alpha ]  \;:=\; \sum_{\beta \in W.\alpha } [\beta] 
	\;\in\; \bbZ[\tilde{\sfX}]^W
	\]	
	where $\alpha$ runs through the dominant integral weights in $\tilde{\sfX}$ and $W.\alpha \subset \tilde{\sfX}$  denotes its orbit under the Weyl group. Multiplying by the integer $d$ from above, we obtain an element
	$[W.d\alpha] \in \bbZ[\sfX]^W = \R(G^\circ)$. Applying the inverse of $\xi\colon K(\langle P^\circ \rangle) \stackrel{\sim}{\too} \R(G^\circ)$ to this element of the representation ring and taking its characteristic cycle, we obtain a clean cycle
	\[
	\cc(P, \alpha) \;:=\; \cc( \xi^{-1} [W.d\alpha] ) \;\in\;  \cL(A).
	\]
	Note that for any integer $n\neq 0$ we have $\cc(P, n\alpha) = [n]_* \cc(P, \alpha)$.
\end{definition} 

\begin{remark} \label{Rmk:CharCycleWeightDefinedOnSubfield}
By construction, $\cc(P, \alpha)$ lies in the subring $\langle \cc(P^\circ)\rangle \subset \cL(A)$. In particular, if the cycle $\cc(P)$ is defined over a given algebraically closed subfield of~$\bbC$, then so is $\cc(P, \alpha)$. 
\end{remark} 

\begin{lemma}\label{lem:weight_multiplicities} For any $\alpha \in \tilde{\sfX}$ the cycle $\cc(P, \alpha) \in \cL(A)$ is effective. Moreover,  with $m$ and $d$ as in   \cref{def:m} and \cref{def:d},   we  have
	\[
	[dm]_* \cc(Q) \;=\; \sum_{\alpha} m_\alpha(V) \cdot \cc(P, \alpha)
	\quad  
	\]
	for any $Q\in \langle P \rangle$ and $V=\omega(Q) \in \Rep(G)$ with weight multiplicities $m_\alpha(V)\in \bbN$.	
\end{lemma}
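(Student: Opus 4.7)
The plan is to establish the displayed identity by a direct chase in the commutative diagram of \cref{thm:cc-and-weights}(1), and then to deduce the effectivity of $\cc(P,\alpha)$ from the fiberwise description provided by \cref{thm:cc-and-weights}(2).

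First I would handle the identity. Given $Q \in \langle P \rangle$ with $V := \omega(Q)$, decompose $V_{\rvert G^\circ}$ into weight spaces for a maximal torus $T \subset G^\circ$. Since $V$ extends to a representation of $G$, its weights lie in $\sfX \subset \tilde{\sfX}$ and their multiplicities are $W$-invariant, so grouping by Weyl orbits gives
\[ \chi(V_{\rvert G^\circ}) \;=\; \sum_\alpha m_\alpha(V)\,[W\alpha] \;\in\; \bbZ[\sfX]^W, \]
the sum ranging over a system of representatives of the Weyl orbits in $\sfX$. The composite $\R(G^\circ) \hookrightarrow \R(\tilde{G}) \xrightarrow{[d]_*} \R(G^\circ)$ in the top row of \cref{thm:cc-and-weights}(1) acts by $[\beta] \mapsto [d\beta]$, and since multiplication by $d$ is injective on the free abelian group $\sfX$, it sends each orbit sum $[W\alpha]$ bijectively onto $[Wd\alpha]$. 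Applying $\cc \circ \xi^{-1}$ to $\sum_\alpha m_\alpha(V)\,[Wd\alpha]$ then yields $\sum_\alpha m_\alpha(V)\,\cc(P,\alpha)$ by the very definition of $\cc(P,\alpha)$ in \cref{defn:cc-lambda}. Chasing $V$ around the diagram the other way produces $[d]_* \cc([m]_* Q) = [dm]_* \cc(Q)$, and equating the two outputs gives the formula.

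For the effectivity, I would write $\cc(P,\alpha) = \sum_Z n_Z\,\PLambda_Z$ with $Z$ ranging over integral subvarieties of $A$ of general type, and aim to show $n_Z \ge 0$ for each $Z$. By \cref{thm:cc-and-weights}(2) applied to $[Wd\alpha]$, the fiber of $\cc(P,\alpha)$ at a very general $v \in \bbP_A(\bbC)$ equals the effective $0$-cycle $\sum_{\beta \in Wd\alpha}[\varphi_v(\beta)] \in Z_0(A)$, so at every point of its support the multiplicity is a strictly positive integer.

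The main obstacle will be to upgrade this fiberwise positivity to effectivity of the cycle $\cc(P,\alpha)$ itself. For this, I would observe that for very general $v$, the finitely many $\PLambda_Z$ with $n_Z \ne 0$ have fibers over $v$ whose projections to $A$ are pairwise disjoint: for distinct integral subvarieties $Z, Z' \subset A$ of general type, the intersection $Z \cap Z'$ has dimension strictly less than $\dim Z$, so $\gamma_Z(\pr_Z^{-1}(Z \cap Z'))$ is a proper closed subset of $\bbP_A$, and a very general $v$ avoids the finite union of all such subsets for pairs of $Z$'s in the support of $\cc(P,\alpha)$. Consequently, at each point $p \in A$ in the support of $\cc(P,\alpha)_v$, the multiplicity $\sum_Z n_Z\cdot m_p(\PLambda_Z)$ reduces to $n_Z\cdot m_p(\PLambda_Z)$ for a single $Z$ with $m_p(\PLambda_Z) > 0$; since this value is a strictly positive integer, we must have $n_Z > 0$, proving the effectivity.
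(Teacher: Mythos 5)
Your proof is correct and follows essentially the same route as the paper's, which derives the multiplicity formula by chasing the diagram of \cref{thm:cc-and-weights}(1) and the effectivity from the fiberwise description via $\varphi_v$ in \cref{thm:cc-and-weights}(2); the paper in fact dispatches the formula with ``holds by construction'' and asserts without detail the implication ``effective fibers over very general $v$ $\Rightarrow$ effective cycle,'' while you spell both out, which is useful. One small imprecision in the disjointness step: for distinct integral $Z, Z'$ the claim $\dim(Z\cap Z')<\dim Z$ fails when $Z\subsetneq Z'$; the cleaner observation is that $\PLambda_Z\cap\PLambda_{Z'}$ is a proper closed subset of the $(g-1)$-dimensional $\PLambda_Z$ (since $\PLambda_Z\neq\PLambda_{Z'}$), hence its image under $\gamma_Z$ is a proper closed subset of $\bbP_A$, which is exactly what is needed to separate the fibers for very general $v$.
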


\begin{proof} 
	Recall that the homomorphism $\varphi_v\colon \bbZ[\sfX] \to \bbZ[A(\bbC)]$ is induced by the group homomorphism $\varphi_v\colon \sfX \to A(\bbC)$ in part (2) of \cref{thm:cc-and-weights}. Hence, it sends the submonoid $\bbN[\sfX]\subset \bbZ[\sfX]$ into  the submonoid  $\bbN[A(\bbC)]$ of effective $0$-cycles. This construction works for {\em very} general $v\in \bbP_A(\bbC)$ only. However, if a clean cycle is known to have effective fibers over a very general cotangent vector $v\in \bbP_A(\bbC)$, then the cycle is effective.
	Hence, the claim about effectivity follows. The formula for multiplicities holds by construction. 
\end{proof} 

By \cref{lem:weight_multiplicities},  the weight multiplicities $m_\alpha(V)$ give us information about the multiplicities in characteristic cycles and vice versa. For example, we say that a representation of a connected reductive group is  \emph{minuscule} if it is an irreducible nontrivial representation whose weights for a maximal torus form a single orbit under the Weyl group. A representation of an arbitrary reductive group is called minuscule if its restriction to the connected component of the identity is so. This is a very restrictive condition: For the simple Dynkin types the table in~\cref{sec:IntroSimplicity} shows that the only minuscule representations other than standard representations of classical groups are the wedge powers of the standard representation in type $A$, spin and half-spin representations in types $B$ and $D$, and the representations of dimension $27$ and $56$ of the exceptional groups of type $E_6$ and  $E_7$. The previous lemma shows that for any perverse sheaf whose characteristic cycle is integral, the corresponding representation must be minuscule~\cite[cor.~1.10]{KraemerMicrolocalI}:

\begin{corollary} \label{cor:minuscule}
	Let $Y\subset A$ be a nondivisible subvariety, and let $P\in \Perv(A, \bbC)$ be a simple perverse sheaf with clean characteristic cycle $\cc(P)=\PLambda_Y$. Then
	 $\omega(P)$ is a minuscule representation of the group $G=G_\omega(P)$.
\end{corollary}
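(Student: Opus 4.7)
The plan is to extract the minuscule condition on $V=\omega(P)$ from the irreducibility of $\cc(P)=\PLambda_Y$, by using the dictionary between weights and fibers of the Gauss map furnished by \cref{thm:cc-and-weights} and \cref{lem:weight_multiplicities}. First I would apply \cref{lem:weight_multiplicities} with $Q=P$ to obtain
\[
[dm]_*\cc(P) \;=\; \sum_{\lambda}\, m_\lambda(V)\cdot\cc(P,\lambda),
\]
where $\lambda$ runs over the dominant weights of $V|_{G^\circ}$ and each summand is effective. On the left-hand side, the nondivisibility of $Y$ together with \cref{lem:image-of-conormal-under-isogeny} identifies $[dm]_*\PLambda_Y$ with the single irreducible conormal variety $\PLambda_{[dm]Y}$, appearing with multiplicity one.

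Next I would localize at a very general cotangent direction $v\in\bbP_A(\bbC)$. By \cref{thm:cc-and-weights}(2), there is a group homomorphism $\varphi_v\colon \sfX\to A(\bbC)$ with
\[
\cc(P,\lambda)_v \;=\; \varphi_v([Wd\lambda]) \;=\; \sum_{\beta\in Wd\lambda}[\varphi_v(\beta)] \;\in\; \bbZ[A(\bbC)].
\]
The crucial technical input is that for $v$ sufficiently general, $\varphi_v$ is injective on the finite set of weights of $V|_{G^\circ}$, so its values on distinct Weyl orbits have disjoint supports. In particular $\cc(P,\lambda)_v$ is a nonzero reduced $0$-cycle whenever $\lambda\neq 0$, whereas $\cc(P,0)=\cc(\xi^{-1}[0])=\cc(\delta_0)=0$, so the zero-weight term drops out automatically.

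Restricting the identity above to the fiber at $v$, the left-hand side $(\PLambda_{[dm]Y})_v$ is reduced since the Gauss map of an irreducible clean conormal cycle is generically \'etale on its image. Matching multiplicities at each point on the right-hand side then forces $m_\lambda(V)=1$ for every dominant $\lambda\neq 0$ with $m_\lambda(V)>0$. Combined with $\dim V=\chi(A,P)=\deg\PLambda_Y$ (compatible with $[dm]_*$ via \cref{lem:image-of-conormal-under-isogeny}) and the decomposition $\dim V=\sum_\lambda m_\lambda(V)|W\lambda|$, the dimension count leaves no room for the zero weight, so $m_0(V)=0$. Thus every weight of $V|_{G^\circ}$ has multiplicity one, which for an irreducible representation is equivalent to being minuscule; since $G/G^\circ$ permutes the $G^\circ$-isotypic components of $V$ while preserving Weyl orbits of weights, the representation $V$ is minuscule for $G$ as well.

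The main obstacle is establishing the injectivity of $\varphi_v$ for very general $v$, or equivalently that distinct Weyl orbits of weights do not accidentally map to the same point of $A$. This is essentially encoded in \cref{thm:cc-and-weights}(2) and reflects the faithfulness of the passage from weights to fibers of the Gauss map of $\PLambda_Y$. A secondary bookkeeping issue is the careful tracking of degrees and multiplicities under the iterated isogenies $[m]$ and $[d]$, but these are all accounted for by \cref{lem:image-of-conormal-under-isogeny} and the compatibility of Adams operations on $\R(G^\circ)$ with the pushforward $[d]_*$ on clean cycles recorded in the bottom square of \cref{thm:cc-and-weights}(1).
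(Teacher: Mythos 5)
Your proof starts correctly: applying \cref{lem:weight_multiplicities} with $Q=P$ to obtain $[dm]_*\cc(P) = \sum_\alpha m_\alpha(V)\cdot\cc(P,\alpha)$, and noting that the left-hand side is the integral cycle $\PLambda_{[dm](Y)}$ by \cref{lem:image-of-conormal-under-isogeny}. From here, however, you diverge from what is actually needed and introduce two errors.

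First, the claim $\cc(P,0)=\cc(\delta_0)=0$ is wrong: the skyscraper $\delta_0$ has conormal variety $\{0\}\times\bbP_A$, whose Gauss map is an isomorphism onto $\bbP_A$, so it is clean and $\cc(\delta_0)=\PLambda_{\{0\}}\neq 0$. The zero weight does not drop out by itself; it is excluded because its support $\{0\}$ differs from $[dm](Y)$ unless $Y$ is a point (in which case the statement is trivial).

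Second, and more seriously, the final step is a genuine gap: the conclusion \emph{every weight of $V|_{G^\circ}$ has multiplicity one} does \emph{not} imply minuscule, even for irreducible representations. For instance $\Sym^2\bbF^3$ is an irreducible multiplicity-free representation of $\SL_3$ whose weights form two Weyl orbits, $W(2,0,0)$ and $W(1,1,0)$. Minuscule means a single Weyl orbit, i.e.\ there is a \emph{unique} dominant $\alpha$ with $m_\alpha(V)>0$ (and it has multiplicity one). Your fiber-level argument via $\varphi_v$ only establishes reducedness of the fiber, which controls multiplicities but says nothing about whether the weights split into several orbits. The relevant input is the \emph{irreducibility} of $\PLambda_{[dm](Y)}$ as a cycle, not just the reducedness of a general fiber. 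Since each $\cc(P,\alpha)$ is effective with Gauss degree $|W\alpha|>0$ (hence nonzero), and the right-hand side is a sum of nonnegative multiples of nonzero effective cycles equal to a single irreducible cycle with coefficient one, only one $\alpha$ can contribute and its coefficient must be one. This is the paper's short argument; it yields the minuscule condition directly without any localization at $v$, injectivity of $\varphi_v$, or dimension counting.
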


\begin{proof} 
	Since $Y$ is nondivisible, the cycle $[dm]_* \cc(P)$ is integral. In the above identity for the weight multiplicities of the representation $V=\omega(\delta_Y)$ then $m_\alpha(V)\neq 0$ for at most one dominant weight $\alpha$, and this weight must enter with multiplicity one.
\end{proof} 

We now want to give a geometric description of the cycles $\cc(P,\alpha)\in \cL(A)$ in cases when $G=G_\omega(P)$ is a classical group. The idea is to express arbitrary weights in terms of the weights in the standard representation, similar to the argument in~\cite{KraemerThetaSummands}. We do this for each of the classical Dynkin types $A, B, D$ separately; the computation for the Dynkin type $C$ is similar, but we omit it since in type~$C$ there are no minuscule representations other than the standard representation.

\subsection{Weyl orbits for type \texorpdfstring{$A$}{A}} \label{subsec:typeA}
 For $\tilde{G} = \SL_n$, let us denote by $\epsilon_1,\dots, \epsilon_n\in \tilde{\sfX}$ the weights of the standard representation. The triviality of the determinant implies that $\epsilon_1 + \cdots + \epsilon_n = 0$, and the wedge powers of the standard representation have as highest weights the fundamental weights
\[
\varpi_i \;=\; \epsilon_1 + \cdots + \epsilon_i 
\quad \textnormal{for $0< i < n$}.
\]
The dominant integral weights are the $\bbN$-linear combinations of the fundamental weights, i.e.,~the weights 
\[
\alpha \;=\; \sum_{\nu=1}^{n-1} \alpha_\nu \epsilon_\nu
\quad \textnormal{with integers} \quad 
\alpha_1 \ge \cdots \ge \alpha_{n-1} \ge 0.
\]

More generally, for $\alpha = (\alpha_1, \dots, \alpha_n) \in \bbZ^n$, let $\ell = \max\{ \nu \mid \alpha_\nu \neq 0\}$ be the length of the $n$-tuple $\alpha$, and let $I(\ell, n)$ be the set of injective maps 
\[
\begin{tikzcd}[column sep=15pt]   \iota \colon \hspace{-11pt}& \{1,\dots, \ell\} \ar[r,hookrightarrow] & \{1,\dots, n\}. \end{tikzcd}
\]
Then $\alpha$ has the Weyl group orbit
\[
W.\alpha \;=\; \bigl\{  \alpha_1 \epsilon_{\iota(1)} + \cdots + \alpha_\ell \epsilon_{\iota(\ell)} \in \tilde{\sfX} \mid \iota \in I(\ell, n) \bigr\}.
\]
Each weight in this orbit is obtained for precisely $N(\alpha)$ distinct choices of $\iota \in I(\ell, n)$, where
\[
N(\alpha) \;:=\; \prod_{i\in \bbZ} \ell_i! 
\quad \textnormal{with} \quad 
\ell_i \;:=\; \# \{ \nu \mid \alpha_\nu = i \},
\]
with the convention $0!=1$ so that the above product is finite. Hence, we have
\[
[ W.\alpha ] \;=\; \frac{1}{N(\alpha)} \cdot \sum_{\iota \in I(\ell, n)} \bigl [ \alpha_1 \epsilon_{\iota(1)} + \cdots + \alpha_\ell \epsilon_{\iota(\ell)} \bigr]
\;\in\; \bbZ[\sfX].
\]
To describe the clean cycle $\cc(P,\alpha)$ in these terms, we need some more notation:

\begin{definition} \label{defn:fiber-product-minus-diagonal}
Let $\PLambda \in \cL(A)$ be a reduced clean cycle. Let $U\subset \bbP_A$ be any open dense subset over which the Gauss map $\gamma_{\PLambda}\colon \PLambda \to \bbP_A$ restricts to a finite flat morphism, and let $\PLambda_{\mid U} := \gamma_{\PLambda}^{-1}(U)$ be its preimage. For an integer $\ell \ge 1$ consider the fiber product
\[
 \PLambda^{\times \ell}_{\vert U} \;:=\; \PLambda_{\vert U} \times_U \times \cdots \times_U \PLambda_{\vert U} 
 \;\subset\; A^\ell \times U
\]
and inside it the big diagonal
\[
 \Delta_\ell \;:=\; 
 \{ (p_1, \dots, p_\ell, v) \mid p_i = p_j \; \textnormal{for some $(i,j)$ with $i\neq j$} \} \;\subset\; \PLambda^{\times \ell}_{\vert U}.
\]	
Since the fiber product of finite flat morphisms is again a finite flat morphism, every irreducible component of the fiber product $\PLambda^{\times \ell}_{\mid U}$ is a finite flat cover of $U$. So the Zariski closure 
\[
  \PLambda^{[\ell]} \;:=\; \overline{\PLambda^{\times \ell}_{\mid U} \smallsetminus \Delta_{\ell} }
  \;\subset\; A^\ell \times \bbP_A
\]
does not depend on the specific choice of the open dense subset $U\subset \bbP_A$ over which~$\gamma_{\PLambda}$ is finite and flat. For any $\alpha = (\alpha_1, \dots, \alpha_\ell)\in \bbZ^\ell$ we then define a clean cycle as the pushforward
\[
 \PLambda^{\alpha} \;:=\; \frac{1}{N(\alpha)} \cdot \sigma_{\alpha \ast} (\PLambda^{[\ell]}) 
 \;\in\; \cL(A)
\]
for the `sum' morphism 
\[ \sigma_\alpha\colon A^\ell \times \bbP_A \too A\times \bbP_A, \quad (p_1, \dots, p_\ell, v) \mapsto (\alpha_1 p_1 + \cdots + \alpha_\ell p_\ell, v).\]
\end{definition} 

\begin{remark} \label{rem:symmetric-fiber-product-of-conormal}
The group $\frS_\ell$ acts on $A^\ell \times \bbP_A$ by permutations of the abelian variety factors, and this action restricts to an action on $\PLambda^{[\ell]} \subset A^\ell \times \bbP_A$. The morphism $\sigma_\alpha$ factors through the quotient by the subgroup 
\[
 \frS_\alpha \;:=\; \prod_{i \in \bbZ} \frS_{\ell_i}
 \;\subset\; \frS_\ell 
 \quad \textnormal{where} \quad 
 \ell_i \;:=\; \# \{ \nu \mid \alpha_\nu = i\},
\] 	
as shown in the following commutative diagram:
	\[
\begin{tikzpicture}[scale=1]
\def\sqrtthree{1.73205080757};

\node[label={[shift={(-0.52,-.35)}]$A^\ell \times \bbP_A$}] at ( -\sqrtthree/2, 1/2) (a) { };
\node[label={[shift={(.5,-.35)}]$A \times \bbP_A$}] at (\sqrtthree/2, 1/2) (c) {};
\node[label={[shift={(0,-.55)}]$(A^\ell \times \bbP_A)/\frS_\alpha$}] at (0, -1) (b) {};

\draw[->, shorten <= 2pt, shorten >= 2pt] (a) edge  node[midway, above] {$\scriptstyle \sigma_\alpha$}  (c);
\draw[->, shorten <= 2pt, shorten >= 2pt] (a) edge node[midway, left] {$\scriptstyle q $ }  (b);
\draw[->, shorten <= 2pt, shorten >= 2pt] (b) edge node[midway, right] {$\scriptstyle  \exists ! \tilde{\sigma}_\alpha $ }  (c);
\end{tikzpicture}
\]
Here, the quotient morphism $q$ is finite of degree $N(\alpha)$. The restriction of $q$ to the subvariety $\PLambda^{[\ell]}\subset A^\ell \times \bbP_A$ is still finite of the same degree over its image, as one may see by looking at a general fiber of the Gauss map. Hence, \cref{defn:fiber-product-minus-diagonal} amounts to
\[
 \PLambda^\alpha \;=\; \tilde{\sigma}_{\alpha \ast}(\tilde{\PLambda}^\alpha)
 \quad \textnormal{where} \quad 
 \tilde{\PLambda}^\alpha \;:=\; \PLambda^{[\ell]}/\frS_\alpha \;\subset\; (A^\ell \times \bbP_A)/\frS_\alpha.
\]
In particular, it follows that if $\PLambda^\alpha$ is reduced resp.~integral, then so is $\tilde{\PLambda}^\alpha$.
\end{remark} 

We now obtain from \cref{thm:cc-and-weights} the following description of the clean cycles corresponding to the dominant weights:

\begin{lemma} \label{lem:cc-in-type-A}
	Let $G=G_\omega(P)$ be semisimple with universal cover $\tilde{G} \simeq \SL_n$, and let~$\alpha \in \tilde{\sfX}$ be a dominant weight. If the clean cycle $\cc(P,\alpha)$ is reduced resp.~integral, then  
	$
	\PLambda :=\cc(P, \epsilon_1)
	$
	is reduced resp.~integral, and 
	\[	\cc(P,\alpha) \;=\; 
	\PLambda^\alpha
	\]
	where on the right-hand side we identify $\alpha=\sum_{i=1}^n \alpha_i \epsilon_i$ with the tuple $(\alpha_1, \dots, \alpha_n)$.
\end{lemma}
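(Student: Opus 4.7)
The plan is to compute the fibers of $\cc(P,\alpha)$ and $\PLambda^\alpha$ over a very general cotangent vector $v \in \bbP_A(\bbC)$, match the two formulas, and then bootstrap from fiber equality to cycle equality. Write $q_i := \varphi_v(d\epsilon_i) \in A(\bbC)$. Applying \cref{thm:cc-and-weights}~(2) to the dominant weights $\epsilon_1$ and $\alpha$ yields
\begin{equation*}
\PLambda_v \;=\; \sum_{i=1}^n [q_i]
\quad \text{and} \quad
\cc(P, \alpha)_v \;=\; \sum_{\beta \in W\alpha} [\varphi_v(d\beta)] \;=\; \frac{1}{N(\alpha)} \sum_{\iota \in I(\ell, n)} \bigl[\alpha_1 q_{\iota(1)} + \cdots + \alpha_\ell q_{\iota(\ell)}\bigr],
\end{equation*}
the last rewriting being the observation from \cref{subsec:typeA} that each $\beta \in W\alpha$ is attained by exactly $N(\alpha) = |\frS_\alpha|$ injections in $I(\ell, n)$.

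Assume now that $\cc(P, \alpha)$ is reduced, so this fiber is a squarefree $0$-cycle. The $|W\alpha|$ points $\varphi_v(d\beta)$ are then pairwise distinct, and I would upgrade this to distinctness of $q_1, \dots, q_n$ by exploiting that every dominant weight of $\SL_n$ has length $\ell \le n - 1$: given any $i \ne j$, there exists $\iota \in I(\ell,n)$ whose image contains $i$ but not $j$, and letting $\iota'$ differ from $\iota$ only at the index $\nu_0$ with $\iota(\nu_0)=i$ by setting $\iota'(\nu_0)=j$, the resulting $\beta, \beta' \in W\alpha$ satisfy $\beta-\beta' = \alpha_{\nu_0}(\epsilon_i - \epsilon_j)$ with $\alpha_{\nu_0} > 0$. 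Distinctness of $\varphi_v(d\beta)$ and $\varphi_v(d\beta')$ therefore forces $\alpha_{\nu_0}(q_i - q_j) \ne 0$, and in particular $q_i \ne q_j$. Hence $\PLambda_v$ is reduced, so $\PLambda$ is reduced, and the formula in \cref{defn:fiber-product-minus-diagonal} applied over the open set where $\gamma_\PLambda$ is finite \'etale gives
\begin{equation*}
(\PLambda^\alpha)_v \;=\; \frac{1}{N(\alpha)} \sum_{\iota \in I(\ell, n)} \bigl[\alpha_1 q_{\iota(1)} + \cdots + \alpha_\ell q_{\iota(\ell)}\bigr] \;=\; \cc(P, \alpha)_v.
\end{equation*}
Since an effective clean cycle in $\cL(A)$ is determined by its fiber over a very general cotangent vector, this yields the desired equality $\cc(P, \alpha) = \PLambda^\alpha$.

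For the integral case---which I expect to be the main obstacle---I still need to upgrade reducedness of $\PLambda$ to integrality under the hypothesis that $\cc(P, \alpha) = \PLambda^\alpha$ is integral. My plan is to invoke monodromy: the Galois group $H$ of the generically \'etale cover $\gamma_\PLambda \colon \PLambda \to \bbP_A$ embeds into $\frS_n$ via its action on $\PLambda_v = \{q_1, \dots, q_n\}$, and the monodromy of $\gamma_{\PLambda^\alpha}$ is the induced $H$-action on $I(\ell, n)/\frS_\alpha$. Irreducibility of $\PLambda^\alpha$ is equivalent to transitivity of this action. If $\PLambda$ were reducible, $H$ would stabilize a nontrivial partition $\{1, \dots, n\} = B_1 \sqcup \cdots \sqcup B_k$ with $k \ge 2$, and the map sending $\iota \in I(\ell, n)$ to the multiset of orbits occupied by its values $\iota(1), \dots, \iota(\ell)$ would be an $H$-invariant descending to $I(\ell, n)/\frS_\alpha$. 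A short combinatorial check exploiting $\ell \le n - 1$ produces two $\iota$'s with different such multisets, contradicting transitivity. Hence $H$ acts transitively on $\{1, \dots, n\}$, so $\PLambda$ is irreducible, and together with reducedness this gives $\PLambda$ integral as required.
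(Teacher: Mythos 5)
Your proof is correct and follows essentially the same strategy as the paper's: evaluate the fibers of $\cc(P,\alpha)$ and $\PLambda^\alpha$ at a very general cotangent vector $v$ via \cref{thm:cc-and-weights}(2), deduce reducedness of $\PLambda$ from the distinctness of the points $q_i$, and then upgrade to integrality. The two places where you diverge from the paper are cosmetic rather than substantive. For reducedness, you argue by an explicit swap (produce $\beta\ne\beta'\in W\alpha$ with $\beta-\beta'=\alpha_{\nu_0}(\epsilon_i-\epsilon_j)$); the paper instead shows that $\sum_\nu\alpha_\nu p_{\iota(\nu)}=\sum_\nu\alpha_\nu p_{\iota^*(\nu)}$ forces $\iota^*=\iota\circ\tau$ for $\tau\in\frS_\alpha$, and notes this plus $\ell<n$ forces the $p_i$ distinct --- internally the same swap is doing the work. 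For integrality, you package the argument in terms of the monodromy group $H\subset\frS_n$ of $\gamma_\PLambda$ acting on $I(\ell,n)/\frS_\alpha$ and produce a nonconstant $H$-invariant (the orbit-distribution of the image of $\iota$); the paper instead writes $\PLambda=\PLambda_1+\PLambda_2$ and exhibits at least two $\frS_\alpha$-inequivalent irreducible components $\PLambda^{[\ell_1,\ell_2]}$ inside $\PLambda^{[\ell]}$, which is exactly the assertion that your orbit-distribution invariant is nonconstant. Your monodromy phrasing is slightly more abstract but relies on the same standard dictionary between irreducible components of a generically finite cover and orbits of its monodromy group; the paper's version has the advantage of being entirely explicit, and also pins down the precise range $\max\{0,\ell-\deg\PLambda_2\}<\ell_1<\min\{\deg\PLambda_1,\ell\}$ where the components occur. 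Either route is fine.
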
 

\begin{proof} 
	Let $v\in \bbP_A$ be a very general cotangent direction, and let $p_1, \dots, p_n\in A(\bbC)$ be the points in the fiber of the Gauss map $\PLambda \to \bbP_A$ over this direction, counted with multiplicities so that 
	\[ \PLambda_v \;=\; [p_1] + \cdots + [p_n] 
    \] 
	inside $\bbZ[A(\bbC)]$. Recall from \cref{thm:cc-and-weights} that the points in the fiber are precisely the images of the weights in the Weyl group orbit $W\cdot d\epsilon_1 = \{ d\epsilon_1, \dots, d\epsilon_n\}\subset \sfX$ under the homomorphism $\varphi_v\colon \sfX\to A(\bbC)$. Up to relabelling indices, we may assume that $\varphi_v(d\epsilon_i) = p_i$ for all $i$. Writing the weight as $\alpha = (\alpha_1, \dots, \alpha_\ell)$ with $\ell< n$, we have
	\begin{eqnarray*}
		\cc(P,\alpha)_v 
		\;=\; \cc( \xi^{-1} [W.d\alpha] ) _v 
		&\;=\;& \varphi_v([W.d\alpha])
		\\[0.6em]
		&\;=\;& 
		\frac{1}{N(\alpha)} \cdot \sum_{\iota \in I(\ell, n)} [ \varphi_v(\alpha_1 d \epsilon_{\iota(1)} + \cdots + \alpha_\ell d \epsilon_{\iota(\ell)} )] 
		\\
		&\;=\;& 
		\frac{1}{N(\alpha)} \cdot \sum_{\iota \in I(\ell, n)} [ \alpha_1 p_{\iota(1)} + \cdots + \alpha_\ell p_{\iota(\ell)} ] .
	\end{eqnarray*} 
	For very general $v$ this $0$-cycle contains no multiple points, since we assumed $\cc(P,\alpha)$ to be reduced. Therefore, each point 
	\[ p \;=\; \alpha_1 p_{\iota(1)} + \cdots + \alpha_\ell p_{\iota(\ell)} \]
	enters in the above expression for $\cc(P,\alpha)_v$ only for $N(\alpha)$ different choices of $\iota$. But on the other hand $p$ does not change if we replace the map $\iota\colon \{1,\dots, \ell\} \into \{1,\dots, n\}$ by $\iota \circ \tau$ for any permutation 
	\[
	\tau \;\in\; \frS_\alpha \;=\; \prod_{i \in \bbZ} \frS_{\ell_i} \;\subset\; \frS_\ell.
	\]
	Since $N(\alpha)=|\frS_\alpha|$, it follows from the above that for all $\iota, \iota^* \in I(\ell, n)$ we have the equivalence
	\[
	\sum_{\nu = 1}^\ell \alpha_\nu p_{\iota(\nu)} 
	\;=\; 
	\sum_{\nu = 1}^\ell \alpha_\nu p_{\iota^*(\nu)} 
	\quad 
	\Longleftrightarrow 
	\quad 
	\exists \tau \in \prod_{i \in \bbZ} \frS_{\ell_i}: \; 
	\iota^* = \iota \circ \tau
	\]
	Since $\ell < n$, this forces $p_1, \dots, p_n\in A(\bbC)$ to be pairwise distinct, so the $0$-cycle~$\PLambda_v$ is reduced and $\PLambda$ must be reduced as well. Now let $U\subset \bbP_A$ be an open neighborhood of~$v$ such that the Gauss map $\PLambda_{\vert U} \to U$ is finite and flat. For the complement of the big diagonal, we then get a bijection
	\[
	I(\ell, n) \;\too\; (\PLambda_{\vert U}^{\times \ell} \smallsetminus \Delta_\ell)_v, 
	\quad 
	\iota  \;\longmapsto\; (p_{\iota(1)}, \dots, p_{\iota(\ell)}, v)
	\]
	and therefore\medskip
	\[
		\cc(P,\alpha)_v \;=\; 
		\frac{1}{N(\alpha)} \cdot \sum_{\iota} [\alpha_1 p_{\iota(1)} + \cdots + \alpha_{\ell} p_{\iota(\ell)}] 
		\;=\; 
		\frac{1}{N(\alpha)} \cdot (\sigma_{\alpha *}(\PLambda^{\times \ell}_{\vert U} \smallsetminus \Delta_\ell))_v
	\]
	which is by definition the fiber of the cycle $\PLambda^\alpha$ over $v$.
	Since this holds for very general $v$ it follows that $\cc(P, \alpha)=\PLambda^\alpha$ as claimed.
	
	\medskip 
	
	If we moreover assume that the cycle $\cc(P, \alpha)$ is integral, then it follows from \cref{rem:symmetric-fiber-product-of-conormal} that the cycle
	\[
	 \tilde{\PLambda}^\alpha \;=\; \PLambda^{[\ell]}/\frS_\alpha 
	\]
	is integral as well. We claim that in this case, the cycle $\PLambda$ must be integral. Indeed,  suppose for a  contradiction that $\PLambda = \PLambda_1 + \PLambda_2$ with effective cycles $\PLambda_1, \PLambda_2 \in \cL(A)$. We already know that the fiber of the Gauss map $\gamma_{\PLambda}\colon \PLambda \to \bbP_A$ over a general point $v\in \bbP_A(\bbC)$ is reduced, hence every point in this fiber lies either on $\PLambda_1$ or on $\PLambda_2$ but not on both. Thus,  each point on the big diagonal comes from a point on the big diagonal of one of the two summands. For the complement of the big diagonal, we therefore obtain that
	\[
	\PLambda_{\vert U}^{\times \ell} \smallsetminus \Delta_\ell 
	\;\supset\; 
	\sum_{\ell_1 + \ell_2 = \ell}
	(\PLambda_{1\vert U}^{\times \ell_1} \smallsetminus \Delta_{1, \ell}) \times_U (\PLambda_{2\vert U}^{\times \ell_2} \smallsetminus \Delta_{2, \ell}) 
	\]
	where we denote the big diagonals in the summands by $\Delta_{i, \ell} \subset \PLambda_{i\vert U}^{\times \ell_i}$. Taking Zariski closure gives
	\[
	\PLambda^{[\ell]} \; \supset \; \sum_{\ell_1+\ell_2=\ell} \PLambda^{[\ell_1, \ell_2]} \] where 
	\[ 
	\PLambda^{[\ell_1, \ell_2]} 
	\;:=\; 
	\overline{(\PLambda_{1\vert U}^{\times \ell_1} \smallsetminus \Delta_{1, \ell}) \times_U (\PLambda_{2\vert U}^{\times \ell_2} \smallsetminus \Delta_{2, \ell}) }.
	\]
	Now from a look at the degree of the respective Gauss maps we see that $\PLambda^{[\ell_1, \ell_2]}\neq 0$ if and only if $ 0\le \ell_1 \le \deg(\PLambda_1)$ and  $ 0\le \ell_2 \le \deg(\PLambda_2) $. For $\ell_1 + \ell_2 = \ell$ these four inequalities are equivalent to
	\[
	 \max\{0, \ell - \deg(\PLambda_2)\} \le \ell_1 \le \min\{ \deg(\PLambda_1), \ell \}.
	\]
	This set of inequalities has at least two different solutions $\ell_1 \in \bbZ$, indeed we have $ \max\{0, \ell - \deg(\PLambda_2)\} < \min\{ \deg(\PLambda_1), \ell \}$ because all the occurring degrees of Gauss maps are strictly positive and because $\ell < n = \deg(\PLambda) = \deg(\PLambda_1) + \deg(\PLambda_2)$. In conclusion, this shows that there are at least two irreducible components of the form $\PLambda^{[\ell_1, \ell_2]}$ in $\PLambda^{[\ell]}$. One easily sees that no two such components are related to each other by a permutation of the factors in $A^\ell \times \bbP_A$, using again that the fibers of the Gauss maps for $\PLambda_1$ and for $\PLambda_2$ are disjoint. Hence, the quotient $\PLambda^{[\ell]}/\frS_\alpha$ has more than one irreducible component, which contradicts our assumption. 
\end{proof} 

\subsection{Weyl orbits for type \texorpdfstring{$B$}{B}} For $\tilde{G} = \Spin_{2n+1}$, let $\epsilon_{\pm 1}, \dots, \epsilon_{\pm n} \in \tilde{\sfX}$ be the nontrivial weights of the standard representation of the orthogonal group, with the relations $\epsilon_{-i}=-\epsilon_i$. The fundamental weights are 
\[
\varpi_i \;=\; 
\begin{cases} 
\epsilon_1 + \cdots + \epsilon_i & \textnormal{for $i<n$}, \\
\tfrac{1}{2} \cdot (\epsilon_1 + \cdots + \epsilon_n) & \textnormal{for $i=n$}.
\end{cases} 
\]
The first $n-1$ fundamental weights are again highest weights of wedge powers of the standard representation; the last fundamental weight is the highest weight of the spin representation. The dominant integral weights are the weights of the form
\[
\alpha \;=\; \sum_{\nu=1}^n \alpha_\nu \epsilon_\nu
\quad \textnormal{with} \quad \alpha_1 \ge \cdots \ge \alpha_n \ge 0,
\]
where $2\alpha_i\in \bbZ$ are either all even or all odd. Put $\ell = \max\{\nu \mid \alpha_\nu \neq 0\}$, and let~$I(\ell, \pm n)$ be the set of maps
\[
\begin{tikzcd}[column sep=15pt]   \iota \colon \hspace{-11pt}& \{1,\dots, \ell\} \ar[r,hookrightarrow] & \{\pm 1, \dots, \pm n\}\end{tikzcd}
\]
with the property that the map $\nu\mapsto |\iota(\nu)|$ is still injective. Then $\alpha$ has the Weyl group orbit
\[
W.\alpha \;=\; 
\bigl\{
\alpha_1 \epsilon_{\iota(1)} + \cdots + \alpha_\ell \epsilon_{\iota(\ell)} \mid 
\iota \in I(\ell, \pm n)  
\bigr\}.
\]
Each weight in this orbit occurs for precisely $N(\alpha)$ different choices of $\iota \in I(\ell, \pm n)$, where the number $N(\alpha)$ is defined as above; note that different sign choices will lead to different weights and hence the extra signs do not change the count. 

\medskip 

In order to translate this back to geometry, we need to adapt \cref{defn:fiber-product-minus-diagonal} to the symmetric case:

\begin{definition} \label{defn:cc-in-type-B}
Let $\PLambda \in \cL(A)$ be a reduced clean cycle with $[-1]_\ast \PLambda = \PLambda$, and let $U\subset \bbP_A$ be an open dense subset over which all components of the cycle are finite and flat. For an integer $\ell \ge 1$ let
\begin{eqnarray*}
\Delta_\ell 
&\;:=\;& 
\{ (p_1, \dots, p_\ell, v) \mid p_i = p_j \; \textnormal{for some $(i,j)$ with $i\neq j$} \} \;\subset\; \PLambda^{\times \ell}_{\vert U}
\\	
\Delta^-_\ell 
&\;:=\;& 
\{ (p_1, \dots, p_\ell, v) \mid p_i = -p_j \; \textnormal{for some $(i,j)$ with $i\neq j$} \} \;\subset\; \PLambda^{\times \ell}_{\vert U}
\end{eqnarray*}	
be the big diagonal resp.~antidiagonal in the fiber product, and consider the Zariski closure
\[
\PLambda^{[\ell]}_{{\sym}} \;:=\; \overline{\PLambda^{\times \ell}_{\mid U} \smallsetminus (\Delta_\ell \cup \Delta_\ell^-) }
\;\subset\; A^\ell \times \bbP_A.
\]
For $\alpha = (\alpha_1, \dots, \alpha_\ell)\in \bbZ^\ell$ we obtain a clean cycle
\[
 \PLambda^\alpha_{\sym} 
 \;:=\; 
 \frac{1}{N(\alpha)} \cdot \sigma_{\alpha \ast}(\PLambda^{[\ell]}_{\sym}) 
 \;\in\; \cL(A)
\]
as the pushforward under the morphism \[\sigma_\alpha\colon A^\ell \times \bbP_A \too A\times \bbP_A, \quad (p_1, \dots, p_\ell, v) \longmapsto (\alpha_1 p_1 + \cdots + \alpha_\ell p_\ell, v)\]
\end{definition}

With this notation, we obtain from \cref{thm:cc-and-weights} the following description of the clean cycles corresponding to the dominant weights:

\begin{lemma} \label{lem:cc-in-type-B}
	Let $G=G_\omega(P)$ be semisimple with universal cover $\tilde{G} \simeq \Spin_{2n+1}$, and let $\alpha \in \bbZ \epsilon_1 + \cdots + \bbZ \epsilon_n \subset \tilde{\sfX}$ be a dominant weight. If the clean cycle $\cc(P, \alpha)$ is reduced, then 
	$
	\PLambda := \cc(P, \epsilon_1)
	$
	is reduced, and 
	\[ \cc(P, \alpha) \;=\; \PLambda^{\alpha}_{{\sym}} \]
	where on the right-hand side we identify $\alpha = \sum_{i=1}^n \alpha_i \epsilon _i$ with $(\alpha_1, \dots, \alpha_n)$.
\end{lemma}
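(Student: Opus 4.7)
The plan is to mimic the proof of \cref{lem:cc-in-type-A} for type $A$, but to take into account that the Weyl group of type $B$ now acts with signs on the weights. First, I would fix a very general cotangent direction $v \in \bbP_A(\bbC)$ and consider the fiber of the Gauss map $\PLambda = \cc(P, \epsilon_1) \to \bbP_A$ over $v$. The standard representation of $\Spin_{2n+1}$ is self-dual with weights $\pm \epsilon_1, \dots, \pm \epsilon_n$ (and possibly the trivial weight, whose contribution to the Weyl orbit is absent for $\epsilon_1$). By \cref{thm:cc-and-weights}~(2) the fiber $\PLambda_v$ equals $\varphi_v([W \cdot d\epsilon_1])$, and since $\varphi_v$ is a group homomorphism, the image of the Weyl orbit is $\{\pm p_1, \dots, \pm p_n\}$ where I set $p_i := \varphi_v(d\epsilon_i)$. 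In particular this already shows that $[-1]_\ast \PLambda = \PLambda$, so the construction in \cref{defn:cc-in-type-B} applies to $\PLambda$.

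Next, using the same computation as in the type $A$ case together with the description of the Weyl orbit of $\alpha$ in terms of the set $I(\ell, \pm n)$, I would write
\[
\cc(P, \alpha)_v \;=\; \varphi_v([W d \alpha]) \;=\; \frac{1}{N(\alpha)} \sum_{\iota \in I(\ell, \pm n)} \bigl[\, \alpha_1 p_{\iota(1)} + \cdots + \alpha_\ell p_{\iota(\ell)}\,\bigr],
\]
with the convention $p_{-i} = -p_i$. The hypothesis that $\cc(P, \alpha)$ is reduced forces the coefficient of every point appearing in this $0$-cycle to equal $1$. The stabilizer of a generic summand inside the set of maps $\iota \in I(\ell, \pm n)$ (under the action of the Weyl group on the fibers) has order precisely $N(\alpha)$ provided the points $\pm p_1, \dots, \pm p_n$ are pairwise distinct; any collision of the form $p_i = \pm p_j$ with $i \neq j$, or $p_i = 0 = -p_i$, would produce a nontrivial extra stabilizer and hence a strictly smaller coefficient, contradicting the reducedness assumption. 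This yields the pairwise distinctness of the $2n$ points $\pm p_i$ and hence the reducedness of $\PLambda_v$; since $v$ was very general, $\PLambda$ itself is reduced.

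Finally, to identify $\cc(P, \alpha)$ with $\PLambda^\alpha_{\sym}$, I would choose an open neighborhood $U \subset \bbP_A$ of $v$ over which the Gauss map $\PLambda_{|U} \to U$ is finite and flat, and observe that the pairwise distinctness of $\pm p_1, \dots, \pm p_n$ over $v$ gives a bijection
\[
I(\ell, \pm n) \;\too\; \bigl(\PLambda_{|U}^{\times \ell} \smallsetminus (\Delta_\ell \cup \Delta_\ell^-)\bigr)_v,
\qquad \iota \;\longmapsto\; (p_{\iota(1)}, \dots, p_{\iota(\ell)}, v).
\]
Taking the Zariski closure of the right-hand side gives $\PLambda^{[\ell]}_{\sym}$ by definition, and pushing forward along $\sigma_\alpha$ with the $1/N(\alpha)$ prefactor matches the expression for $\cc(P, \alpha)_v$ computed above. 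Since this identification holds for very general $v$, it holds for the cycles themselves.

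I expect the main obstacle to be the bookkeeping in the second step: ruling out \emph{all} possible sources of extra stabilizers in the Weyl orbit of $d\alpha$, since compared to type $A$ the available collisions now include both $p_i = p_j$ and $p_i = -p_j$, and the cases where some $\alpha_\nu = 0$ (i.e.\ $\ell < n$) interact nontrivially with sign changes. One has to use reducedness of $\cc(P, \alpha)$ to rule out each case and conclude that $v$ lies over a point where the fiber of $\PLambda$ avoids both the diagonal and the antidiagonal after taking $\ell$-fold fiber powers.
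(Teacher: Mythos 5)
Your proposal takes essentially the same route as the paper: identify the Gauss fiber over a very general $v$ with $\varphi_v$ applied to the Weyl orbit $W\cdot d\epsilon_1$, expand $\cc(P,\alpha)_v$ as a sum over $I(\ell,\pm n)$, use reducedness of $\cc(P,\alpha)$ to deduce pairwise distinctness of $\pm p_1, \dots, \pm p_n$, and finish via the bijection with the fiber of $\PLambda^{[\ell]}_{\sym}$. One small wording slip: a collision such as $p_i = \pm p_j$ causes the corresponding point in the $0$-cycle to be hit by strictly \emph{more} than $N(\alpha)$ maps $\iota \in I(\ell,\pm n)$, so after dividing by $N(\alpha)$ its coefficient becomes strictly \emph{larger} than $1$ (not smaller, as you wrote) --- that is what contradicts reducedness.
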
 

\begin{proof} 
	Let $v\in \bbP_A$ be a very general cotangent direction, and let $p_{\pm 1}, \dots, p_{\pm n}$ be the projection in $A(\bbC)$ of the $2n$ points in the fiber of the Gauss map $\PLambda \to \bbP_A$ over this direction, counted with multiplicities so that 
	\[ \PLambda_v \;=\; [p_1] + \cdots + [p_n] + [p_{-1}] + \cdots + [p_{-n}]\] 
	inside $\bbZ[A(\bbC)]$. Recall from \cref{thm:cc-and-weights} that the points in the fiber are precisely the images of the weights in the Weyl group orbit $W. d\epsilon_1 = \{ \pm d\epsilon_1, \dots, \pm d\epsilon_n\}\subset \sfX$ under the homomorphism $\varphi_v\colon \sfX\to A(\bbC)$. Up to relabelling indices, we may assume that $\varphi_v(\pm d\epsilon_i) = p_{\pm i} = \pm p_i$ for all $i$. Then as in the proof of \cref{lem:cc-in-type-A} one obtains
	\begin{eqnarray*}
		 \cc(P, \alpha)_v 
		&\;=\;& 
		\frac{1}{N(\alpha)} \cdot \sum_{\iota \in I(\ell, \pm n)} [ \alpha_1 p_{\iota(1)} + \cdots + \alpha_\ell p_{\iota(\ell)} ] .
	\end{eqnarray*} 
	For very general $v$ this $0$-cycle contains no multiple points, since we assumed $\cc(P, \alpha)$ to be reduced. The same counting argument as in \cref{lem:cc-in-type-A} then shows that for any~$\iota, \iota^* \in I(\ell, \pm n)$ we have the equivalence
	\[
	\sum_{\nu = 1}^\ell \alpha_\nu p_{\iota(\nu)} 
	\;=\; 
	\sum_{\nu = 1}^\ell \alpha_\nu p_{\iota^*(\nu)} 
	\quad 
	\Longleftrightarrow 
	\quad 
	\exists \tau \in \prod_{i \in \bbZ} \frS_{\ell_i}: \; 
	\iota^* = \iota \circ \tau.
	\]
	This forces $p_{\pm 1}, \dots, p_{\pm n}\in A(\bbC)$ to be pairwise distinct: Indeed, assuming that we had
	\[ p_i = p_j \quad \textnormal{for certain} \quad i, j \in \{\pm 1, \dots, \pm n\}
	\quad \textnormal{with} \quad i \;\neq\; \pm j, \]
	then the implication $\Longrightarrow$ in the above equivalence would fail for any $\iota, \iota^*\in I(\ell, \pm n)$ with 
	\[ \iota(1)=i=-\iota^*(1), \quad \iota(2)=-j=-\iota^*(2) \quad \textnormal{and} \quad \iota(\nu)=\iota^*(\nu)
	\quad \textnormal{for $\nu = 3,\dots, \ell$}, 
	\]
	a contradiction. This shows that the $0$-cycle $\PLambda_v$ is reduced, hence $\PLambda$ must be reduced as well. For the complement of the big diagonal and antidiagonal we then get a bijection
	\[
	I(\ell, \pm n) \;\too\; (\PLambda^{\times \ell}_{\vert U} \smallsetminus (\Delta_\ell \cup \Delta_\ell^-))_v, 
	\quad 
	\iota  \;\longmapsto\; (p_{\iota(1)}, \dots, p_{\iota(\ell)}, v)
	\]
	and can conclude as in \cref{lem:cc-in-type-A} that $\cc(P, \alpha) = \PLambda^\alpha_{\sym}$.
\end{proof}

\subsection{Weyl orbits for type \texorpdfstring{$D$}{D}} For $\tilde{G} = \Spin_{2n}$ we again let $\epsilon_{\pm 1}, \dots, \epsilon_{\pm n} \in \tilde{\sfX}$ denote the weights of the standard representation of the orthogonal group, with the relations $\epsilon_{-i}=-\epsilon_i$. The fundamental weights are 
\[
\varpi_i \;=\; 
\begin{cases} 
\epsilon_1 + \cdots + \epsilon_i & \textnormal{for $i<n-1$}, \\
\tfrac{1}{2}(\epsilon_1 + \cdots + \epsilon_{n-1} - \epsilon_n) & \textnormal{for $i=n-1$}, \\
\tfrac{1}{2}(\epsilon_1 + \cdots + \epsilon_{n-1} + \epsilon_n) & \textnormal{for $i=n$}. \\ 
\end{cases}
\]
The first $n-2$ fundamental weights are highest weights of wedge powers of the standard representation; the last two fundamental weights are the highest weights of the two spin representations. The dominant integral weights are the weights of the form 
\[
\alpha \;=\; \sum_{\nu=1}^n \alpha_\nu \epsilon_\nu
\quad \textnormal{with} \quad 
\alpha_1 \ge \cdots \ge \alpha_{n-1} \ge |\alpha_n| \ge 0,
\]
where $2\alpha_i \in \bbZ$ are either all even or all odd. Put $\ell = \max\{\nu \mid \alpha_\nu \neq 0\}$, and let~$I_{\textup{even/odd}}(\ell, \pm n)$ be the set of maps
\[
\begin{tikzcd}[column sep=15pt]   \iota \colon \hspace{-11pt}& \{1,\dots, \ell\} \ar[r,hookrightarrow] & \{\pm 1, \dots, \pm n\}\end{tikzcd}
\]
with the property that the map $\nu \mapsto |\iota(\nu)|$ is still injective and the number of negative values of $\iota$ is even resp.~odd. Then $\alpha$ has the Weyl group orbit
\[
W.\alpha \;=\; 
\bigl\{
\alpha_1 \epsilon_{\iota(1)} + \cdots + \alpha_\ell \epsilon_{\iota(\ell)} \mid \iota \in I_{\textup{even/odd}}(\ell, \pm n)
\bigr\}
\quad 
\textnormal{with} 
\quad 
\begin{cases} 
\text{even} & \text{if $\alpha_n \ge 0$}, \\
\text{odd} & \text{if $\alpha_n < 0$}.
\end{cases}
\]
Each weight in this orbit occurs for precisely $N(\alpha)$ different choices of $\iota$, again the signs do not matter for this count.

\medskip 

In order to translate this back to geometry, we need to refine \cref{defn:cc-in-type-B} as follows:

\begin{definition} \label{defn:cc-in-type-D}
Let $\PLambda \in \cL(A)$ be a reduced clean cycle with $[-1]_\ast \PLambda = \PLambda$, and let $U\subset \bbP_A$ be an open dense subset over which all components of the cycle are finite and \'etale. Labelling the points in a general fiber of the Gauss map $\PLambda_{\vert U} \to U$ in pairs of opposite points as 
\[ \gamma_{\PLambda}^{-1}(u) \;=\; \{ p_{\pm 1}, \dots, p_{\pm n}\}
\quad \textnormal{with} \quad 
p_{-i} \;=\; -p_i,
\]
we identify the monodromy group of the finite \'etale cover $\gamma_{\PLambda\vert U} \colon \PLambda_{\vert U} \to U$ as a subgroup of $(\pm 1)^n \rtimes \frS_n$. We say that the monodromy of the Gauss map is {\em even} if it is contained in the subgroup
\[
 (\pm 1)^n_+ \rtimes \frS_n 
 \quad \textnormal{where} \quad 
 (\pm 1)^n_+ \;:=\; \{ (a_1, \dots, a_n) \in (\pm 1)^n \mid a_1 \cdots a_n = +1 \}.
\] 
Then for $\ell = n$ in \cref{defn:cc-in-type-B} we obtain that the cycle $\PLambda^{[n]}_\sym$ on $A^n \times \bbP_A$ splits as a sum
\[
 \PLambda^{[n]}_\sym \;=\; \PLambda^{[n]}_{\sym,+} + \PLambda^{[n]}_{\sym,-} 
\]
where $\PLambda^{[n]}_{\sym, \pm}\subset \PLambda^{[n]}_\sym$ are defined by the condition that their fiber over $u\in U(k)$ contains an even resp. odd number of points with a negative sign, in other words
\[
 (\PLambda^{[n]}_{\sym, \pm})_u \;:=\; 
 \{ (p_{i_1}, \dots, p_{i_n}, u) \mid \prod_{\nu=1}^n \mathrm{sgn}(i_\nu) = \pm 1 \}.
\]
Note that this condition depends on the way we have labelled the points in the fiber $\gamma_Z^{-1}(u)$: The labelling by $\pm$ has no intrinsic meaning and is only used as a notational device to separate the two pieces in the decomposition, none of the two pieces is distinguished. We define
\[
 \PLambda^\alpha_{\sym, \pm} \;:=\; \sigma_{\alpha \ast} (\PLambda^{[n]}_{\sym,\pm}) \;\in\; \cL(A)
\]
for $\alpha = (\alpha_1, \dots, \alpha_n)\in \bbZ^n$ and the sum morphism $\sigma_\alpha$ as in \cref{defn:cc-in-type-B}. Note that while there is no intrinsic meaning to the labels $\pm$, it might nevertheless happen that $\pr_A(\PLambda^\alpha_{\sym, -})\neq \pr_A(\PLambda^\alpha_{\sym, +})$. We also note that by symmetry of $Z\subset A$ we can assume without loss of generality that $\alpha_n \ge 0$.
\end{definition}

\begin{lemma} \label{lem:cc-in-type-D}
	Let $G=G_\omega(P)$ be semisimple with universal cover $\tilde{G} \simeq \Spin_{2n}$, and let $\alpha \in \bbZ \epsilon_1 + \cdots + \bbZ \epsilon_n \subset \tilde{\sfX}$ be a dominant weight of length $\ell$. If $\cc(P, \alpha)$ is reduced, then 
	$
	\PLambda := \cc(P, \epsilon_1)
	$
	is reduced. In this case 
	\[ 
	\cc(P, \alpha) \;=\;
	\begin{cases}
	\PLambda^\alpha_\sym & \text{for $\ell < n$}, \\
	\PLambda^\alpha_{\sym, \epsilon} & \text{for $\ell = n$ and suitable $\epsilon \in \{+, -\}$},
	\end{cases}
	\]
	where on the right-hand side we identify $\alpha = \sum_{i=1}^n \alpha_i \epsilon_i$ with $(\alpha_1, \dots, \alpha_n)$.
\end{lemma}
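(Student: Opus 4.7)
I would follow the template of the proofs of Lemmas~\ref{lem:cc-in-type-A} and~\ref{lem:cc-in-type-B}, modifying it to account for the two new features of type $D_n$: the splitting of the Weyl orbit of a weight of length $\ell=n$ into ``even'' and ``odd'' pieces, and the evenness constraint on the monodromy of the Gauss map. First I fix a very general cotangent direction $v\in \bbP_A(\bbC)$. Since $W\epsilon_1=\{\pm \epsilon_1,\dots,\pm \epsilon_n\}$ is stable under negation, by \cref{thm:cc-and-weights}~(2) the cycle $\PLambda=\cc(P,\epsilon_1)$ satisfies $[-1]_\ast \PLambda = \PLambda$, so I may label the $2n$ points of the fiber $\gamma_{\PLambda}^{-1}(v)$ (counted with multiplicity) as $p_{\pm 1},\dots,p_{\pm n}$ with $p_{-i}=-p_i$, in such a way that $\varphi_v(\pm d\epsilon_i)=\pm p_i$.

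Next, using the explicit description of $W\alpha$ in type $D$ recalled just above the lemma, \cref{thm:cc-and-weights}~(2) gives for a dominant weight $\alpha=\sum_{\nu=1}^{\ell}\alpha_\nu\epsilon_\nu$ of length $\ell\le n$ the formula
\[
\cc(P,\alpha)_v \;=\; \frac{1}{N(\alpha)}\sum_{\iota\in J(\ell,n,\alpha_n)}\bigl[\alpha_1 p_{\iota(1)}+\cdots +\alpha_\ell p_{\iota(\ell)}\bigr],
\]
where $J(\ell,n,\alpha_n)=I(\ell,\pm n)$ when $\ell<n$, and $J(n,n,\alpha_n)=I_{\textup{even}}(n,\pm n)$ if $\alpha_n\ge 0$, $I_{\textup{odd}}(n,\pm n)$ if $\alpha_n<0$. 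The reducedness hypothesis on $\cc(P,\alpha)$ forces the $0$-cycle on the right to be multiplicity-free, and the same counting argument as in \cref{lem:cc-in-type-B} then shows that the points $p_{\pm 1},\dots,p_{\pm n}$ must be pairwise distinct; hence the generic fiber of $\gamma_\PLambda$ is reduced and $\PLambda$ itself is reduced.

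For $\ell<n$, the indexing set $I(\ell,\pm n)$ and the counting argument are the same as in the type $B$ case, so I may conclude verbatim as in \cref{lem:cc-in-type-B} that $\cc(P,\alpha)=\PLambda^\alpha_\sym$. The main additional step, which I expect to be the only real obstacle, is the case $\ell=n$: here I must first verify that the cycle $\PLambda^{[n]}_\sym$ actually splits as $\PLambda^{[n]}_{\sym,+}+\PLambda^{[n]}_{\sym,-}$, which requires the monodromy of $\gamma_{\PLambda\mid U}$ to be \emph{even}, i.e.\ to be contained in $(\pm 1)^n_+\rtimes \frS_n$. To establish this I would argue as follows: the identification of the fiber $\gamma_{\PLambda}^{-1}(v)$ with the Weyl orbit $W\cdot d\epsilon_1\subset \sfX$ provided by $\varphi_v$ is equivariant for the $\pi_1(U,v)$-action on the left and the natural $W$-action on the right, because by \cref{thm:cc-and-weights}~(2) it is $\varphi_v$ that organizes the fiber; hence the monodromy factors through the Weyl group of $\tilde G=\Spin_{2n}$, which is exactly $(\pm 1)^n_+\rtimes \frS_n$. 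With this evenness in hand, the two pieces $\PLambda^{[n]}_{\sym,\pm}$ are well-defined finite \'etale covers of $U$, and the bijection
\[
I_{\textup{even/odd}}(n,\pm n)\;\too\;\bigl(\PLambda^{[n]}_{\sym,\pm}\smallsetminus(\Delta_n\cup\Delta_n^-)\bigr)_v,\qquad \iota\;\longmapsto\;(p_{\iota(1)},\dots,p_{\iota(n)},v),
\]
together with the formula for $\cc(P,\alpha)_v$ above, shows that $\cc(P,\alpha)=\PLambda^\alpha_{\sym,\epsilon}$ with $\epsilon=+$ if $\alpha_n>0$ and $\epsilon=-$ if $\alpha_n<0$ (after possibly relabeling the two pieces, since, as noted in \cref{defn:cc-in-type-D}, the labels $\pm$ have no intrinsic meaning and depend on the initial labeling of $p_{\pm i}$). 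This concludes the proof for $\ell=n$ and hence the lemma.
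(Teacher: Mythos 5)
Your proposal follows exactly the template the paper intends (the paper's own proof is a one-liner, "Similar to the argument for type $B_n$"), and fills in the details correctly. You correctly identify the two genuinely new points relative to type $B_n$ and address both:

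\begin{enumerate}
\item For $\ell<n$, you use the full indexing set $I(\ell,\pm n)$ rather than the ``even/odd'' subset from the paper's displayed description of the type $D$ Weyl orbit, and then conclude exactly as in type $B$. This is right: for $\ell<n$ the parity condition on the sign changes can always be absorbed by the $n-\ell$ coordinates that do not appear in $\alpha$, so the orbit indeed runs over all of $I(\ell,\pm n)$; the paper's formula as written is only literally accurate for $\ell=n$. Your formulation matches the definition of $\PLambda^\alpha_\sym$.
\item For $\ell=n$ you supply the one step the paper leaves implicit: verifying that the monodromy of $\gamma_{\PLambda\mid U}$ is even, so that the decomposition $\PLambda^{[n]}_\sym = \PLambda^{[n]}_{\sym,+} + \PLambda^{[n]}_{\sym,-}$ from \cref{defn:cc-in-type-D} is available. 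Your idea --- that the labeling of the Gauss fiber by $\varphi_v$ is monodromy-equivariant, so the image of $\pi_1(U,v)$ lands inside the Weyl group $W(\tilde G)=(\pm 1)^n_+\rtimes\frS_n$ --- is the correct one, and it is exactly the conclusion needed (\cref{Thm:FromRepresentationToPhysicalSpin} also asserts this evenness). One caveat: \cref{thm:cc-and-weights}~(2) as stated in the paper only records the commutativity of a diagram at a fixed very general $v$; the equivariance as $v$ varies around loops is a stronger consequence of the cited theorem in \cite{KraemerMicrolocalII}, so you should reference that directly rather than item (2) alone.
\end{enumerate}

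Apart from that single imprecision in the citation, the argument is sound and is essentially the paper's intended proof.
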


\begin{proof} 
Similar to the argument for type $B_n$. 
\end{proof} 

\section{Simplicity of the Tannaka group} 

We now take a closer look at the Tannaka group of the perverse intersection complex on a smooth nondivisible subvariety. By \cref{cor:minuscule}, the corresponding representation is minuscule; the goal of this section is to show that, under suitable positivity assumptions, the Tannaka group is simple modulo its center.

\subsection{The simplicity criterion} \label{sec:simplicity-criterion}

For the rest of this section, we assume that $k$ is algebraically closed of characteristic zero.  Throughout, we fix a subvariety $X\subset A$ and denote by
\[
 \omega\colon \quad \langle \delta_X \rangle \;\too\; \Vect(\bbF)
\]
a fiber functor on the Tannaka category generated by the perverse intersection complex $\delta_X\in \Perv(A, \bbF)$. Consider the reductive Tannaka group $G_{X, \omega} := G_\omega(\delta_X)$ and denote by
\[
 G_{X, \omega}^\ast \;:=\; [G_{X, \omega}^\circ, G_{X, \omega}^\circ]
\]
the derived group of its connected component. This is a connected semisimple group, hence its Lie algebra is a product of simple Lie algebras. Recall that a connected algebraic group is \emph{simple} if its Lie algebra is simple, or equivalently, it does not contain connected (reduced) normal subgroups. Writing $g:= \dim A$, the goal of this section is the following simplicity criterion:

\begin{theorem} \label{Thm:TannakaGroupSimple} Suppose $g \ge 3$ and let $X$ be a smooth nondivisible subvariety with ample normal bundle. Then the following are equivalent:
\begin{enumerate}
\item The algebraic group $G_{X, \omega}^\ast$ is not simple;
\item There are smooth positive-dimensional subvarieties $X_1, X_2 \subset A$ such that the sum morphism induces an isomorphism
\[ X_1 \times X_2 \stackrel{\sim}{\too} X.\]
\end{enumerate}
\end{theorem}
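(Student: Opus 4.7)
The plan is to handle the two implications separately, using convolution of perverse sheaves for $(2)\Rightarrow(1)$ and characteristic cycles for $(1)\Rightarrow(2)$.

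\textbf{Easy direction $(2) \Rightarrow (1)$.} I would first note that any decomposition $X \simeq X_1 \times X_2$ via the sum morphism must have both factors of positive dimension, since a zero-dimensional factor would only translate $X$, which is excluded by nondivisibility. Then $\delta_X \simeq \delta_{X_1} * \delta_{X_2}$ in $\Pbar(A, \bbF)$, so applying $\omega$ yields $V := \omega(\delta_X) \simeq V_1 \otimes V_2$ with $V_i := \omega(\delta_{X_i})$ of dimension $|e_{X_i}| \ge 2$ (each $X_i$ being neither a point nor an abelian subvariety). The Tannaka group $G_{X, \omega}$ is then the image in $\GL(V)$ of the tensor representation of $G_{X_1, \omega_1} \times G_{X_2, \omega_2}$, so $G^\ast_{X, \omega}$ is a quotient of a product of two nontrivial semisimple groups, and hence is not simple.

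\textbf{Hard direction $(1) \Rightarrow (2)$.} By \cref{cor:algebraically-closed} I may reduce to $k = \bbC$ and work with characteristic cycles. Ampleness of $\cN_{X/A}$ makes the Gauss map $\gamma_X: \PLambda_X \to \bbP_A$ finite by \cref{Thm:PositivityNotions}(4), so $\PLambda_X$ is clean; combined with nondivisibility of $X$, one has $\cc(\delta_X) = \PLambda_X$ with multiplicity one and the Gauss fiber over a very general $v \in \bbP_A$ is reduced. By \cref{cor:minuscule}, $V := \omega(\delta_X)$ is an irreducible minuscule representation of $G^\ast := G_{X, \omega}^\ast$. Assuming $G^\ast$ is not simple, its simply connected cover splits nontrivially as $\tilde{G}^\ast \simeq \tilde{G}_1 \times \tilde{G}_2$, and irreducibility of $V$ forces an external tensor factorization $V \simeq V_1 \boxtimes V_2$ with each $V_i$ a nontrivial minuscule representation of $\tilde{G}_i$.

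I would then translate this Tannakian splitting into a cycle-level identity via the diagram of \cref{thm:cc-and-weights}(1): the equation $[V] = [V_1] \cdot [V_2]$ in $\R(\tilde{G}^\ast)$ yields
\[ [dm]_\ast \PLambda_X \;=\; \PLambda'_1 \circ \PLambda'_2 \quad \text{in } \cL(A), \]
where $\PLambda'_i := \cc(\xi^{-1}[W d \alpha_i])$ is effective by \cref{lem:weight_multiplicities}, with $\alpha_i$ the highest weight of $V_i$. Using \cref{thm:cc-and-weights}(2), the reduced fiber $\gamma_X^{-1}(v)$ decomposes as the Minkowski sum $\varphi_v(W d \alpha_1) + \varphi_v(W d \alpha_2)$ in $A(\bbC)$; minusculity of each $V_i$ lets me extract integral subvarieties $X_1, X_2 \subset A$ with $\PLambda'_i = [dm]_\ast \PLambda_{X_i}$ and $X = X_1 + X_2$ at the level of reduced subvarieties. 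Finiteness of $\gamma_X$ transfers to each $\gamma_{X_i}$, so $X_i$ is smooth with ample normal bundle by \cref{Thm:PositivityNotions}(4). The sum morphism $\sigma: X_1 \times X_2 \to X$ has generic degree $\dim V_1 \cdot \dim V_2 / \dim V = 1$, so it is generically \'etale and birational; the matching of clean characteristic cycles $\PLambda_X = \PLambda_{X_1} \circ \PLambda_{X_2}$ forces $\sigma$ to have no positive-dimensional fibers (else $R\sigma_\ast \delta_{X_1 \times X_2}$ would acquire additional summands contradicting the cycle identity), so $\sigma$ is finite, and finite birational between smooth varieties implies that $\sigma$ is an isomorphism.

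\textbf{Main obstacle.} The substantive step is converting the abstract Tannakian splitting $V = V_1 \boxtimes V_2$ into a geometric decomposition of $X$ by integral smooth subvarieties rather than merely effective cycles, and simultaneously upgrading the resulting birational sum morphism to an isomorphism. Both steps rely on the reduced Gauss-fiber structure furnished by ample $\cN_{X/A}$ together with nondivisibility of $X$, which rigidify the Minkowski decomposition at the level of Gauss fibers into a unique subvariety-level decomposition; this refines the analysis of \cite[\S 6]{KraemerMicrolocalII}.
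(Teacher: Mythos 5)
Your overall strategy matches the paper's (convolution of perverse sheaves for one direction, characteristic-cycle analysis for the other), but both directions have genuine gaps.

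\textbf{Gap in $(2)\Rightarrow(1)$.} You assert that $G_{X,\omega}$ is the image of $G_{X_1,\omega}\times G_{X_2,\omega}$ acting on $V_1\otimes V_2$, and then conclude non-simplicity because ``a quotient of a product of two nontrivial semisimple groups is not simple.'' But the Tannaka group of $\langle\delta_{X_1}\oplus\delta_{X_2}\rangle$ is a priori only a \emph{subgroup} of $G_{X_1}\times G_{X_2}$ (surjecting onto each factor), and by Goursat it could be an entangled subgroup --- e.g.\ a diagonal $\SL_n\subset\SL_n\times\SL_n$ --- in which case $V_1\otimes V_2$ is an inner tensor product of two representations of a single simple group and the image in $\GL(V_1\otimes V_2)$ could perfectly well be simple. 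The paper rules this out with \cref{Lemma:TensorProductOfMinuscule}: each $\omega(\delta_{X_i})$ is minuscule (by \cref{cor:minuscule}, using that the $X_i$ are smooth and nondivisible), and the inner tensor product of two nontrivial minuscule representations of a simple group is never minuscule. This forces each simple factor of the covering group to act nontrivially on exactly one of $V_1,V_2$, which is what actually produces the splitting in \cref{Lemma:TannakaGroupProduct}. Without that step, the argument is not complete.

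\textbf{Gaps in $(1)\Rightarrow(2)$.} Your general line --- lift the splitting $V\simeq V_1\boxtimes V_2$ of the universal cover to a cycle-level identity $[dm]_*\PLambda_X=\PLambda_{X_1}\circ\PLambda_{X_2}$, and recover a sum decomposition of $X$ --- is indeed the paper's strategy (via \cref{Prop:WhatNonSimplicityImplies}). However: (a) you do not treat the case $\dim X=g-1$; the cycle-dimension argument (\cref{cor:DimensionOfConvolutionViaSegre}) only identifies $\dim X=\dim X_1+\dim X_2$ when $\dim X<g-1$, and the paper disposes of the ample-divisor case by citing Lawrence--Sawin. (b) The claim that ``finiteness of $\gamma_X$ transfers to each $\gamma_{X_i}$, so $X_i$ is smooth with ample normal bundle by \cref{Thm:PositivityNotions}(4)'' is false: \cref{Thm:PositivityNotions}(4) only gives ampleness of the normal bundle \emph{assuming} smoothness; a finite Gauss map does not imply the subvariety is smooth. (c) Consequently your final step --- ``finite birational between smooth varieties implies $\sigma$ is an isomorphism'' --- does not apply, since $X_1\times X_2$ is not known to be smooth. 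The paper's fix is a normalization argument: $f_1\times f_2\colon X\to X_1\times X_2$ is finite birational, hence (as $X$ is smooth) exhibits $X$ as the normalization; one then replaces $X_1,X_2$ by their normalizations $\tilde X_1,\tilde X_2$, which are smooth because $\tilde X_1\times\tilde X_2\simeq X$, and re-embeds them into $A$ via the Albanese to obtain the statement as formulated. You also need the conic-map formalism (\cref{Prop:RationalMapBetweenCones}, \cref{Prop:BasicPropertiesConicMorphism}) to go from the cycle-level factorization to the surjective morphisms $f_i\colon X\to X_i$ on bases --- this is where the $\PLambda_{X_i}$ with finite Gauss maps actually get turned into surjections from $X$, and your sketch leaves this step essentially unaddressed.
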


\noindent The proof of this result will occupy the rest of this section, but let us first observe that the criterion applies in many cases. First, a smooth projective curve $X \subset A$ generating $A$ has ample normal bundle \cite[prop.~4.1]{Har71}, thus $G_{X, \omega}^\ast$ is simple as soon as $X$ is nondivisible and $g \ge 3$. More generally we have:

\begin{corollary} \label{Cor:TannakaGroupSimpleSpecialCases} Suppose $g \ge 3$ and let $X\subset A$ be a smooth nondivisible subvariety with ample normal bundle. Assume that one of the following conditions holds:\smallskip 
\begin{enumerate}
\item for $x\in X(k)$ the image of the Albanese morphism $\alb_{X, x} \colon X\to \Alb(X)$  is a nondegenerate subvariety of $\Alb(X)$ in the sense of \cref{section positivity}, or\smallskip
\item the natural map $\Alb(X) \to A$ is an isogeny.\smallskip
\end{enumerate}
Then the algebraic group $G_{X, \omega}^\ast$ is simple.
\end{corollary}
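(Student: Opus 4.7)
The plan is to derive simplicity from \cref{Thm:TannakaGroupSimple} by showing that under either hypothesis, the subvariety $X$ cannot admit a nontrivial decomposition as a product via the sum morphism. First I would observe that assumption (2) implies (1): the composition $X \to \Alb(X) \to A$ agrees with the inclusion $X \hookrightarrow A$ up to translation, so if $\Alb(X) \to A$ is an isogeny then $\alb_{X,x}(X)$ maps isogenously onto a translate of $X$. Since $X \subset A$ has ample normal bundle, \cref{Thm:PositivityNotions} (3)--(4) imply that $X$ is nondegenerate in $A$, and \cref{Prop:NonDegerateIsogeny} then forces $\alb_{X,x}(X)$ to be nondegenerate in $\Alb(X)$.

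It therefore suffices to treat case (1). Assume for contradiction that $G_{X,\omega}^\ast$ is not simple. By \cref{Thm:TannakaGroupSimple} there exist smooth subvarieties $X_1, X_2 \subset A$ of positive dimension such that the sum morphism induces an isomorphism $X_1 \times X_2 \stackrel{\sim}{\too} X$. Since the Albanese of a product is the product of Albaneses, we obtain a canonical isomorphism $\Alb(X) \simeq \Alb(X_1) \times \Alb(X_2)$, and for suitable base points $x_i \in X_i(k)$ the image $\alb_{X,x}(X)$ equals $\alb_{X_1,x_1}(X_1) \times \alb_{X_2,x_2}(X_2)$. Since each inclusion $X_i \hookrightarrow A$ factors through $\alb_{X_i,x_i}$, the Albanese maps of the factors are closed immersions and $\dim \alb_{X_i,x_i}(X_i) = \dim X_i$.

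Next I would project $\alb_{X,x}(X)$ onto one of the factors $\Alb(X_i)$: the image has dimension $\dim X_i$, which is strictly less than $\dim X = \dim \alb_{X,x}(X)$ because the other factor is positive-dimensional. If $\dim X_i < \dim \Alb(X_i)$ for some $i$, this alone contradicts the nondegeneracy of $\alb_{X,x}(X)$ in $\Alb(X)$. The main obstacle is the remaining case where $\dim X_i = \dim \Alb(X_i)$ for both $i$: then the closed immersion $X_i \hookrightarrow \Alb(X_i)$ is a dominant morphism of smooth integral varieties of equal dimension, hence an isomorphism, so each $X_i$ is isomorphic to an abelian variety. I would resolve this case by invoking rigidity of abelian varieties: a smooth closed subvariety of $A$ isomorphic as a variety to an abelian variety is the translate of an abelian subvariety of $A$, whence $\Stab(X_i) \subset A$ has positive dimension. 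Since translations by elements of $\Stab(X_1)$ preserve the sum $X = X_1 + X_2$, we get $\Stab(X_1) \subseteq \Stab(X)$, contradicting the finiteness of $\Stab(X)$ which follows from \cref{Thm:PositivityNotions} together with the ample normal bundle hypothesis.
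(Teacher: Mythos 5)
Your argument follows essentially the same route as the paper: reduce (2) to (1) via isogeny-invariance of nondegeneracy, then use \cref{Thm:TannakaGroupSimple} to obtain a product decomposition $X\simeq X_1\times X_2$, pass to $\Alb(X)=\Alb(X_1)\times\Alb(X_2)$, and project onto a factor to contradict nondegeneracy of the Albanese image. The difference is that you explicitly isolate and resolve the exceptional case $\dim X_i=\dim\Alb(X_i)$ for all $i$, which the paper compresses into a single unjustified sentence asserting that the Albanese image of $X_1\times X_2$ is always degenerate for smooth connected positive-dimensional $X_i$; that assertion is not literally true in general (it fails when each $X_i$ is a translate of an abelian subvariety), and your resolution via rigidity of abelian varieties together with $\Stab(X_1)\subseteq\Stab(X)$ and the finiteness of $\Stab(X)$ is a correct way to close the gap. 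An alternative, closer in spirit to what the paper implicitly relies on, is to observe that the factors $X_i$ produced by \cref{Thm:TannakaGroupSimple} (via \cref{Prop:WhatNonSimplicityImplies}) come with finite Gauss map, hence are nondegenerate with finite stabilizer by \cref{Thm:PositivityNotions}, and therefore cannot be translates of positive-dimensional abelian subvarieties, so $\dim X_i<\dim\Alb(X_i)$ for both $i$. Either way, your proof is correct and in fact more complete than the argument printed in the paper.
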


\begin{proof}[{Proof of \cref{Cor:TannakaGroupSimpleSpecialCases}}] For smooth connected subvarieties $X_1,X_2 \subset A$ the image of all Albanese morphisms $X_1 \times X_2 \to \Alb(X_1) \times \Alb(X_2)$ is degenerate provided that $0 < \dim X_1 + \dim X_2 < 2 \dim A$. This shows the statement assuming (1). Now hypothesis (2) implies hypothesis (1): Indeed, $X \subset A$ is nondegenerate by \cref{Thm:PositivityNotions} and nondegeneracy is invariant under isogenies (\cref{Prop:NonDegerateIsogeny}).
\end{proof}

\begin{remark}\label{Rem:complete_intersection}  For a smooth integral subvariety $X \subset A$, condition (2) holds if the normal bundle of $X$ is the direct sum of vector bundles $\cV_1, \dots, \cV_r$ with $r \ge 1$ such that~$\cV_i$ is $d_i$-ample\footnote{A line bundle $\cL$ on an integral variety $X$ is \emph{$d$-ample} if there is an integer $n \ge 1$ such that~$\cL^{\otimes n}$ is globally generated and the fibers of the morphism $X \to \bbP(\rH^0(X, \cL^{\otimes n})^\vee)$ have dimension $\le d$. Ordinary ampleness is equivalent to $0$-ampleness.
A vector bundle $\cE$ on $X$ is \emph{$d$-ample} if the line bundle $\cO(1)$ on $\bbP(\cE^\vee)$ is $d$-ample.} 
and 
\[ \dim X > \max_{i = 1, \dots, r}  \rk \cV_i + d_i , \]
see~\cite[th.~4.5]{Deb95}. By \cref{Thm:PositivityNotions} the ampleness of the normal bundle of a smooth subvariety implies nondegeneracy, thus \cref{Thm:TannakaGroupSimple} applies to any smooth nondivisible subvariety $X\subset A$ such that
\begin{itemize}
\item the normal bundle of $X$ is ample and $\dim X > g/2$, or
\item   $\dim X \ge 2$ and $X$ is a complete intersection of ample divisors in $A$.
\end{itemize}
\end{remark}

\subsection{Product decomposition: from geometry to groups} We start by showing how to obtain a product decomposition of the Lie algebra of the Tannaka group starting from a product decomposition of the subvariety. Keeping the notation of \cref{sec:simplicity-criterion} we write $G_{X, \omega}^\ast := G_\omega^\ast(\delta_X)$ for any fixed fiber functor $\omega\colon \langle \delta_X \rangle \to \Vect(\bbF)$. 

\begin{lemma} \label{Lemma:TensorProductOfMinuscule} Let $G$ be a simple simply connected algebraic group over $\bbF$ and $V$,~$W$ nontrivial irreducible representations of $G$. Then $V \otimes W$ is not minuscule.
\end{lemma}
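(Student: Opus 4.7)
The plan is to argue by contradiction. Recall that by definition, a minuscule representation has all of its weights in a single orbit of the Weyl group $W_G$ of $G$, hence all weights have the same length under a $W_G$-invariant inner product $(\cdot, \cdot)$ on the real span $\frakh^*_{\bbR}$ of the weight lattice. Let $\lambda, \mu$ denote the highest weights of $V, W$; these are nonzero because $V$ and $W$ are nontrivial. Since the weights of $W$ form a single $W_G$-orbit equal to $W_G \cdot \mu$, the vectors $\lambda + w\mu$ are weights of $V \otimes W$ for every $w \in W_G$. If $V \otimes W$ were minuscule, equating the squared length $|\lambda + w\mu|^2$ with $|\lambda + \mu|^2$ and using $|w\mu| = |\mu|$ would give
\[
(\lambda, \mu) \;=\; (\lambda, w\mu) \quad \text{for all } w \in W_G,
\]
so $\lambda$ would be orthogonal to $\mu - w\mu$ for every $w \in W_G$.

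Next, let $U \subseteq \frakh^*_{\bbR}$ be the linear span of $\{\mu - w\mu : w \in W_G\}$. This is a $W_G$-stable subspace. Since $G$ is simple, its Weyl group acts irreducibly on $\frakh^*_{\bbR}$, so $U$ is either zero or all of $\frakh^*_{\bbR}$. The average $\frac{1}{|W_G|}\sum_{w \in W_G} w\mu$ is a $W_G$-fixed vector in $\frakh^*_{\bbR}$ and hence must vanish by the same irreducibility; consequently
\[
\mu \;=\; \frac{1}{|W_G|} \sum_{w \in W_G} (\mu - w\mu) \;\in\; U,
\]
and because $\mu \neq 0$ this forces $U = \frakh^*_{\bbR}$. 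Combined with $\lambda \perp U$ we conclude $\lambda = 0$, contradicting the nontriviality of $V$.

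The argument is essentially self-contained and short; there is no real obstacle. The only point requiring care is the appeal to irreducibility of the Weyl group action on $\frakh^*_{\bbR}$, which is exactly where the hypothesis that $G$ is \emph{simple} (and not merely semisimple) enters. Indeed, for $G = G_1 \times G_2$ the outer tensor product of two minuscule representations is itself minuscule, so some form of irreducibility of the root system is unavoidable.
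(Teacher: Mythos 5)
Your argument is correct and takes a genuinely different route from the paper. The paper's proof invokes the classification of minuscule representations (the table of Dynkin types and minuscule weights) and proceeds case by case: for $B_n$, $C_n$, $E_6$, $E_7$ it observes there are too few minuscule representations for a nontrivial tensor product relation, for $D_n$ it compares the kernels of $\Spin_{2n}(\bbF)\to\SO_{2n}(\bbF)$ and of the map to $\GL(V\otimes W)$, and for $A_n$ it exhibits the nonzero wedge-product morphism $\Alt^i\bbF^n\otimes\Alt^j\bbF^n\to\Alt^{i+j}\bbF^n$ (and appeals to duality for $i+j>n$). Your argument is classification-free and uniform: using that all weights of a minuscule representation lie in one Weyl orbit and hence have equal length, you force $(\lambda,\mu-w\mu)=0$ for all $w$, and then the irreducibility of the Weyl group's reflection representation for a \emph{simple} group, together with the vanishing of the $W_G$-average of $\mu$, forces the highest weight $\lambda$ of $V$ to be orthogonal to the whole weight space, hence zero --- a contradiction. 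Your approach buys conceptual clarity and independence from the table, and it makes completely transparent why simplicity (and not mere semisimplicity) is needed, as you note via the $G_1\times G_2$ counterexample; the paper's approach stays inside the combinatorial framework it has already set up around the table, which is later reused in the analysis of wedge and spin representations. Both proofs are correct.
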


\begin{proof} The minuscule representations of $G$ are given up to isomorphism by the table on~\cpageref{Table:MinusculeRepresentations}. In particular, the highest weight of any minuscule representation is a fundamental weight. But the highest weight in $V\otimes W$ is the sum of the highest weights of $V$ and $W$, hence it is a sum of two dominant integral weights and therefore cannot be a fundamental weight.
\end{proof}

\begin{proposition} \label{Lemma:TannakaGroupProduct} Let $X_1, X_2 \subset A$ be smooth subvarieties such that the sum morphism $\sigma \colon X_1 \times X_2 \to X:= X_1 + X_2$ is an isomorphism and $X$ is nondivisible. Then we have an isomorphism of Lie algebras
\[  \Lie G^\ast_{X, \omega} \iso \Lie G^\ast_{X_1, \omega} \oplus \Lie G^\ast_{X_2, \omega}.\]
\end{proposition}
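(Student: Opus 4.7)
The starting point is the identity
\[ \delta_X \;\simeq\; \delta_{X_1} * \delta_{X_2} \quad \text{in} \quad \Pbar(A, \bbF), \]
which follows by proper base change from the hypothesis that the sum morphism restricts to an isomorphism $\sigma \colon X_1 \times X_2 \iso X$: indeed $\delta_{X_1} \boxtimes \delta_{X_2} \simeq \delta_{X_1 \times X_2}$ since $\dim X = \dim X_1 + \dim X_2$, and $\R\sigma_\ast \delta_{X_1 \times X_2} \simeq \delta_X$. Applying the fiber functor yields $V \simeq V_1 \otimes V_2$ where $V := \omega(\delta_X)$ and $V_i := \omega(\delta_{X_i})$. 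Both $X_1$ and $X_2$ are themselves nondivisible: any $a \in \Stab(X_i)$ satisfies $X + a = X_1 + X_2 + a = X$, so $a \in \Stab(X) = \{0\}$.

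Next, I would set up a Tannakian diagram realising $G_{X,\omega}$ as a quotient of a subgroup of $G_{X_1,\omega} \times G_{X_2,\omega}$ by a central subgroup. Let $H := G_\omega(\delta_{X_1} \oplus \delta_{X_2})$; this group is reductive since the full subcategory $\langle \delta_{X_1} \oplus \delta_{X_2} \rangle$ of $\Pbar(A, \bbF)$ is semisimple by the decomposition theorem modulo negligible complexes. Each inclusion $\langle \delta_{X_i}\rangle \subseteq \langle \delta_{X_1} \oplus \delta_{X_2}\rangle$ is a full subcategory closed under subquotients, so by \cite[prop.~2.21(a)]{DM82} it induces a surjection $H \onto G_{X_i,\omega}$; as $H$ preserves the decomposition of $V_1 \oplus V_2$ into subobjects, these two surjections combine into a closed embedding $H \into G_{X_1,\omega} \times G_{X_2,\omega}$. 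The inclusion $\langle \delta_X \rangle \subseteq \langle \delta_{X_1} \oplus \delta_{X_2}\rangle$ obtained above yields a further surjection $H \onto G_{X,\omega}$, which by construction factors as the embedding just described followed by the tensor product map $\GL(V_1) \times \GL(V_2) \to \GL(V_1 \otimes V_2)$. The kernel of this tensor product map is the central one-parameter subgroup $\{(\lambda \id_{V_1}, \lambda^{-1} \id_{V_2}) : \lambda \in \bbG_m\}$, so $G_{X,\omega}$ is the quotient of $H$ by a central subgroup. Taking derived subalgebras one thus obtains
\[ \Lie G_{X,\omega}^\ast \;\simeq\; [\Lie H, \Lie H] \;\subseteq\; \Lie G_{X_1,\omega}^\ast \oplus \Lie G_{X_2,\omega}^\ast, \]
with the containment surjecting onto each direct summand.

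The main obstacle is upgrading this containment to an equality, which I would handle by contradiction using Goursat's lemma for semisimple Lie algebras. If the containment were strict, there would exist a nontrivial simple Lie algebra $\frakq$ appearing as a direct factor of both $\Lie G_{X_1,\omega}^\ast$ and $\Lie G_{X_2,\omega}^\ast$ such that $[\Lie H, \Lie H]$ contains only a diagonal copy $\Delta_\phi(\frakq)$ rather than the full $\frakq \oplus \frakq$. Since $X$ is nondivisible, \cref{cor:minuscule} shows that $V \simeq V_1 \otimes V_2$ is minuscule as a representation of the semisimple Lie algebra $[\Lie H, \Lie H]$, and so decomposes as an external tensor product of minuscule representations of its simple ideals. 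Writing $\Lie G_{X_i,\omega}^\ast = \frakq \oplus \Lie G_{X_i,\omega}'$ and correspondingly $V_i \simeq V_{i,\frakq} \otimes V_i'$, the tensor factor of $V$ on which the simple ideal $\Delta_\phi(\frakq) \simeq \frakq$ acts is precisely $V_{1,\frakq} \otimes V_{2,\frakq}$ with $\frakq$ acting diagonally. By faithfulness of $V_i$ as a representation of $G_{X_i,\omega}$, both $V_{1,\frakq}$ and $V_{2,\frakq}$ are nontrivial minuscule representations of $\frakq$, so \cref{Lemma:TensorProductOfMinuscule} forces $V_{1,\frakq} \otimes V_{2,\frakq}$ to be non-minuscule; this contradicts the minuscule decomposition of $V$ and yields the desired equality.
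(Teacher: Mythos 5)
Your proof is correct and rests on the same two pillars as the paper's: the convolution identity $\delta_X \simeq \delta_{X_1} * \delta_{X_2}$ and the minusculity of the resulting representations (\cref{cor:minuscule}), combined with \cref{Lemma:TensorProductOfMinuscule}. The only structural difference is that you establish the Lie algebra equality indirectly, via Goursat's lemma and a contradiction, whereas the paper directly partitions the simple factors of the universal cover of $G^\ast_\omega(\delta_{X_1}\oplus\delta_{X_2})$ according to which of $V_1$, $V_2$ they act on nontrivially — two packagings of essentially the same argument.
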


\begin{proof} Let $G$ be the universal cover of the derived connected component of the Tannaka group of $\delta_{X_1} \oplus \delta_{X_2}$. The group $G$ decomposes as a product
\[G = G_1 \times \cdots \times  G_n\]
with $G_1, \dots, G_n$ simply connected simple (nontrivial) algebraic groups. Both perverse sheaves $\delta_{X_1}$ and $\delta_{X_2}$ belong to the tensor category $\langle \delta_{X_1} \oplus \delta_{X_2} \rangle$ and therefore define representations of $G$. Since $X_1$, $X_2$ are smooth and nondivisible (otherwise $X$ would not be nondivisible), by \cref{cor:minuscule} such representations are minuscule. By seeing them as  representations of the product $G_1 \times \cdots \times G_n$ they decompose as an external tensor product,
\[  \omega(\delta_{X_i})  =   V_{i,1} \boxtimes \cdots \boxtimes V_{i,n}, \qquad i = 1, 2,\] 
for representations $V_{i,\ell}$ of $G_\ell$. Note that for all $i$, $\ell$ the representation $V_{i, \ell}$ is necessarily minuscule. By hypothesis the sum morphism $X_1 \times X_2 \to X$ is an isomorphism, thus $\delta_X = \delta_{X_1} \ast \delta_{X_2}$ by definition of the convolution product. In particular $\delta_X$ belongs to $\langle \delta_{X_1} \oplus \delta_{X_2} \rangle$ and as representations of $G$ we have
\[  \omega(\delta_X)  =  \omega(\delta_{X_1}) \otimes \omega(\delta_{X_2})  =   (V_{1,1}\otimes V_{2,1}) \boxtimes \cdots \boxtimes (V_{1,n}\otimes V_{2,n}).\]
Again by \cref{cor:minuscule}, the representation $\omega(\delta_X)$ of $G$ is minuscule because $X$ is smooth and nondivisible. Thus  the representation   $V_{1,\ell} \otimes V_{2,\ell}$ of $G_\ell$ is also minuscule for each $1 \le \ell \le n$. Now \cref{Lemma:TensorProductOfMinuscule} implies that for $1 \le \ell \le n$ the representation $V_{i, \ell}$ is trivial for exactly one $i \in \{1, 2\}$. Strictly speaking \cref{Lemma:TensorProductOfMinuscule} gives only the existence of such an $i$; however, if $V_{1, \ell}$ and $V_{2, \ell}$ were both trivial, then $G_\ell$ would act trivially on $\omega(\delta_{X_1}) \oplus \omega(\delta_{X_2})$ contradicting the fact that $G_\ell$ is a nontrivial simple factor of $G$. Resuming the proof, for $i = 1, 2$ let $L_i \subset \{ 1, \dots, n \}$ be the subset made of those $\ell$ for which $V_{i, \ell}$ is nontrivial. Then $L_1, L_2 \subset \{ 1, \dots, n \}$ are complementary subsets and
\begin{align*}
G^\ast_{X_i, \omega} &= \im(\textstyle \prod_{\ell \in L_i} G_\ell  \to \GL(\omega(\delta_{X_i}))), \qquad i = 1, 2,\\
G^\ast_{X, \omega} &= \im(G \to \GL(\omega(\delta_{X}))).
\end{align*}
Rather generally, for a simple simply connected algebraic group $H$ and a nontrivial minuscule representation $W$ of $H$, the kernel of $H \to \GL(W)$ is finite. This gives isomorphisms of Lie algebras
\[ \Lie G^\ast_{X, \omega} \iso \Lie G \iso \bigoplus_{\ell \in L_1} \Lie G_{\ell} \oplus \bigoplus_{\ell \in L_2} \Lie G_{\ell} \iso \Lie G^\ast_{X_1} \oplus \Lie G^\ast_{X_2},\]
as desired.
\end{proof}

\subsection{Conic maps}
In this section, we introduce the notion of conic map, which will turn out useful in dealing with conormal varieties (see \cref{definition conic morphism}). Recall that the \emph{domain of definition} of a rational map is the maximal open subset of its source on which the map is well-defined.

\begin{proposition} \label{Prop:RationalMapBetweenCones} 
Let $X$, $X' \subset A$ be integral subvarieties and $F \colon \PLambda_X \dashto \PLambda_{X'}$ a rational map between their conormal varieties. Then there exists a unique rational map $f \colon X \dashto X'$ such that the following diagram commutes:
\[
\begin{tikzcd}
\PLambda_X \ar[r, dashed, "F"] \ar[d, swap, "\pr_X"] 
& \PLambda_{X'} \ar[d, "\pr_{X'}"]
\\
 X  \ar[r, dashed, "f"] 
& X'
\end{tikzcd}
\]
Furthermore, the domain of definition of $f$ contains the smooth locus $X^\reg \subset X$.
\end{proposition}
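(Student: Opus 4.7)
The plan is to reduce both existence of $f$ and the extension claim to two classical rigidity facts about abelian varieties: (i) every rational map from a smooth variety to an abelian variety extends to a morphism (Weil), and (ii) every morphism from $\bbP^r$ to an abelian variety is constant (because $A$ contains no nonconstant rational curves). Uniqueness of $f$ is immediate from the surjectivity of $\pr_X$: two rational maps $f_1, f_2 \colon X \dashto X'$ satisfying $f_i \circ \pr_X = \pr_{X'} \circ F$ must coincide on the dense set $\pr_X(U)$, where $U \subset \PLambda_X$ is the common domain of definition.

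For existence, set $r := \codim_A X - 1$, let $U \subset \PLambda_X$ be the domain of $F$, and write $W := \PLambda_X \setminus U$. Over $X^\reg$, the projection $\pr_X$ restricts to the $\bbP^r$-bundle $\bbP(\cC_{X/A}|_{X^\reg}) \to X^\reg$, so a fiber-dimension count on each irreducible component of $W$ shows that for $x$ in a dense open subset of $X^\reg$, the intersection $W \cap \PLambda_{X,x}$ has dimension $\le r-1$; hence $U \cap \PLambda_{X,x}$ is a dense open of the irreducible fiber $\PLambda_{X,x} \simeq \bbP^r$. Restricting $F$ to this fiber and composing with $\pr_{X'}$ yields a rational map $\psi_x \colon \bbP^r \dashto X' \into A$, which by (i) extends to a morphism $\bbP^r \to A$ and by (ii) is constant. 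Since $X'$ is closed in $A$ and $\psi_x$ takes values in $X'$ on a dense open, the constant value lies in $X'$. Setting $f(x) := \psi_x(\bbP^r)$ defines $f$ on a dense open subset of $X^\reg$, giving a rational map $f \colon X \dashto X'$ satisfying $f \circ \pr_X = \pr_{X'} \circ F$ on a dense open, and therefore as rational maps.

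For the final assertion, apply (i) once more: composing $f$ with $X' \into A$ and restricting to the smooth variety $X^\reg$ gives a rational map $X^\reg \dashto A$, which extends to a morphism $X^\reg \to A$; since $X'$ is closed and the image lies in $X'$ on a dense open, the extended morphism factors through $X^\reg \to X'$, and this extension must agree with the rational map $f$ wherever both are defined.

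The main obstacle, though a mild one, lies in the fiber-dimension argument needed to ensure that for general $x \in X^\reg$ the rational map $\psi_x$ is defined on an open dense subset of $\PLambda_{X,x}$; once this is in place, the two rigidity facts do all the geometric work, collapsing each fiber $\bbP^r$ of $\pr_X$ to a single point of $X'$ and producing $f$.
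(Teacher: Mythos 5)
Your proof is correct and takes essentially the same route as the paper's: restrict to fibers over smooth points, use rigidity of rational maps from projective space to an abelian variety to collapse each $\bbP^r$-fiber to a point, and apply Weil's extension theorem to extend $f$ over all of $X^\reg$. The only differences are cosmetic — you split the projective-space rigidity into two steps (Weil extension plus constancy of morphisms) and add an explicit fiber-dimension count, whereas the paper simply works directly over the dense open $\pr_X(V) \subset X^\reg$, the image of the domain of $F$ intersected with $\pr_X^{-1}(X^\reg)$, avoiding that count.
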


\begin{proof} 
Let $U$ be the domain of definition of the rational map $F\colon \PLambda_X \dashrightarrow \PLambda_{X'}$. For any smooth point $x\in X^\reg(k)$ the fiber~$\PLambda_{X, x}$ is a projective space. Every rational map from a projective space to an abelian variety is constant~\cite[cor.~3.9]{MilneAV}, so for $x\in X^\reg(k)$ the morphism 
\[
\pr_{X'} \circ\ F_{\rvert U_x} \colon \quad U_x \;:=\; U\cap \PLambda_{X, x} \;\too \; X'
\;\subset\; A
\] 
must be constant. Therefore, the morphism 
\[ \pr_{X'} \circ\ F_{\rvert V} \colon \quad V \;:=\; U \cap \pr_X^{-1}(X^\reg) \;\too \; X'
\;\subset\; A \]
is constant along the fibers of the smooth morphism $\pr_X \colon V \to X^\reg$. Over the open subset $\pr_X(V) \subset X^\reg$ the morphism $\pr_X$ locally has sections, so we have
\[ \pr_{X'} \circ F = f \circ \pr_X \]
for a unique $f \colon \pr_X(V) \to X'$. The latter extends to a morphism $f\colon X^\reg \to X'$ because a rational map from a variety to an abelian variety is defined at every smooth point of the source~\cite[th.~3.1]{MilneAV}.
\end{proof}

In the above proof, we have not used anything specific about conormal varieties. In fact, the only thing we used was that $\pr_X\colon \PLambda_X \to X$ is a projective bundle over $X^\reg\subset X$ and that $X'$ embeds in an abelian variety. However, the conormal geometry will be taken into account by the following notion of a conic map:

\begin{definition} \label{definition conic morphism}
In the setup of proposition~\ref{Prop:RationalMapBetweenCones} we call the rational map $f \colon X \dashto X'$ the \emph{base} of $F\colon \PLambda_X \dashto \PLambda_{X'}$. We say that a rational map $F\colon \PLambda_X \dashto \PLambda_{X'}$ is~\emph{conic} if the diagram
\[
\begin{tikzcd}
 \PLambda_X \ar[r, "\gamma_X"] \ar[d, swap, dashed, "F"]& \bbP_A \ar[d, equal] \\
\PLambda_{X'} \ar[r, "\gamma_{X'}"] & \bbP_A.
\end{tikzcd}
\]
commutes, i.e.,~if $F$ is compatible with the respective Gauss maps. 
\end{definition}

\begin{example} \label{Ex:GaussDegreeBirational} Let $X$, $X' \subset A$ be integral subvarieties and $F \colon \PLambda_X \dashto \PLambda_{X'}$ a conic map whose base is birational. Then
\[ \deg \PLambda_X \;=\; \deg \PLambda_{X'}\]
because the Gauss degree can be computed over any nonempty open subset of $\bbP_A$.
\end{example}

Note that even when the base $f\colon X\dashto X'$ of a conic map is defined everywhere, it is still not clear whether it is the restriction of an endomorphism of the abelian variety $A$. However, the results about conic maps in the rest of this section will suffice for our purpose:

\begin{proposition} \label{Prop:BasicPropertiesConicMorphism} 
Let $X$, $X'\subset A$ be integral subvarieties and $F \colon \PLambda_X \dashto \PLambda_{X'}$ a conic map. If the algebraic group $\Stab(X)$ is finite, then the rational map $F$ is dominant and generically finite.
\end{proposition}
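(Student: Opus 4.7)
The plan is to exploit the commutativity of the Gauss diagram to transfer the cleanness of $\PLambda_X$ to the conic map $F$. First I would note that since $\Stab(X)$ is finite, \cref{Thm:PositivityNotions}(1) tells us that the conormal variety $\PLambda_X$ is clean, i.e.\ the Gauss map $\gamma_X\colon \PLambda_X \to \bbP_A$ is dominant; because both $\PLambda_X$ and $\bbP_A$ have dimension $g-1$, it is automatically generically finite.

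Next I would use that $F$ is conic, meaning $\gamma_X = \gamma_{X'} \circ F$ on the domain of definition $U$ of $F$. From this identity I would argue that $F$ is dominant by a dimension count: writing $Z \subset \PLambda_{X'}$ for the Zariski closure of $F(U)$, the factorization $\gamma_X = (\gamma_{X'}|_Z) \circ F$ forces
\[
  \dim \gamma_{X'}(Z) \;\ge\; \dim \gamma_X(\PLambda_X) \;=\; g-1,
\]
hence $\dim Z \ge g - 1 = \dim \PLambda_{X'}$, so $Z = \PLambda_{X'}$ as both are integral.

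Finally, since $F \colon \PLambda_X \dashto \PLambda_{X'}$ is a dominant rational map between integral varieties of the same dimension $g-1$, it is automatically generically finite.

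I do not expect any serious obstacle here: the whole statement is essentially a dimension-counting consequence of the fact that cleanness of $\PLambda_X$ (a purely geometric condition on $X$ controlled by \cref{Thm:PositivityNotions}) propagates through the conic factorization. The only point to handle with some care is the transition between the rational map $F$ and its domain of definition, which is immediate since $\PLambda_X$ is integral and the image of a dominant rational map only needs to be treated up to Zariski closure.
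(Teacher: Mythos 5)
Your argument is correct and uses essentially the same ingredients as the paper's proof: $\Stab(X)$ finite implies the Gauss map $\gamma_X$ is dominant and generically finite by \cref{Thm:PositivityNotions}(1), and then the conic factorization $\gamma_X = \gamma_{X'}\circ F$ together with the equality $\dim\PLambda_X = \dim\PLambda_{X'} = g-1$ forces both properties of $F$. The only difference from the paper is the order: you establish dominance first (from the image-dimension count on $\gamma_{X'}(Z)$) and deduce generic finiteness from the equal dimensions, whereas the paper deduces generic finiteness of $F$ directly from that of the composite and then gets dominance from the equal dimensions; this is a purely cosmetic reordering of the same dimension argument.
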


\begin{proof} By \cref{Thm:PositivityNotions} (1), the Gauss map $\gamma_X$ is generically finite. It then follows from the commutative diagram in \cref{definition conic morphism} that the rational map $F$ is also generically finite. Since $\dim(\PLambda_X)=\dim(\PLambda_{X'})=g-1=\dim(\bbP_A)$, it follows that~$F$ is also dominant, being a generically finite map between varieties of the same dimension.
\end{proof}

\subsection{Product decompositions: from groups to geometry} 
We now explain how to obtain from a product decomposition for the Lie algebra of the Tannaka group a product decomposition for conormal varieties, using the above results about conic maps. Borrowing notation from \cref{sec:simplicity-criterion} we write $G_{X, \omega}^\ast := G_\omega^\ast(\delta_X)$ for any fixed fiber functor $\omega\colon \langle \delta_X \rangle \to \Vect(\bbF)$.

\begin{proposition} \label{Prop:WhatNonSimplicityImplies} 
Assume $X\subset A$ is a smooth nondivisible subvariety and $G_{X, \omega}^\ast$ is not simple. Then for $i=1,2$ there are integral subvarieties $X_i \subset A$ with $\PLambda_{X_i} \in \langle \PLambda_X\rangle$ of Gauss degree $\deg(\PLambda_{X_i})>1$, conic maps $F_i \colon \PLambda_X \dashto \PLambda_{X_i}$ and integer $n \ge 1$ with the following properties:\smallskip 
\begin{enumerate}
    \item We have an identity of cycles $[n]_\ast \PLambda_X = \PLambda_{[n](X)} = \PLambda_{X_1} \circ \PLambda_{X_2}$;\smallskip 
            \item The following square is commutative
    \[
    \begin{tikzcd} 
    \PLambda_X  \ar[d, dashed, "F_1 \times F_2"'] \ar[r, "{[n]}"] & \PLambda_{ [n] (X)} \ar[d, equal] \\
    \PLambda \ar[r, "\sigma"]& \PLambda_{X_1} \circ \PLambda_{X_2}
    \end{tikzcd}
    \]
    where $ \PLambda$ and $\sigma$ are as in \cref{defn:convolution-of-clean-cycles}; \smallskip

    \item The rational maps $F_i\colon \PLambda_X \dashrightarrow \PLambda_{X_i}$ are dominant and generically finite;\smallskip 

    \item  The base of $F_i$ is a morphism $f_i\colon X\to X_i$ which is surjective.
\end{enumerate}
\end{proposition}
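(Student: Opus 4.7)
I would proceed in four steps.

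First, reduce to $k=\bbC$ so that characteristic cycles and the weight-to-cycle correspondence of Section \ref{section from rep to geo} are available; this is legitimate because the whole setup, the Tannaka group, and the cycle-theoretic statement can be spread out and base-changed. Let $\tilde G \to G^\ast_{X,\omega}$ be the simply connected cover. The non-simplicity hypothesis splits $\tilde G$ as a product $\tilde G=\tilde G_1\times \tilde G_2$ with both factors nontrivial simply connected semisimple groups. Since $X$ is smooth and nondivisible, Corollary \ref{cor:minuscule} tells us that $V:=\omega(\delta_X)$ is minuscule, hence factors as an external tensor product $V=V_1\boxtimes V_2$ with each $V_i$ a nontrivial minuscule representation of $\tilde G_i$. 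Write $\alpha_i\in\tilde\sfX_i$ for the highest weight of $V_i$, so $\alpha=\alpha_1+\alpha_2$ is the highest weight of $V$.

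Second, convert this tensor decomposition into a convolution decomposition of clean cycles. Let $m,d\ge 1$ be the integers from Definitions \ref{def:m} and \ref{def:d} and set $n:=md$. Minusculity of $V$ and Lemma \ref{lem:weight_multiplicities}, together with Lemma \ref{lem:image-of-conormal-under-isogeny} (using that $X$ is nondivisible), yield
\[ \PLambda_{[n](X)} \;=\; [n]_\ast \PLambda_X \;=\; \cc(\delta_X,\alpha). \]
Because $W=W_1\times W_2$ and $d\alpha_i$ is fixed by the factor $W_j$ with $j\ne i$, the Weyl orbit factors as $W\cdot d\alpha=W_1 d\alpha_1+W_2 d\alpha_2$, which translates in the group algebra $\bbZ[\sfX]^W$ into the product $[Wd\alpha]=[W_1 d\alpha_1]\cdot [W_2 d\alpha_2]$. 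Since $\xi^{-1}$ and $\cc$ are ring homomorphisms, setting $\PLambda_i:=\cc(\delta_X,\alpha_i)$ produces the identity
\[ \PLambda_{[n](X)} \;=\; \PLambda_1 \circ \PLambda_2 \quad \text{in } \cL(A), \]
with each $\PLambda_i$ effective and lying in $\langle \PLambda_X \rangle$ by Lemma \ref{lem:weight_multiplicities} and Remark \ref{Rmk:CharCycleWeightDefinedOnSubfield}. The Gauss degrees satisfy $\deg\PLambda_i=\dim V_i>1$.

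Third, and this is the main obstacle, I must promote each $\PLambda_i$ to an integral conormal variety $\PLambda_{X_i}$ of an integral subvariety $X_i\subset A$. The strategy is to realize $\PLambda_i$ as the clean characteristic cycle of a simple perverse sheaf associated to $V_i$. Although $V_i$ may not descend to $G^\circ$, the twist by $d$ does, so there is a well-defined element of $\langle [m]_\ast\delta_X\rangle$ whose weights encode the minuscule orbit $W_i d\alpha_i$; minusculity of $V_i$ then gives that the associated characteristic cycle is a single conormal variety, by the minuscule-to-single-conormal implication underlying Corollary \ref{cor:minuscule}. Equivalently, one argues directly on cycles: the integral cycle $\PLambda_{[n](X)}$ is written as a convolution of two effective cycles of coprime Gauss degrees matching $\dim V_i$, and the minusculity of each $V_i$ prevents any multiplicities or extra irreducible components from appearing in the factors without producing them in the integral target; hence each $\PLambda_i=\PLambda_{X_i}$ for an integral $X_i\subset A$.

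Finally, construct the conic maps and check (1)--(4). Property (1) is the convolution identity just established. For (2), the sum map $\sigma\colon \PLambda\to \PLambda_{X_1}\circ\PLambda_{X_2}$ is birational by Gauss-degree count and integrality, and $[n]\colon \PLambda_X\to \PLambda_{[n](X)}$ is birational since $X$ is nondivisible; composing $[n]$ with $\sigma^{-1}$ and projecting to each factor produces rational maps $F_i\colon\PLambda_X\dashto \PLambda_{X_i}$ whose product fits in the claimed commutative square. They are conic by construction, being built entirely from maps that commute with the Gauss projection to $\bbP_A$. For (3), the finiteness of $\Stab(X)$ (which follows from ampleness of $\cN_{X/A}$ via Theorem \ref{Thm:PositivityNotions}) and Proposition \ref{Prop:BasicPropertiesConicMorphism} give that each $F_i$ is dominant and generically finite. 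For (4), Proposition \ref{Prop:RationalMapBetweenCones} shows that the base $f_i$ of $F_i$ is a morphism on the smooth locus, which is all of $X$; properness of $X$ together with dominance of $F_i$ force $f_i(X)=X_i$, proving surjectivity.
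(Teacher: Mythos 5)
Your proposal follows essentially the same outline as the paper's proof---reduce to $k=\bbC$, split the universal cover of $G_{X,\omega}^\ast$ as a product, convert the tensor decomposition $V\simeq V_1\otimes V_2$ into a convolution identity of clean cycles via \cref{thm:cc-and-weights}, deduce integrality of the factors, and assemble the conic maps via \cref{Prop:RationalMapBetweenCones} and \cref{Prop:BasicPropertiesConicMorphism}. The weak point is the crucial integrality step (your third step). There is no ``minuscule-to-single-conormal'' converse of \cref{cor:minuscule} to appeal to, and the Gauss degrees $\dim V_1,\dim V_2$ are certainly not coprime in general, so neither of your two sketches as written gives the conclusion. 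What actually forces each $\PLambda_i$ to be integral is simpler and does not use minusculity at all: $\PLambda_{[n](X)}=\PLambda_1\circ\PLambda_2$ is integral (since $X$ is nondivisible and smooth, so $[n]_\ast\PLambda_X=\PLambda_{[n](X)}$ by \cref{lem:image-of-conormal-under-isogeny}), each $\PLambda_i$ is effective by \cref{lem:weight_multiplicities}, and $\circ$ is bilinear with the convolution of two nonzero clean effective cycles again a nonzero clean effective cycle; hence any second component or multiplicity in some $\PLambda_i$ would propagate to the integral product, a contradiction. Two smaller slips: finiteness of $\Stab(X)$ is not a consequence of an ample normal bundle (that hypothesis does not appear in this proposition)---it is built into nondivisibility, which means $\Stab(X)=\{0\}$; and you should first replace $X$ by a translate so that $\det(\delta_X)=\delta_0$, making $G^\circ$ semisimple, before invoking the highest-weight formalism over $\bbZ[\sfX]^W$.
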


\begin{proof} 
By \cref{cor:algebraically-closed} and \cref{lem:reduction-to-complex-case} we may assume $k=\bbC$, which will allow us later to use the results about characteristic cycles in \cref{thm:cc-and-weights}.
By \cref{cor:derived-group} the group $G_{X, \omega}^\ast$ does not change if we replace~$X$ by $X+a$ for any $a\in A(k)$, and it clearly suffices to achieve properties (1)-(4) for any such translate. We will therefore assume  
\[ \det(\delta_X) \;=\; \delta_0 \]
so that the connected component $G:=G_{X, \omega}^\circ$ is semisimple by \cref{prop:connected-component}. If the group~$G_{X, \omega}^\ast$ is not simple modulo its center, then by the structure theory of semisimple groups there are simply connected semisimple groups $G_1,G_2\not \simeq \{1\}$ and an isogeny 
\[
\begin{tikzcd}[column sep=15pt] p\colon \hspace{-11pt} & \tilde{G} \;:=\; G_1 \times G_2 \ar[r,twoheadrightarrow] & G \end{tikzcd}
\]
Then $V:=\omega(\delta_X)$ restricts to an irreducible representation of the covering group $\tilde{G}$ and as such it decomposes as
\[
 V_{\rvert\tilde{G}} \;\simeq\; V_1 \otimes V_2 
 \quad \textnormal{with irreducible} \quad 
 V_i \;\in\; \Rep_\bbF(G_i) \;\subset\; \Rep_\bbF(\tilde{G}).
\]
Note that both factors $G_1$ and $G_2$ must act nontrivially on $V$ since otherwise they would not appear in the Tannaka group. In particular, we have $\dim V_i \ge 2$ as any one-dimensional representation of a connected semisimple group is trivial.

\medskip 

Let $n=m\cdot \deg(p)$ for the smallest integer $m \ge 1$ with $m\cdot \Gamma_{\cc(\delta_X)} = \{0\}$. Since $X$ is smooth, its perverse intersection complex has characteristic cycle $\cc(\delta_X)=\PLambda_X$. Via the first part of \cref{thm:cc-and-weights}, the above decomposition as a tensor product of two representations of the universal covering group yields clean cycles $\PLambda_1,\PLambda_2\in \langle \PLambda_X\rangle$ such that
\[ [n]_\ast \PLambda_X = \PLambda_1 \circ \PLambda_2,\]
and the second part of the theorem implies that $\deg \PLambda_i = \dim V_i\ge 2$.  
Moreover, by \cref{lem:weight_multiplicities} the cycles $\PLambda_i$ are effective. Since we assumed the subvariety $X\subset A$ to be nondivisible, the morphism $[n] \colon X \to {[n](X)}$ is dominant and birational, in fact we have 
$
 [n]_\ast \PLambda_X = \PLambda_{[n](X)} 
$
as an identity of cycles by \cref{lem:image-of-conormal-under-isogeny}. 
Altogether, it follows that
\[ \PLambda_{[n](X)} \;=\; [n]_* \PLambda_X  \;=\; \PLambda_1 \circ \PLambda_2. \]
The cycle on the left-hand side is reduced and irreducible, so the same must hold for both factors on the right-hand side because the convolution product $\circ$ on cycles is bilinear and the convolution of any two clean effective cycles is again a clean effective cycle. Hence, there exist integral subvarieties $X_i\subset A$ with $\PLambda_i = \PLambda_{X_i}$. By definition of $\circ$, we have
\[
 \PLambda_1 \circ \PLambda_2 \;=\; \sigma_*(\overline{\PLambda_{X_1\mid U} \times_U\PLambda_{X_2\mid U}})
\]
where $\sigma\colon A\times A \times \bbP_A \to A\times \bbP_A$ denotes the sum morphism and $U\subset \bbP_A$ is as in \cref{defn:convolution-of-clean-cycles}. The multiplicities of the cycle-theoretic pushforward on the right-hand side are given by the degree of the sum morphism 
\[
 \sigma\colon \quad \PLambda \;:=\; \overline{\PLambda_{X_1\mid U} \times_U\PLambda_{X_2\mid U}} \;\too\; \PLambda_{[n](X)}
\]
on the various components of its source. Since the cycle $\PLambda_1\circ \PLambda_2 = \PLambda_{[n](X)}$ is integral as observed above, it follows in fact that the fiber product $\PLambda$ is integral and is mapped birationally onto its image by $\sigma$.
Consider then the composition of rational maps 
\[
F_i \colon 
\begin{tikzcd} 
\PLambda_X \ar[r, "{[n]\times \id}"]
& \PLambda_{[n](X)} 
\ar[r, dashed, "\sigma^{-1}"] 
& \PLambda \;=\; \overline{\PLambda_{X_1\rvert U}\times_U \PLambda_{X_2\rvert U}}
\ar[r, "\pr_i"]
& \PLambda_{X_i}
\end{tikzcd}
\] 
where $\pr_i$ denotes be the projection onto the $i$-th factor.
By construction, $F_i$ is a conic map, and by \cref{Prop:RationalMapBetweenCones} its base $f_i \colon X \dashto X_i$ is defined on all of $X$ because we assumed $X$ to be smooth. Moreover, by \cref{Prop:BasicPropertiesConicMorphism} the rational map $F_i$ is dominant and generically finite, so the morphism $f_i$ is surjective.
\end{proof}

\subsection{Proof of \cref{Thm:TannakaGroupSimple}} We can now prove the simplicity criterion for Tannaka groups as follows. Thanks to \cref{Lemma:TannakaGroupProduct} only the implication (1) $\implies$ (2) is left to be shown. Suppose that the algebraic group $G_{X, \omega}^\ast$ is not simple. According to \cite[Lemma 4.6]{LS20}, this is never the case when $X$ is a smooth ample divisor, thus from now we may assume $\dim X < g -1$. Let $n$, $X_i$, $F_i$ and~$f_i\colon X\to X_i$ be as in \cref{Prop:WhatNonSimplicityImplies}. 
For $i = 1, 2$, the Gauss map of $X_i$ is a finite morphism because the one of $X$ is by \cref{Thm:PositivityNotions} and $\PLambda_{X_i} \in \langle \PLambda_X \rangle$ by construction. It is therefore possible to apply \cref{cor:DimensionOfConvolutionViaSegre} and deduce the equality
\[ \dim X = \dim X_1 + \dim X_2\]
from the identity of cycles $\PLambda_{X_1} \circ \PLambda_{X_2} = \PLambda_{[n](X)}$ and the hypothesis $\dim X < g - 1$. Moreover, the subvarieties $X_1$ and $X_2$ are nondegenerate by \cref{Thm:PositivityNotions}. Thus, \cref{Lem:SumOfNondegenerate} implies that the sum 
\[ X_1 + X_2 \;\subset\; A \]
is a nondegenerate subvariety in $A$ and the sum morphism $\sigma \colon X_1 \times X_2 \to X_1 + X_2$ is generically finite. Therefore, the conormal cone $\PLambda_{X_1 + X_2}$ appears as a summand with multiplicity $\deg(\sigma)$ in the cycle $\PLambda_{X_1} \circ \PLambda_{X_2}$. But we know that 
$\PLambda_{X_1} \circ \PLambda_{X_2} = \PLambda_{[n](X)}$, hence  
\[
 \deg(\sigma) \;=\; 1 
 \quad \textnormal{and} \quad 
 X_1 + X_2 \;=\; [n](X).
\] 
Together with \cref{Prop:WhatNonSimplicityImplies} (2) this gives the following commutative square:
\[ 
\begin{tikzcd}
X \ar[d, "f_1 \times f_2"'] \ar[r, "{[n]}"] & \left[ n \right] (X) \ar[d, equal]\\
X_1 \times X_2 \ar[r, "\sigma"] & X_1 + X_2
\end{tikzcd}
\]
The nondivisibility assumption on $X$ implies that the morphism $[n] \colon X \to [n](X)$ is finite birational. This forces $f = (f_1 \times f_2)$ to be finite birational. As $X$ is smooth, this says that $f$ is the normalization morphism. For $i = 1, 2$ let $\tilde{X}_i$ be the normalization of $X_i$. The morphism $\tilde{f} \colon X \to \tilde{X}_1 \times \tilde{X}_2$ induced by $f$ is an isomorphism, thus $\tilde{X}_1$ and $\tilde{X}_2$ are smooth. Identifying $X$ with $\tilde{X}_1 \times \tilde{X}_2$ permits to embed $\tilde{X}_1$ and $\tilde{X}_2$ in $A$ and to write $\Alb(X) = \Alb(\tilde{X}_1) \times \Alb(\tilde{X_2})$. By suitably embedding $X$, $\tilde{X}_1$, and $\tilde{X}_2$ in their Albanese variety, we have the following identity:
\[ (\tilde{X}_1 \times \{ 0 \}) + (\{ 0 \} \times \tilde{X}_2) = X \subset \Alb(X). \]
The commutativity of the following square
\[ 
\begin{tikzcd}
(\Alb(\tilde{X}_1) \times \{ 0 \}) \times ( \{ 0 \} \times \Alb(\tilde{X}_2))  \ar[r] \ar[d]& \Alb(X) \ar[d] \\
A \times A \ar[r] & A 
\end{tikzcd}
\]
where the horizontal arrows are the sum morphisms and the vertical ones are given by universal property of the Albanese, implies the desired equality $X = \tilde{X}_1 + \tilde{X}_2$.
\qed

\section{Wedge powers} \label{sec:wedge}

We now characterize low-dimensional smooth subvarieties whose Tannaka group is the image of a special linear group acting on a nontrivial wedge power of its standard representation. In the case of hypersurfaces, Lawrence and Sawin show that such groups do not occur~\cite[lemma~4.10]{LS20} using combinatorial properties of Eulerian numbers. In higher codimension, wedge powers do occur, but we show by geometric arguments that they only arise from symmetric powers of curves. 

\subsection{Statement of the main result}

As in the previous section, we assume the field $k$ to be algebraically closed of characteristic zero. Fix a subvariety $X\subset A$ and a fiber functor
$\omega\colon \langle \delta_X \rangle \to \Vect(\bbF)$ on the Tannaka category generated by the perverse intersection complex of $X$. As in \cref{sec:simplicity-criterion}, put $G_{X, \omega} := G_\omega(\delta_X)$ and denote by
\[
 G_{X, \omega}^\ast \;:=\; [G^\circ_{X, \omega}, G^\circ_{X, \omega}]
\] 
the derived group of its connected component of the identity. We are interested in the following situation:

\begin{definition} \label{Def:WedgePower} 
Let $r \ge 1$ be an integer. We say $X\subset A$ is an  \emph{$r$-th wedge power} if we have
\[
 G_{X, \omega}^\ast 
 \simeq \Alt^r(\SL_n(\bbF))
 \quad \textnormal{with the standard action on} \quad
 \omega(\delta_X)
 \simeq \Alt^r(\bbF^n)
\]
for some $n \ge 1$. Notice that if $r$ is given, then $n$ is determined by the topological Euler characteristic
\[
\chi(\delta_X) \;=\; \dim_\bbF(\omega(\delta_X)) \;=\; \tbinom{n}{r}.
\]
If $X$ is an $r$-th wedge power, then by duality it is also an $(n-r)$-th wedge power. 
\end{definition} 

The typical example of wedge powers arises from symmetric powers of curves, as announced in the introduction:

\begin{lemma}  \label{ex:symmetric-power-of-curve}
Let $C\subset A$ be a smooth projective curve and $r\ge 2$ an integer. If the sum morphism 
\[ s\colon \quad \Sym^r C \;\too\; X \;=\; C + \cdots + C \;\subset\; A\]
is an isomorphism onto its image, then this image $X\subset A$ is an $r$-th wedge power.
\end{lemma}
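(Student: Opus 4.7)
The plan is to identify $\omega(\delta_{X})$ explicitly as a representation of the Tannaka group $G_{C,\omega}$, and then invoke the main theorem for curves to identify $G_{X,\omega}^{*}$ as the image of $\SL(V)$ on $\Alt^{r}V$.

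First I would factor the sum morphism $\sigma\colon C^{r}\to A$ as $C^{r}\xrightarrow{q}\Sym^{r}C\xrightarrow{s}X\hookrightarrow A$, where $q$ is the quotient by the $\frS_{r}$-action by coordinate permutations and $s$ is the given isomorphism; then $\delta_{X}\simeq (i\circ s)_{\ast}\delta_{\Sym^{r}C}$ appears as a direct summand of $q_{\ast}\delta_{C^{r}}$. Because $\delta_{C}=\bbF_{C}[1]$ sits in odd cohomological degree, the braiding in the symmetric monoidal category $\Pbar(A,\bbF)$ introduces a Koszul sign on each transposition, so the $\frS_{r}$-equivariant structure on $\delta_{C}^{\boxtimes r}$ inherited from the symmetric monoidal structure differs from the naive permutation action (whose invariant summand is $\delta_{\Sym^{r}C}$) by the sign character. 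Since the fiber functor $\omega$ respects the braiding, it transports this structure to the ordinary permutation action on $V^{\otimes r}$, where $V := \omega(\delta_{C})$, and sends the summand $\omega(\delta_{\Sym^{r}C})$ to the sign-isotypic piece, which is $\Alt^{r}V$. Alternatively, the K\"unneth formula for $\rH^{r}(C^{r},L_{\chi}^{\boxtimes r})$ together with generic vanishing of $\rH^{0}$ and $\rH^{2}$ recovers the identification $\omega(\delta_{X})\simeq \Alt^{r}\rH^{1}(C,L_{\chi}|_{C})=\Alt^{r}V$ directly.

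The rest is Tannakian. Since $\delta_{X}\in\langle\delta_{C}\rangle$ there is a surjection $G_{C,\omega}\twoheadrightarrow G_{X,\omega}$ realizing $G_{X,\omega}$ as the image of $G_{C,\omega}$ in $\GL(\Alt^{r}V)$, and passing to derived connected components gives $G_{X,\omega}^{*}\simeq\Alt^{r}(G_{C,\omega}^{*})$. Replacing $A$ by the abelian subvariety $\langle C\rangle$ generated by $C$ leaves all relevant Tannaka groups unchanged, so I may assume $C$ generates $A$; and $C$ is nondivisible in $A$ since $X$ is nondivisible and $s$ is an isomorphism. The main theorem applied to the nondegenerate curve $C$ then yields $G_{C,\omega}^{*}\simeq \SL_{n}(\bbF)$ with $n=\dim V=2g(C)-2$, whence $G_{X,\omega}^{*}\simeq\Alt^{r}(\SL_{n}(\bbF))$ acting as the $r$-th exterior power of the standard representation on $\omega(\delta_{X})\simeq\Alt^{r}\bbF^{n}$, which establishes that $X$ is an $r$-th wedge power. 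The main technical obstacle is the Koszul-sign bookkeeping in the first step --- one has to verify carefully that the alternating power, rather than the symmetric one, appears --- while the remaining steps are essentially formal consequences of Tannakian duality and the curve case of the main theorem.
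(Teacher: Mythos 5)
Your first step is essentially correct: both the Koszul-sign analysis of the $\frS_r$-equivariant structure on $\delta_C^{\ast r}$ and the alternative K\"unneth/generic-vanishing computation of $\omega(\delta_X)\simeq\Alt^r V$ are sound, and this identification is what the paper uses implicitly when it points to the commutative diagram in its proof.

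The gap is in the second step, where you conclude $G_{C,\omega}^{\ast}\simeq\SL_n(\bbF)$ by invoking ``the main theorem applied to the nondegenerate curve $C$''. This does not work, for three reasons. First, it is circular: \cref{ex:symmetric-power-of-curve} is precisely what establishes the implication (2) $\Rightarrow$ (1) of \cref{Thm:SmallWedgePowersAreSumsOfCurves}, which is one of the three ingredients (\cref{Thm:TannakaGroupSimpleIntro,Thm:SmallWedgePowersAreSumsOfCurves_Intro,Thm:Spin_Intro}) combined to prove the main Tannaka theorem, so the main theorem cannot be assumed here. Second, even if it could be invoked, the main theorem only gives that $G_{C,\omega}$ is \emph{big}, which by definition also allows the symplectic group $\Sp(V,\theta)$ when $C$ is symmetric up to translation (e.g.\ hyperelliptic). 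In that case the wedge powers of $V$ are not irreducible minuscule $\Sp$-representations and the conclusion fails, so one must first rule out the hyperelliptic case. Third, the main theorem requires $d<(g-1)/2$, i.e.\ $\dim A\ge 4$ for a curve, whereas the lemma must hold, for instance, for a non-hyperelliptic genus-$3$ curve in its $3$-dimensional Jacobian.

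The paper's proof fixes all of this at once: it observes that $W_r\subset\Alb(C)$ is isomorphic to $X$, hence smooth, so by the Riemann singularity theorem $C$ is not $r$-gonal and in particular not hyperelliptic (as $r\ge 2$). It then cites the variant of Larsen's alternative in \cite[section~6]{KWSmall} --- a self-contained external result about non-hyperelliptic curves in their Jacobians, not the present paper's main theorem --- to get $G_\omega^\ast(\delta_C)\simeq\SL_n(\bbF)$ with the standard action, after which your identification $\omega(\delta_X)\simeq\Alt^r V$ finishes the argument. To repair your proposal, replace the appeal to the main theorem by this non-hyperellipticity observation together with the reference to \cite{KWSmall}.
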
 

\begin{proof}
Replacing $A$ by the abelian subvariety $\langle C\rangle \subset A$, we may assume that $C$ generates $A$. Fix a point $p\in C(k)$. The morphism $\alb_{C, p}\colon C \into \Alb(C)$ induces a sum morphism
\[
\Sym^r(C) \;\too\; \Alb(C). 
\] 
Let $W_r \subset \Alb(C)$ denote the image of this sum morphism. The embedding $C\into A$ also induces a morphism $\Alb(C)\to A$; this last morphism is surjective because we assumed $A=\langle C \rangle$.
We obtain the following commutative diagram:
\[
\begin{tikzcd} 
\Sym^r(C) \ar[r, two heads] \ar[d, equals]
& W_r \ar[r, hook] \ar[d, two heads]
& \Alb(C) \ar[d, two heads] 
\\
\Sym^r(C) \ar[r, "\sim"]
& X \ar[r, hook] 
& A
\end{tikzcd} 
\] 
The square on the left implies that the morphism $W_r \to X$ is an isomorphism, therefore~$W_r$ is smooth (hence by the Riemann singularity theorem the curve~$C$ is not $r$-gonal, so in particular it is not hyperelliptic if $r\ge 2$).

\medskip

Fixing any $r-2\ge 0$ points on the curve and varying the remaining two points, one obtains from the above also that the morphisms $\Sym^2(C)\to W_2 \subset \Alb(C)$ and $\Sym^2(C) \to C+C\subset A$ have the same fibers. So we can apply the variant of Larsen's alternative in~\cite[section~6]{KWSmall} to the perverse sheaf $\delta_C \in \Perv(A, \bbF)$ to see that
\[
G^\ast_\omega(\delta_C) \simeq  \SL_n(\bbF)
\quad \textnormal{with the standard action on}
\quad 
\omega(\delta_C) \simeq \bbF^n 
\quad \textnormal{for} \quad
n = \chi(\delta_C).
\] 
The above diagram then shows
$G_{X, \omega}^\ast \simeq \Alt^r(\SL_n(\bbF))$ 
and
$\omega(\delta_X) \simeq \Alt^r(\bbF^n)$. 
\end{proof} 

The goal of this section is to show a converse to the above lemma. More precisely, we obtain the following complete classification of wedge powers for all nondivisible smooth subvarieties $X\subset A$ of high codimension whose Gauss map $\gamma_X\colon \PLambda_X \to \bbP_A$ is finite:

\begin{theorem} \label{Thm:SmallWedgePowersAreSumsOfCurves} Let $X\subset A$ be a nondivisible smooth subvariety with ample normal bundle, and suppose that its Euler characteristic is $\chi(\delta_X)=\tbinom{n}{r}$ for some integer $r$ with $1<r\le n/2$.  If $2 \dim X < \dim A - 1$, then the following are equivalent:\smallskip 
\begin{enumerate} 
\item The subvariety $X\subset A$ is an $r$-th wedge power.\smallskip 
\item There is a nondegenerate irreducible smooth projective curve $C\subset A$ such that\smallskip 
\begin{itemize}
\item $X=C+\cdots + C \subset A$ is the sum of $r$ copies of $C$, and\smallskip 
\item the sum morphism $\Sym^r C \to X$ is an isomorphism.
\end{itemize}
\end{enumerate} 
\end{theorem}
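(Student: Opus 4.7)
The direction $(2)\Rightarrow(1)$ is \cref{ex:symmetric-power-of-curve}. For $(1)\Rightarrow(2)$, my plan is to extract a curve $C\subset A$ from characteristic cycles and then identify $X$ with $\Sym^r C$.

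\emph{Extracting $C$ and the geometric picture.} After translating $X$---which leaves $G_{X,\omega}^{\ast}$ unchanged by \cref{cor:derived-group}---I may assume $\det(\delta_X)=\delta_0$, so $G_{X,\omega}^{\circ}$ is semisimple by \cref{prop:connected-component}. Since $X$ is an $r$-th wedge power, $\omega(\delta_X)\simeq \Alt^r\bbF^n$ is the minuscule representation of $\SL_n$ with highest weight $\varpi_r=\epsilon_1+\cdots+\epsilon_r$. With $m,d$ as in Definitions \ref{def:m}--\ref{def:d}, combining \cref{lem:weight_multiplicities} and \cref{lem:image-of-conormal-under-isogeny} shows that
\[
\cc(\delta_X,\varpi_r)\;=\;[dm]_{\ast}\PLambda_X\;=\;\PLambda_{[dm](X)}
\]
is an integral clean cycle. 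Applying \cref{lem:cc-in-type-A} with $\alpha=\varpi_r$ then yields an integral subvariety $C\subset A$ of Gauss degree $n$ with $\PLambda_C=\cc(\delta_X,\epsilon_1)$, together with the identity
\[
\PLambda_{[dm](X)}\;=\;\PLambda_C^{\varpi_r}\;=\;\tfrac{1}{r!}\,\sigma_{\varpi_r\ast}\!\left(\PLambda_C^{[r]}\right).
\]
Projecting supports to $A$ gives $[dm](X)=r\cdot C:=C+\cdots+C$, and matching Gauss degrees ($\binom{n}{r}$ on both sides) forces the sum morphism $\sigma:\Sym^r C\to r\cdot C$ to be birational. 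Combined with the birationality of $[dm]|_X$ (by nondivisibility of $X$), we obtain $\dim X=r\dim C$ and that $X$ is birational to $\Sym^r C$; nondegeneracy of $C$ then follows from \cref{Lem:SumOfNondegenerate}.

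\emph{Upgrading to an isomorphism and concluding.} I replace $C$ by its normalization $\tilde C$ so that $\Sym^r\tilde C$ is normal, while the sum morphism $\Sym^r\tilde C\to[dm](X)$ remains proper and birational. Since $X$ is smooth with $[dm]|_X$ finite birational, $X$ is the normalization of $[dm](X)$, and the universal property produces a proper birational morphism $\psi:\Sym^r\tilde C\to X$. As $X\subset A$ is smooth with ample normal bundle, the short exact sequence $0\to T_X\to T_A|_X\to \cN_{X/A}\to 0$ combined with triviality of $T_A$ gives $K_X\simeq\det\cN_{X/A}$ ample, so $X$ is its own minimal model of general type and contains no rational curves. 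I will use these properties together with the standard factorization of proper birational morphisms to smooth targets to conclude that $\psi$ is an isomorphism. Smoothness of $X$ then makes $\Sym^r\tilde C$ smooth, which for $r\ge 2$ forces $\dim\tilde C\le 1$; the case $\dim\tilde C=0$ is incompatible with $\chi(\delta_X)=\binom{n}{r}\ge 2$, so $\tilde C$ is a smooth projective curve. Reducedness of $\PLambda_C$ then implies $C=\tilde C$. Finally, to obtain an embedded curve $C'\subset A$ with $X=r\cdot C'$ literally in $A$ (rather than only up to the isogeny $[dm]$), I take $C'$ to be a suitable irreducible component of $[dm]^{-1}(C)$, and use the abstract isomorphism $X\simeq \Sym^r C$ together with the embedding $X\subset A$ to identify the sum morphism $\Sym^r C'\to X$ with this isomorphism.

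\emph{Anticipated difficulties.} The most delicate step is upgrading $\psi:\Sym^r\tilde C\to X$ from a proper birational morphism to an isomorphism: this is automatic once $\Sym^r\tilde C$ is smooth, but one would like to establish it \emph{before} knowing that $\dim\tilde C=1$. I expect to resolve this either via the minimal-model/no-rational-curves argument (using that $X$ is its own minimal model of general type, hence any proper birational morphism to $X$ from a normal variety without rational curves in its resolution must be an iso) or by a direct computation of topological Euler characteristics: Macdonald's formula $\sum_r \chi^{\mathrm{top}}(\Sym^r \tilde C)\,s^r=(1-s)^{-\chi^{\mathrm{top}}(\tilde C)}$ (for $\tilde C$ smooth) combined with $\chi(\delta_X)=\binom{n}{r}$ should force $\dim\tilde C=1$. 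The final descent from $[dm](X)=r\cdot C$ to $X=r\cdot C'\subset A$ will also require careful bookkeeping of the isogeny $[dm]$ and the choice of preimage component.
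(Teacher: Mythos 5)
The central gap is at the very first geometric step: you assert that ``projecting supports to $A$ gives $[dm](X)=r\cdot C$,'' but this equality does not follow from the cycle identity $\PLambda_{[dm](X)}=\Alt^{r}\PLambda_{C}$. What that identity gives you is $[dm](X)=\Alt^{r}C=\pr_{A}(\Supp(\Alt^{r}\PLambda_{C}))$, and one always has the containment $\Alt^{r}C\subset r\cdot C$ coming from the sum map, but the reverse containment requires the projection $\pr_{C,r}\colon\PLambda_{C}^{[r]}\to C^{r}$ to be dominant. By \cref{prop:linear-growth-for-small-wedge-powers} this holds once you know $r\dim C<\dim A$, but at this stage you have no control on $\dim C$ at all: the hypothesis $2\dim X<\dim A-1$ only bounds $\dim\Alt^{r}C$, and if $\pr_{C,r}$ fails to dominate then $\dim\Alt^{r}C$ can be much smaller than $r\dim C$, so the hypothesis does not transfer. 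Bridging this gap is exactly the content of \cref{Thm:SmallWedgePowersHaveSmallDegree}, whose proof uses the Segre-class monotonicity estimate of \cref{Lemma:DecreasingDimensionInSmallDimension} (together with \cref{Lemma:SegreClassAndConvolution,Lemma:NonVanishingSegreClassPontryaginProduct}) to show that $d_C(i)=\dim\Alt^{i}C$ is nondecreasing for $i\le r$, and then deduces $r\dim C<\dim A$. Your proposal has no replacement for this step, and everything downstream---the birationality of $\Sym^{r}C\to r\cdot C$ via Gauss degree, the identity $\dim X=r\dim C$, the eventual conclusion that $C$ is a curve---collapses without it.

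A secondary, lesser issue is in the step you flag yourself: upgrading $\psi\colon\Sym^{r}\tilde C\to X$ from proper birational to an isomorphism. The paper avoids MMP/rational-curve arguments entirely and instead shows (\cref{prop:sum-morphism-finite}) that the sum morphism $Z^{r}\to Y$ is \emph{finite}, by comparing fiber dimensions of the conormal projections using \cref{Corollary:Equidimensional-Fibers}; finiteness plus birationality to a smooth target then gives the isomorphism for free. Your MMP route would need $\Sym^{r}\tilde C$ to have singularities mild enough for the minimal-model comparison to apply, which you cannot guarantee before knowing $\tilde C$ is a curve---precisely the circularity you anticipate. The paper's finiteness argument sidesteps this cleanly, and I would recommend adopting it rather than trying to repair the MMP approach.
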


In view of \cref{ex:symmetric-power-of-curve} we only need to show the implication $(1)\Longrightarrow (2)$, which will take up the rest of this section.

\subsection{Structure of the proof}

The proof relies on three independent steps. The first step is to show that the structure of wedge powers is reflected by characteristic cycles: To any subvariety $Z\subset A$ and an integer $r \ge 1$ we will attach a clean effective cycle 
\[ \Alt^r \PLambda_Z \;\in\; \cL(A), \]
and we show:

\begin{theorem} \label{Thm:FromRepresentationToPhysicalWedgePowers} 
Let $X\subset A$ be a smooth nondivisible subvariety with $\dim X > 0$ that is an $r$-th wedge power for some integer $r \ge 1$. Then there is an integral subvariety $Z \subset A$ with $\PLambda_Z \in \langle \PLambda_X \rangle$ such that
\[ \Alt^r \PLambda_{Z} \;=\; \PLambda_{[e](X)} \]
for some integer $e \ge 1$. The Gauss degrees are related by
\[ 
 \deg \PLambda_X = \tbinom{n}{r} \quad \textnormal{\em where} \quad n=\deg \PLambda_Z,
\]
and if the Gauss map $\gamma_X\colon \PLambda_X \to \bbP_A$ is a finite morphism, then so is $\gamma_Z\colon \PLambda_Z \to \bbP_A$.
\end{theorem}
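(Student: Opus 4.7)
The plan is to translate the representation-theoretic content of the wedge-power hypothesis into geometry via the characteristic-cycle dictionary of Section \ref{section from rep to geo} for Dynkin type $A$. After a routine descent using Corollary \ref{cor:algebraically-closed}, we may assume $k = \bbC$. Since $X$ is smooth and nondivisible of positive dimension, $\cc(\delta_X) = \PLambda_X$ is an integral clean cycle with $\deg \PLambda_X = \chi(\delta_X) = \binom{n}{r}$.

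By hypothesis, $V := \omega(\delta_X) \simeq \Alt^r \bbF^n$ is the minuscule representation of $\tilde G \simeq \SL_n$ with highest weight $\varpi_r = \epsilon_1 + \cdots + \epsilon_r$, so among dominant weights only $\varpi_r$ appears in $V$, with multiplicity one. Setting $e := dm$ for the integers $m$ and $d$ from Definitions \ref{def:m} and \ref{def:d}, Lemma \ref{lem:weight_multiplicities} collapses to
$$[e]_* \PLambda_X \;=\; \cc(\delta_X, \varpi_r),$$
while Lemma \ref{lem:image-of-conormal-under-isogeny} together with the nondivisibility of $X$ gives $[e]_* \PLambda_X = \PLambda_{[e](X)}$. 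Hence $\cc(\delta_X, \varpi_r) = \PLambda_{[e](X)}$, which is integral because $[e] \colon X \to [e](X)$ is birational.

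Now Lemma \ref{lem:cc-in-type-A} applies: identifying $\varpi_r$ with the tuple $(1, \ldots, 1, 0, \ldots, 0)$ having $r$ ones, the integrality of $\cc(\delta_X, \varpi_r)$ forces $\PLambda := \cc(\delta_X, \epsilon_1)$ to be integral as well, and yields the identity $\cc(\delta_X, \varpi_r) = \PLambda^{\varpi_r} = \Alt^r \PLambda$. Writing $\PLambda = \PLambda_Z$ for the associated integral subvariety $Z \subset A$, this gives $\Alt^r \PLambda_Z = \PLambda_{[e](X)}$. The containment $\PLambda_Z \in \langle \PLambda_X \rangle$ is immediate from Remark \ref{Rmk:CharCycleWeightDefinedOnSubfield}, and Theorem \ref{thm:cc-and-weights}(2) gives $\deg \PLambda_Z = |W \cdot d\epsilon_1| = n$.

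For the finiteness of $\gamma_Z$, finiteness of $\gamma_X$ implies finiteness of $\gamma_{[e](X)} = \gamma_{\Alt^r \PLambda_Z}$ since the isogeny $[e] \colon A \to A$ is finite. The case $r = 1$ is immediate because $\PLambda_Z = \PLambda_{[e](X)}$. For $r \ge 2$, any positive-dimensional fiber of $\gamma_Z$ over some $v \in \bbP_A$ would contain an irreducible curve $C'$, and then the image of $C'^r$ under the $r$-fold sum morphism $A^r \to A$ would be positive-dimensional (since already $C' + C'$ contains a translate of $C'$), forcing the fiber of $\gamma_{\Alt^r \PLambda_Z}$ over $v$ to be positive-dimensional---contradicting its finiteness. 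The main technical point of the argument is securing the integrality of $\cc(\delta_X, \varpi_r)$ required to invoke Lemma \ref{lem:cc-in-type-A}; this is precisely where the nondivisibility of $X$ enters, through the identification $[e]_* \PLambda_X = \PLambda_{[e](X)}$.
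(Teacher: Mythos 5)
Your argument for the existence of the integral subvariety $Z$ with $\Alt^r \PLambda_Z = \PLambda_{[e](X)}$ is correct and follows essentially the paper's route: you invoke Lemma~\ref{lem:weight_multiplicities} (using that $\Alt^r \bbF^n$ is minuscule) where the paper instead unwinds Definition~\ref{defn:cc-lambda} and Theorem~\ref{thm:cc-and-weights}, but these amount to the same computation and both arrive at $\cc(\delta_X, \varpi_r) = \PLambda_{[e](X)}$ before invoking Lemma~\ref{lem:cc-in-type-A}.

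The finiteness of $\gamma_Z$, however, is where your argument has a genuine gap. You attempt to propagate a hypothetical positive-dimensional fiber $\gamma_Z^{-1}(v) \supset C'$ forward to a positive-dimensional fiber of $\gamma_{\Alt^r\PLambda_Z}$ over $v$, but this fails for two reasons. First, $\PLambda_Z^{[r]}$ is by Definition~\ref{defn:fiber-product-minus-diagonal} the Zariski closure of $\PLambda_{Z\vert U}^{\times r} \smallsetminus \Delta_r$ for an open $U$ over which $\gamma_Z$ is finite flat; if $v \notin U$, there is no guarantee that $C'^r$ survives in this closure, since closures of open loci need not recover all points of special fibers (think of a strict transform hitting an exceptional divisor in only finitely many points). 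Second, even granting that $\sigma(\PLambda_Z^{[r]})$ has a positive-dimensional fiber over $v$, that fiber might lie entirely in a negligible (non-dominant) component of $\sigma(\PLambda_Z^{[r]})$, which by convention is discarded when forming the clean cycle $\Alt^r\PLambda_Z \in \cL(A)$; so no contradiction with $\Alt^r\PLambda_Z = \PLambda_{[e](X)}$ arises. The paper's argument goes in the opposite direction and avoids both issues: since $\PLambda_Z$ lies in the subring $\langle \PLambda_X\rangle \subset \cL(A)$, it appears as a component of some convolution power $\PLambda_X^{\circ N}$; and every component of a convolution power has finite Gauss map when $\gamma_X$ does, because $\overline{\PLambda_{X\vert U}\times_U\cdots\times_U\PLambda_{X\vert U}}$ is a closed subvariety of the $\bbP_A$-fiber product $\PLambda_X \times_{\bbP_A}\cdots\times_{\bbP_A}\PLambda_X$, which is finite over $\bbP_A$, and properness together with quasi-finiteness of the image under the sum map gives finiteness of the Gauss map on $\sigma_*$. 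You already observed the containment $\PLambda_Z \in \langle\PLambda_X\rangle$ but did not use it for finiteness; replacing your last paragraph with this argument would repair the proof.
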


For the construction of the clean cycle $\Alt^r \PLambda_Z$ and the proof of the above result, see \cref{sec:cc-of-wedge-powers}. Once we have this, the second step in our classification of wedge powers will be to prove a monotonicity statement for the cycles $\Alt^r(\PLambda_Z)$ as a function of $r$. Let 
\[
 \Alt^r Z \;:=\; \im\Bigl(\pr_A\colon \Supp(\Alt^r \PLambda_Z) \to A \Bigr)
 \;\subset\; A 
\]
be the image of the support of the clean effective cycle $\Alt^r \PLambda_Z \in \cL(A)$ under the projection to the abelian variety, then we will show:

\begin{theorem} \label{Thm:SmallWedgePowersHaveSmallDegree} Let $Z\subset A$ be an integral subvariety whose Gauss map is a finite morphism of degree $n=\deg(\gamma_Z)$. Suppose there exists an integer $r \ge 1$ with $r \le n/2$ such that
\[ \dim \Alt^r Z < (\dim A-1)/2.
\]
Then we have
\[ r \dim Z < \dim A.\]
\end{theorem}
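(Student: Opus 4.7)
I would argue the contrapositive: assuming $r\dim Z\ge \dim A$, I would show $\dim \Alt^r Z\ge (\dim A-1)/2$. Write $g=\dim A$. The two structural inputs are as follows.

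\emph{Input 1 (Gauss-sum identity).} The morphism $\bbP_A\to A$, $v\mapsto \sum_{p\in \gamma_Z^{-1}(v)}p$, is defined on a dense open of $\bbP_A$, extends to a morphism by smoothness of $\bbP_A$ and properness of $A$, and must be constant since any morphism from a projective space to an abelian variety is constant. Denoting its value by $s_0$, the complementarity of $r$- and $(n-r)$-subsets of $\gamma_Z^{-1}(v)$ inside the $n$-point fiber gives $\Alt^{n-r'}Z = s_0-\Alt^{r'}Z$, so $\dim \Alt^{r'}Z=\dim \Alt^{n-r'}Z$ for every $r'$.

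\emph{Input 2 (Segre-class bound).} From \cref{defn:fiber-product-minus-diagonal} the decomposition of the $r'$-fold convolution into Weyl-orbit pieces reads
\[
 \PLambda_Z^{\circ r'} \;=\; r'!\cdot \Alt^{r'}\PLambda_Z \;+\; \sum_{\mu\neq(1^{r'})} c_\mu \, \PLambda_Z^{\mu},
\]
where the summation runs over partitions of $r'$ with strictly fewer than $r'$ parts. Each $\PLambda_Z^{\mu}$ has $A$-support contained in $[\mu_1](Z)+\cdots+[\mu_k](Z)$ of dimension at most $(r'-1)\dim Z$, so its Segre classes vanish in degrees exceeding $(r'-1)\dim Z$. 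Combining this with \cref{Lemma:SegreClassAndConvolution}, which gives $s(\PLambda_Z^{\circ r'})=s(\PLambda_Z)^{*r'}$, and with \cref{Lemma:NonVanishingSegreClassPontryaginProduct}, which makes this Pontryagin product effective and nonzero in every degree $\le\min\{r'\dim Z,g-1\}$, one concludes
\[
 \dim \Alt^{r'}Z \;\ge\; \min\{r'\dim Z,\,g-1\} \qquad \text{whenever }(r'-1)\dim Z<g-1.
\]

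\emph{Case analysis.} The easy case is $(r-1)\dim Z<g-1$: Input 2 directly yields $\dim \Alt^r Z\ge g-1\ge (g-1)/2$. The complementary easy case is $(n-r-1)\dim Z<g-1$: Input 2 applied with $r'=n-r$, combined with Input 1 and the estimate $(n-r)\dim Z\ge r\dim Z\ge g$, yields $\dim \Alt^r Z=\dim \Alt^{n-r}Z\ge g-1$. The hypothesis $r\le n/2$ gives $n-r-1\ge r-1$, so both easy cases together amount to $\dim Z<(g-1)/(r-1)$.

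\emph{The hard case} $\dim Z\ge (g-1)/(r-1)$ is the main obstacle, because diagonal cycles $\PLambda_Z^{\mu}$ ($\mu\neq (1^r)$) can now contribute Segre classes in degree $g-1$ and the argument above breaks. To handle it, I would pick the minimal $r^\ast\ge 1$ with $r^\ast\dim Z\ge g-1$; by construction $(r^\ast-1)\dim Z<g-1$ and, in the hard case, $r^\ast\le r-1$. Input 2 then gives $\dim \Alt^{r^\ast}Z=g-1$, and Input 1 promotes this to $\dim \Alt^{n-r^\ast}Z=g-1$. The Pieri-type inclusion $\Alt^{n-r^\ast}Z\subset \Alt^r Z+\Alt^{n-r^\ast-r}Z$, together with the symmetry $\dim \Alt^{n-r^\ast-r}Z=\dim \Alt^{r^\ast+r}Z$, then reduces the desired lower bound on $\dim \Alt^r Z$ to an upper bound on $\dim \Alt^{r^\ast+r}Z$, which can be obtained by induction on $r$ (with the easy cases serving as the base) or by a direct fiber-dimension analysis of the sum map $\sigma_A\colon T_r\to A$ restricted by the common Gauss-direction constraint encoded in $T_r=\pi_r(\PLambda_Z^{[r]})$. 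The crucial use of the hypothesis $r\le n/2$ is exactly to make Input 1 available and to force $r^\ast+r\le n$, so that the Pieri-type inclusion lives inside the relevant range.
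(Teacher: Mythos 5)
The paper's proof is short and sharp: it argues by contradiction, keeping the hypothesis $\dim \Alt^r Z < (g-1)/2$ alive so that \cref{Lemma:DecreasingDimensionInSmallDimension} (the monotonicity lemma) applies. That lemma yields $\dim Z \le \dim \Alt^s Z < (g-1)/2$ for the integer $s = \max\{i : i\dim Z < g\} < r$; combined with $s\dim Z \ge g - \dim Z$ and \cref{prop:linear-growth-for-small-wedge-powers} this forces $(g+1)/2 < s\dim Z < (g-1)/2$, a contradiction. Your contrapositive framing discards exactly the input that makes this work.

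Your \emph{Input 2} is essentially correct, and it cleanly handles what you call the easy case $\dim Z < (g-1)/(r-1)$: under $r\dim Z\ge g$ it even gives $\dim\Alt^r Z = g-1$, which is stronger than needed. Your \emph{Input 1} is the same Gauss-sum symmetry used in the proof of \cref{cor:gauss-degree-bound}. The trouble is the hard case. There, after the Pieri-type inclusion $\Alt^{n-r^*}Z\subset \Alt^r Z + \Alt^{n-r^*-r}Z$ and the symmetry from Input 1, you land on the inequality
\[
g-1 \;\le\; \dim\Alt^r Z + \dim\Alt^{r^*+r}Z,
\]
and you now need an \emph{upper} bound $\dim\Alt^{r^*+r}Z\le (g-1)/2$. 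But every tool you have built---the effectivity of Pontryagin products of Segre classes, and the junk-vanishing argument---gives \emph{lower} bounds on dimensions of these wedge cycles, not upper bounds. Induction on $r$ does not help either: the contrapositive statement you are inducting on only produces lower bounds. The ``direct fiber-dimension analysis'' is not specified and it is not clear what it would give. So the hard case is a genuine gap.

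What you are missing is the content of \cref{Lemma:DecreasingDimensionInSmallDimension}. That lemma is where the paper converts the \emph{assumed} upper bound $\dim\Alt^r Z<(g-1)/2$ into an upper bound on $\dim\Alt^i Z$ for all $i\le r$ (in particular on $\dim Z$), by a careful comparison of Segre classes of $\Alt^i\PLambda_Z\circ\Alt^i\PLambda_{-Z}$ and $\Alt^{i-1}\PLambda_Z\circ\Alt^{i-1}\PLambda_{-Z}$. That comparison, rather than the decomposition of $\PLambda_Z^{\circ r}$, is the key to getting upper bounds out of Segre classes. Once you argue the contrapositive and lose the hypothesis $\dim\Alt^r Z<(g-1)/2$, there is no way to recover such an upper bound; switching to a proof by contradiction (so that both $r\dim Z\ge g$ and $\dim\Alt^r Z<(g-1)/2$ are in play) and invoking the monotonicity lemma is the natural fix.
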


The proof of this will be given in \cref{sec:dimension-estimate}. Finally, the last step for our classification of wedge powers will be to show that in the given dimension range, the smoothness of $\Alt^r \PLambda_Z$ forces $Z$ to be a curve. More precisely:

\begin{theorem} \label{Thm:WedgePowersOfSmallDegrees} 
Let $X,Z\subset A$ be integral subvarieties whose Gauss maps are finite morphisms. Suppose moreover that $X$ is smooth and nondivisible and that there are integers $e,r \ge 1$ such that 
\[ \Alt^r \PLambda_{Z} = \PLambda_{[e](X)}.\]
If $r \dim Z < \dim A$, then $Z$ is a curve and \smallskip 
\begin{enumerate}
	\item the normalization $C$ of $Z$ embeds in $A$,\smallskip
	\item $X=C+\cdots + C\subset A$ is the sum of $r$ copies of $C$, embedded suitably in $A$,\smallskip 
	\item the sum morphism $\Sym^r C \to X$ is an isomorphism.
\end{enumerate}
\end{theorem}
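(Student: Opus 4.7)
My plan is to exploit the cycle identity $\Alt^r \PLambda_Z = \PLambda_{[e](X)}$ to produce a birational sum morphism $\Sym^r Z \to [e](X)$, then to identify $X$ with the $r$-th symmetric power of the normalization of $Z$, forcing $\tilde Z$ to be a smooth curve, and finally to realize $C = \tilde Z$ as an embedded subvariety of $A$ via the Albanese. Throughout I treat the case $r \ge 2$ (for $r=1$ the identity reduces to $Z=[e](X)$ and the conclusion that $Z$ is a curve forces $\dim X = 1$). For birationality I follow \cref{defn:fiber-product-minus-diagonal} and \cref{rem:symmetric-fiber-product-of-conormal} with $\alpha = (1, \dots, 1)$: the natural morphism $\tilde\sigma : \PLambda_Z^{[r]}/\frS_r \to \PLambda_{[e](X)}$ is then a finite morphism between integral clean cycles of the same Gauss degree $\binom{n}{r}$ (with $n := \deg \gamma_Z$), hence birational. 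Using that $Z$ is nondegenerate by \cref{Thm:PositivityNotions}, iterating \cref{Lem:SumOfNondegenerate} together with the hypothesis $r \dim Z < g$ gives $\dim Z^{*r} = r \dim Z$, and the nonemptiness of generic common conormal fibers (intersection of $r$ linear subspaces of codimension $\dim Z$ in $\bbP_A$) forces $[e](X) = \pr_A(\Supp(\Alt^r \PLambda_Z)) = Z^{*r}$. Finally, over a smooth point $x \in [e](X)^{\reg}$, the fiber of $\pr_{[e](X)}$ on $\PLambda_{[e](X)}$ is the irreducible linear subspace $\bbP(\cC_{[e](X)/A, x})$, while on $\Alt^r \PLambda_Z$ it decomposes as the disjoint union of the linear subspaces $\bigcap_{i=1}^r \bbP(\cC_{Z/A, p_i})$ indexed by the preimages $[p_1, \dots, p_r]$ of $x$ in $\Sym^r Z$; irreducibility forces a single preimage, giving birationality.

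Next, let $\tilde Z \to Z$ denote the normalization and argue that $\tilde Z$ is a curve. The quotient $\Sym^r \tilde Z = \tilde Z^r/\frS_r$ is normal (finite group quotient of a normal variety in characteristic zero), and the composition $\Sym^r \tilde Z \to \Sym^r Z \to [e](X)$ is finite and birational, so $\Sym^r \tilde Z$ is a normalization of $[e](X)$. On the other hand, $X$ is smooth and $[e]|_X : X \to [e](X)$ is finite and birational by nondivisibility of $X$ (cf.~\cref{lem:image-of-conormal-under-isogeny}), so $X$ is also a normalization. By uniqueness of normalization, $X \cong \Sym^r \tilde Z$. Since $X$ is smooth, $r \ge 2$, and $\Sym^r Y$ carries singularities along the big diagonal as soon as $\dim Y \ge 2$, the variety $\tilde Z$ must be a smooth curve.

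To construct the embedding $C := \tilde Z \hookrightarrow A$ with $X = C + \dots + C$, I observe that the original inclusion $X \hookrightarrow A$ factors through the Albanese $\Alb(\Sym^r C) = \Jac(C)$, hence through a sum map on $\Sym^r C$ associated to some morphism $f : C \to A$. Since $C$ is a smooth projective curve, to verify $f$ is a closed embedding it suffices to check injectivity on points and differentials. If $f(q_1) = f(q_2)$ for distinct $q_1 \ne q_2$, then for generic $q_3, \dots, q_r$ the distinct classes $[q_1, q_3, \dots, q_r]$ and $[q_2, q_3, \dots, q_r]$ in $\Sym^r C = X$ would map to the same point of $A$, contradicting the injectivity of $X \hookrightarrow A$; and if $df_q = 0$ at some $q \in C$, then the differential of the sum map at $[q, q_2, \dots, q_r]$ would have rank at most $r-1 < r = \dim X$, contradicting that $X \hookrightarrow A$ is an immersion. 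I expect the main obstacle to be the fiber-theoretic analysis in the first paragraph---controlling $\Alt^r \PLambda_Z$ precisely on the smooth locus of $[e](X)$ and ruling out spurious contributions from the singular locus---for which one must carefully use the integrality and reducedness of the various cycles in play.
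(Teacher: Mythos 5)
Your proposal follows the same overall strategy as the paper---produce a birational sum morphism $\Sym^r Z\to [e](X)$ by comparing Gauss degrees, identify $X$ with $\Sym^r\tilde Z$ via normalization, and then conclude that $\tilde Z$ is a smooth curve from \cref{Prop:SmoothSymmetricPower}. Your construction of the embedding $C\hookrightarrow A$ via the Albanese factorization $X\simeq\Sym^r C\to \Alb(\Sym^r C)=\Jac(C)\to A$ is a genuinely different and rather slicker route than the paper's, which instead exhibits $\tilde Z$ as a translate of an irreducible component of $[e]^{-1}(Z)$ and needs \cref{Prop:FiniteBirationMorphismBetweenSymmetricPowers} to transport the birational sum morphism up the isogeny $[e]$. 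Your route is an appealing alternative (modulo the small bookkeeping of absorbing the translation constant $a_0\in A$ into a translate of $C$ using surjectivity of $[r]$).

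However, there is a genuine gap in your second paragraph: you assert that the composition $\Sym^r\tilde Z\to\Sym^r Z\to[e](X)$ is \emph{finite} and birational, but your first paragraph only establishes that $\tau\colon\Sym^r Z\to[e](X)$ is \emph{generically} finite (via $\dim Z^{*r}=r\dim Z$ and \cref{Lem:SumOfNondegenerate}) and birational (via the Gauss-degree count on conormal cycles, which is sound). Finiteness of the conic morphism $\tilde\sigma$ on conormal cycles does not automatically descend to finiteness of $\tau$ on the bases: a proper birational morphism from a normal variety can contract positive-dimensional fibers (as a blowup does), and then $\Sym^r\tilde Z$ would not be the normalization of $[e](X)$, so the uniqueness argument breaks down. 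The paper closes this gap in \cref{prop:sum-morphism-finite} by invoking \cref{Corollary:Equidimensional-Fibers}---which says that the fibers of $\pr_Y\colon\PLambda_Y\to Y$ are pure of dimension $\codim_A Y-1$ precisely because $X$ is smooth and $Y=[e](X)$---and then a semicontinuity argument: any positive-dimensional fiber of $\sigma\colon Z^r\to Y$ would force a fiber of $\pr_Y\circ\tilde\sigma$ of dimension at least one more than generic, contradicting the equidimensionality upper bound. Without this input, the step ``$\Sym^r\tilde Z$ is a normalization of $[e](X)$, hence isomorphic to $X$'' is not justified.
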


We will prove this in \cref{sec:smooth-wedge-powers}. Before coming to the details, let us note how the above three results conclude the classification of wedge powers:

\begin{proof}[{Proof of \cref{Thm:SmallWedgePowersAreSumsOfCurves}}]
Suppose that $X\subset A$ satisfies the assumptions of the theorem and that it is an $r$-th wedge power for some $r\in \bbN$ with $1<r\le n/2$ where $\deg \gamma_X= \tbinom{n}{r}$. By \cref{Thm:FromRepresentationToPhysicalWedgePowers} then
\[ \Alt^r \PLambda_{Z} = \PLambda_{[e](X)}. \]
for some integer $e \ge 1$ and some integral subvariety $Z\subset A$. 
Since $[e]\colon A\to A$ is an isogeny, we have 
$ \dim [e](X) = \dim X < (\dim A - 1)/2$
by our dimension assumption on $X$. \Cref{Thm:SmallWedgePowersHaveSmallDegree} then shows 
\[ r \dim Z  < \dim A, \]
and hence \cref{Thm:WedgePowersOfSmallDegrees} gives the desired result.
\end{proof}                                                                                                                 

\subsection{Characteristic cycles of wedge powers} \label{sec:cc-of-wedge-powers}

We now explain how to compute characteristic cycles of wedge powers. 
Let $Z\subset A$ be a subvariety with dominant Gauss map $\gamma_Z\colon \PLambda_Z \to \bbP_A$. For an integer $r\ge 1$ we consider as in \cref{defn:fiber-product-minus-diagonal} the Zariski closure
\[
 \PLambda_{Z}^{[r]} \;:=\; 
 \overline{\PLambda^{\times r}_{Z\mid U} \smallsetminus \Delta_r } 
 \;\subset\; A^r \times \bbP_A
\]
where $U\subset \bbP_A$ is any open dense subset over which the Gauss map $\gamma_Z$ is finite and flat.
Let $\sigma\colon A^r \times \bbP_A \to A\times \bbP_A, (z_1, \dots, z_r, \xi) \mapsto (z_1+\cdots + z_r, \xi)$ be the sum morphism and put 
\[
 \Alt^r \PLambda_Z \;:=\; \tfrac{1}{r!} \, \sigma_*(\PLambda_{Z}^{[r]})
 \;\in\; \cL(A),
\]
which is the special case $\alpha = (1,\dots, 1) = (1^r)$ of the cycle in \cref{defn:fiber-product-minus-diagonal}.

\begin{proof}[Proof of \cref{Thm:FromRepresentationToPhysicalWedgePowers}] By \cref{cor:algebraically-closed} and \cref{lem:reduction-to-complex-case} we may assume that $k=\bbC$ and $\bbF = \bbC$ so that we can use the results about characteristic cycles from \cref{section from rep to geo}. Replacing~$X$ by a translate we may assume by \cref{cor:derived-group} that the connected component of the group $G_\omega(\delta_X)$ is semisimple. Its universal cover is then isomorphic to $\tilde{G}\simeq \SL_n(\bbC)$ for some $n \ge 1$, so the setup in \cref{subsec:typeA} applies to $P=\delta_X$. By assumption
\[
 \omega(P) \;\simeq\; \Alt^r(\bbC^n).
\]
Since $\dim \omega(P)>1$ for any non-negligible perverse sheaf $P$ which is not a skyscraper sheaf~\cite{WeissauerAlmostConnected}, we have $1<r<n$. The highest weight of the above wedge power representation of $\SL_n(\bbC)$ is the fundamental weight $\alpha = \varepsilon_1 + \cdots + \varepsilon_r$. With notation as in \cref{defn:cc-lambda}, the Weyl group orbit $W.\alpha$ consists precisely of the weights in the representation $\xi(P^\circ)$, so $\xi^{-1} [W.\alpha] = [P^\circ]$ in the Grothendieck ring $K(\langle P^\circ \rangle)$. It follows that 
\begin{align*}
 \cc(P, \alpha) 
 &\;=\; \cc(\xi^{-1}[W.d\alpha]) && \textnormal{with notation as in \cref{defn:cc-lambda}} \\
 &\;=\; [d]_* \cc(P^\circ) && \textnormal{since $\xi^{-1}[W.\alpha] = [P^\circ]$} \\
 &\;=\; [e]_* \cc(P) && \textnormal{for $e:=dm$ and $P^\circ := [m]_\ast P$} \\
 &\;=\; [e]_* \PLambda_X && \textnormal{since $\cc(\delta_X) = \PLambda_X$ for smooth $X$} \\
 &\;=\; \PLambda_{[e](X)} && \textnormal{by \cref{lem:image-of-conormal-under-isogeny}, since $X$ is nondivisible.}
\end{align*}	
In particular, the cycle $\cc(P, \alpha)$ is integral. So by \cref{lem:cc-in-type-A} the cycle $\cc(P, \varepsilon_1)$ is integral as well, hence of the form $\cc(P, \varepsilon_1) = \PLambda_Z$ for an integral subvariety $Z\subset A$, and we have
\[
 \PLambda_{[e](X)} \;=\; \cc(P, \alpha) \;=\; \PLambda_Z^\alpha \;=\; \Alt^r \PLambda_Z.
\] 
Then $\deg \PLambda_X = \deg \cc(P, \alpha) = \dim \omega(P) = \tbinom{n}{r}$. For the statement about the finiteness of the Gauss map, recall from \cref{Rmk:CharCycleWeightDefinedOnSubfield} that the cycle $\PLambda_Z=\cc(P, \epsilon_1)$ lies in the subring $\langle \PLambda_X \rangle \subset \cL(A)$. As such, it appears in some convolution power of the cycle $\PLambda_X$. Recalling the definition of the convolution product, this implies that if the Gauss map for $\gamma_X$ is finite, then so is the one for $\gamma_Z$.  
\end{proof}

\begin{remark} \label{rem:gauss-degree-of-complement-of-diagonal} 
The discussion in the proof of \cref{Thm:FromRepresentationToPhysicalWedgePowers} shows that for any reduced subvariety $Z\subset A$ whose Gauss map is dominant of degree $n=\deg(\gamma_Z)$ and any integer $r \ge 1$ we have:\smallskip 
\begin{itemize}
\item The projection $\gamma_{Z,r}\colon \PLambda_{Z}^{[r]} \to \bbP_A$ is dominant (and hence generically finite) if and only if $r\le n$. In that case 
\[ \deg \gamma_{Z,r} \;=\; r! \tbinom{n}{r}. \] 
\item If $\gamma_Z\colon \PLambda_Z \to \bbP_A$ is a finite morphism, then so is $\gamma_{Z, r}\colon \PLambda_{Z}^{[r]} \to \bbP_A$.\smallskip
\end{itemize} 
\end{remark}

\subsection{Dimension estimates for wedge powers} 
\label{sec:dimension-estimate}

We will deduce \cref{Thm:SmallWedgePowersHaveSmallDegree} from a monotonicity property of wedge powers. To formulate this, recall that for $Z\subset A$ and an integer $i \ge 1$ we put
$
 \Alt^i Z := \pr_A(\Supp(\Alt^i \PLambda_Z)) \subset A$.
We are interested in its dimension
\[
 d_Z(i) \;:=\; \dim \Alt^i Z.
\]
\begin{example} \label{ex:wedge-powers-of-a-curve}
Let $Z$ be a smooth projective curve of genus $g>1$, embedded via a suitable translate of the Abel-Jacobi map in its Jacobian variety $A=\Alb(Z)$. Then by~\cite[examples~3.1 and 4.1]{KraemerThetaSummands},
\[
 d_Z(i) \;=\; 
 \begin{cases} 
 i & \textnormal{for $1\le i\le g-1$}, \\
 g-1-i & \textnormal{for $g-1\le i \le 2g-2$}.
 \end{cases} 
\]
Here $d_Z$ is not monotonous, but it is so between zero and $g-1=\deg \gamma_Z/2$. 
\end{example}

The following monotonicity result shows that the behavior in the above example is typical for wedge powers in small degrees: 

\begin{proposition} \label{prop:linear-growth-for-small-wedge-powers}
	Let $Z\subset A$ be an integral nondegenerate subvariety, and let $r\ge 1$ be an integer such that $r\dim Z < \dim A$. 
	\begin{enumerate} 
	\item Let $Y:=Z+\cdots + Z \subset A$ be the sum of $r$ copies of $Z$. Then the sum morphism 
	\[
	\sigma\colon \quad \Sym^r Z \;=\; Z^r/\frS_r \;\too\; Y 
	\]
	is generically finite, and the subvariety $Y\subset A$ is nondegenerate. 
	\item The clean cycle $\Alt^r \PLambda_Z$ contains $\PLambda_{Y}$ with multiplicity $d=\deg(\sigma) \ge 1$, and we have
	\[
		\Alt^r Z \;=\; Y.
	\]
	\item For general $z = (z_1, \dots, z_r)\in Z^r(k)$, the fiber of $\PLambda_Y$ over the point $y=\sigma(z)$ is
	\[
	 \PLambda_{Y, y}\;=\; \PLambda_{Z, z_1} \cap \cdots \cap \PLambda_{Z, z_r} 
	 \;\subset\; \bbP_A.
	\]
	\end{enumerate} 
\end{proposition}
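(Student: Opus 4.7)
For (1), we apply \cref{Lem:SumOfNondegenerate} inductively on $r$: since $Z$ is nondegenerate and $r \dim Z < \dim A$, each intermediate iterated sum $Z + \cdots + Z$ of $k \le r$ copies remains nondegenerate of dimension $k \dim Z$. In particular $Y$ is nondegenerate with $\dim Y = r \dim Z = \dim \Sym^r Z$, so $\sigma$ is generically finite of some degree $d \ge 1$.

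For (3), at a tuple $z = (z_1, \dots, z_r) \in (Z^{\reg})^r$ with pairwise distinct entries, generically chosen in the preimage of $Y^{\reg}$, generic smoothness in characteristic zero combined with generic finiteness of $\sigma$ forces $\sigma$ to be \'etale at $[z]$. Identifying tangent spaces of $A$ with $\Lie A$ via translation, the differential of $\sigma$ at $z$ sends $(v_1, \dots, v_r)$ to $v_1 + \cdots + v_r$, so \'etaleness yields
\[ T_y Y \;=\; T_{z_1} Z + \cdots + T_{z_r} Z \;\subset\; \Lie A \]
with $y = \sigma(z)$ and the sum transverse. Taking annihilators in $\coLie{A}$ gives $\cC_{Y,y} = \cC_{Z,z_1} \cap \cdots \cap \cC_{Z,z_r}$, and projectivizing yields (3).

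For (2), recall from \cref{defn:fiber-product-minus-diagonal} that $\Alt^r \PLambda_Z = \tfrac{1}{r!}\, \sigma_\ast \PLambda_Z^{[r]}$, where $\sigma \colon A^r \times \bbP_A \to A \times \bbP_A$ is the fiberwise sum. Consider the projection $\pr_{A^r} \colon \PLambda_Z^{[r]} \to Z^r$: over a general tuple $(z_1, \dots, z_r)$ its fiber is $\PLambda_{Z,z_1} \cap \cdots \cap \PLambda_{Z,z_r} = \PLambda_{Y, y}$ by (3), a linear subspace of the expected dimension $g - 1 - r \dim Z$, so $\pr_{A^r}$ is dominant, and the image in $A$ of the support of $\Alt^r \PLambda_Z$ equals $\sigma(Z^r \smallsetminus \Delta_r) = Y$, proving $\Alt^r Z = Y$. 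For the multiplicity of $\PLambda_Y$, fix a very general pair $(y, \xi)$ with $y \in Y$ and $\xi \in \PLambda_{Y, y}$. The $d$ unordered preimages $\{z_1^{(k)}, \dots, z_r^{(k)}\}_{k=1}^d \in \sigma^{-1}(y) \subset \Sym^r Z$ consist, for generic $y$, of pairwise distinct smooth points of $Z$, and by (3) applied to each decomposition, $\PLambda_{Y, y} = \bigcap_i \PLambda_{Z, z_i^{(k)}}$, so every $z_i^{(k)}$ lies in $\gamma_Z^{-1}(\xi)$. The fiber of $\sigma \colon \PLambda_Z^{[r]} \to A \times \bbP_A$ over $(y, \xi)$ therefore consists of the $r! \cdot d$ ordered tuples obtained from these $d$ subsets by the $r!$ orderings, each contributing multiplicity one since $\sigma$ preserves $\xi$ and is thus a local isomorphism on each branch. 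Hence $\sigma_\ast \PLambda_Z^{[r]}$ contains $\PLambda_Y$ with multiplicity $r! \cdot d$; dividing by $r!$ yields the claimed multiplicity $d$.

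The main technical obstacle is the transversality statement in (3)---that the intersection $\bigcap_i \PLambda_{Z, z_i}$ has the expected codimension $r \dim Z$ for a generic tuple---which rests on the nondegeneracy of $Z$ and the bound $r \dim Z < \dim A$ via part (1); once this is in hand, parts (2) and (3) follow by organized counting.
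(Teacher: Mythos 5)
Your proof is correct and follows essentially the same route as the paper: \cref{Lem:SumOfNondegenerate} for (1), the tangent-space/annihilator argument giving the direct-sum decomposition $T_y Y = \bigoplus_i T_{z_i}Z$ and hence the conormal intersection for (3), and a generic-fiber count for (2). The only difference is presentational---you prove (3) before (2) and spell out the $r!\cdot d$ multiplicity count explicitly, whereas the paper first uses only the containment $\PLambda_{Y,y}\subset\PLambda_{Z,z_i}$ to get surjectivity of $\pr_{Z,r}$, proves the equality in (3) afterwards, and leaves the multiplicity as ``clear from the above discussion.''
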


\begin{proof} 
The image $Y=Z+\cdots + Z$ of the sum morphism $\sigma \colon \Sym^r Z \to Y$ is a proper subvariety of $A$ because $\dim Y \le \dim \Sym^r Z = r\dim Z < \dim A$. As we assumed~$Z\subset A$ to be nondegenerate, it then follows by~\cref{Lem:SumOfNondegenerate} that the morphism $\sigma$ is generically finite and that $Y\subset A$ is again nondegenerate.

\medskip 

This last property implies that the Gauss map $\gamma_Y\colon \PLambda_Y \to \bbP_A$ is generically finite and dominant, see \cref{Thm:PositivityNotions}. Let $W \subset Y^\reg$ be a nonempty open subset such that $V:= \sigma^{-1}(W)$ is contained in $(Z^\reg)^r \smallsetminus \Delta_r$ and the sum morphism $\sigma \colon V \to W$ is finite \'etale. If we view the tangent spaces as subspaces of $\Lie(A)$, then for every point $z = (z_1, \dots, z_r) \in V(k)$ and $y=\sigma(z)$ the tangent map
\[ T_z(\sigma)\colon \quad T_{z_1}(Z) \times \cdots \times T_{z_r}(Z) \;\too\; T_y(Y)
\]
is the restriction of the sum map $\Lie(A)^r \to \Lie(A)$. In particular,  $T_{z_i}(Z)\subset T_y(Y)$ and hence $\PLambda_{Y, y} \subset \PLambda_{Z, z_i}$ for all $i$. 

\medskip 

Let $U \subset \bbP_A$ be a nonempty open subset over which the Gauss maps $\gamma_Y$ and $\gamma_Z$ are finite \'etale, and such that $\pr_Y(\gamma_Y^{-1}(U))\subset W$. For any $\xi \in U(k)$ and $z \in V(k)$ with $\sigma(z) \in \gamma_Y^{-1}(\xi)$, we have 
\[
 (z_1, \dots, z_r, \xi) \;\in\; \PLambda_{Z\vert U}^{\times r} \smallsetminus \Delta_r \;\subset\; \PLambda_{Z}^{[r]}
\]
So the image of $\pr_{Z,r}\colon \PLambda_{Z}^{[r]} \to Z^r$ contains an open dense subset of~$Z^r$. Hence by properness, the morphism $\pr_{Z,r}$ is surjective and then $\Alt^r Z = Y$ by definition. In particular, $\dim \Alt^r Z = \dim Y = r\dim Z$, where the last equality follows from the generic finiteness of the sum morphism $\sigma \colon \Sym^r Z \to Y$. 
The statement about the general fiber of $\pr_Y\colon \Alt^r \PLambda_Z\to Y$ follows from the fact that for general $(z_1, \dots, z_r)$ we have
\[
 T_y(Y) \;=\; T_{z_1}(Z) \oplus \cdots \oplus T_{z_r}(Z)
\]
because the summands on the right-hand side span $T_y(Y)$ and their dimension adds up to $r\dim Z = \dim Y = \dim T_y(Y)$; passing to the corresponding normal spaces gives $\PLambda_{Y, y} = \PLambda_{Z, z_1} \cap \cdots \cap \PLambda_{Z, z_r}$ and the claim follows since $\Supp(\Alt^r \PLambda_Z)_y = \PLambda_{Y, y}$ for $y\in Y(k)$ general. Finally, it is also clear from the above discussion that the clean cycle $\Alt^r \PLambda_Z$ contains the component $\PLambda_Y$ with multiplicity $d=\deg(\sigma)$.
\end{proof} 

\begin{corollary} \label{cor:gauss-degree-bound}
	Let $Z\subset A$ be an integral nondegenerate subvariety of positive dimension. Then 
	\[
	\deg(\gamma_Z) \;\ge\; 2 \frac{ \dim A}{\dim Z} - 2.
	\]	
\end{corollary}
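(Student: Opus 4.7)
The plan is to set $d = \dim Z$, $g = \dim A$, and $n = \deg(\gamma_Z)$, and to show $n \ge 2 r_0$ where $r_0$ denotes the largest integer with $r_0 d < g$. Since by construction $r_0 \ge g/d - 1$, this yields $n \ge 2g/d - 2$.

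The main structural input is the duality $\dim \Alt^r Z = \dim \Alt^{n-r} Z$ for $0 \le r \le n$, along with the fact that $\Alt^n Z$ is a single point. Both rest on a constant-sum observation: over the dense open subset $U \subset \bbP_A$ on which $\gamma_Z$ is finite \'etale, label by $z_1(v), \dots, z_n(v)$ the images in $Z$ of the $n$ points of $\gamma_Z^{-1}(v)$. The assignment $v \mapsto z_1(v) + \cdots + z_n(v) \in A$ is then a well-defined morphism $U \to A$ (being symmetric in the $z_i(v)$, it factors through $\Sym^n Z \to A$). Since $\bbP_A$ is smooth, it extends uniquely to a morphism $\bbP_A \to A$, and since every morphism from a projective space to an abelian variety is constant, this sum equals a fixed $c \in A$. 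Consequently, for any subset $I \subset \{1, \dots, n\}$ of size $r$, we have $\sum_{i \in I} z_i(v) = c - \sum_{j \notin I} z_j(v)$ generically, so comparing generic fibres in $A$ yields $\Alt^r Z = c - \Alt^{n-r} Z$ as subvarieties of $A$, and in particular $\Alt^n Z = \{c\}$.

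Given this, I argue by contradiction: suppose $n < 2 r_0$. If $n \le r_0$, then $nd < g$ and \cref{prop:linear-growth-for-small-wedge-powers} applied with $r = n$ gives $\dim \Alt^n Z = nd \ge 1$, contradicting $\Alt^n Z = \{c\}$. If instead $r_0 < n < 2 r_0$, then $1 \le n - r_0 < r_0$, and applying the same proposition to both $r = r_0$ and $r = n - r_0$ yields $\dim \Alt^{r_0} Z = r_0 d$ and $\dim \Alt^{n-r_0} Z = (n-r_0) d$; the duality then forces $r_0 = n - r_0$, i.e.\ $n = 2 r_0$, a contradiction. Thus $n \ge 2 r_0 \ge 2g/d - 2$.

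The main subtlety lies in justifying the constant-sum fact, specifically the extension of the fibre-sum morphism from $U$ to all of $\bbP_A$, which one may do by invoking the standard result that a rational map from a smooth variety to an abelian variety is defined in codimension one, combined with the triviality of $\Mor(\bbP_A, A)$. The remaining case analysis is then essentially immediate from \cref{prop:linear-growth-for-small-wedge-powers}.
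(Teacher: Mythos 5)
Your proof is correct and takes essentially the same route as the paper: both rely on \cref{prop:linear-growth-for-small-wedge-powers} for the identity $\dim \Alt^r Z = r\dim Z$ when $r\dim Z<\dim A$, and on the duality $\dim \Alt^r Z = \dim \Alt^{n-r}Z$, which the paper states via $\Alt^r\PLambda_Z = \Alt^{n-r}\PLambda_{p-Z}$ and you rederive through the constant-sum observation; the only difference is cosmetic (the paper argues via strict monotonicity on overlapping intervals, you evaluate the dimensions directly). Your elementary justification of the constant-sum fact via the extension of the fibre-sum morphism and the rigidity of maps from projective space to an abelian variety is a welcome filling-in of a step the paper leaves implicit.
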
 

\begin{proof} 
	\Cref{prop:linear-growth-for-small-wedge-powers} shows that the map $r\mapsto d_Z(r) := \dim \Alt^r Z$ is strictly increasing on the interval $\{0,1,\dots, r_0\}$, where $r_0=\max\{ r\mid r\dim Z < \dim A\}$. On the other hand, we have
	\[
	\Alt^r \PLambda_Z \;=\; \Alt^{n-r} \PLambda_{p-Z}
	\]
	where $n=\deg(\gamma_Z)$ and where the point $p\in A(k)$ is defined by $\Alt^n \PLambda_Z = \PLambda_{\{p\}}$. It follows that
	\[
	\dim \Alt^r Z \;=\; \dim \Alt^{n-r} Z. 
	\]
	The left-hand side is strictly increasing for $r\in \{0,1,\dots, r_0\}$, the right-hand side is strictly decreasing for $r\in \{n-r_0, \dots, n\}$. 
	This forces $r_0 \le n-r_0$, hence $n\ge 2r_0$. Then $(r_0 + 1)\dim Z \ge \dim A$ implies
	$
	(n/2 + 1) \cdot \dim Z \ge \dim A
	$
	as desired.
\end{proof} 

For smooth subvarieties $Z\subset A$, the degree of the Gauss map $\gamma_Z\colon \PLambda_Z \to \bbP_A$ is equal to the topological Euler characteristic of $\delta_Z$, see \cref{subsec:clean-cc}. So for smooth curves $Z\subset A$ with $\langle Z \rangle = A$, \cref{cor:gauss-degree-bound} says that the curve has genus $\ge \dim A$; this follows of course also directly from the fact that for such curves the morphism $\Alb(Z)\to A$ must be surjective. Note that here the bound in \cref{cor:gauss-degree-bound} is sharp.

\medskip 

\Cref{prop:linear-growth-for-small-wedge-powers} gives a monotonicity statement for wedge powers, but in order to apply it we need to have an a priori bound on $\dim Z$. The following result will allow us to start instead from a bound only on some wedge power $\dim \Alt^r Z$, since the bound will be inherited by all lower wedge powers:

\begin{lemma} \label{Lemma:DecreasingDimensionInSmallDimension} 
Let $Z\subset A$ be a reduced subvariety whose Gauss map is a finite morphism of degree $n$. Let $r$ be an integer with $1 \le r \le n / 2$, and assume that
 \[ \dim \Alt^r Z < (\dim A-1)/2.\]
Then the function $d_Z\colon \{1,\dots, r\} \to \bbN, i\mapsto d_Z(i) := \dim \Alt^i Z$ is nondecreasing, in particular 
\[ \dim \Alt^i Z < (\dim A - 1)/2 \quad \textnormal{for all} \quad i\in \{1,\dots, r\}. \] 
\end{lemma}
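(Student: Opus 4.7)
My plan is to argue by downward induction on $i \in \{1,\ldots,r\}$, aiming at each step to show $d_Z(i) \le d_Z(r)$; the base case $i=r$ is the hypothesis. The central tool would be the convolution decomposition
\[
 \Alt^{i-1}\PLambda_Z \circ \PLambda_Z \;=\; i \cdot \Alt^i \PLambda_Z \;+\; (i-1) \cdot R_i
\]
in $\cL(A)$, obtained by separating, in a generic Gauss fibre $\{p_1(v),\ldots,p_n(v)\}$, the pairs $(\{j_1<\cdots<j_{i-1}\},k)$ with $k \notin \{j_1,\ldots,j_{i-1}\}$ (contributing $\Alt^i\PLambda_Z$ with multiplicity $i$) from those with $k \in \{j_1,\ldots,j_{i-1}\}$ (yielding an effective clean cycle $R_i$ whose generic fibre points are $2p_m(v) + \sum_{k\in J'} p_k(v)$ with $|J'|=i-2$, $m \notin J'$). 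Thus, up to this restriction, $R_i$ is the convolution $[2]_*\PLambda_Z \circ \Alt^{i-2}\PLambda_Z$; since $[2]$ is an isogeny and the Gauss maps of $\Alt^j\PLambda_Z$ are finite by Remark~\ref{rem:gauss-degree-of-complement-of-diagonal}, Corollary~\ref{cor:DimensionOfConvolutionViaSegre} applies twice to yield the support-dimension identity
\[
 \min\!\bigl(d_Z(i-1) + \dim Z,\ g-1\bigr) \;=\; \max\!\bigl(d_Z(i),\ \dim \pr_A \Supp R_i\bigr),
\]
together with the refined bound $\dim \pr_A \Supp R_i \le \min(\dim Z + d_Z(i-2),\ g-1)$.

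In the \emph{favourable case} $d_Z(i-1) + \dim Z \le g-1$, the left-hand side equals $d_Z(i-1) + \dim Z$ and strictly exceeds $d_Z(i-1)$; combined with the refined bound on $\dim \pr_A \Supp R_i$ and the inductive knowledge that $d_Z(i-2)$ is small (using $d_Z(0)=0$ as a base when $i=2$), the right-hand maximum is forced to equal $d_Z(i)$. This gives the identity $d_Z(i) = d_Z(i-1) + \dim Z$, so $d_Z(i-1) < d_Z(i) \le d_Z(r)$ as required, and moreover the nondecreasing property is strict.

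The main obstacle is ruling out the \emph{unfavourable case} $d_Z(i-1) + \dim Z > g-1$. Here I would exploit the symmetry $d_Z(j) = d_Z(n-j)$ (which follows from $\Alt^n\PLambda_Z = \PLambda_{\{p\}}$, whence $\Alt^{n-j}Z = p-\Alt^jZ$) together with the hypothesis $r \le n/2$. In this case the identity reduces to $g-1 = \max(d_Z(i),\, \dim \pr_A\Supp R_i)$; since the inductive hypothesis gives $d_Z(i) < (g-1)/2 < g-1$, the maximum must be attained by $\dim \pr_A\Supp R_i = g-1$, and the refined bound then yields $d_Z(i-2) \ge g-1-\dim Z$. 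I would iterate this downward propagation while simultaneously applying the favourable-case identity $d_Z(j) = d_Z(j-1)+\dim Z$ at indices $j=i+1,\ldots,r$ (in force by downward induction), to force $\dim Z > (g-1)/2$ and eventually $d_Z(r) \ge r\dim Z > (g-1)/2$, contradicting the hypothesis.

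The hardest step will be making this propagation airtight near the boundary, particularly in the parity-sensitive case $r=n/2$ with $n$ even, where the dual index $n-i+1$ can land inside the inductive range and the crude estimate $\dim \pr_A \Supp R_i \le d_Z(i-1)$ would be insufficient; here one must rely on the full strength of the refined bound $\dim \pr_A \Supp R_i \le \dim Z + d_Z(i-2)$. A secondary subtlety is that $R_i$ is not literally the clean cycle $[2]_*\PLambda_Z \circ \Alt^{i-2}\PLambda_Z$ but only its restriction away from the locus $m \in J'$; verifying that this locus has strictly smaller dimension (so that Corollary~\ref{cor:DimensionOfConvolutionViaSegre} transfers to $R_i$) will require a short separate argument using the fact that $r\le n/2$ keeps $J'$ sufficiently ``sparse'' in a general Gauss fibre.
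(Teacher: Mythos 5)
Your approach is genuinely different from the paper's, and as written it does not close. The paper considers the ``autoconjugate'' convolution $\PLambda_i := (\Alt^i\PLambda_Z)\circ(\Alt^i\PLambda_{-Z})$ and deduces the support inclusion $\Supp\PLambda_{r-1}\subset\Supp\PLambda_r$ by the spare-point trick: since $n\ge 2r>2(r-1)$, a generic Gauss fibre has a point $p$ lying in neither of two given $(r-1)$-element subsets, so every $z=\sum x_i-\sum y_i$ appearing in $(\PLambda_{r-1})_v$ is also of the form $\sum x_i + p - \sum y_i - p$ in $(\PLambda_r)_v$. This yields $\PLambda_r=c\,\PLambda_{r-1}+E$ with $c>0$ and $E$ effective, after which a single degree-comparison of Segre classes gives $\min\{2d_Z(r-1),g-1\}\le 2d_Z(r)<g-1$, hence $d_Z(r-1)\le d_Z(r)$; the descending induction then relates only consecutive indices and closes in one step.

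Your Pieri-type decomposition $\Alt^{i-1}\PLambda_Z\circ\PLambda_Z = i\cdot\Alt^i\PLambda_Z + R_i$ has a structural problem: the error term $R_i$ is supported on sums $2p_m+\sum_{j\in J'}p_j$ with $|J'|=i-2$, so your ``refined bound'' on $\dim\pr_A\Supp R_i$ involves $d_Z(i-2)$, \emph{two} indices below the current one. In a downward induction with base $i=r$ you do not yet know $d_Z(i-2)$ at step $i$, so the bound cannot be applied. The ``unfavourable case'' $d_Z(i-1)+\dim Z>g-1$ is also handled circularly: you want to propagate to $d_Z(r)\ge r\dim Z$ and contradict the hypothesis, but the identity $d_Z(r)=r\dim Z$ is precisely \cref{prop:linear-growth-for-small-wedge-powers}, whose proof assumes $r\dim Z<\dim A$ — which is what this lemma together with \cref{Thm:SmallWedgePowersHaveSmallDegree} is supposed to establish. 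You also aim at the strict linear-growth identity $d_Z(i)=d_Z(i-1)+\dim Z$, which is strictly stronger than the nondecreasing property the lemma asks for, and this extra ambition is what opens the gap. (As a minor point, the multiplicity in your decomposition should be $1\cdot R_i$, not $(i-1)\cdot R_i$: each point $2p_m+\sum_{j\in J'}p_j$ with $|J'|=i-2$, $m\notin J'$, arises from exactly one pair $(J,k)$ with $J=J'\cup\{m\}$ and $k=m$; the count $(i-1)\binom{n}{i-1}=n\binom{n-1}{i-2}$ matches the number of such points, not a multiplicity.) Switching to the paper's symmetric convolution removes the two-step dependence and the case analysis entirely.
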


\begin{proof} We only need to show $d_Z(r - 1) \le d_Z(r)$, because we can then proceed by descending induction. Put
\[ 
 \PLambda_i \;:=\; (\Alt^i \PLambda_Z) \circ (\Alt^i \PLambda_{-Z})
\quad \textnormal{for} \quad i \;=\; 1,\dots, n. \]
The support of this cycle is the closure of the subset of points in $(A \times \bbP_A)(k)$ of the form
$ (x_1 + \cdots + x_i - y_1 - \cdots - y_i, \xi)$
where \smallskip 
\begin{itemize}
\item  $\xi\in \bbP_A(k)$ is a cotangent direction such that $\pr_A(\gamma_{Z}^{-1}(\xi))\subset Z^\reg(k)$,\smallskip 
\item    $\{ x_1, \dots, x_i \}, \{ y_1, \dots, y_i\} \subset \pr_A(\gamma_Z^{-1}(\xi))$ are subsets of cardinality~$i$.\smallskip
\end{itemize}
For general $\xi \in \bbP_A(k)$ the fiber $\gamma_Z^{-1}(\xi)$ consists of $n \ge 2 r > 2(r-1) + 1$ distinct points, hence for any two subsets $\{ x_1, \dots, x_{r-1} \}$ and $\{ y_1, \dots, y_{r- 1} \}$ as above there is a point $p$ in $\pr_A(\gamma_{Z}^{-1}(\xi))$ which belongs to neither of the two subsets. By writing the point 
\[ z \;:=\;
 x_1 + \cdots + x_{r - 1} - y_1 - \cdots - y_{r - 1}\] 
as $z +p - p$, we find that the point $(z, \xi)$ lies in the support of $\PLambda_r$. Varying $\xi$ and the chosen subsets of points in the fiber of the Gauss map, we obtain the inclusion
$\Supp(\PLambda_{r - 1}) \subset \Supp(\PLambda_r)$
for the supports. Since both cycles are linear combinations of conormal varieties and hence pure of the same dimension $\dim A - 1$, it follows that we have
\begin{equation} \label{Eq:LangrangianCycleWedgePowersInduction}
	\PLambda_r = c \PLambda_{r-1} + E
\end{equation}
where $c > 0$ is a rational number and $E$ is an effective cycle with rational coefficients. 
	
\medskip 

The finiteness of the Gauss map $\gamma_Z\colon \PLambda_Z \to \bbP_A$ implies that $\Alt^i \PLambda_Z \to \bbP_A$ is finite as well. Indeed $\PLambda^{[i]}_Z$ is by definition a subvariety inside the $i$-fold fiber product $\PLambda_Z \times_{\bbP_A} \cdots \times_{\bbP_A} \PLambda_Z$ and the fiber product of finite morphisms is a finite morphism. By design the sum map  $\PLambda^{[i]}_Z \to \Supp(\Alt^i \PLambda_Z)$ is surjective and compatible with Gauss maps, implying the desired finiteness. By \cref{Lemma:SegreClassAndConvolution}, the finiteness of the Gauss map for $\Alt^i \PLambda_Z$ allows to compute the total Segre class of $\PLambda_i = \Alt^i \PLambda_Z \circ \Alt^i \PLambda_{-Z}$ as a Pontryagin product 
\[ s(\PLambda_i) \;=\; s(\Alt^i \PLambda_Z) \ast s(\Alt^i \PLambda_{-Z}) 
\quad \textnormal{in} \quad 
\CH_{<g}(A) \;:=\; \CH_\bullet(A)/\CH_g(A). \]
Comparing this with \eqref{Eq:LangrangianCycleWedgePowersInduction}, we obtain in the truncated Chow ring $\CH_{<g}(A)$ an identity
\[
s(\Alt^r \PLambda_Z) \ast s(\Alt^r \PLambda_{-Z}) 
\;=\;
c s(\Alt^{r-1} \PLambda_Z) \ast s(\Alt^{r-1} \PLambda_{-Z}) 
+ \cdots 
\]
where $\cdots$ stands for effective cycle classes (possibly zero).
The left-hand side of this equality vanishes in all degrees $> 2 d_Z(r)$ since $s_i(\Alt^r \PLambda_{Z}) = s_i(\Alt^r \PLambda_{-Z})= 0$ for all $i>d_Z(r)$. On the other hand, \cref{Lemma:NonVanishingSegreClassPontryaginProduct} says that on the right-hand side the Pontryagin product 
\[
 s(\Alt^{r-1} \PLambda_Z) \ast s(\Alt^{r-1} \PLambda_{-Z})
\]
is nonzero and effective in all degrees $\le \min \{ 2 d_Z(r - 1), \dim A - 1 \}$ because the subvariety $\Alt^{r - 1} Z$ has finite Gauss map and hence all its irreducible components are nondegenerate by \cref{Thm:PositivityNotions}. So a comparison of the left and right-hand side yields
\[ \min \{ 2 d_Z(r - 1), \dim A - 1\} \le 2 d_Z(r) < \dim A -1,\]
whence $2 d_Z(r - 1) \le \dim A - 1$ and $d_{Z}(r - 1) \le d_Z(r)$.
\end{proof}

\begin{proof}[{Proof of \cref{Thm:SmallWedgePowersHaveSmallDegree}}] Let $g := \dim A$. 
We argue by contradiction: If $r \dim Z \ge g$, consider the integer
\[ s \;:=\; \max \{ i \in \bbN \mid i \dim Z < g\} \;<\; r. \]
On the one hand, we have
\begin{align*}
 s \dim Z &\;=\; \dim \Alt^s Z  && \textnormal{by \cref{prop:linear-growth-for-small-wedge-powers}, since $s\dim Z < g$} \\
 &\;<\; (g-1)/2 && \textnormal{by \cref{Lemma:DecreasingDimensionInSmallDimension}, since $s<r$}.	
\end{align*} 
In particular, $\dim Z < (g-1)/2$. But on the other hand
\begin{align*} 
 s \dim Z \;=\; (s+1)\dim Z - \dim Z 
 & \;\ge \; g - \dim Z && \textnormal{since $(s+1)\dim Z \ge g$} \\
 & \; >\; (g+1)/2 && \textnormal{since $\dim Z < (g-1)/2$}
\end{align*} 
which contradicts the previous displayed inequality. 
\end{proof}

\subsection{Smooth wedge powers come from curves} \label{sec:smooth-wedge-powers}

It remains to show that the only smooth wedge powers in the dimension range in question are those coming from smooth curves, as announced in \cref{Thm:WedgePowersOfSmallDegrees}.
In what follows, let $X, Z\subset A$ be integral subvarieties such that\smallskip 
\begin{enumerate} 
\item the subvariety $X\subset A$ is smooth and nondivisible,\smallskip 
\item the Gauss maps $\gamma_X$ and $\gamma_Z$ are finite morphisms, and\smallskip 
\item we have $\Alt^r \PLambda_Z = \PLambda_{[e](X)}$ for some integers $e,r \ge 1$ with $r\dim Z < \dim A$. \smallskip
\end{enumerate} 
We claim that then $Y:=[e](X)$ is birational to the $r$-th symmetric power of $Z$, more precisely:

\begin{proposition} \label{prop:sum-morphism-finite}
If the above conditions (1), (2), (3) hold, then $Y=Z+\cdots + Z$ is a sum of $r$ copies of~$Z$, and the sum morphism $\sigma\colon Z^r \to Y$ is finite and factors through a finite birational morphism
\[
\tau \colon \quad  \Sym^r Z \;=\; Z^r/\frS_r \;\longrightarrow\; Y.
\]	
\end{proposition}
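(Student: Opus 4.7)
The plan is to split the statement into two independent parts: first, identifying $Y$ with the iterated sum $Z+\cdots+Z$ via \cref{prop:linear-growth-for-small-wedge-powers} and the integrality of $\PLambda_{[e](X)}$, which will also yield birationality of $\tau$; and second, establishing finiteness of $\sigma$ by a dimension chase combining finiteness of $\gamma_Z$, the equidimensional fibers of $\pr_Y\colon\PLambda_Y\to Y$ given by \cref{Corollary:Equidimensional-Fibers} (crucially using that $Y=[e](X)$ with smooth $X$), and upper semi-continuity of fiber dimension. The main obstacle I expect is the final dimension count, where these three inputs have to fit together precisely.

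Since $\gamma_Z$ is finite, $Z$ is nondegenerate by \cref{Thm:PositivityNotions}(3), so \cref{prop:linear-growth-for-small-wedge-powers} applies: writing $Y':=Z+\cdots+Z$ (sum of $r$ copies) and $\tau\colon\Sym^r Z\to Y'$ for the sum morphism, we obtain
\[
 \Alt^r\PLambda_Z \;=\; d\cdot\PLambda_{Y'}+E
\]
with $d=\deg\tau\ge1$ and $E$ effective. Since $\Alt^r\PLambda_Z=\PLambda_{[e](X)}$ is integral with coefficient one, this forces $d=1$, $E=0$, and $\PLambda_{Y'}=\PLambda_{[e](X)}$; projecting to $A$ yields $Y'=[e](X)=Y$, so $Y=Z+\cdots+Z$ and $\tau$ is birational. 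For the finiteness I would then consider the commutative square
\[
\begin{tikzcd}
 \PLambda_Z^{[r]} \ar[r,"\sigma_\bbP"] \ar[d,"\pi"'] & \PLambda_Y \ar[d,"\pr_Y"] \\
 Z^r \ar[r,"\sigma"] & Y
\end{tikzcd}
\]
where $\sigma_\bbP$ is induced by the sum $A^r\to A$ on the first factor. The map $\sigma_\bbP$ is surjective (from $\sigma_{\bbP\ast}\PLambda_Z^{[r]}=r!\,\PLambda_Y$), and its fiber over $(y,v)\in\PLambda_Y$ consists of tuples $(z_1,\dots,z_r)$ with $\sum z_i=y$ and $z_i\in\pr_A(\gamma_Z^{-1}(v))$; finiteness of $\gamma_Z$ makes this set finite, so $\sigma_\bbP$ is a finite morphism. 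Applied to $Y=[e](X)$, \cref{Corollary:Equidimensional-Fibers} gives $\dim\PLambda_{Y,y}=g-1-r\dim Z$ for every $y\in Y$, whence
\[
 \dim\pi^{-1}(\sigma^{-1}(y)) \;=\; \dim\sigma_\bbP^{-1}(\pr_Y^{-1}(y)) \;=\; g-1-r\dim Z
\]
for every $y\in Y$.

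The crucial step will be to convert this dimensional identity into $\dim\sigma^{-1}(y)=0$. The morphism $\pi\colon\PLambda_Z^{[r]}\to Z^r$ is surjective with generic fiber of dimension $(g-1)-r\dim Z$, so by upper semi-continuity of fiber dimension every fiber of $\pi$ has dimension $\ge g-1-r\dim Z$; restricting $\pi$ to $\pi^{-1}(\sigma^{-1}(y))\to\sigma^{-1}(y)$ therefore gives
\[
 \dim\pi^{-1}(\sigma^{-1}(y)) \;\ge\; \dim\sigma^{-1}(y)+(g-1-r\dim Z).
\]
Together with the preceding equality this forces $\dim\sigma^{-1}(y)\le0$, so $\sigma$ has finite fibers and, being proper, is finite. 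Since $\sigma=\tau\circ q$ for the finite surjective quotient $q\colon Z^r\to\Sym^r Z$, the morphism $\tau$ is finite as well, which together with the birationality established above completes the proof.
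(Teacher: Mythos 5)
Your proof is correct and follows essentially the same route as the paper: both hinge on \cref{prop:linear-growth-for-small-wedge-powers}, the equidimensionality of fibers of $\pr_Y$ from \cref{Corollary:Equidimensional-Fibers} (valid because $Y=[e](X)$ with $X$ smooth), and a dimension chase through the lift of $\sigma$ to conormal varieties to show $\sigma$ is finite. The only variation is cosmetic: you extract $\deg\tau=1$ (and $Y=Z+\cdots+Z$) upfront from the integrality of $\Alt^r\PLambda_Z=\PLambda_{[e](X)}$ together with \cref{prop:linear-growth-for-small-wedge-powers}(2), whereas the paper proves finiteness first and then computes $\deg\tau$ at the end by comparing Gauss degrees, $\deg\gamma_{Z,r}/\deg\gamma_Y=r!$, which uses the same integrality in disguise.
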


\begin{proof} 
The finiteness of the Gauss map $\gamma_Z$ implies that $Z\subset A$ is nondegenerate by \cref{Thm:PositivityNotions}. Since $r\dim Z < \dim A$, \cref{prop:linear-growth-for-small-wedge-powers} says that $\pr_{Z, r}\colon \PLambda_Z^{[r]} \to Z^r$ is surjective and the sum morphism 
\[ \sigma\colon \quad Z^r \too Y=Z+\cdots + Z 
\]
is generically finite. Moreover, $\sigma$ induces a morphism $\tau\colon \Sym^r Z \to Y$ and our assumption $\Alt^r \PLambda_Z = \PLambda_Y$ implies by  part (2) of~\cref{prop:linear-growth-for-small-wedge-powers} that~$\tau$ has generic degree $\deg(\tau)=1$, i.e.~it is birational. It remains to show that this morphism is finite. Since we have no control on the singularities of $Z$, we cannot use~\cref{Lem:BirationalMapIsIso}. Instead, to show that $\sigma$ and hence $\tau$ is finite, we
consider the following commutative diagram:
\[
\begin{tikzcd}[column sep=40pt]
 Z^r  \ar[d, swap, "\sigma"] &  \PLambda_Z^{[r]}  \ar[d, "\tilde{\sigma}"]  \ar[l, swap, "\pr_{Z,r}"]  \ar[r,"\gamma_{Z,r}"] &  \bbP_A  \ar[d, equals] \\
Y &  \PLambda_Y \ar[l, "\pr_Y"] \ar[r, swap, "\gamma_Y"]  & \bbP_A
\end{tikzcd} 
\]	
Here, the fibers of the morphism $\pr_Y$ are all of pure dimension $N=\codim_A Y - 1$ by \cref{Corollary:Equidimensional-Fibers}. Moreover, the morphism $\tilde{\sigma}$ is finite since the rightmost square in the above diagram commutes and since in that square the horizontal arrows~$\gamma_{Z,r}$ and~$\gamma_Y$ are finite morphisms by our finiteness assumptions on Gauss maps. Hence, it follows that all fibers of $\pr_Y \circ \tilde{\sigma} \colon \PLambda_{Z, r} \to Y$ are of dimension $N$. Since $\sigma$ is generically finite, it follows from the commutativity of the leftmost square in the above diagram that the generic fiber of the morphism $\pr_{Z, r}$ has dimension $N$ as well. We can now argue by contradiction: Any positive-dimensional fiber of $\sigma$ would give rise to a fiber of $\pr_Y \circ \tilde{\sigma}$ of dimension $\ge N+1$, by semi-continuity of dimension of fibers for proper morphisms~\cite[\href{https://stacks.math.columbia.edu/tag/0D4I}{lemma 0D4I}]{stacks-project}. This shows that $\sigma$ is finite.
\end{proof} 

\begin{proof}[Proof of \cref{Thm:WedgePowersOfSmallDegrees}]
In \cref{prop:sum-morphism-finite}, we have seen that the sum morphism~$\sigma$ factors through a finite birational morphism $ \tau \colon \Sym^r Z \to Y$.
We claim that a similar finite birational morphism from a symmetric product also exists for $X$ rather than for $Y=[e](X)$. For this, we use the following general remark: 

\medskip

For any integral subvariety $W \subset A$, the scheme-theoretic preimage $[e]^{-1}(W)$ is reduced because it is the preimage of a reduced subvariety under an \'etale morphism \cite[\href{https://stacks.math.columbia.edu/tag/03PC}{prop.~03PC (8)}]{stacks-project}. If $W'$ is an irreducible component of $[e]^{-1}(W)$, then any other irreducible component is of the form $W' + x$ for an $e$-torsion point $x\in A[e]$, and hence the morphism
$[e]\colon W' \to Z$ is surjective. In particular, by \cref{Prop:NonDegerateIsogeny} the subvariety $W\subset A$ is nondegenerate if and only if $W'\subset A$ is so.
 
\medskip 

Applying this to $W = Z$, we see that any irreducible component $Z'$ of $[e]^{-1}(Z)$ is nondegenerate. Hence, if we define $X' = Z' + \cdots + Z'\subset A$ to be the sum of $r$ copies of $Z'$, then \cref{Lem:SumOfNondegenerate} implies
\[ \dim X' = r \dim Z'  = r \dim Z = \dim Y.\]
It follows that $X'$ is an irreducible component of $[e]^{-1}(Y)$. On the other hand, we have
\[ [e]^{-1}(Y) \;=\; [e]^{-1}([e](X))
\;=\; \bigcup_{t \in A[e]} X + t.\]
Therefore, there is an $e$-torsion point $t\in A[e]$ such that $X = X' +t$ and $X$ is the sum of $r$ copies of 
\[ \tilde{Z} := Z' + u, \]
where $u\in A(k)$ is any point with $ru = t$. Since the stabilizer of $X=\tilde{Z}+\cdots + \tilde{Z}$ is trivial by assumption, it follows that the stabilizer of $\tilde{Z}$ is trivial, so the finite morphism $[e] \colon \tilde{Z} \to Z$ is birational. Then the morphism
\[
 \Sym^r [e] \colon \quad \Sym^r \tilde{Z} \;\too\; \Sym^r Z
\]
is finite birational by \cref{Prop:FiniteBirationMorphismBetweenSymmetricPowers}. Now the sum morphism $\tilde{\sigma} \colon \tilde{Z}^r \to X$ is invariant under the permutation action of $\frS_r$, so it factors through a morphism~$\tilde{\tau}$ as shown in the following commutative diagram:
\begin{equation}\label{eq:WedgePowersOfSmallDegrees}
\begin{tikzcd}
\Sym^r \tilde{Z} \ar[r, "\tilde{\tau}"] \ar[d, "{\Sym^r [e]}"']& X\ \ar[d, "{[e]}"] \\
\Sym^r Z \ar[r, "\tau"] & Y.
\end{tikzcd}
\end{equation}
The morphisms $\tau$, $[e]$ and $\Sym^r [e]$ in this diagram are finite birational, hence $\tilde{\tau}$ must be finite birational, too. On the other hand, the variety $X$ is smooth by hypothesis, which forces $\tilde{\tau}$ to be an isomorphism. In particular, the symmetric power $\Sym^{r} \tilde{Z}$ is smooth, which for $r>1$ implies that $\tilde{Z}$ is a smooth curve by \cref{Prop:SmoothSymmetricPower}.
\end{proof}

\begin{remark}
Given $X, Y, Z$ as in the proof of theorem~\ref{Thm:WedgePowersOfSmallDegrees}, it follows immediately from the fact that the morphisms $[e]$ and $\tau$ from \eqref{eq:WedgePowersOfSmallDegrees} are finite and birational that $X$ is dominated by the normalization $W$ of $\Sym^r Z$ which in turn is easily seen to be isomorphic to $\Sym^r \tilde Z$ where $\tilde Z$ is the normalization of $Z$. Smoothness of $X$ implies that in fact $\Sym^r \tilde Z \simeq X$. The argument we used instead is maybe a bit longer but yields more, namely a canonical (up to an $e$-torsion point) embedding of $\tilde Z$ into $A$.
\end{remark}

\section{Spin representations} \label{sec:spin}

We now show that under suitable assumptions on a smooth subvariety of an abelian variety, its Tannaka group cannot be the image of a spin group acting via a spin representation. For hypersurfaces this can be done by showing that their topological Euler characteristic is not a power of two~\cite[lemma~4.9]{LS20}; we here discuss the case of higher codimension, where we do not know the Euler characteristic but consider characteristic cycles as in the previous section.

\subsection{Statement of the main result} \label{sec:StatementResultsSpin}

Recall that for $N\ge 3$ the group $\SO_N(\bbF)$ admits a double cover
\[
 \Spin_N(\bbF) \;\too\; \SO_N(\bbF)
\] 
by the spin group. The spin group is a simply connected algebraic group and admits a faithful representation $\bbS_N\in \Rep_\bbF(\Spin_N(\bbF))$, the {\em spin representation} of dimension $\dim S_N = 2^n$ where $n=\lfloor N/2 \rfloor$. The behavior of this representation depends on the Dynkin type (see \cite[\S 20]{FultonHarris}): \medskip 
\begin{enumerate}
\item[$B_n$:] If $N=2n+1$ is odd, then the spin representation $\bbS_N$ is irreducible.\medskip
\item[$D_n$:] If $N=2n$ is even, then $\bbS_N \simeq \bbS_N^+ \oplus \bbS_N^-$ splits as the direct sum  of two irreducible representations called the {\em half-spin representations}. They both have dimension $\dim \bbS_N^+ = \dim \bbS_N^- = 2^{n-1}$ but are not isomorphic to each other, they are only related by an outer automorphism of the spin group. The dual of the half-spin representations and the center of the spin group are given by the following table:
\begin{center}
 \renewcommand{\arraystretch}{1.3}
\begin{tabular}{c|c|c}
\ & dual of $\bbS_N^+$ & center of $ \Spin_{2n}(\bbF)$ \\
\hline \hline 
$n$ even & $\bbS_N^+$ & $\bbZ/2\bbZ \times \bbZ/2\bbZ$ \\
$n$ odd & $\bbS_N^-$ & $\bbZ/4\bbZ$
\end{tabular}
 \renewcommand{\arraystretch}{1}
\end{center}

For $n=2m+1$ odd, the half-spin representations are faithful. For $n=2m$ even, the half-spin representation $\bbS_\pm$ is self-dual and the natural pairing is symmetric if $m$ is even and alternating if $m$ is odd. The images of $\Spin_{4m}(\bbF)$ via the half-spin representations
\[
 \Spin_{4m}^\pm(\bbF) \;\subset\; \GL(\bbS_{4m}^\pm)
\]
are called the {\em half-spin groups}. They are isomorphic to each other and fit in the following diagram of isogenies given by dividing out the subgroups of $Z(\Spin_{4m}(\bbF))\simeq \bbZ/2\bbZ\times \bbZ/2\bbZ$:
\[
\begin{tikzpicture}[scale=1.15]
\node at (0, 1) (a) {$\Spin_{4m}(\bbF)$};
\node at (-2, 0) (b) {$\Spin_{4m}^-(\bbF)$};
\node at (0, 0) (c) {$ \SO_{4m}(\bbF)$};
\node at (2, 0) (d) {$\Spin_{4m}^+(\bbF)$};
\node at (0, -1) (e) {$\SO_{4m}(\bbF)/\pm 1$};
\draw[->] (a) edge[bend right=20] (b);
\draw[->] (a) -- (c);
\draw[->] (a) edge[bend left=20] (d);
\draw[->] (b) edge[bend right=20] (e);
\draw[->] (c) -- (e);
\draw[->] (d) edge[bend left=20] (e);
\end{tikzpicture}
\]
\end{enumerate}

\noindent We are interested in geometric incarnations of the above:

\begin{definition} 
Let $X\subset A$ be a subvariety. Let $G=G_{X, \omega}^\ast$ be the derived group of the connected component of the corresponding Tannaka group, and consider the faithful representation $V=\omega(\delta_X)_{\rvert G} \in \Rep_\bbF(G)$. Let $n \ge 1$ be an integer. We say that $X\subset A$ is \smallskip 
\begin{itemize} 
\item of {\em spin type $B_n$} if $G \simeq \Spin_{2n+1}(\bbF)$ and $V\simeq \bbS_{2n+1}$.\smallskip 
\item of {\em spin type $D_n$} if $G \simeq \Spin_{2n}^\epsilon(\bbF)$ and $V\simeq \bbS_{2n}^\epsilon$ for some $\epsilon \in \{+, -\}$.\smallskip 
\end{itemize} 
\end{definition} 

In both cases, the subvariety $X$ is irreducible because the representation $V$ is so.

\begin{remark} \label{Rmk:ParitySpinRep} Suppose that $X$ is of spin type $D_n$ for $n = 2m$. Since half-spin representations are self-dual in this case, the subvariety $X$ is symmetric up to translation. The Poincar\'e pairing on $X$ is symmetric if $d = \dim X$ is even and alternating if $d$ is odd. By comparison with the natural pairing on $\bbS_n^{\pm}$, the integers $m$ and $d$ must have the same parity.
\end{remark}

\noindent The goal of this section is to show that for smooth subvarieties of small dimension this cannot happen. To state our results, let $g = \dim A$:

\begin{theorem}\label{Thm:SmallSpinDoNotExist}
Let $X\subset A$ be a $d$-dimensional nondivisible smooth subvariety with ample normal bundle and $d < (g- 1)/2$. Then, for any integer $n \ge 1$,
\begin{enumerate}
\item $X$ is not of spin type $B_n$; \smallskip
\item if $X$ is of spin type $D_n$, then $d \ge (g - 1)/4$, $n = 2m$ with $m \in \{3,\dots,d\}$ having the same parity as $d$.
\end{enumerate}
\end{theorem}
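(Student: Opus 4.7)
My plan is to mirror the proof of \cref{Thm:SmallWedgePowersAreSumsOfCurves} adapted to the spin setting: use the characteristic cycle formalism of \cref{section from rep to geo} to translate the spin or half-spin representation on $\omega(\delta_X)$ into a reconstruction of $X$ (up to isogeny) as a sum of copies of a curve-like subvariety $Z \subset A$ associated to the standard representation of the orthogonal group, and then derive the exclusions and constraints by matching this geometric picture against the Dynkin type of $G^\ast_{X,\omega}$.

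First, I would translate $X$ so that $\det \delta_X = \delta_0$ (which makes $G^\circ_{X,\omega}$ semisimple by \cref{prop:connected-component}) and use \cref{Thm:PositivityNotions} to ensure $\PLambda_X = \cc(\delta_X)$ is a clean integral cycle. Then consider $\PLambda := \cc(P, \epsilon_1)$ for $P = \delta_X$, which corresponds to the standard representation of $\SO_{2n+1}$ (type $B_n$) or $\SO_{2n}$ (type $D_n$). Since the spin representation has a single Weyl orbit of weights (the orbit of $\varpi_n$), \cref{lem:weight_multiplicities} gives $\cc(P, \varpi_n) = [dm]_\ast \PLambda_X = \PLambda_{[dm](X)}$, using nondivisibility of $X$. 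Applying $[2]_\ast$ and noting $2\varpi_n = \epsilon_1 + \cdots + \epsilon_n \in \bbZ\epsilon_1 + \cdots + \bbZ\epsilon_n$, \cref{lem:cc-in-type-B} or \cref{lem:cc-in-type-D} yields
\[
 \PLambda_{[2dm](X)} \;=\; \cc(P, 2\varpi_n) \;=\; \PLambda^{(1,\ldots,1)}_\sym
\]
(or its signed variant $\PLambda^{(1,\ldots,1)}_{\sym, \pm}$ in type $D$). Integrality of $\PLambda_{[2dm](X)}$ then forces $\PLambda$ to be the conormal of a reduced, $[-1]$-invariant subvariety $Z\subset A$.

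Next, $d < (g-1)/2$ combined with \cref{Lemma:DecreasingDimensionInSmallDimension} applied to $\cc(P, \epsilon_1 + \cdots + \epsilon_k)$ for $k$ decreasing from $n$ to $1$ forces $\dim Z = 1$, so $Z$ is a union of smooth symmetric curves. The signed analog of \cref{Thm:WedgePowersOfSmallDegrees} identifies $[2dm](X)$ as an $n$-fold signed sum of such curves, placing $X$ (up to isogeny) in the curve-sum setting of \cref{ex:symmetric-power-of-curve}. Hence $G^\ast_{X,\omega}$ must be identified with the image of the curve's Tannaka group $\SL_k$ inside a wedge-like representation, i.e.~of Dynkin type $A$. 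This type-$A$ identification is incompatible with $\Spin_{2n+1}$ (Dynkin type $B_n$) for $n \ge 3$, ruling out (1); the small cases $n = 1, 2$ are excluded by the Euler-characteristic bound $|e_X| \ge g$ from \cref{Lemma:LowerBoundTopCharAmpleNormalBundle} together with $d < (g-1)/2$.

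For case (2), type $D_n$ with $n$ odd gives half-spin representations that are not self-dual, so $X$ is not symmetric up to translation by \cref{cor:constant_up_to}; yet the symmetric curve $Z$ forces $[2dm](X) = Z + \cdots + Z$ to be symmetric, a contradiction (small-rank coincidences like $D_3 = A_3$ are excluded by dimensional/Euler-characteristic bounds as before). Hence $n = 2m$ is even. The parity $m \equiv d \pmod 2$ is \cref{Rmk:ParitySpinRep}; the constraint $m \ge 3$ excludes the half-spin coincidences in ranks $\le 8$ together with the surface Euler-characteristic bound \cref{Lemma:LowerBoundTopCharSurfaces}; $m \le d$ is the dimension count $d \ge m$ from the $m$-fold curve sum; and $d \ge (g-1)/4$ follows by combining $d \ge m$ with the Abel-Jacobi bound $g \le \sum g_i$ on the total genus of the components of $Z$ (since $\langle X \rangle = A$) together with the Gauss-degree identity $\sum (2g_i - 2) = 2n = 4m$. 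The main obstacle is controlling the signed fiber product $\PLambda^{(1,\ldots,1)}_{\sym, \pm}$ of \cref{defn:cc-in-type-D}: proving that its integrality selects the correct half-spin component and that the resulting signed-sum reconstruction of $[2dm](X)$ preserves enough of the curve-sum structure to force the type-$A$ contradiction.
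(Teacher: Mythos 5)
You correctly set up the characteristic-cycle translation that underlies the paper's approach: after translating so that $\det\delta_X = \delta_0$, you take $\PLambda := \cc(P, \epsilon_1)$ as the clean cycle corresponding to the standard (vector) representation and relate the spin or half-spin weight $\varpi_n$ to the cycle $\Alt^n_\sym\PLambda_Z$ (resp.~its signed pieces) via \cref{lem:cc-in-type-B} and \cref{lem:cc-in-type-D}; this is the content of \cref{Thm:FromRepresentationToPhysicalSpin}. After that point, your route departs from the paper's and has two substantive gaps.

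First, your claim that $d < (g-1)/2$ together with \cref{Lemma:DecreasingDimensionInSmallDimension} ``forces $\dim Z = 1$'' is not correct. The symmetric dimension estimate (\cref{Prop:dimension_of_Lambda_1_k} and \cref{Thm:SmallSpinHaveSmallDegree}) yields only $n\dim Z < g - 1$; it does not pin down $\dim Z$, and nothing forces $Z$ to be a curve. The paper never needs that.

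Second, and more importantly, your strategy hinges on a signed analogue of \cref{Thm:WedgePowersOfSmallDegrees} reconstructing $[e](X)$ as a sum of $n$ copies of a curve and thereby forcing a Dynkin type-$A$ contradiction. You rightly flag this as ``the main obstacle'', but it is more than an obstacle: no such reconstruction can hold, because the sum morphism $\sigma\colon (Z')^n \to Y$ associated to a \emph{symmetric} positive-dimensional component $Z' = -Z'$ is never finite (the antidiagonal slices $\{(z,-z,z_3,\dots,z_n)\}$ all collapse to a single point). The paper exploits exactly this: \cref{Thm:SpinOfSmallDegrees} shows that the smoothness of $X$ together with the equidimensionality of fibers in \cref{Corollary:Equidimensional-Fibers} would force $\sigma$ to be finite, and then derives a contradiction from the antidiagonals. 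Combined with $n\dim Z < g-1$ from \cref{Thm:SmallSpinHaveSmallDegree}, this excludes the spin types without ever reducing to curves or invoking Dynkin-type identifications.

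Your accounting of the residual $D_n$ constraints also diverges from the paper. The bound $d \ge (g-1)/4$ arises from the case split in \cref{Prop:dimension_of_Lambda_1_k}(2), which imposes the sharper threshold $d < (g-1)/4$ precisely when $n = 2m$ is even with $m \le d+1$; it is not derived from an Abel--Jacobi genus inequality (your algebra there would in fact contradict $d < (g-1)/2$). The bound $m \le d$ comes from $m \le d+1$ in that same case split combined with $d - m \equiv 0 \pmod 2$ from \cref{Rmk:ParitySpinRep}, not from a ``$m$-fold curve sum'' count. And the exclusion of $m = 2$ is deduced from the Euler-characteristic lower bounds of \cref{Lemma:LowerBoundTopCharAmpleNormalBundle} and \cref{Lemma:LowerBoundTopCharSurfaces}, not from small-rank isogeny coincidences.
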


The list of Dynkin types in the above theorem starts with $B_2$ and $D_3$, and at least for these smallest cases the result is optimal in the sense that the dimension bound cannot be weakened:

\begin{example} \label{Exa:low_dim_Spin}
Let $X$ be a smooth projective curve of genus $g=3$, embedded in its Jacobian variety $A=\Alb(X)$. By~\cite[th.~6.1]{KWSmall}, \cite{WeissauerBNSheaves} there are two cases:\smallskip 
\begin{enumerate} 
\item[$B_2$:] If $X$ is hyperelliptic, then $G_{X, \omega}^\ast \simeq \Sp_4(\bbF)$ and  $\omega(\delta_X)\simeq \bbF^4$ is its standard representation. This representation corresponds to the spin representation under the isomorphism $\Spin_5(\bbF)\simeq \Sp_4(\bbF)$. \smallskip 
\item[$D_3$:] If $X$ is not hyperelliptic, then $G_{X, \omega}^\ast \simeq \SL_4(\bbF)$ and $\omega(\delta_X) \simeq \bbF^4$ is its standard representation or its dual. These two representations correspond to the two half-spin representations under the isomorphism $\Spin_6(\bbF) \simeq \SL_4(\bbF)$. \smallskip 
\end{enumerate} 
So spin representations do occur, but in this example $2\dim X = \dim A - 1$.
\end{example}

\subsection{Structure of the proof} 

The proof of \cref{Thm:SmallSpinDoNotExist} relies on three independent steps. The first is to show that the structure of spin representations is reflected by characteristic cycles: We say that a cycle $\PLambda \in \cL(A)$ is {\em symmetric} if $[-1]_* \PLambda = \PLambda$. In this case, if the cycle is reduced and effective of Gauss degree $\deg \PLambda = 2n$, then for any integer $r\ge 1$ we will define via the formalism in \cref{lem:weight_multiplicities} a clean effective cycle 
\[ 
\Alt^{r}_\sym \PLambda \;\in\; \cL(A) 
\quad \textnormal{of Gauss degree} \quad 
 \deg \Alt^r_\sym \PLambda \;=\; 2^r \tbinom{n}{r}. 
\]
These cycles are closely related to the wedge powers from~\cref{sec:wedge}, for instance we have
\[
 \Supp \Alt^r \PLambda \;=\; \bigcup_{i=0}^{\lfloor r/2\rfloor} \Supp \Alt^{r-2i}_\sym \PLambda.
\]
For $r=n$ the cycle $\Alt^n_\sym \PLambda$ will correspond to the spin representation. 
For half-spin representations we consider the monodromy of the Gauss map $\gamma_{\PLambda}\colon \PLambda \to \bbP_A$: If the Gauss map has even monodromy in the sense of \cref{defn:cc-in-type-D}, we will obtain a decomposition
\[
 \Alt^n_\sym \PLambda \;=\; \Alt^n_{\sym, +} \PLambda + \Alt^n_{\sym, -} \PLambda
\]
where $ \Alt^n_{\sym,\pm} \PLambda\in \cL(A)$ are clean effective cycles of Gauss degree $2^{n-1}$. We show:

\begin{theorem} \label{Thm:FromRepresentationToPhysicalSpin} 
Let $X\subset A$ be a smooth nondivisible subvariety of spin type $B_n$ or~$D_n$ for some $n$. Then there exist $a\in A(k)$, a reduced symmetric effective cycle~$Z$ on $A$ of Gauss degree $\deg \PLambda_Z = 2n$ and an integer $e\ge 1$ with the following properties:\smallskip 
\begin{enumerate} 
\item If $X$ is of spin type $B_n$, then 
$\Alt^{n}_\sym \PLambda_{Z} = \PLambda_{[e](X+a)}$.\medskip
\item  If $X$ is of spin type $D_n$, then the Gauss map $\gamma_{\PLambda}$ has even monodromy and we have
\[ \Alt^n_{\sym,\epsilon} \PLambda_Z = \PLambda_{[e](X + a)} 
\quad \textnormal{\em for suitable} \quad \epsilon \in \{+, -\}.
\]
\end{enumerate} 
In both cases, if $\gamma_X\colon \PLambda_X \to \bbP_A$ is a finite morphism, then so is $\gamma_Z\colon \PLambda_Z \to \bbP_A$.
\end{theorem}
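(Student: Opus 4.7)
The plan is to translate the representation-theoretic hypothesis on spin types into a geometric statement about characteristic cycles, via the machinery from \cref{section from rep to geo}. By \cref{cor:algebraically-closed} and \cref{lem:reduction-to-complex-case} we may reduce to the case $k = \bbF = \bbC$, enabling the use of characteristic cycles. After replacing $X$ by a suitable translate $X + a$ chosen so that $\det(\delta_{X+a}) = \delta_0$, the connected component of the Tannaka group becomes semisimple by \cref{prop:connected-component} (3); by \cref{cor:derived-group} (1) it still coincides with $G_{X,\omega}^\ast$. Setting $P := \delta_{X+a}$, we have $\cc(P) = \PLambda_{X+a}$ since $X+a$ is smooth, and $[n]_* \PLambda_{X+a} = \PLambda_{[n](X+a)}$ for each $n \ge 1$ by nondivisibility together with \cref{lem:image-of-conormal-under-isogeny}.

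The central object will be $\PLambda_Z := \cc(P, \epsilon_1)$, the clean cycle (as in \cref{defn:cc-lambda}) corresponding to the highest weight of the standard representation of the orthogonal group. Its symmetry $[-1]_\ast \PLambda_Z = \PLambda_Z$ will follow from the self-duality of the standard representation, since duality of representations under the Tannakian dictionary corresponds to the $[-1]_\ast$-operation on characteristic cycles. Its reducedness will come from the backward direction of \cref{lem:cc-in-type-B} (resp.\ \cref{lem:cc-in-type-D}) applied to an appropriate weight $\alpha$; specifically, I will show that $\cc(P, \alpha) = \PLambda_{[e](X+a)}$ is integral for some $e \ge 1$, and then the lemma forces $\PLambda_Z$ to be reduced. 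Its Gauss degree is $|W\epsilon_1| = 2n$ in both Dynkin types $B_n$ and $D_n$. Finiteness of $\gamma_Z$ will follow from $\PLambda_Z \in \langle \cc(P^\circ) \rangle \subset \langle \PLambda_X \rangle$ (\cref{Rmk:CharCycleWeightDefinedOnSubfield}) together with the fact that convolution preserves finiteness of Gauss maps.

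For spin type $B_n$, the universal cover is $\tilde G \simeq \Spin_{2n+1}(\bbC) = G^\circ$ (simply connected), so $d = 1$ in \cref{def:d}. The spin representation $\bbS_{2n+1}$ is minuscule with highest weight $\varpi_n = \tfrac{1}{2}(\epsilon_1 + \cdots + \epsilon_n)$; hence $[W\varpi_n] = [\bbS_{2n+1}]$ in $\R(G^\circ)$, giving $\cc(P, \varpi_n) = \cc(P^\circ) = [m]_\ast \PLambda_{X+a} = \PLambda_{[m](X+a)}$ where $m$ is as in \cref{def:m}. Using the identity $\cc(P, 2\varpi_n) = [2]_\ast \cc(P, \varpi_n)$ from \cref{defn:cc-lambda}, we obtain $\cc(P, 2\varpi_n) = \PLambda_{[2m](X+a)}$. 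The weight $2\varpi_n = \epsilon_1 + \cdots + \epsilon_n$ is integral of length $n$, and \cref{lem:cc-in-type-B} then identifies this cycle with $\PLambda_Z^{(1,\dots,1)}_\sym = \Alt^n_\sym \PLambda_Z$, yielding the desired identity with $e = 2m$.

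For spin type $D_n$, the argument runs parallel with \cref{lem:cc-in-type-D}: since $n$ must be even for $\Spin_{2n}^\epsilon$ to make sense, we have $\tilde G = \Spin_{2n}$ with a degree-two cover onto $G^\circ = \Spin_{2n}^\epsilon$, so $d = 2$. The half-spin weight $\varpi_n$ (or $\varpi_{n-1}$) gives, via the same scaling trick, $\cc(P, 2\varpi) = \PLambda^{2\varpi}_{\sym, \epsilon} = \Alt^n_{\sym,\epsilon} \PLambda_Z = \PLambda_{[e](X+a)}$ for the appropriate sign. The even monodromy of $\gamma_Z$ is essentially automatic: the Gauss monodromy acts on the fiber via the Weyl group action on the orbit $W\epsilon_1$, and the Weyl group of $\Spin_{2n}$ is precisely the subgroup $(\pm 1)^n_+ \rtimes \frS_n \subset (\pm 1)^n \rtimes \frS_n$ of even sign changes, so it lies in the even subgroup needed for the decomposition $\Alt^n_\sym = \Alt^n_{\sym,+} + \Alt^n_{\sym,-}$ to make sense. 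The main technical nuisance will be tracking the parity of $n$ and the correct value of $d$ across the different subcases, and matching the half-integer weight $\varpi_n$ with the integral multiple $2\varpi_n$ that \cref{lem:cc-in-type-B} and \cref{lem:cc-in-type-D} require as input.
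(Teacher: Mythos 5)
Your proof is correct and follows the same route as the paper's, which is quite terse (it merely refers to the wedge-power argument ``with \cref{lem:cc-in-type-A} replaced by \cref{lem:cc-in-type-B}''). You have identified and patched a genuine nuance that the paper glosses over: the highest weight of the (half-)spin representation is the half-integral weight $\varpi_n$, which does not lie in $\bbZ\epsilon_1 + \cdots + \bbZ\epsilon_n$, so \cref{lem:cc-in-type-B} and \cref{lem:cc-in-type-D} cannot be invoked with $\alpha = \varpi_n$ directly. Your use of the scaling identity $\cc(P, 2\varpi_n) = [2]_\ast\cc(P, \varpi_n)$ from \cref{defn:cc-lambda} to pass to the integral weight $2\varpi_n = \epsilon_1 + \cdots + \epsilon_n$ is exactly the right fix, and your bookkeeping of $e$ via the chain $\cc(P,\varpi_n) = \cc(P^\circ) = [m]_\ast\PLambda_{X+a}$ (for $B_n$, where $d=1$) is sound. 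Your justification of the even monodromy of $\gamma_Z$ for type $D_n$---that the Gauss monodromy acts through the Weyl group $(\pm 1)^n_+\rtimes\frS_n$---matches what is implicit in the construction underlying \cref{lem:cc-in-type-D} and \cref{defn:cc-in-type-D}.

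One inaccuracy: you assert that $n$ must be even for $\Spin_{2n}^\epsilon$ to make sense and hence $d = 2$. In fact for odd $n$ the half-spin representations of $\Spin_{2n}$ are faithful, so $\Spin_{2n}^\epsilon = \Spin_{2n}$, which is simply connected, giving $d = 1$; spin type $D_n$ with $n$ odd is a live possibility that the theorem must (and does) handle. This does not break your argument, since the scaling trick is applied regardless of the value of $d$ and the lemma only cares that $2\varpi_n$ is integral---but the parenthetical remark about $n$ being forced to be even should be dropped, and the resulting value of $e$ differs between the parities (you get $e = 2dm$, so $e = 4m$ for even $n$ of type $D_n$ but $e = 2m$ for odd $n$).
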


For the precise definition of the clean cycles appearing above and the proof of the theorem, we refer to \cref{sec:cc-of-spin}. The second step will be a dimension estimate for the images
\[
  \Alt^{n}_{\sym, \epsilon} Z 
 \;:=\;
 \pr_A( \Supp(\Alt^{n}_{\sym, \epsilon} \PLambda_Z) 
 \;\subset\; A,
\]
for $\epsilon \in \{ +, - , \varnothing \}$. We show:

\begin{theorem} \label{Thm:SmallSpinHaveSmallDegree} Let $Z$ be a reduced symmetric effective cycle on $A$ whose Gauss map is a finite morphism of even degree $\deg(\gamma_Z)=2n$.~\medskip 
\begin{enumerate} 
\item If 
$\dim \Alt^{n}_\sym Z < (g-1)/2$, then $n\dim Z < g - 1$.~\medskip
\item If the Gauss map $\gamma_Z$ has even monodromy and there is $\epsilon \in \{ +, -\}$ for which the dimension $d: = \dim \Alt^n_{\sym, \epsilon} Z$ satisfies
\[ 
d < \begin{cases}
(g-1)/4 &\text{if $n = 2m$ is even and $m \le d + 1$}, \\
(g-1)/2 &\text{otherwise}, 
\end{cases}
\]
then $n\dim Z < g - 1$.
\end{enumerate} 
\end{theorem}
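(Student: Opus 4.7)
Following the blueprint of \cref{Thm:SmallWedgePowersHaveSmallDegree}, I would prove both parts by contradiction, using a linear-growth statement and a monotonicity statement for the dimensions $d^\sym(r):=\dim \Alt^r_\sym Z$ (and $d^{\sym,\epsilon}(r):=\dim \Alt^r_{\sym,\epsilon} Z$ in the half-spin case). The linear growth is the symmetric analogue of \cref{prop:linear-growth-for-small-wedge-powers}: for every $r\le n$ with $r\dim Z<g$ we have $d^\sym(r)=r\dim Z$, and likewise for $d^{\sym,\epsilon}$ whenever it is defined. The proof is essentially the same as for \cref{prop:linear-growth-for-small-wedge-powers}: since $\gamma_Z$ is finite, every top-dimensional irreducible component of $Z$ is nondegenerate by \cref{Thm:PositivityNotions}(3), and the antipodal locus $\{(z_\nu)_\nu \in Z^r: z_\nu=\pm z_\mu \text{ for some }\nu\ne\mu\}$ is a proper closed subvariety of $Z^r$, so $Z^{[r]}_\sym$ is dense in $Z^r$ and its image under the sum morphism has dimension $\min(r\dim Z,g)=r\dim Z$ by \cref{Lem:SumOfNondegenerate}.

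The crucial monotonicity step, analogous to \cref{Lemma:DecreasingDimensionInSmallDimension}, asserts that if $d^\sym(r)<(g-1)/2$ for some $r\le n$, then $d^\sym(r-1)\le d^\sym(r)$. Mirroring that lemma, I would consider the self-convolution
\[
 \PLambda_r \;:=\; \Alt^r_\sym \PLambda_Z \circ \Alt^r_\sym \PLambda_Z,
\]
whose support over a generic $\xi$ consists of sums $\sum X+\sum Y$ for pairs of valid $r$-subsets of the Gauss fiber (using that $[-1]_\ast\PLambda_Z=\PLambda_Z$). Given valid $(r-1)$-subsets $X',Y'$ of the fiber and a point $p$ in an antipodal pair disjoint from $\pm(X'\cup Y')$, the $r$-subsets $(X,Y):=(X'\cup\{p\},\,Y'\cup\{-p\})$ are again valid with $\sum X+\sum Y=\sum X'+\sum Y'$; such a point $p$ exists as soon as $n>2(r-1)$. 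This yields a cycle identity $\PLambda_r=c\,\PLambda_{r-1}+E$ with $c>0$ rational and $E$ effective, and then \cref{Lemma:SegreClassAndConvolution} together with \cref{Lemma:NonVanishingSegreClassPontryaginProduct} imply $d^\sym(r-1)\le d^\sym(r)$ in the range $r\le(n+1)/2$. For the complementary range $r>(n+1)/2$, where the extension-by-$\{p,-p\}$ trick runs out of room in the fiber, one invokes the doubling inclusion $\Alt^r_\sym Z+\Alt^r_\sym Z\supseteq [2](\Alt^{r-1}_\sym Z)$ (obtained from $(X,X'):=(Y'\cup\{p\},\,Y'\cup\{-p\})$ for any valid $(r-1)$-subset $Y'$) combined with the combinatorial decomposition of $\Alt^n_\sym\PLambda\circ \Alt^n_\sym\PLambda$ into pushforwards $[2]_\ast \Alt^k_\sym\PLambda$ arising from $\bbS\otimes\bbS=\bigoplus_{k=0}^n \Alt^k V$ on the representation side. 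This second argument is the main technical obstacle and requires a delicate interplay between Segre-class estimates and the hypothesis $d^\sym(n)<(g-1)/2$.

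With these ingredients, Part (1) follows by contradiction: assume $n\dim Z\ge g-1$. The case $n\dim Z=g-1$ is immediate from linear growth, since then $d^\sym(n)=g-1>(g-1)/2$. The case $n\dim Z\ge g$ is handled by setting $s:=\max\{i:i\dim Z<g\}<n$, whereupon iterated monotonicity gives $d^\sym(s)\le d^\sym(n)<(g-1)/2$ while linear growth gives $d^\sym(s)=s\dim Z\ge g-\dim Z$; combining, $g-\dim Z<(g-1)/2$, so $\dim Z>(g+1)/2$, contradicting $\dim Z\le s\dim Z<(g-1)/2$. Part (2) is handled by the same scheme with $\Alt^n_{\sym,\epsilon}$ in place of $\Alt^n_\sym$, the even-monodromy assumption ensuring that the parity decomposition is well-defined. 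The subtlety is that the extension step $(X,Y)\mapsto(X\cup\{p\},Y\cup\{-p\})$ flips the parity class of $Y$, so the monotonicity within the half-spin stratum must be obtained indirectly by passing through the wedge cycles $\Alt^r_\sym$ for $r<n$; in the exceptional regime $n=2m$ even with $m\le d+1$, only the doubling inclusion $\Alt^n_{\sym,\epsilon}Z+\Alt^n_{\sym,\epsilon}Z\supseteq [2](\Alt^k_\sym Z)$ survives, which replaces the sharp monotonicity $d^{\sym,\epsilon}(k)\le d$ by $d^{\sym,\epsilon}(k)\le 2d$ and thereby degrades the bound from $(g-1)/2$ to $(g-1)/4$.
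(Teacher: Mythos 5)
Your central monotonicity step is the fatal gap. To prove $d^\sym(r-1)\le d^\sym(r)$ by the mechanism of \cref{Lemma:DecreasingDimensionInSmallDimension} you need a fresh antipodal pair in the Gauss fiber disjoint from the $2(r-1)$ pairs already used, which requires $n>2(r-1)$, i.e., $r\le(n+1)/2$; but the descending induction has to start at $r=n$, which is outside this range for all $n\ge 2$. You acknowledge the obstacle and propose the doubling inclusion as a fallback, but that only gives $d^\sym(k)\le 2\,d^\sym(n)<g-1$, not the sharp $d^\sym(k)\le d^\sym(n)<(g-1)/2$ that your final contradiction needs. With only the weak bound, the scheme collapses: combining $s\dim Z<g-1$ with $(s+1)\dim Z\ge g$ yields merely $\dim Z>1$, which is no contradiction at all.

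The paper sidesteps monotonicity entirely. \Cref{Prop:dimension_of_Lambda_1_k} proves the uniform bound $\dim\Alt^i_\sym Z<g-1$ for \emph{all} $i\le n$ at once, by your doubling observation: specializing $\epsilon_j=\pm\delta_j$ gives $\Supp\bigl([2]_*\Alt^i_\sym\PLambda_Z\bigr)\subset\Supp\bigl(\Alt^n_\sym\PLambda_Z\circ\Alt^n_\sym\PLambda_Z\bigr)$, hence $\dim\Alt^i_\sym Z\le 2\dim\Alt^n_\sym Z<g-1$. Then, instead of a contradiction scheme, the theorem is proved by an \emph{ascending} induction showing $\dim\Alt^s_\sym Z=s\dim Z$ for all $s\le n$: one decomposes $\Supp\bigl(\Alt^{s-1}_\sym\PLambda_Z\circ\PLambda_Z\bigr)$ into $\Alt^s_\sym\PLambda_Z$ plus two pieces whose base has dimension $\le(s-1)\dim Z<s\dim Z$, and compares with the Segre-class formula $\dim\pr_A\bigl(\Supp(\Alt^{s-1}_\sym\PLambda_Z\circ\PLambda_Z)\bigr)=\min\{s\dim Z,\,g-1\}$ from \cref{cor:DimensionOfConvolutionViaSegre}. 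The uniform bound $<g-1$ then forces the minimum to equal $s\dim Z$, completing the step. This argument computes the dimension rather than bounding it, and only needs the weak $<g-1$ estimate; no monotonicity is required. Finally, for part (2) you are missing the reduction via \cref{Lemma:BadHalfSpinBound}: outside the exceptional regime $n=2m$ with $m\le d+1$, that lemma shows $\dim\Alt^n_{\sym,+}Z=\dim\Alt^n_{\sym,-}Z=\dim\Alt^n_\sym Z$, so the half-spin case reduces to part (1); the weaker $(g-1)/4$ hypothesis is only consumed in the exceptional regime.
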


The proof of this is given in \cref{sec:spin-dimension-estimate}. Finally, the last step for ruling out spin representations will be to show that in the given dimension range, the cycle $\Alt^n_{\sym, \epsilon} \PLambda_Z$ cannot be smooth and integral. More precisely:

\begin{theorem} \label{Thm:SpinOfSmallDegrees}
Let $X \subset A$ be a smooth nondivisible subvariety with ample normal bundle and $Z$ a reduced symmetric effective cycle on $A$ whose Gauss map is finite of even degree $\deg(\gamma_Z)=2n\ge 4$. Suppose that for some integer~$e\ge 1$ one of the following two conditions holds: 
\begin{enumerate} 
\item $\PLambda_{[e](X)} = \Alt^{n}_\sym \PLambda_Z$.
\item $\gamma_Z$ has even monodromy and $\PLambda_{[e](X)} = \Alt^{n}_{\sym,\epsilon} \PLambda_Z$ for some $\epsilon\in \{+,-\}$.	
\end{enumerate}  
Then $n \dim Z \ge \dim A$. 
\end{theorem}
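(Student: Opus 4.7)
We argue by contradiction and assume $n \dim Z < \dim A$. The strategy is to adapt the argument of \cref{Thm:WedgePowersOfSmallDegrees} to the symmetric setting in order to force $X$ to arise as a symmetric product of a curve, and then to compare Gauss degrees of the resulting cycle to the one appearing in the hypothesis in order to reach a contradiction.

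First, following the proof of \cref{prop:sum-morphism-finite}, consider the commutative diagram
\[
\begin{tikzcd}[column sep=30pt]
Z^n \ar[d, "\sigma"'] & \PLambda \ar[l, "\pr_{Z,n}"'] \ar[r, "\gamma"] \ar[d, "\tilde\sigma"] & \bbP_A \ar[d, equal] \\
{[e](X)} & \PLambda_{[e](X)} \ar[l, "\pr_{[e](X)}"] \ar[r, "\gamma_{[e](X)}"] & \bbP_A
\end{tikzcd}
\]
where $\PLambda = \PLambda^{[n]}_\sym$ in case (1) and $\PLambda = \PLambda^{[n]}_{\sym,\epsilon}$ in case (2), and $\tilde\sigma$ is the restriction of the sum morphism $A^n \times \bbP_A \to A \times \bbP_A$. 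The finiteness of $\gamma_Z$ yields finiteness of $\gamma$, and the cycle identity in the hypothesis together with the structure of $\langle \PLambda_Z \rangle$ makes $\gamma_{[e](X)}$ finite as well; hence $\tilde\sigma$ is finite. Since $X$ is smooth and nondivisible, \cref{Corollary:Equidimensional-Fibers} gives pure fiber dimension $\dim A - 1 - n \dim Z$ for $\pr_{[e](X)}$. A positive-dimensional fiber of $\sigma$ would therefore contradict the equidimensionality of $\pr_{[e](X)} \circ \tilde\sigma = \sigma \circ \pr_{Z,n}$, so $\sigma$ is finite.

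Next, I would show that the induced morphism $\tau\colon \Sym^n Z \to [e](X)$ is birational, in parallel with the final step of \cref{prop:sum-morphism-finite}. For general $z = (z_1, \dots, z_n) \in Z^n$ with $y = \sigma(z)$, the tangent space decomposition $T_y [e](X) = \bigoplus_i T_{z_i} Z$ (of the expected dimension $n \dim Z$) yields $\PLambda_{[e](X), y} = \PLambda_{Z, z_1} \cap \cdots \cap \PLambda_{Z, z_n}$, by the same argument as in \cref{prop:linear-growth-for-small-wedge-powers}. Hence on general fibers $\tilde\sigma$ is the identity and $\deg \sigma = \deg \tilde\sigma$. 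A general fiber of $\gamma$ contains $\deg\gamma = 2^n n!$ points in case (1) and $2^{n-1}n!$ in case (2), while by hypothesis $\deg \gamma_{[e](X)} = 2^n$ (resp.~$2^{n-1}$), giving $\deg\tilde\sigma = n!$ in both cases, so $\tau$ is birational. Then, precisely as after diagram \eqref{eq:WedgePowersOfSmallDegrees}, I would descend through the isogeny: an irreducible component $Z'$ of $[e]^{-1}(Z_0)$ for an irreducible component $Z_0$ of $Z$, together with a suitable translate $\tilde Z = Z' - u$ with $nu$ the appropriate $e$-torsion correction, produces a finite birational morphism $\tilde\tau\colon \Sym^n \tilde Z \to X$. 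Smoothness of $X$ forces $\tilde\tau$ to be an isomorphism, and since $n \ge 2$ (because $2n \ge 4$), \cref{Prop:SmoothSymmetricPower} implies that $\tilde Z$ is a smooth projective curve.

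Finally, the contradiction comes from comparing Gauss degrees. Since $\tilde Z$ is a nondivisible smooth curve and the sum morphism $\Sym^n \tilde Z \to X$ is an isomorphism, \cref{ex:symmetric-power-of-curve} together with \cref{Thm:FromRepresentationToPhysicalWedgePowers} applied to the smooth nondivisible $X$ yields the cycle identity $\PLambda_X = \Alt^n \PLambda_{\tilde Z}$. Pushing forward by $[e]$, using \cref{lem:image-of-conormal-under-isogeny} for the nondivisibility of $X$ and the birationality of $[e]\colon \tilde Z \to Z$, we obtain
\[ \PLambda_{[e](X)} \;=\; [e]_\ast \PLambda_X \;=\; [e]_\ast \Alt^n \PLambda_{\tilde Z} \;=\; \Alt^n \PLambda_Z. \]
Comparing with the hypothesis $\PLambda_{[e](X)} = \Alt^n_\sym \PLambda_Z$ (case 1) or $\PLambda_{[e](X)} = \Alt^n_{\sym,\epsilon} \PLambda_Z$ (case 2), and noting that the respective Gauss degrees equal $\binom{2n}{n}$, $2^n$, and $2^{n-1}$, the strict inequalities $\binom{2n}{n} > 2^n > 2^{n-1}$ for $n \ge 2$ contradict the assumed cycle equality. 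The main obstacle is the descent step in the second paragraph: one must carefully lift the reduced symmetric cycle $Z$ through the isogeny $[e]$, exploit nondivisibility of $X$ to pin down the correct $e$-torsion translation $u$, and verify that the induced morphism from $\Sym^n \tilde Z$ really lands on $X$ itself rather than an $e$-torsion translate. The bookkeeping for the half-spin case (2) is slightly more delicate, since one works throughout with the single connected piece $\PLambda^{[n]}_{\sym,\epsilon}$ selected by the even monodromy of $\gamma_Z$, but the numerics $\deg \tilde\sigma = n!$ remain the same and the final Gauss-degree comparison still goes through.
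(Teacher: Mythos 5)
The student's proposal starts on the same track as the paper (same contradiction hypothesis, same commutative diagram, same finiteness-of-$\sigma$ argument borrowed from \cref{prop:sum-morphism-finite}), but then misses the point where the paper's contradiction actually occurs, and instead continues into a much longer and gappier route. The paper's proof ends abruptly after establishing that $\sigma$ is finite: because $Z$ is symmetric and positive-dimensional, the antidiagonal in the source of the sum morphism forces positive-dimensional fibers, so $\sigma$ \emph{cannot} be finite, and that is the contradiction. You prove $\sigma$ finite and, not noticing that you have just proved something incompatible with the symmetry of $Z$, go on to build $X$ as a symmetric power of a curve and only derive a contradiction at the very end by comparing Gauss degrees $\binom{2n}{n}$ versus $2^n$ or $2^{n-1}$.

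There are also genuine gaps in your longer route. First, the argument you take from \cref{prop:sum-morphism-finite} needs $\pr_{Z,n}\colon\PLambda\to Z^n$ to be surjective, which relies on \cref{prop:linear-growth-for-small-wedge-powers} and therefore on $Z$ being \emph{integral}; the paper handles a reducible $Z$ by first passing to a maximal-dimensional component $Z'$ and splitting into two subcases ($Z'$ symmetric, handled by the antidiagonal; $Z'$ not symmetric, handled by the bound $\deg\gamma_{Z'}\le n$ and \cref{cor:gauss-degree-bound}). You treat $Z$ as if it were irreducible throughout. Second, the identity $[e]_\ast \Alt^n\PLambda_{\tilde Z} = \Alt^n\PLambda_Z$ that underlies your final degree comparison is asserted without proof, and it requires $\tilde Z$ to be nondivisible; nondivisibility of $X=\Sym^n\tilde Z$ only gives that $\Stab(\tilde Z)$ is $n$-torsion, not that it is trivial, so this step would need further justification. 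Third, you correctly flag the descent through $[e]$ as delicate, but this is in fact a full additional argument that is not needed at all once one notices that the symmetry of $Z$ kills the finiteness of $\sigma$ directly.
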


\begin{proof}[{Proof of \cref{Thm:SmallSpinDoNotExist}}] 
	Let $X\subset A$ be a $d$-dimensional smooth nondivisible subvariety of spin type $B_n$ or $D_n$ for some integer $n$ with $d < (g-1)/2$. After replacing $X$ by a translate, there exists by \cref{Thm:FromRepresentationToPhysicalSpin} an integer $e\ge 1$ and a reduced symmetric effective cycle $Z$ on $A$ with $\deg(\gamma_Z)=2n$ such that
	\[ \Alt^{n}_{\sym, \epsilon} \PLambda_Z \;=\; \PLambda_{[e](X)}
	\quad \textnormal{for some} \quad \epsilon \;\in\; \{+, -, \varnothing\}. \]
	It follows that
	\[ \dim \Alt^{n}_{\sym, \epsilon} Z \;=\; \dim [e](X) \;=\; \dim X.
	\]
	Note that we have $\epsilon \neq \varnothing$ only in spin type $D_n$ and in that case $\gamma_Z$ has even monodromy. 
	Moreover, if $X$ is of spin type $D_n$, $n = 2m$ is even and $m \le d + 1$, assume $d < (g-1)/4$.  Then~\cref{Thm:SmallSpinHaveSmallDegree} implies
	$n \dim Z  < \dim A$, which contradicts \cref{Thm:SpinOfSmallDegrees}. The remaining cases are of type $D_n$ with $n = 2m$ even, $d \ge (g-1)/4$ and $m \le d + 1$. \Cref{Rmk:ParitySpinRep} implies that $d - m$ must be even.  \Cref{Lemma:LowerBoundTopCharAmpleNormalBundle,,Lemma:LowerBoundTopCharSurfaces} imply that in the current dimension range the absolute value of the topological Euler characteristic of $X$ is never $8$, so the case $m = 2$ does not occur.
	\end{proof} 

\subsection{Characteristic cycles of spin representations} 
\label{sec:cc-of-spin}

We now explain how to compute characteristic cycles for subvarieties of spin type. Consider a symmetric reduced clean effective cycle $Z$ on $A$ with dominant Gauss map $\gamma_Z\colon \PLambda_Z \to \bbP_A$. For an integer $r\ge 1$ we put 
\[
\PLambda_{Z,\sym}^{[r]} \;:=\; 
\overline{\PLambda_{Z\vert U}^{\times r} \smallsetminus (\Delta_r \cup \Delta_r^-)} \;\subset\; \PLambda_Z^{\times r}
\] 
as in \cref{defn:fiber-product-minus-diagonal}, where $U\subset \bbP_A$ is any open dense subset over which the Gauss map $\gamma_Z$ is finite and \'etale. For a partition $\alpha = (\alpha_1, \dots, \alpha_r)$ consider the sum morphism 
\[ \sigma_\alpha \colon A^r \times \bbP_A \too A\times \bbP_A, \qquad (z_1, \dots, z_r, \xi) \longmapsto (\alpha_1 z_1+\cdots + \alpha_rz_r, \xi). \]
We put
\[
 \PLambda_{Z, \sym}^{\alpha} \;:=\; \sigma_{\alpha*}(\PLambda_{Z, \sym}^{[r]}).
\]
We are mostly interested in the special case of the partition $\alpha=(1^r)=(1,\dots, 1)$ and write
\[
 \Alt^{r}_\sym \PLambda_Z \;:=\; \PLambda_{Z, \sym}^{(1^r)}
\]
in this case. As in \cref{defn:cc-in-type-D}, we say that the Gauss map $\gamma_Z\colon \PLambda_Z \to \bbP_A$ has {\em even monodromy} if its degree is an even integer $\deg(\gamma_Z)=2n$ and the finite \'etale cover $\gamma_{Z\vert U}\colon \PLambda_{Z\vert U} \to U$ has as its monodromy group a subgroup of $(\pm 1)^n_+\rtimes \frS_n$. In this case  
\[
 \PLambda_{Z, \sym}^{[n]} \;=\; \PLambda_{Z, \sym, +}^{[n]} +  \PLambda_{Z, \sym, -}^{[n]}
\]
for $\PLambda_{Z, \sym, \pm}^{[n]}$ as in \cref{defn:cc-in-type-D}, and for partitions $\alpha$ of length $r=n$ we put
\[
 \PLambda_{Z, \sym, \pm}^\alpha \;=\; \sigma_{\alpha*}(\PLambda_{Z, \sym, \pm}^{[n]}).
\] 
Again for $\alpha=(1^n)$ we instead write $\Alt^n_{\sym, \pm} \PLambda_Z := \PLambda_{Z,\sym, \pm}^{(1^n)}$.

\begin{proof}[Proof of \cref{Thm:FromRepresentationToPhysicalSpin}]
Replacing $X\subset A$ by a translate we may assume $\det(\delta_X)=\delta_0$. As in the proof of \cref{Thm:FromRepresentationToPhysicalWedgePowers}, but replacing \cref{lem:cc-in-type-A} by \cref{lem:cc-in-type-B}, we then find a reduced symmetric effective cycle $Z$ on $A$ with $\PLambda_Z \in \langle \PLambda_X \rangle$ such that
\[
 \PLambda_{[e](X)} \;=\; \Alt^{n}_{\sym, \epsilon} \PLambda_Z
 \quad \textnormal{and} \quad 
 \deg(\gamma_Z) \;=\; 2n
\]
for some integer $e\ge 1$ and $\epsilon \in \{+,-,\varnothing\}$ as claimed. 
\end{proof}

\subsection{Dimension estimates in the spin case}
\label{sec:spin-dimension-estimate}

Let $Z$ be a symmetric reduced effective cycle in $A$.  
As before, we put
\begin{align*} 
\Alt^{r}_{\sym} Z \;&:=\; \pr_A(\Supp \Alt^{r}_{\sym} \PLambda_Z) \;\subset\; A,\\
\Alt^{n}_{\sym, \epsilon} Z \;&:=\; \pr_A(\Supp \Alt^{n}_{\sym, \epsilon} \PLambda_Z) \;\subset\; A.
\end{align*}
We want to show that under certain assumptions the dimension of these images grows linearly with $r$ in a suitable range. The first step is to relate the cycles associated with the two half-spin representations:

\begin{lemma} \label{Lemma:BadHalfSpinBound} Let $Z \subset A$ be a symmetric reduced effective cycle whose Gauss map~$\gamma_Z$ is finite of degree $\deg(\gamma_Z)=2n\ge 4$ and has even monodromy. Suppose that $n $ is odd, or that $n= 2m$ is even and $m -1 > \min_{\epsilon \in \{ +, - \}} \dim \Alt^n_{\sym, \epsilon} Z$. Then,
\[ \dim \Alt^n_{\sym, +} Z = \dim \Alt^n_{\sym, -} Z. \]
\end{lemma}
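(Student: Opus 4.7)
The argument naturally splits by the parity of $n$, reflecting the different behavior of the half-spin representations of $\Spin_{2n}$: for $n$ odd they are dual to each other, while for $n=2m$ even each is self-dual and the two are only interchanged by an outer automorphism.

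For $n$ odd, I would exhibit a geometric involution swapping $\PLambda^{[n]}_{\sym,+}$ and $\PLambda^{[n]}_{\sym,-}$. The map $\tau := [-1]^{\times n}$ on $A^n \times \bbP_A$ acts as $[-1]$ on each of the $A$-factors and as the identity on $\bbP_A$ (because $[-1]^*$ is a scalar on $\Omega^1_A$), and preserves $\PLambda^{[n]}_\sym$ by symmetry of $Z$. Flipping all $n$ signs simultaneously changes the parity of the labelling precisely when $n$ is odd, so $\tau$ exchanges the two parity components, while $\sigma_\alpha \circ \tau = [-1] \circ \sigma_\alpha$ for $\alpha = (1^n)$. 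Consequently $\Alt^n_{\sym,+}Z$ and $\Alt^n_{\sym,-}Z$ are interchanged by the isomorphism $[-1]\colon A \to A$, and have equal dimension.

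For $n=2m$ even no such canonical involution is available: $\tau$ preserves each parity component individually. I would instead analyze the self-convolutions $\PLambda_\pm := \Alt^n_{\sym,\pm} \PLambda_Z \circ \Alt^n_{\sym,\pm} \PLambda_Z$. A fiberwise computation following the pattern of \cref{defn:cc-in-type-D} shows that the fiber of $\PLambda_\epsilon$ over a very general $v \in \bbP_A(k)$ consists of the points $2\sum_i u_i p_i(v)$ indexed by $u \in \{-1,0,1\}^n$ with $|\{i : u_i = 0\}|$ even, subject only at the top level $u \in \{\pm 1\}^n$ to the additional constraint $\prod_i u_i = \epsilon$. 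Hence $\PLambda_+$ and $\PLambda_-$ share exactly the same lower-level support (the components with $|\{i : u_i = 0\}| \ge 2$), while their top-level components correspond to the self-dual versus anti-self-dual halves of $\Alt^n V$ inside $\bbS^\epsilon \otimes \bbS^\epsilon$.

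The concluding step combines \cref{Lemma:SegreClassAndConvolution} with \cref{Lemma:NonVanishingSegreClassPontryaginProduct} to give the chain $\min(2 d_\pm,\, g-1) \le \dim \pr_A \Supp(\PLambda_\pm) \le 2 d_\pm$, where $d_\pm := \dim \Alt^n_{\sym,\pm} Z$. Assuming (after relabelling) $d_- \le d_+$ with $d_- \le m - 2$ by hypothesis, the coincidence of lower-level supports forces $\min(2 d_+, g-1) \le 2 d_-$, which gives $d_+ = d_-$ whenever $2d_+ \le g - 1$. The hard part will be handling the residual regime $2 d_+ > g - 1$, in which the combined hypotheses force $g < n - 1$ so that $A$ is anomalously small relative to the Gauss degree of $\PLambda_Z$; this boundary case will likely require a dedicated argument analyzing how the top-level $\Alt^n V$ summands contribute to the Segre classes, rather than the uniform bound that applies in the generic range.
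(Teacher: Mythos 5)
Your treatment of the odd case is correct and matches the paper: the paper simply notes that $X_+ = [-1]_* X_-$, which is what your sign-flip involution $\tau=[-1]^{\times n}$ proves, so $\dim X_+ = \dim X_-$ immediately.

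For $n=2m$ even, your starting point — analyzing the self-convolutions $\Alt^n_{\sym,\pm}\PLambda_Z \circ \Alt^n_{\sym,\pm}\PLambda_Z$ and noting that their fibers over a general $v$ coincide except on the $u\in\{\pm 1\}^n$ level — is the right geometric observation, but your concluding dimension-count does not close, and you acknowledge as much. The issue is concrete: writing $\Supp(\PLambda_\epsilon\circ\PLambda_\epsilon) = L \cup [2]_*\Supp\PLambda_\epsilon$ with $L$ independent of $\epsilon$, the Segre class lower bound only gives $\max(\dim \pr_A L,\, d_+)\ge \min(2d_+, g-1)$. When $\dim \pr_A L < d_+$ this forces $d_+\ge g-1$ and tells you nothing about $d_-$, and when $2d_+>g-1$ the bound $\min(2d_+,g-1)\le 2d_-$ degenerates to $g-1\le 2d_-$, which again fails to conclude $d_+=d_-$. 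Without additional input these regimes are not pathological enough to be excluded by the hypothesis $m-1>d_-$.

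The ingredient you are missing is the \emph{exact} cycle-level identity
\[
\PLambda_+\circ\PLambda_+ - [2]_*\PLambda_+ \;=\; \PLambda_-\circ\PLambda_- - [2]_*\PLambda_-,
\]
the geometric shadow of $\Alt^2\bbS^+\simeq\Alt^2\bbS^-$. Your set-theoretic observation that the lower-level supports agree is only the coarse version of this; to get the equality with multiplicities the paper exhibits a sign-compatible bijection between pairs $(a,b)$ with $a\ne b$ in $\{\pm 1\}^n_+$ and in $\{\pm 1\}^n_-$ (flip the sign of the first coordinate where $a,b$ differ). Rewritten as $(\PLambda_++\PLambda_-)\circ(\PLambda_+-\PLambda_-)=[2]_*(\PLambda_+-\PLambda_-)$ and passing to Segre classes in $\rH_\bullet(A,\bbZ)$, one gets a degree-by-degree recursion for $\delta_r := s_r(\PLambda_+)-s_r(\PLambda_-)$, namely $(2^{2r}-2^{2m})\delta_r = \sum_{i=1}^{r-1}(2s_i\pm\delta_i)*\delta_{r-i}$, using $s_0(\PLambda_\pm)=2^{2m-1}$. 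Since $\delta_0=0$ this propagates to $\delta_r=0$ for all $r<m$. If $\dim X_->d:=\dim X_+$, then $\delta_{d+1}=-s_{d+1}(\PLambda_-)\ne 0$ forces $d+1\ge m$, contradicting $m-1>d$. This recursion is what turns the qualitative agreement of supports into a quantitative statement sharp enough to handle every regime, including the boundary case you flag as problematic.
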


\begin{proof} We may suppose $k = \bbC$. For $\epsilon \in \{ + , - \}$ set 
\[ \PLambda_\epsilon := \Alt^n_{\sym, \epsilon} \PLambda_Z, \qquad  X_\epsilon = \Alt^n_{\sym, \epsilon} Z = \pr_A(\PLambda_\epsilon).\]
 If $n=2m+1$ is odd, then the definitions imply $X_+ = [-1]_* X_-$ and the statement follows. The representation-theoretic analogue of the previous identity is that the half-spin representations $\bbS_+$ and $\bbS_-$ are dual to each other. Suppose henceforth that $n = 2m$ is even and, up to changing signs, that we have $\dim X_+ \le \dim X_-$. With this notation $m-1 > d:= \dim X_+$. Similarly, drawing inspiration from the isomorphism of representations $\Alt^2 (\bbS_+) \iso \Alt^2(\bbS_-)$ we obtain the following:
\begin{claim} The following identity of Lagrangian cycles holds:
\[ \PLambda_+ \circ \PLambda_+ - [2]_\ast \PLambda_+ =  \PLambda_- \circ \PLambda_- - [2]_\ast \PLambda_- \in \cL(A).\]
\end{claim}

\begin{proof}[Proof of the claim]
Pick a general cotangent direction $v\in \bbP_A(k)$, and denote by
\[ \PLambda_{Z, v} \;=\; \{\pm p_1,\ldots, \pm p_n\} \;\subset\; A \]
the corresponding fiber of the Gauss map, which we identify as a $0$-cycle on the abelian variety via the projection $\pr_A\colon \PLambda_Z \to A$. Let $\{ \pm 1 \}^n_\epsilon \subset \{ \pm 1\}^n$ be the subset made of $n$-tuples $a = (a_1, \dots, a_n)$ such that the sign of $a_1 \cdots a_n$ is $\epsilon$. As $0$-cycles on $A$, we have
\begin{align*}
	(\PLambda_\epsilon)_v \;&=\;  \sum_{a \in \{ \pm 1 \}^n_\epsilon} [a_1 p_1 + \cdots +  a_n p_n] \in \rZ_0(A), \\
	(\PLambda_\epsilon \circ \PLambda_\epsilon)_v 
	\;&=\; 
	\sum_{a, b \in \{ \pm 1 \}^n_\epsilon} [ (a_1 + b_1) p_1 + \cdots +  (a_n+ b_n) p_n] \in \rZ_0(A),
\end{align*}
where the summation sign refers to the sum as cycles and $[x]$ is the $0$-cycle given by a point $x \in A(k)$. We split the sum in the preceding equation in two: The sum ranging on couples $(a, b)$ with $a= b$ gives the $0$-cycle $([2]_\ast \PLambda_\epsilon)_v$. For the remaining couples, notice that we have a bijection
\[  \left\{ \left. (a, b) \in (\{ \pm 1 \}^n_+ )^2  \, \right| \, a \neq b \right\} \too \left\{ \left. (a', b') \in (\{ \pm 1 \}^n_-)^2 \, \right| \, a' \neq b' \right\} \]
sending $(a, b)$ to the couple $(a', b')$ obtained by changing the sign of the first entry in which $a$ and $b$ differ. Such a bijection is compatible with sum, that is $a + b = a' + b'$. Letting vary $v$ gives the desired identity of cycles.
\end{proof}

The identity in the claim can be rewritten as:
\[ (\PLambda_+ + \PLambda_-) \circ (\PLambda_+ - \PLambda_-) = [2]_\ast (\PLambda_+ - \PLambda_-).\]
Since $\PLambda_Z$ has finite Gauss map, so do $\PLambda_+$ and $\PLambda_-$ by construction. By \cref{Lemma:SegreClassAndConvolution} we obtain, by passing to Segre classes,
\[ s(\PLambda_+ + \PLambda_-) \ast s(\PLambda_+ - \PLambda_-) = [2]_\ast s(\PLambda_+ - \PLambda_-) \in \CH_{<g}(A).\]
Let $s$ and $\delta$ be the image of $s(\PLambda_+)$ and $s(\PLambda_+ - \PLambda_-)$ in the homology $\rH_\bullet(A, \bbZ)$, so that 
\[ (2s + \delta) \ast \delta = [2]_\ast \delta \in \rH_\bullet(A, \bbZ),\] 
where $\ast$ denotes the Pontryagin product in homology. Write $s_i, \delta_i \in \rH_{2i}(A, \bbZ)$ for the pieces of $s, \delta$ in degree $2i$. Then in degree $2r$ the previous identity reads as
\[ \sum_{i = 0}^r (2 s_i + \delta_i) \ast \delta_{r - i} = 2^{2r} \delta_r \in \rH_{2r}(A, \bbZ),\]
because the multiplication by $2$ on $A$ acts as multiplying by $2^{2r}$ on $\rH_{2r}(A, \bbZ)$. Now for $\epsilon \in \{ +, -\}$, the $0$-th Segre class of $\PLambda_\epsilon$ has degree
\[  \deg \gamma_{\PLambda_\epsilon} = |\{ \pm 1\}^n_\epsilon| = 2^{2m - 1}.\]
Plugging in the identities $s_0 = 2^{2m - 1}$ and $\delta_0 = 0$ obtained in this way yields the recursion formula
\[ (2^{2r} - 2^{2m}) \delta_r = \sum_{i = 1}^{r - 1} (2 s_i + \delta_i) \ast \delta_{r - i} \in \rH_{2r}(A, \bbZ).\]
Then the vanishing of $\delta_0$ inductively implies
\[ \delta_r = 0, \qquad r = 0, \dots, m-1.\]
Suppose by contradiction $\dim X_- > d= \dim X_+$. Then
\[ \delta_{d + 1} = s_{d+1}(\PLambda_-) - s_{d + 1}(\PLambda_+) = s_{d+1}(\PLambda_-)  \neq 0\]
where we abusively identified Segre classes with their images in homology. Then the vanishing of $\delta_r$ for $r < m$  implies $d +1 \ge m$, contradicting $m - 1 > d$.
\end{proof}

The second step is the following analog of \cref{Lemma:DecreasingDimensionInSmallDimension} (note that at first we only get a weaker estimate in the spin case since we here only start from a dimension bound on the support of $\Alt^r_\sym Z$, which a priori might be strictly smaller than the one of $\Alt^r Z$):  

\begin{proposition} \label{Prop:dimension_of_Lambda_1_k} 
Let $Z$ be a symmetric reduced effective cycle on $A$ whose Gauss map has degree $\deg(\gamma_Z)=2n\ge 4$. \smallskip
\begin{enumerate} 
\item If $\dim \Alt^{n}_\sym Z < (g-1)/2$, then $\dim \Alt^{i}_\sym Z <  g-1$ for $i = 1, \dots, n -1$.	
\item If $\gamma_Z$ is finite and has even monodromy and if there is $\epsilon \in \{+,-\}$ such that the dimension  $d:= \dim \Alt^n_{\sym, \epsilon} Z$ satifies
\[ 
d < 
\begin{cases}
(g-1)/4 &\text{if $n = 2m$ is even and $m \le d + 1$}, \\
(g-1)/2 &\text{otherwise}, 
\end{cases}
\] then $\dim \Alt^i_{\sym} Z <  g - 1$ for $1 \le i \le n$ and $\dim \Alt^n_{\sym} Z < \frac{g - 1}{2}$ if $n$ is odd.
\end{enumerate} 
\end{proposition}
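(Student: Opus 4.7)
The plan is to adapt the ``add-and-subtract a generic point'' argument of \cref{Lemma:DecreasingDimensionInSmallDimension} to the symmetric setting of spin cycles. Fix a very general cotangent direction $\xi \in \bbP_A(k)$ and label the $2n$ points of $\gamma_Z^{-1}(\xi)$ as $\{\pm p_1, \dots, \pm p_n\}$, using the symmetry $[-1]_\ast Z = Z$. A fiber point of $\Alt^r_\sym \PLambda_Z$ over $\xi$ has the form $\sum_{j \in S} \epsilon_j p_j$ for $S \subset \{1, \dots, n\}$ of size $r$ and signs $\epsilon_j \in \{\pm 1\}$; when $\gamma_Z$ has even monodromy, the cycles $\Alt^n_{\sym, \pm} \PLambda_Z$ are distinguished by the parity of the number of negative signs.

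For part (1), the key geometric identity is that for any $i \in \{0, \dots, n\}$, any $S \subset \{1, \dots, n\}$ of size $i$, any target signs $\eta_j \in \{\pm 1\}$ for $j \in S$, and any auxiliary signs $\epsilon_j \in \{\pm 1\}$ for $j \notin S$, one can write
\[
\sum_{j \in S} 2\eta_j p_j \;=\; \Bigl(\sum_{j \in S} \eta_j p_j + \sum_{j \notin S} \epsilon_j p_j\Bigr) + \Bigl(\sum_{j \in S} \eta_j p_j - \sum_{j \notin S} \epsilon_j p_j\Bigr),
\]
with each parenthesized expression a fiber point of $\Alt^n_\sym \PLambda_Z$ over $\xi$. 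Projecting to $A$ yields the set-theoretic inclusion $[2]\Alt^i_\sym Z \subset \Alt^n_\sym Z + \Alt^n_\sym Z$. Since $[2]\colon A \to A$ is an isogeny and the right-hand side has dimension at most $2 \dim \Alt^n_\sym Z$, we obtain $\dim \Alt^i_\sym Z \le 2\dim \Alt^n_\sym Z < g - 1$.

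For part (2), we split according to the hypothesis. In the ``otherwise'' branch (either $n$ is odd, or $n = 2m$ even with $m > d + 1$), \cref{Lemma:BadHalfSpinBound} gives $\dim \Alt^n_{\sym, +} Z = \dim \Alt^n_{\sym, -} Z = d$, hence $\dim \Alt^n_\sym Z = d < (g-1)/2$ and part (1) applies verbatim, yielding both $\dim \Alt^i_\sym Z \le 2d < g-1$ for all $i \le n$ and the extra conclusion $\dim \Alt^n_\sym Z = d < (g-1)/2$ when $n$ is odd. In the residual branch ($n = 2m$ even, $m \le d + 1$, and $d < (g-1)/4$), the analog of the key identity with both summands in $\Alt^n_{\sym, +} \PLambda_Z$ carries a parity obstruction: the resulting index $i$ must have the same parity as $n$, i.e.~be even, so we only immediately obtain $[2]\Alt^i_\sym Z \subset \Alt^n_{\sym, +} Z + \Alt^n_{\sym, +} Z$ (and hence $d_i \le 2d$) for even $i \le n-1$. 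To close the remaining cases (odd $i$ and $i = n$), we chain together three observations: Debarre's \cref{Lem:SumOfNondegenerate} applied to $\dim \Alt^2_\sym Z = \min(2 d_1, g)$ together with $d_2 \le 2d < g$ forces $d_1 \le d$; the straightforward adjacency inclusions $\Alt^{n-1}_\sym Z \subset \Alt^n_{\sym, +} Z + Z$ and $\Alt^n_\sym Z \subset \Alt^{n-1}_\sym Z + Z$ then give $d_{n-1} \le 2d$ and $d_n \le 3d$; finally, from $\dim \Alt^n_{\sym, -} Z \le d_n \le 3d$ together with the cross-parity inclusion $[2]\Alt^i_\sym Z \subset \Alt^n_{\sym, +} Z + \Alt^n_{\sym, -} Z$ for odd $i$, one obtains $d_i \le 4d$. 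Since $4d < g-1$ under the hypothesis $d < (g-1)/4$, every $d_i < g-1$ as required.

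The main obstacle is the parity bookkeeping in part~(2): the natural two-summand identity in the half-spin case produces only indices of the same parity as $n$, so handling the opposite parity in the hard branch requires the three-step chain above, combining Debarre's dimension formula with the basic adjacency relations among the $\Alt^r_\sym$.
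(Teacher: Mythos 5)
Your proof is correct and tracks the paper's argument closely in part (1) and in the easy branch of part (2), but you take a genuinely different route in the hard branch ($n = 2m$ even, $m \le d + 1$, $d < (g-1)/4$). After the shared parity-constrained identity giving $d_i \le 2d$ for even $i < n$, the paper proceeds by observing that $\Alt^2 Z = \Alt^2_\sym Z \cup \{0\}$, invoking the non-symmetric wedge-power monotonicity result \cref{Lemma:DecreasingDimensionInSmallDimension} to deduce $\dim Z \le \dim \Alt^2 Z < (g-1)/2$, and then handling the odd indices via $\Supp \PLambda^{(2i+1)} \subset \Supp \PLambda^{(2i)} \circ \PLambda^{(1)}$ to split $g-1$ as $(g-1)/2 + (g-1)/2$. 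You instead squeeze $d_1 \le d$ directly from $d_2 \le 2d$ using Debarre's \cref{Lem:SumOfNondegenerate} (note the correct identity is $\dim \Alt^2_\sym Z = \min(2d_1, g-1)$ rather than $\min(2d_1, g)$, since the image of a clean cycle is always a proper union of general-type subvarieties; this changes nothing downstream because $2d < (g-1)/2$), then chain the adjacency inclusions $\Alt^{n-1}_\sym Z \subset \Alt^n_{\sym,\epsilon} Z + Z$ and $\Alt^n_\sym Z \subset \Alt^{n-1}_\sym Z + Z$ and finish the odd indices with a cross-parity inclusion to obtain explicit linear bounds $d_i \le 4d < g-1$. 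Both work; your version avoids any appeal to the non-spin monotonicity machinery at the cost of more bookkeeping, and it even yields the sharper $d_1 \le d$ rather than the paper's $d_1 < (g-1)/2$. One caveat applying equally to your proof and to the paper's: the argument as written requires $m \ge 2$ so that the even index $i = 2 \le n-1$ is available—for $n = 2$ the parity bootstrap never gets started and a separate treatment would be needed, though this case ($\Spin_4$ is not simple) never arises in the applications downstream.
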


\begin{proof} We may suppose $k = \bbC$. (1) Put
$\PLambda^{(s)} := \Alt^{s}_\sym \PLambda_Z$. Pick a general cotangent direction $v\in \bbP_A(k)$, and denote by
\[ \PLambda_{Z, v} \;=\; \{\pm p_1,\ldots, \pm p_n\} \;\subset\; A \]
the corresponding fiber of the Gauss map, which we identify as usual with a set of points on the abelian variety via the projection $\pr_A\colon \PLambda_Z \to A$. Writing $p_{-i} := -p_i$ we have
\begin{align*}
 \Supp (\PLambda^{(s)})_v \;&=\; \{
 p_{i_1} + \cdots + p_{i_s} \mid |i_1|, \dots, |i_s| \textrm{ pairwise distinct } 
 \}, \\
	\Supp (\PLambda^{(n)} \circ \PLambda^{(n)})_v 
	\;&=\; 
	\left\{ \left. \sum_{i = 1}^n (\delta_i + \epsilon_i) p_i   
	\right| \delta_1, \dots, \delta_n,  \epsilon_1, \dots, \epsilon_n \in \{ \pm 1\}
	\right\}.
\end{align*}
By specializing to the case $\delta_i = \pm \epsilon_i$ and varying the cotangent direction $v\in \bbP_A(k)$ we find
\[
\Supp \left([2]_\ast \PLambda^{(1)} + [2]_\ast \PLambda^{(2)} + \cdots + [2]_{\ast} \PLambda^{(n)} \right) 
\;\subset\; 
\Supp(\PLambda^{(n)} \circ \PLambda^{(n)}) .
\] 
Hence, for the images $Z^{(s)} := \pr_A(\Supp(\PLambda^{(s)})) = \Supp(\Alt^{s}_\sym Z) \subset A$ we obtain the inclusions
\[
[2]\left( Z \cup Z^{(2)} \cup \cdots\cup Z^{(n)}    \right)
	\; \subset \;  \pr_A(\Supp(\PLambda^{(n)} \circ \PLambda^{(r)})) \subset  Z^{(n)}  + Z^{(n)}  .
\] 
Since $[2]\colon A\to A$ is an isogeny, a look at dimension then shows that for $1\le s\le n$ we have
\[ 
\dim Z^{(s)} \le \dim(Z^{(n)}+Z^{(n)}) \le 2\dim (Z^{(n)}) < g-1 \]
where the last inequality holds by our dimension assumption. This proves (1).

\medskip 

(2) If $n=2m+1$ is odd or $n = 2m$ is even with $m > d + 1$, then \cref{Lemma:BadHalfSpinBound} implies
\[ d = \dim \Alt^n_{\sym, +} = \dim \Alt^n_{\sym, -} = \dim \Alt^n_\sym Z\]
so we are done by (1). Now assume $n=2m$ and $m  -1\le d < (g-1)/4$. In that case, a set-theoretic look at the fibers of Gauss maps gives 
\[
\Supp \left([2]_\ast \PLambda^{(2)} + [2]_\ast \PLambda^{(4)} + \cdots + [2]_{\ast} \PLambda^{(n-2)} \right) 
\;\subset\; 
\Supp(\Alt^{n}_{\sym, \epsilon} \PLambda_Z \circ \Alt^{n}_{\sym, \epsilon} \PLambda_Z) .
\]
For all $i<m$ then
\[
 \dim Z^{(2i)} \;\le\; 2 \dim \Alt^n_{\sym, \epsilon} \PLambda_Z \;<\; (g-1)/2,
\]
where the second inequality holds by assumption. Since $\Alt^2 Z = \Alt^2_\sym Z + \Alt^0_\sym Z$ it follows that
$
 \dim \Alt^2 Z  < (g-1)/2
$,
and the monotonicity of usual wedge powers in \cref{Lemma:DecreasingDimensionInSmallDimension} gives
\[
 \dim Z \;=\; \dim \Alt^1 Z \;\le\; \dim \Alt^2 Z \;<\; (g-1)/2,
\] 
since we assumed the Gauss map to be finite. Then $\Supp \PLambda^{(2i+1)} \subset \Supp \PLambda^{(2i)}\circ \PLambda^{(1)}$ also implies $\dim Z^{(2i+1)} \le \dim Z^{(2i)} + \dim Z^{(1)} < (g-1)/2 + (g-1)/2 = g-1$ for all $i<m$. Likewise, $\Supp \PLambda^{(n)} \subset \Supp \PLambda^{(n-2)}\circ \PLambda^{(2)}$ gives $\dim Z^{(n)} < g-1$.
\end{proof}

\begin{proof}[Proof of \cref{Thm:SmallSpinHaveSmallDegree}] 
Let $Z$ be a reduced symmetric effective cycle on $A$ whose Gauss map is a finite morphism of degree $\deg(\gamma_Z)=2n$ for some integer $n\ge 2$. Assume that we are in one of the following two cases:\smallskip 
\begin{enumerate} 
\item $\dim \Alt^{n}_\sym Z < (g-1)/2$, or\smallskip 
\item $n = 2m$ is even, the Gauss map $\gamma_Z$ has even monodromy, and for some $\epsilon \in \{+,-\}$ we have
\[ \dim \Alt^n_{\sym, \epsilon} < (g-1)/4.
\]
\end{enumerate} 
By \cref{Prop:dimension_of_Lambda_1_k} then
$ \dim \Alt^i_\sym Z < g-1$
for all $i\in \{1,\dots, n\}$, 
so it will be enough to show that 
\[
 \dim \Alt^{i}_\sym Z \;=\; i\dim Z
\]
for all those $i$. For $i=1$ there is nothing to show. We now use induction: Suppose that
\[
 \dim \Alt^{i}_\sym Z \;=\; i\dim Z
 \quad \textnormal{for all} \quad  
 i \;\in\; \{1,2,\dots, s-1\},
\] 
where $s\le r$ is a positive integer. We want to conclude $\dim \Alt^{s}_\sym Z = s\dim Z$. 

\medskip 
 
We may assume $\dim Z > 0$. For simplicity, we put $\PLambda^\beta := \PLambda_{Z, \sym}^\alpha$ where $\beta=\alpha^t$ denotes the transpose of a partition $\alpha$, extending our notation from the previous proof. The definitions imply
\begin{equation} \label{eq:convolution-support}
\Supp \left( \PLambda^{(s-1)} \circ \PLambda^{(1)}\right) = \PLambda^{(s)} \cup  \PLambda^{(s-1,1)} \cup \PLambda^{(s-2)}.
\end{equation} 
We now compare dimensions. For the last two pieces on the right-hand side, the inclusions $\PLambda^{(s-1,1)} \subset \PLambda_{2Z} \circ  \PLambda^{(s-2)}$ and $\PLambda^{(s-2)}\subset \PLambda_Z\circ \cdots \circ \PLambda_Z$ (with $s-2$ factors) imply 
\begin{align*}
\dim \pr_A\left( \PLambda^{(s-1,1)} \cup \PLambda^{(s-2)} \right) 
&\;\le \; (s-1)\dim Z && \\
&\;=\; \dim \Alt^{s-1}_\sym Z && \textnormal{by our induction assumption} \\
&\;<\; g-1 && \textnormal{by \cref{Prop:dimension_of_Lambda_1_k} as $s -1 \le n$.}
\end{align*}
Since $\dim Z > 0$, it follows that
\begin{eqnarray} \label{eq:upper-estimate}
\dim \pr_A\left( \PLambda^{(s-1, 1)} \cup \PLambda^{(s-2)} \right)  
&<&  \min \{ s\dim Z, g-1 \}  
\\ \nonumber 
&=& 
\dim 
\pr_A \left( \PLambda^{(s-1)} \circ \PLambda^{(1)} \right),
\end{eqnarray}
where the last equality holds by \cref{cor:DimensionOfConvolutionViaSegre} since by definition $\PLambda^{(1)} = \PLambda_Z$ and by induction $\dim \pr_A(\PLambda^{(s-1)}) = \dim \Alt^{s-1}_\sym Z = (s-1) \dim Z$. Note that the corollary does not require integrality of the occurring clean cycles; we only need that each irreducible component of their support has finite Gauss map, which follows from our assumption that the Gauss map $\PLambda_Z \to \bbP_A$ is a finite morphism.
Comparing~\eqref{eq:convolution-support} and~\eqref{eq:upper-estimate} we conclude
\[ \dim \pr_A (\PLambda^{(s)}) = \min \{ s\dim Z, g-1 \}. \]
But $ \dim \pr_A (\PLambda^{(s)}) < g-1$ again by \cref{Prop:dimension_of_Lambda_1_k} because $s\leq n$. Hence, it follows that
\[
 \dim \pr_A (\PLambda^{(s)}) \;=\; s \dim Z
\] 
which completes the induction step.
\end{proof}

\subsection{Spin representations do not occur in small dimension} 

It remains to show that smooth nondivisible subvarieties of small enough dimension are not of spin type. Recall the setting of \cref{Thm:SpinOfSmallDegrees}:
We are given an integral subvariety $X\subset A$ and a reduced symmetric effective cycle $Z$ on $A$ such that \smallskip 
\begin{itemize} 
	\item the subvariety $X\subset A$ is smooth and nondivisible,  \smallskip 
	\item the Gauss maps $\gamma_X$, $\gamma_Z$ are finite, and $\deg(\gamma_Z)=2n$ for an integer $n\ge 2$, \smallskip 
	\item we have
	$ \Alt^{n}_{\sym, \epsilon} \PLambda_Z = \PLambda_{[e](X)}$ 
	for some $e\ge 1$ and suitable $\epsilon \in \{+,-,\varnothing\}$.\smallskip
\end{itemize} 
Here we make the convention that the labels $\epsilon=\pm$ will only be used if $\gamma_Z$ has even monodromy, so that the corresponding cycles are defined.
We want to show that in the above situation $n\dim Z \ge \dim A$:

\begin{proof}[Proof of \cref{Thm:SpinOfSmallDegrees}]  
Suppose by contradiction that $n\dim Z < \dim A$. Let $Z'\subset A$ be an irreducible component of maximal dimension in $Z$. The finiteness of the Gauss map implies by \cref{Thm:PositivityNotions} that $Z'\subset A$ is nondegenerate. Hence, \cref{cor:gauss-degree-bound} gives 
\[
 \frac{\deg(\gamma_{Z'}) + 2}{2}\, \dim Z \;\ge\; \dim A.
\]
This is not quite strong enough to contradict our assumption, but it allows us to reduce to the case of symmetric components: If $Z'\neq -Z'$ were not symmetric, then the effective symmetric cycle $Z$ would contain the two distinct components~$\pm Z'$ and these two components clearly have the same Gauss degree, which would lead to the estimate
$
 2n =\deg(\gamma_Z) \ge \deg(\gamma_{Z'}) + \deg(\gamma_{-Z'}) = 2\deg(\gamma_{Z'})$. 
The previous inequality then leads to
\[ \frac{n+2}{2} \dim Z  \;\ge\; \dim A \]
and for $n\ge 2$ this contradicts our assumption that $n\dim Z < \dim A$.

\medskip 

So for the remainder of the proof we will assume that the subvariety $Z'=-Z'$ is symmetric.
Since by assumption $n\dim Z' < \dim A$, it follows from \cref{prop:linear-growth-for-small-wedge-powers} that the morphism
\[ \pr_{Z', n}\colon \Supp \PLambda_{Z'}^{[n]} \too {Z'}^n \]
is surjective. Let $\PLambda' \subset \Supp \PLambda_{Z'}^{[n]}$ be an irreducible component dominating $Z'^n$. This component is not contained in the union $\Delta \cup \Delta^-\subset A^n\times \bbP_A$ of the big diagonal and the big antidiagonal, hence
\[
 \PLambda' \;\subset\; \Supp \PLambda_{Z, \sym}^{[n]}.
\]
If the Gauss map has even monodromy, then moreover $\PLambda_{Z, \sym}^{[n]} = \PLambda_{Z, \sym, +}^{[n]} + \PLambda_{Z, \sym, -}^{[n]}$ and, by irreducibility,
\[
 \PLambda' \;\subset\; \Supp \PLambda_{Z, \sym, \delta}^{[n]}
 \quad \textnormal{for some} \quad \delta \;\in\; \{+,-\}.
\]
To adjust the signs, use the involution
$
 \varphi := \id^{n-1} \times [-1] \colon A^n \to A^n
$
and consider the subvariety
\[
 \PLambda  \;\subset\; \Supp \PLambda^{[n]}_{Z, \sym, \epsilon}
 \quad \textnormal{defined by} 
 \quad 
 \PLambda \;:=\; 
 \begin{cases} 
 \; \varphi_*\, \PLambda' & \textnormal{if $\epsilon = -\delta$}, \\[0.2em]
 \;\PLambda' & \textnormal{otherwise}.
 \end{cases} 
\]
Since $Z'=-Z'$, we still have 
\[ \pr_{A^n}(\PLambda) \;=\; (Z')^{n-1} \times (\pm Z') \;=\; (Z')^n. \]
By construction, $\PLambda$ is an irreducible component of $\Supp \PLambda_{Z, \sym, \epsilon}^{[n]}$ and by assumption we have 
\[
 \PLambda_Y \;=\;
  \Alt^n_{\sym, \epsilon} \PLambda_Z \;:=\; 
  \frac{1}{N(\alpha)} \, \sigma_{\alpha *} \bigl( \PLambda_{Z, \sym, \epsilon}^{[n]} \bigr)  
  \quad \textnormal{for} \quad 
  \alpha \;=\; (1^n),
\]
so the irreducibility of the left-hand side forces $\PLambda_Y = \Supp \sigma_{\alpha *}(\PLambda)$. This gives a commutative diagram
\[
\begin{tikzcd}[column sep=40pt]
(Z')^n  \ar[d, swap, "\sigma"] &  \PLambda \ar[d, "\sigma_\alpha"]  \ar[l, swap, "\pr_{Z,r}"]  \ar[r,"\gamma_{Z,r}"] &  \bbP_A  \ar[d, equals] \\
Y &  \PLambda_Y \ar[l, "\pr_Y"] \ar[r, swap, "\gamma_Y"]  & \bbP_A
\end{tikzcd} 
\]	
where $\sigma$ is the sum morphism, which is generically finite by \cref{Lem:SumOfNondegenerate}.

\medskip 

In fact, $\sigma$ must be finite by the same argument as in \cref{prop:sum-morphism-finite}: The fibers of $\pr_Y$ are of pure dimension $N=\codim_A Y - 1$ by \cref{Corollary:Equidimensional-Fibers}. Moreover, $\sigma_\alpha$ is finite since the rightmost square in the above diagram commutes and since in that square $\gamma_{Z,r}$ and~$\gamma_Y$ are finite morphisms by our finiteness assumptions on Gauss maps. So all fibers of $\pr_Y \circ \sigma_\alpha \colon \PLambda_{Z, r} \to Y$ are of dimension $N$. Since $\sigma$ is generically finite, it follows from the commutativity of the leftmost square in the above diagram that the generic fiber of the morphism $\pr_{Z, r}$ has dimension $N$ as well. We can now argue by contradiction: Any positive-dimensional fiber of $\sigma$ would give rise to a fiber of $\pr_Y \circ \sigma_\alpha$ of dimension $\ge N+1$, by semi-continuity of dimension of fibers for proper morphisms~\cite[\href{https://stacks.math.columbia.edu/tag/0D4I}{lemma 0D4I}]{stacks-project}. This shows that $\sigma$ is finite.

\medskip 

But $Z'=-Z'$ is symmetric and $\dim Z' > 0$, hence looking at antidiagonals one sees that the sum morphism~$\sigma$ cannot be finite. Contradiction.
\end{proof}

\appendix

\section{Reduction to the complex case}

For reference we include the following well-known fact about $\ell$-adic constructible sheaves with coefficients in $\bbF = \overline{\bbQ}_\ell$ on varieties over an algebraically closed field  $k$ of characteristic zero:

\begin{lemma} \label{lem:reduction-to-complex-case} Let   $k \subset K$ be an extension of  algebraically closed   fields of characteristic zero and let $X$ be a variety over $k$. Then the base change functor
	\[
	(-)_K\colon \Dbc(X, \bbF) \too \Dbc(X_K, \bbF)
	\]
	is fully faithful. Moreover, for every $P\in \Dbc(X_K, \bbF)$ there exists a subfield $k'\subset K$ which is the algebraic closure of a finitely generated extension of $k$ such that $P$ is in the
	essential image of the functor
	\[
	(-)_K\colon \Dbc(X_{k'}, \bbF) \too \Dbc(X_K, \bbF).
	\]
\end{lemma}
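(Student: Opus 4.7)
I would split the proof into the two assertions and treat the full faithfulness first. For $P, Q \in \Dbc(X, \bbF)$, the plan is to rewrite the Hom-group as sheaf cohomology via
\[
 \Hom_{\Dbc(X,\bbF)}(P, Q) \;=\; \rH^0(X, R\cHom(P, Q))
\]
and likewise for $(P_K, Q_K)$ on $X_K$. Since $R\cHom$ is compatible with the flat base change along the projection $\pi\colon X_K \to X$ (so that $R\cHom(P_K, Q_K) \simeq \pi^\ast R\cHom(P, Q)$), the claim reduces to showing that
\[
 \rH^\ast(X, F) \;\too\; \rH^\ast(X_K, F_K)
\]
is an isomorphism for any constructible complex $F$ on $X$. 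I would reduce this, via a constructible stratification and the hypercohomology spectral sequence, to the case of a lisse $\bbF$-sheaf on a smooth connected variety, where it is the classical invariance of $\ell$-adic cohomology under extension of algebraically closed base fields, a consequence of the invariance of the \'etale fundamental group (SGA~1, exp.~X).

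For the essential image, the plan is a spreading-out argument. Write $K$ as the filtered union of its algebraically closed subfields $k'$ of the form $k' = \overline{k(\underline t)}$ with $\underline t$ a finite transcendence family. A given $P \in \Dbc(X_K, \bbF)$ is, by constructibility and boundedness, encoded up to isomorphism by a finite amount of data: a stratification of $X_K$ into finitely many locally closed subvarieties, finite-dimensional continuous $\bbF$-representations of the \'etale fundamental groups of the strata describing the lisse cohomology sheaves in each of the finitely many nonzero degrees, and finitely many gluing morphisms. By the standard finite presentation results (EGA~IV, \S 8), the stratification and all morphisms defining it descend to some $X_{k'}$. After possibly enlarging $k'$, the lisse sheaves descend too: each continuous representation factors through a finite extension $E/\bbQ_\ell$, and the \'etale fundamental group of a stratum is itself invariant under the algebraically closed base extension $k' \subset K$, so the monodromy representations are already defined over $k'$. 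Finally, the remaining finitely many gluing morphisms and the isomorphism identifying the base change of the resulting object with $P$ live in $\Hom$-groups to which the already established full faithfulness applies, so they descend after a further enlargement of $k'$.

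\textbf{Main obstacle.} The full faithfulness is essentially formal once one accepts the invariance of cohomology under algebraically closed base change. The technical heart is the essential image statement: the informal principle that ``every constructible datum is of finite type'' has to be turned into a rigorous limit argument in which $\overline{\bbQ}_\ell$-coefficients and the continuous monodromy representations are descended simultaneously with the underlying stratification. In practice this is a routine combination of EGA~IV~\S 8 with SGA~4-style noetherian approximation for constructible \'etale sheaves, but care is needed to propagate finiteness through all the pieces of data at once.
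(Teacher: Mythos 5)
Your overall strategy matches the paper's: for full faithfulness, both you and the paper reduce the comparison of Hom-groups to the invariance of $H^\bullet(X, -)$ under the extension $k\subset K$ via $R\cHom$; for essential image, both use a spreading-out argument whose terminal input is the invariance of the geometric \'etale fundamental group, applied to descend lisse sheaves.

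There is, however, one concrete slip in the full faithfulness step. You justify the base-change isomorphism $H^\bullet(X, Q)\xrightarrow{\sim} H^\bullet(X_K, Q_K)$ (for a lisse sheaf on a smooth connected variety) by appealing to the invariance of $\pi_1^{\et}$ under algebraically closed base extension (SGA~1, exp.~X). That is not enough: equivalence of the categories of lisse sheaves controls $H^0$ (invariants) and $H^1$ (torsors/extensions), but not $H^i$ for $i\ge 2$, since \'etale cohomology is not computed by continuous group cohomology of $\pi_1^{\et}$ in general. The correct input is the base-change theorem for \'etale cohomology under extension of separably/algebraically closed base fields — the paper cites Milne, \emph{\'Etale Cohomology}, cor.~VI.4.3 for torsion coefficients and then passes to $\overline{\bbQ}_\ell$ formally. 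That theorem applies directly to any constructible complex, so you also do not need the intermediate reduction to the lisse case; the stratification/spectral-sequence step is superfluous here. The $\pi_1$-invariance is exactly the right tool for a different step, namely descending the lisse piece in the essential image argument, which is where the paper invokes it.

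For essential image, your ``encode $P$ by a stratification, lisse sheaves on strata, and gluing morphisms, then spread out'' has the right idea but is vaguer than it needs to be — it is not so easy to say cleanly what the ``gluing morphisms'' for a general constructible \emph{complex} are, nor to descend all of these data simultaneously. The paper instead runs a double induction that packages the same content rigorously: first on the number of nonzero cohomology sheaves (the truncation triangle $\tau_{<m}P\to P\to \cH^m(P)[-m]\to$ realizes $P$ as a cone, and full faithfulness descends the connecting morphism once its endpoints descend), reducing to a single constructible sheaf; then on $\dim\Supp(P)$, via the adjunction $j_!j^*P\to P$ for an open dense $j\colon U\hookrightarrow X$ over which $P$ is lisse. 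Each step descends exactly one triangle, which is how the informal ``finitely much data'' gets propagated through a limit without needing to control everything at once. The two arguments are the same in substance; the paper's inductive scaffolding is what makes the ``care needed'' in your last paragraph concrete.
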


\begin{proof} 
	For the full faithfulness, consider two complexes $P,P'\in \Dbc(X, \bbF)$. To see $\Hom_{\Dbc(X, \bbF)}(P, P') \simeq \Hom_{\Dbc(X_K, \bbF)}(P_K, P'_K)$, we only need to take  $Q=R\cH om(P, P')$ in the isomorphism
	\[
	H^\bullet(X, Q) \;\stackrel{\sim}{\too}\; 
	H^\bullet(X_K, Q_K)
	\] 
	which is obtained by base change (see e.g.~\cite[cor.~VI.4.3]{MilneEC} for the case of \'etale torsion sheaves, the case of $\ell$-adic sheaf complexes then follows formally). 
	
	\medskip 
	
	Now let $P\in \Dbc(X_K, \bbF)$. We want to show that it descends to a subfield $k'\subset K$ which is the algebraic closure of a finitely generated extension of $k$. We use induction on the number of nonvanishing cohomology sheaves. Let $m \in \bbZ$ be maximal with $\cH^m(P)\neq 0$, and consider the  triangle
	\[
	\tau_{<m}(P) \too P \too \cH^m(P)[-m] \too
	\]
	Rotating the triangle, we obtain
	\[
	P \;\simeq\; \cone(\cH^m(P)[1-m] \to \tau_{<m}(P))
	\]
	If the source and the target of a morphism descend to a given subfield, then so does the morphism by full faithfulness, and hence also the cone descends to the same subfield. By induction, it therefore suffices to discuss the case where $P$ is a single constructible $\overline{\bbQ}_\ell$-sheaf. Then by~\cite[Rapport, prop.~2.5]{SGA4H}, there is an open dense subset of $X_K$ on which $P$ is smooth. Let $k'\subset K$ be the algebraic closure of a finitely generated extension of $k$ such that the open dense subset has the form $j_K\colon U_K \to X_K$ for some open $j\colon U \into X_{k'}$. Looking at the adjunction morphism
	\[
	j_{K!}j_K^*(P) \too P
	\]
	and arguing by induction on $\dim \Supp(P)$ it will suffice to show that $j_K^*(P)\simeq L_K$ for some local system $L$ on $U$. But this is clear because of the equivalence between $\ell$-adic local systems and representations of the \'etale fundamental group~\cite[Rapport, prop.~2.4]{SGA4H} and the invariance of the geometric \'etale fundamental group \cite[Exp.~XIII, prop.~4.6]{SGA1}.
\end{proof}

\section{Symmetric powers of varieties}\label{section symmetric powers}

Let $X$ be a variety over an algebraically closed field $k$ of characteristic $0$, and fix an integer $n \ge 1$. The \emph{$n$-fold symmetric product} of $X$ is defined as the categorical quotient 
\[ \Sym^n X \;:=\; X^n / \frS_n \]
of $X^n=X\times \cdots \times X$ by the permutation action of the symmetric group $\frS_n$. This quotient exists for instance if $X$ is quasiprojective \cite[\S 9.3, p. 253]{NeronModels}. 
In that case, we denote by $$\pi_X \colon \quad X^n \; \too \; \Sym^n X$$ 
the quotient morphism. Let $U\subset X^n$ be the complement of the big diagonal, i.e.,~the open subset of all $n$-tuples of pairwise distinct points. Then $\pi_X(U)$ is open in $\Sym^n X$ and $\pi_X \colon U \to \pi_X(U)$ is a principal $\frS_n$-bundle~\cite[exp.~V, th.~4.1]{SGA3}, in particular
\[ \deg \pi_X = n!. \]
On the other hand, $\pi_X$ is not \'etale at any point $x \in (X^n \smallsetminus U)(k)$.

\begin{proposition} \label{Prop:PropertiesOfSymmetricPowers}
Let $X$ be a quasiprojective variety. If $X$ is reduced, irreducible, integral or normal, then the respective property holds also for $\Sym^n X$.
\end{proposition}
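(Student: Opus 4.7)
The plan is to handle each of the four properties by factoring through the product $X^n$: first show that $X^n$ inherits the property from $X$, then transfer the property along the finite surjective quotient morphism $\pi_X\colon X^n\to \Sym^nX$. Both steps are classical, so the proposal amounts to assembling the standard commutative-algebra input carefully.

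For the first step, since $k$ is algebraically closed (hence perfect) of characteristic zero, the $n$-fold product $X^n$ inherits each of the four properties from $X$. Reducedness follows because the tensor product of reduced algebras over a perfect field is reduced; irreducibility of a product of irreducible varieties over an algebraically closed field is standard and extends to $X^n$ by induction; integrality is the combination of the previous two; normality of $X^n$ follows because ``geometrically normal'' is preserved by fiber products over a perfect field, or equivalently, the tensor product of two finitely generated normal $k$-algebras is again normal when $k$ is algebraically closed of characteristic zero.

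For the second step, the quotient morphism $\pi_X$ is finite and surjective, and locally on $\Sym^nX$ it is of the form $\Spec B \to \Spec B^{\frS_n}$ for an $\frS_n$-stable affine open $\Spec B\subset X^n$; this is the content of the construction of $X^n/\frS_n$ as a categorical quotient recalled just above the proposition. Irreducibility of $\Sym^nX$ is then immediate, as the continuous image of an irreducible topological space under the surjection $\pi_X$ is irreducible. Reducedness descends because any subring of a reduced ring is reduced, in particular $B^{\frS_n}\subset B$. For normality one uses that if $B$ is an integrally closed domain carrying a finite group action, then $B^{\frS_n}$ is again an integrally closed domain: any $x\in \Frac B^{\frS_n}$ integral over $B^{\frS_n}$ is \emph{a fortiori} integral over $B$, hence lies in $B$, and being $\frS_n$-invariant lies in $B\cap\Frac B^{\frS_n}=B^{\frS_n}$. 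Integrality for $\Sym^nX$ is the combination of reducedness and irreducibility.

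The main obstacle, such as it is, lies in keeping the bookkeeping straight for normality: one picks an open affine cover of $\Sym^nX$, pulls it back to an $\frS_n$-stable affine cover of $X^n$, and applies the invariants statement to each piece. None of this requires any input beyond what is already recalled above the proposition together with standard commutative algebra; in particular, no result from the main body of the paper is needed.
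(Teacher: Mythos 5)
Your proof is correct and follows the same two-step structure as the paper's: pass from $X$ to $X^n$ over the algebraically closed field, then descend the property along the quotient $\pi_X\colon X^n\to\Sym^n X$. The only difference is that where the paper cites Mumford's GIT for the stability of these properties under categorical quotients, you supply the underlying commutative-algebra arguments (subring of reduced is reduced, continuous surjective image of irreducible is irreducible, invariants of an integrally closed domain under a finite group are integrally closed) explicitly, which is a legitimate and self-contained unpacking of the same reference.
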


\begin{proof} 
If $X$ has one of the stated properties, then $X^n$ has the same property since $k$ is algebraically closed. Therefore, the claim follows from the fact that the properties are stable under categorical quotients~\cite[p. 5]{GIT}. 
\end{proof}

\begin{proposition} \label{Prop:FiniteBirationMorphismBetweenSymmetricPowers} 
If $f \colon X' \to X$ is a finite birational morphism between integral quasiprojective varieties, then $\Sym^n f \colon \Sym^n X' \to \Sym^n X$ is finite birational.
\end{proposition}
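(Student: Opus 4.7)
The plan is to exploit the commutative diagram
\[
\begin{tikzcd}
(X')^n \ar[r, "f^n"] \ar[d, "\pi_{X'}"'] & X^n \ar[d, "\pi_X"] \\
\Sym^n X' \ar[r, "\Sym^n f"] & \Sym^n X
\end{tikzcd}
\]
in which $f^n$ is finite (as a product of finite morphisms) and the vertical arrows are the finite surjective quotient morphisms introduced in the preceding subsection. The composite $\pi_X \circ f^n = (\Sym^n f) \circ \pi_{X'}$ is therefore finite, and the strategy is to deduce finiteness of $\Sym^n f$ by descending along the finite surjection $\pi_{X'}$, and then to verify birationality by restriction to the locus on which $f$ is an isomorphism.

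For the finiteness of $\Sym^n f$, I would check the three standard ingredients separately. \emph{Quasi-finiteness of fibers:} for $y \in \Sym^n X$, an infinite fiber $(\Sym^n f)^{-1}(y)$ would have an infinite preimage in $(X')^n$ under $\pi_{X'}$, contradicting the finiteness of $\pi_X \circ f^n$. \emph{Universal closedness:} for any closed subscheme $Z \subseteq \Sym^n X'$ one has $(\Sym^n f)(Z) = \pi_X(f^n(\pi_{X'}^{-1}(Z)))$, which is closed because $\pi_X \circ f^n$ is a closed map and $\pi_{X'}$ is surjective; this identity is preserved under arbitrary base change $T \to \Sym^n X$, since all morphisms in the square remain finite after base change. \emph{Affineness:} working locally, for an affine open $U \subseteq \Sym^n X$ the subscheme $(f^n)^{-1}(\pi_X^{-1}(U))$ is affine ($\pi_X$ and $f^n$ being affine) and $\frS_n$-stable, so its categorical quotient by $\frS_n$, which equals $(\Sym^n f)^{-1}(U)$, is affine.

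For birationality, I would pick a nonempty open $U \subset X$ on which $f$ restricts to an isomorphism, and set $V := f^{-1}(U) \subset X'$. Then $f^n \colon V^n \xrightarrow{\sim} U^n$ is an isomorphism of $\frS_n$-invariant open subschemes, and by compatibility of categorical quotients with $\frS_n$-invariant open immersions this descends to an isomorphism between the open subschemes $V^n/\frS_n \subset \Sym^n X'$ and $U^n/\frS_n \subset \Sym^n X$. Since $U^n \subset X^n$ is dense and $\pi_X$ is surjective, $U^n/\frS_n$ is dense in $\Sym^n X$, and symmetrically on the source; hence $\Sym^n f$ is birational.

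The main obstacle I anticipate is the bookkeeping in the finiteness step: although each of the three properties descends essentially formally along the identity $\pi_X \circ f^n = (\Sym^n f) \circ \pi_{X'}$, one must ensure that universal closedness really survives base change (which reduces to the fact that $\pi_{X'}$ remains surjective universally, being finite) and one must invoke the existence of categorical quotients of affine $\frS_n$-schemes to conclude affineness. The birationality step, by contrast, is immediate from the functoriality of the quotient construction.
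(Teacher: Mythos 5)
Your proof is correct and is built on the same commutative square as the paper's, but the two halves of the argument are handled differently. For finiteness the paper simply records the square and asserts that since $f^n$, $\pi_{X'}$, $\pi_X$ are finite so is $\Sym^n f$; you instead carry out the descent along the finite surjection $\pi_{X'}$ explicitly, checking quasi-finiteness, universal closedness, and affineness one by one (the affineness step correctly using that the $\frS_n$-stable open $(f^n)^{-1}(\pi_X^{-1}(U))$ is automatically saturated, so its invariant ring gives $(\Sym^n f)^{-1}(U)$). For birationality the paper uses a degree count: $\deg(\Sym^n f)\cdot n! = \deg(\Sym^n f\circ\pi_{X'}) = \deg(\pi_X\circ f^n) = n!$, forcing $\deg(\Sym^n f)=1$; you instead exhibit an explicit dense open on which $\Sym^n f$ is an isomorphism, descending the isomorphism $V^n\xrightarrow{\sim}U^n$ to quotients. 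Both routes are sound. The paper's argument is shorter but silently invokes multiplicativity of degree for finite dominant maps of integral schemes; yours is longer but more elementary and makes all the quotient bookkeeping visible, which some readers would find clearer.
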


\begin{proof} In the commutative square
	\[
	\begin{tikzcd}[column sep=40pt]
	X'^n \ar[r, "f^n"] \ar[d, "\pi_{X'}"'] & X^n \ar[d, "\pi_{X}"] \\
	\Sym^n X' \ar[r, "\Sym^n f"] & \Sym^n X
	\end{tikzcd}
	\]
	the morphisms $f^n$, $\pi_{X'}$ and $\pi_X$ are finite, hence $\Sym^n f$ is also finite. Since $f^n$ is birational, we obtain
	\[ \deg (\Sym^n f \circ \pi_{X'}) =  \deg (\pi_X \circ f^n) = n! = \deg(\pi_X) \]
	which implies that the morphism $\Sym^n f$ has degree $1$ as required.
\end{proof}

\begin{proposition} \label{Prop:SmoothSymmetricPower} 
Let $X$ be an integral quasiprojective variety with $\dim X > 0$, and let $n\ge 2$ be an integer. Then
\[  \text{$\Sym^n X$ is smooth}\iff \text{$X$ is a smooth curve}. \]
\end{proposition}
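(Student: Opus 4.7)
\bigskip

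\noindent\textbf{Proof proposal.} The strategy is to reduce the question to a purely local computation at $\frS_n$-fixed or $\frS_n$-partially-fixed points in $X^n$, and then invoke the Chevalley--Shephard--Todd theorem on quotients of affine spaces by finite groups generated by pseudoreflections.

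\medskip

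For the direction $(\Leftarrow)$, assume that $X$ is a smooth curve. Given a point $p = (x_1,\ldots,x_n) \in X^n(k)$, let $H \subset \frS_n$ be its stabilizer; $H$ is a product of symmetric groups corresponding to the coincidence pattern of the $x_i$. Choose an étale neighborhood of $(x_i)$ in $X$ of the form $\Spec \cO_{\bbA^1,0}$; then an étale neighborhood of $p$ in $X^n$ is $\Spec \cO_{\bbA^n,0}$, with $H$ acting by permuting blocks of coordinates. An étale-local model of $\Sym^n X$ at $\pi_X(p)$ is therefore $\bbA^n/H$. Since each factor in $H$ is a symmetric group acting on an affine line by permuting coordinates, $H$ is a reflection group (a product of Coxeter groups of type $A$). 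Chevalley--Shephard--Todd then gives $\bbA^n/H \simeq \bbA^n$, so $\Sym^n X$ is smooth at $\pi_X(p)$.

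\medskip

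For the direction $(\Rightarrow)$, assume that $\Sym^n X$ is smooth and $\dim X \geq 1$. I will first prove $X$ is smooth. If $x_0 \in X$ were singular, pick $n-1$ further points $x_2,\ldots,x_n \in X^{\reg}$ pairwise distinct and distinct from $x_0$ (possible since $\dim X \geq 1$). At $p = (x_0, x_2, \ldots, x_n)$ the stabilizer in $\frS_n$ is trivial, so $\pi_X$ is étale at $p$. Thus smoothness of $\Sym^n X$ at $\pi_X(p)$ pulls back to smoothness of $X^n$ at $p$, and therefore to smoothness of $X$ at $x_0$, a contradiction.

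\medskip

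It remains to rule out $d := \dim X \geq 2$. Assume for contradiction that $d \geq 2$, choose a (smooth) point $x \in X$ and points $x_3,\ldots,x_n \in X$ pairwise distinct and distinct from $x$, and consider $p = (x,x,x_3,\ldots,x_n) \in X^n$. The stabilizer of $p$ in $\frS_n$ is the transposition $\langle (1\,2)\rangle \simeq \frS_2$. Using étale-local coordinates at $x$ (identifying an étale neighborhood with $\Spec \cO_{\bbA^d,0}$) and at each $x_i$, an étale-local model of $\Sym^n X$ near $\pi_X(p)$ is
\[
\bigl(\bbA^d \times \bbA^d\bigr)/\frS_2 \; \times \; \bbA^{d(n-2)},
\]
where $\frS_2$ swaps the two $\bbA^d$-factors. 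The nontrivial element of $\frS_2$ acts on $\bbA^{2d}$ with fixed locus the diagonal, which has codimension $d \geq 2$, hence is not a pseudoreflection. By Chevalley--Shephard--Todd, $\bbA^{2d}/\frS_2$ is singular at the image of the origin, so $\Sym^n X$ is singular at $\pi_X(p)$, contradicting our hypothesis. Therefore $d = 1$, completing the proof.

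\medskip

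The only substantive step is the passage from a global statement about $\Sym^n X$ to the local model $\bbA^{2d}/\frS_2$; this is the point where one must justify that smoothness (or its failure) is detected étale-locally and that the étale-local structure of $\pi_X$ at a partial-diagonal point really is the product form stated above. Once this local model is established, Chevalley--Shephard--Todd does all the remaining work.
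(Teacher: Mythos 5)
Your proof is correct but takes a genuinely different route from the paper's. For the implication $(\Rightarrow)$, the paper first shows $\dim X = 1$ by a global argument: it replaces $X$ by its smooth locus, applies Zariski--Nagata purity to conclude that the branch locus of $\pi_X\colon X^n \to \Sym^n X$ is a divisor, and then equates the dimension of this divisor (namely $n\dim X - 1$) with $\dim\Delta_n = (n-1)\dim X$ to force $\dim X = 1$; only afterwards does it prove smoothness of $X$ by the same trivial-stabilizer argument you use. You instead prove smoothness of $X$ first, then pass to an \'etale-local (or completed-local) model at a partial-diagonal point and invoke the converse direction of Chevalley--Shephard--Todd to see that the quotient $\bbA^{2d}/\frS_2$ by the swap is singular when $d\ge 2$. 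Your $(\Leftarrow)$ direction likewise gives a self-contained CST argument where the paper simply cites \cite[p.~255]{NeronModels}. The paper's route is shorter because purity does the dimension counting in one stroke; yours is more local and more self-contained, at the price of needing to justify that the completed local ring of $\Sym^n X$ at $\pi_X(p)$ is the ring of $\Stab(p)$-invariants of $\widehat{\cO}_{X^n,p}$ and that the action of the stabilizer is linearizable in those coordinates -- a standard fact, but one you should state explicitly rather than describe informally as an ``\'etale-local model,'' since this identification (not CST) is where the actual geometric content lies. Both proofs are valid.
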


\begin{proof} It is well known that for any smooth curve $X$ the symmetric powers $\Sym^n X$ are smooth~\cite[p.~255]{NeronModels}. Conversely, assume that $\Sym^n X$ is smooth. First, we show that $\dim X = 1$. For this, we may replace $X$ by its smooth locus and thus assume $X$ is smooth. By Nagata-Zariski purity \cite[th.~X.3.1]{SGA1}, the branch locus $B$ of $\pi_X \colon X^n \to \Sym^n X$ is empty or a divisor in $\Sym^n X$. On the other hand, the morphism $\pi_X$ is ramified at a $k$-point $x = (x_1, \dots, x_n)$ of $X^n$ if and only if $x_i = x_j$ for some $i \neq j$, that is, if and only if $x$ lies in the big diagonal $\Delta_n$ of $X^n$.   Since $n\ge 2$,  the big diagonal is nonempty, so that the branch locus is a divisor and thus
	\[ n \dim X - 1 = \dim B = \dim \Delta_n = (n - 1) \dim X,\]
	hence $\dim X = 1$. Having shown that $X$ is a curve, we now verify that it must be smooth. Suppose to the contrary that there exists a singular point $x_1\in X(k)$. Pick pairwise distinct points $x_2, \dots, x_n\in X(k)\smallsetminus \{x_1\}$. Then the morphism $\pi_X$ is \'etale at $x = (x_1, x_2, \dots, x_n)$, so $\pi_X(x)$ is a singular point in $\Sym^n X$, a contradiction.
\end{proof}

In~\cref{Sec:SumMorphisms} we needed a criterion for a birational morphism from a symmetric power of a smooth variety to another smooth variety to be an isomorphism. The proof is naturally cast for certain singularities:
To define them, recall that for a coherent sheaf $\cF$ on a variety $V$ and an integer $m \ge 1$ we write $\cF^{[m]}$ for the reflexive hull of $\cF^{\otimes m}$. A reflexive sheaf $\cF$ on $V$ of generic rank one is a \emph{$\bbQ$-line bundle} if there is $m \ge 1$ such that $\cF^{[m]}$ is a line bundle. When $V$ is proper, such a sheaf~$\cF$ is \emph{nef} if the line bundle $\cF^{[m]}$ is. For $V$ normal, the \emph{canonical sheaf} $\cK_V$ is defined as the pushforward to $V$ of the canonical bundle on $V^\reg$ and is reflexive of generic rank one. By definition $V$ is \emph{$\bbQ$-Gorenstein} if it is normal and the canonical sheaf is a $\bbQ$-line bundle. This is the case if $V$ is \emph{$\bbQ$-factorial}, i.e. if~$V$ is normal and any reflexive sheaf of generic rank one on $V$ is a $\bbQ$-line bundle. The singularities of~$V$ are \emph{terminal} if $V$ is $\bbQ$-Gorenstein and the pullback of any local section of~$\cK_V^{[m]}$ for any $m \ge 1$ vanishes along all the components of the exceptional divisor of any resolution of $V$; see \cite[def. 2.34]{KM98}. With this terminology we have:

\begin{proposition} \label{Lem:BirationalMapIsIso}
Let $X$ and $W$ be $\bbQ$-factorial normal, integral projective varieties. Suppose that $W$ has terminal singularities. 
If  $\cK_X$ is nef and $\dim X \ge 2$, then any proper birational morphism $f\colon \Sym^n X \to W$ is an isomorphism.
\end{proposition}

\begin{proof} The symmetric product $S := \Sym^n X$ of $X$ is normal by~\cref{Prop:PropertiesOfSymmetricPowers}  and $\bbQ$-factorial by \cite[lemma 5.16]{KM98}. The hypothesis $\dim X \ge 2$ implies that the quotient morphism $\pi \colon X^n \to S$ is unramified in codimension one, hence the natural morphism $(\pi^\ast \cK_S)^{\vee \vee} \to \cK_{X^n}$ is an isomorphism. Thus $\cK_{S}$ is nef since $\cK_{X^n}$ is so. To conclude apply \cref{SubLem:BirationalMapIsIso} below with $V = S$.
\end{proof}

To keep track of the arguments that enter the proof, we state the lemma in a generality which is slightly broader than actually needed. To do this, for a proper morphism $f \colon V \to W$ a $\bbQ$-line bundle $\cL$ on $V$ is said to be \emph{$f$-nef} if the restriction of $\cL^{[m]}$ to any fiber of $f$ is nef, where $m \ge 1$ is such that $\cL^{[m]}$ is a line bundle.

\begin{lemma} \label{SubLem:BirationalMapIsIso} Let $f \colon V \to W$ be a proper birational morphism between normal quasiprojective varieties. Suppose that $V$ is $\bbQ$-Gorenstein, $\cK_V$ is $f$-nef and~$W$ is~$\bbQ$-factorial with terminal singularities. Then $f$ is an isomorphism.
\end{lemma}

\begin{proof} By assumption the varieties $V$ and $W$ are $\bbQ$-Gorenstein, thus there is  an integer  $m \ge 1$ such that $\cK_V^{[m]}$ and $\cK_W^{[m]}$ are line bundles. Write 
\[\cK_{V}^{[m]} = f^*\cK_W^{[m]} \otimes \cO_V(E)\] 
for some Cartier divisor $E$ on $V$. Since $W$ has terminal singularities, the divisor~$E$ is effective and its support is exactly the divisorial part of the exceptional locus of~$f$. Moreover~$E$ is $f$-nef: Indeed, for any projective curve $C \subset V$ contracted by $f$ we have
\[ E.C = \cK_{V}^{[m]}.C - f^*\cK_W^{[m]}.C  = \cK_{V}^{[m]}.C \ge 0\]
because $\cK_V$ is $f$-nef. Therefore $-E$ is effective by \cite[lemma~3.39~(1)]{KM98}, hence trivial because $E$ is effective. It follows that the exceptional set of $f$ has no divisorial part. On the other hand $W$ is $\bbQ$-factorial, so the exceptional locus of $f$ is pure of codimension one~\cite[1.40]{Deb01}. Thus, $f$ is an isomorphism.
\end{proof}

\small

\bibliography{./../biblio}

\providecommand{\bysame}{\leavevmode\hbox to3em{\hrulefill}\thinspace}
\providecommand{\MR}{\relax\ifhmode\unskip\space\fi MR }
\providecommand{\MRhref}[2]{%
  \href{http://www.ams.org/mathscinet-getitem?mr=#1}{#2}
}
\providecommand{\href}[2]{#2}
\begin{thebibliography}{DMOS82}

\bibitem[Abr94]{Abr94}
D.~Abramovich, \emph{Subvarieties of semiabelian varieties}, Compositio Math.
  \textbf{90} (1994), no.~1, 37--52.

\bibitem[And92]{AndreMumfordTate}
Y.~Andr{\'e}, \emph{Mumford-{Tate} groups of mixed {Hodge} structures and the
  theorem of the fixed part}, Compos. Math. \textbf{82} (1992), no.~1, 1--24.

\bibitem[BBDG18]{BBDG}
A.~Beilinson, J.~Bernstein, P.~Deligne, and O.~Gabber, \emph{Faisceaux pervers.
  {Actes} du colloque ``{Analyse} et {Topologie} sur les {Espaces}
  {Singuliers}''. {Partie} {I}}, 2nd edition ed., Ast{\'e}risque, vol. 100,
  Paris: Soci{\'e}t{\'e} Math{\'e}matique de France (SMF), 2018.

\bibitem[Bea86]{Beauville}
A.~Beauville, \emph{Le groupe de monodromie des familles universelles
  d'hypersurfaces et d'intersections compl\`etes}, Complex analysis and
  algebraic geometry ({G}\"{o}ttingen, 1985), Lecture Notes in Math., vol.
  1194, Springer, Berlin, 1986, pp.~8--18.

\bibitem[BLR90]{NeronModels}
S.~Bosch, W.~L\"{u}tkebohmert, and M.~Raynaud, \emph{N\'{e}ron models},
  Ergebnisse der Mathematik und ihrer Grenzgebiete (3), vol.~21,
  Springer-Verlag, Berlin, 1990.

\bibitem[BSS93]{BeltramettiSchneiderSommese}
M.~C. Beltrametti, M.~Schneider, and A.~J. Sommese, \emph{Applications of the
  {E}in-{L}azarsfeld criterion for spannedness of adjoint bundles}, Math. Z.
  \textbf{214} (1993), no.~4, 593--599.

\bibitem[BSS18]{BSS}
B.~Bhatt, C.~Schnell, and P.~Scholze, \emph{Vanishing theorems for perverse
  sheaves on abelian varieties, revisited}, Sel. Math., New Ser. \textbf{24}
  (2018), no.~1, 63--84.

\bibitem[Cou20]{Coulembier}
K.~Coulembier, \emph{Tannakian categories in positive characteristic}, Duke
  Math. J. \textbf{169} (2020), no.~16, 3167--3219.

\bibitem[DE21]{DAE20}
M.~D'Addezio and H.~Esnault, \emph{On the universal extensions in tannakian
  categories}, International Mathematics Research Notices (2021).

\bibitem[Deb82]{DebarreNoether}
O.~Debarre, \emph{In\'{e}galit\'{e}s num\'{e}riques pour les surfaces de type
  g\'{e}n\'{e}ral}, Bull. Soc. Math. France \textbf{110} (1982), no.~3,
  319--346, With an appendix by A. Beauville.

\bibitem[Deb95]{Deb95}
\bysame, \emph{Fulton-{H}ansen and {B}arth-{L}efschetz theorems for
  subvarieties of abelian varieties}, J. Reine Angew. Math. \textbf{467}
  (1995), 187--197.

\bibitem[Deb01]{Deb01}
\bysame, \emph{Higher-dimensional algebraic geometry}, Universitext,
  Springer-Verlag, New York, 2001.

\bibitem[Deb05]{DebarreAV}
\bysame, \emph{Complex tori and abelian varieties}, SMF/AMS Texts and
  Monographs, vol.~11, American Mathematical Society, Providence, RI;
  Soci\'{e}t\'{e} Math\'{e}matique de France, Paris, 2005, Translated from the
  1999 French edition by Philippe Mazaud.

\bibitem[Dim04]{DimcaSheaves}
A.~Dimca, \emph{Sheaves in topology}, Universitext, Berlin: Springer, 2004.

\bibitem[DMOS82]{DM82}
P.~Deligne, J.~S. Milne, A.~Ogus, and K.~Shih, \emph{Hodge cycles, motives, and
  {S}himura varieties}, Lecture Notes in Mathematics, vol. 900,
  Springer-Verlag, Berlin-New York, 1982.

\bibitem[Ebe84]{Ebeling}
W.~Ebeling, \emph{An arithmetic characterisation of the symmetric monodromy
  groups of singularities}, Invent. Math. \textbf{77} (1984), no.~1, 85--99.

\bibitem[EIL00]{EinLazarsfeldBo}
L.~Ein, B.~Ilic, and R.~Lazarsfeld, \emph{A remark on projective embeddings of
  varieties with non-negative cotangent bundles}, Complex analysis and
  algebraic geometry, de Gruyter, Berlin, 2000, pp.~165--171.

\bibitem[FH91]{FultonHarris}
W.~Fulton and J.~Harris, \emph{Representation theory}, Graduate Texts in
  Mathematics, vol. 129, Springer-Verlag, New York, 1991, A first course,
  Readings in Mathematics.

\bibitem[FK00]{FraneckiKapranov}
J.~Franecki and M.~Kapranov, \emph{The {G}auss map and a noncompact
  {R}iemann-{R}och formula for constructible sheaves on semiabelian varieties},
  Duke Math. J. \textbf{104} (2000), no.~1, 171--180.

\bibitem[Ful98]{FultonIntersectionTheory}
W.~Fulton, \emph{Intersection theory}, second ed., Ergebnisse der Mathematik
  und ihrer Grenzgebiete. 3. Folge. A Series of Modern Surveys in Mathematics,
  vol.~2, Springer-Verlag, Berlin, 1998.

\bibitem[Gab62]{Gabriel62}
P.~Gabriel, \emph{Des cat\'{e}gories ab\'{e}liennes}, Bull. Soc. Math. France
  \textbf{90} (1962), 323--448.

\bibitem[Har71]{Har71}
R.~Hartshorne, \emph{Ample vector bundles on curves}, Nagoya Math. J.
  \textbf{43} (1971), 73--89.

\bibitem[Hum78]{Hum78}
J.~E. Humphreys, \emph{Introduction to {L}ie algebras and representation
  theory}, Graduate Texts in Mathematics, vol.~9, Springer-Verlag, New
  York-Berlin, 1978, Second printing, revised.

\bibitem[Jan83]{Janssen}
W.~A.~M. Janssen, \emph{Skew-symmetric vanishing lattices and their monodromy
  groups}, Math. Ann. \textbf{266} (1983), no.~1, 115--133.

\bibitem[Kat01]{KatzLFM}
N.~M. Katz, \emph{{{\(L\)}}-functions and monodromy: four lectures on {Weil}
  {II}}, Adv. Math. \textbf{160} (2001), no.~1, 81--132.

\bibitem[Kat04]{KatzLarsen}
\bysame, \emph{Larsen's alternative, moments, and the monodromy of {L}efschetz
  pencils}, Contributions to automorphic forms, geometry, and number theory,
  Johns Hopkins Univ. Press, Baltimore, MD, 2004, pp.~521--560.

\bibitem[KM98]{KM98}
J.~Koll\'ar and S.~Mori, \emph{Birational geometry of algebraic varieties},
  Cambridge Tracts in Mathematics, vol. 134, Cambridge University Press,
  Cambridge, 1998, With the collaboration of C. H. Clemens and A. Corti,
  Translated from the 1998 Japanese original.

\bibitem[KM23]{KM}
T.~Kr\"amer and M.~Maculan, \emph{Arithmetic finiteness of irregular
  varieties}, 2023,
  \href{https://arxiv.org/abs/2310.08485}{\texttt{arXiv:2310.08485}}.

\bibitem[Kr{\"a}14]{KraemerSemiabelian}
T.~Kr{\"a}mer, \emph{Perverse sheaves on semiabelian varieties}, Rend. Semin.
  Mat. Univ. Padova \textbf{132} (2014), 83--102.

\bibitem[Kr{\"a}20]{KraemerThetaSummands}
\bysame, \emph{Summands of theta divisors on {J}acobians}, Compos. Math.
  \textbf{156} (2020), no.~7, 1457--1475.

\bibitem[Kr{\"a}21]{KraemerMicrolocalII}
\bysame, \emph{Characteristic cycles and the microlocal geometry of the {G}auss
  map, {II}}, J. Reine Angew. Math. \textbf{774} (2021), 53--92.

\bibitem[Kr{\"a}22]{KraemerMicrolocalI}
\bysame, \emph{Characteristic cycles and the microlocal geometry of the {G}auss
  map, {I}}, Ann. Sci. \'{E}c. Norm. Sup\'{e}r. (4) \textbf{55} (2022),
  1475--1527.

\bibitem[KW15a]{KWGeneric}
T.~Kr{\"a}mer and R.~Weissauer, \emph{On the {Tannaka} group attached to the
  theta divisor of a generic principally polarized abelian variety}, Math. Z.
  \textbf{281} (2015), no.~3-4, 723--745.

\bibitem[KW15b]{KWSmall}
\bysame, \emph{Semisimple super {Tannakian} categories with a small tensor
  generator}, Pac. J. Math. \textbf{276} (2015), no.~1, 229--248.

\bibitem[KW15c]{KWVanishing}
\bysame, \emph{Vanishing theorems for constructible sheaves on abelian
  varieties}, J. Algebraic Geom. \textbf{24} (2015), no.~3, 531--568.

\bibitem[Laz04a]{LazarsfeldPositivityI}
R.~Lazarsfeld, \emph{Positivity in algebraic geometry. {I}}, Ergebnisse der
  Mathematik und ihrer Grenzgebiete. 3. Folge. A Series of Modern Surveys in
  Mathematics, vol.~48, Springer-Verlag, Berlin, 2004, Classical setting: line
  bundles and linear series.

\bibitem[Laz04b]{LazarsfeldPositivityII}
\bysame, \emph{Positivity in algebraic geometry. {II}}, Ergebnisse der
  Mathematik und ihrer Grenzgebiete. 3. Folge. A Series of Modern Surveys in
  Mathematics, vol.~49, Springer-Verlag, Berlin, 2004, Positivity for vector
  bundles, and multiplier ideals.

\bibitem[LS20]{LS20}
B.~Lawrence and W.~Sawin, \emph{The {S}hafarevich conjecture for hypersurfaces
  in abelian varieties}, ar{X}iv, 2020,
  \href{https://arxiv.org/abs/2004.09046v3}{\texttt{arXiv:2004.09046v3}}.

\bibitem[LV20]{LV}
B.~Lawrence and A.~Venkatesh, \emph{Diophantine problems and {$p$}-adic period
  mappings}, Invent. Math. \textbf{221} (2020), no.~3, 893--999.

\bibitem[Mac62]{MacdonaldSymmetric}
I.~G. Macdonald, \emph{Symmetric products of an algebraic curve}, Topology
  \textbf{1} (1962), 319--343.

\bibitem[MFK94]{GIT}
D.~Mumford, J.~Fogarty, and F.~Kirwan, \emph{Geometric invariant theory}, third
  ed., Ergebnisse der Mathematik und ihrer Grenzgebiete (2), vol.~34,
  Springer-Verlag, Berlin, 1994.

\bibitem[Mil80]{MilneEC}
J.~S. Milne, \emph{\'{E}tale cohomology}, Princeton Mathematical Series, No.
  33, Princeton University Press, Princeton, N.J., 1980.

\bibitem[Mil86]{MilneAV}
\bysame, \emph{Abelian varieties}, Arithmetic geometry, {Pap}. {Conf}.,
  {Storrs}/{Conn}. 1984, 103-150 (1986)., 1986.

\bibitem[Sch15]{SchnellHolonomic}
C.~Schnell, \emph{Holonomic {$\mathcal{D}$}-modules on abelian varieties},
  Publ. Math., Inst. Hautes {\'E}tud. Sci. \textbf{121} (2015), 1--55.

\bibitem[SGA 1]{SGA1}
\emph{Rev\^{e}tements \'{e}tales et groupe fondamental ({SGA} 1)}, Documents
  Math\'{e}matiques (Paris), vol.~3, Soci\'{e}t\'{e} Math\'{e}matique de
  France, Paris, 2003, S\'{e}minaire de g\'{e}om\'{e}trie alg\'{e}brique du
  Bois Marie 1960--61. Directed by A. Grothendieck, With two papers by M.
  Raynaud, Updated and annotated reprint of the 1971 original.

\bibitem[SGA 3]{SGA3}
P.~Gille and P.~Polo (eds.), \emph{Sch\'{e}mas en groupes ({SGA} 3). {T}ome
  {I}. {P}ropri\'{e}t\'{e}s g\'{e}n\'{e}rales des sch\'{e}mas en groupes},
  Documents Math\'{e}matiques (Paris), vol.~7, Soci\'{e}t\'{e} Math\'{e}matique
  de France, Paris, 2011, S\'{e}minaire de G\'{e}om\'{e}trie Alg\'{e}brique du
  Bois Marie 1962--64. A seminar directed by M. Demazure and A. Grothendieck
  with the collaboration of M. Artin, J.-E. Bertin, P. Gabriel, M. Raynaud and
  J-P. Serre, Revised and annotated edition of the 1970 French original.

\bibitem[SGA $4\tfrac{1}{2}$]{SGA4H}
P.~Deligne, \emph{Cohomologie \'{e}tale}, Lecture Notes in Mathematics, vol.
  569, Springer-Verlag, Berlin, 1977, S\'{e}minaire de g\'{e}om\'{e}trie
  alg\'{e}brique du Bois-Marie SGA $4\frac{1}{2}$.

\bibitem[{Sta}22]{stacks-project}
The {Stacks Project Authors}, \emph{The {Stacks Project}},
  \url{https://stacks.math.columbia.edu}, 2022.

\bibitem[Uen73]{UenoCompositio}
K.~Ueno, \emph{Classification of algebraic varieties. {I}}, Compositio Math.
  \textbf{27} (1973), 277--342.

\bibitem[Wei06]{WeissauerBNSheaves}
R.~Weissauer, \emph{Brill-noether sheaves}, ar{X}iv, 2006,
  \href{https://arxiv.org/abs/math/0610923}{\texttt{arXiv:math/0610923}}.

\bibitem[Wei11]{WeissauerRigidity}
\bysame, \emph{A remark on rigidity of {BN}-sheaves}, ar{X}iv, 2011,
  \href{https://arxiv.org/abs/1111.6095}{\texttt{arXiv:1111.6095}}.

\bibitem[Wei15a]{WeissauerArxiv2015a}
\bysame, \emph{On {S}ubvarieties of {A}belian {V}arieties with degenerate
  {G}auss mapping}, ar{X}iv, 2015,
  \href{https://arxiv.org/abs/1110.0095}{\texttt{arXiv:1110.0095}}.

\bibitem[Wei15b]{WeissauerAlmostConnected}
\bysame, \emph{Why certain {T}annaka groups attached to abelian varieties are
  almost connected}, ar{X}iv, 2015,
  \href{https://arxiv.org/abs/1207.4039}{\texttt{arXiv:1207.4039}}.

\end{thebibliography}

\bibliographystyle{amsalpha}

\end{document}